\newtheorem{theorem}{Theorem}[section]
\newtheorem{lemma}[theorem]{Lemma}
\newtheorem{proposition}[theorem]{Proposition}
\newtheorem{corollary}[theorem]{Corollary}
\newtheorem{maintheorem}{Theorem}
\theoremstyle{remark}
\newtheorem{remark}[theorem]{Remark}
\theoremstyle{remark}
\newtheorem{example}[theorem]{Example}
\newcommand{\C}{\ensuremath{\mathbb{C}}}
\newcommand{\R}{\ensuremath{\mathbb{R}}}
\renewcommand{\H}{\ensuremath{\mathbb{H}}}
\renewcommand{\O}{\ensuremath{\mathbb{O}}}
\newcommand{\F}{\ensuremath{\mathbb{F}}}
\definecolor{Gray}{gray}{0.85}
\newcommand{\g}[1]{\ensuremath{\mathfrak{#1}}}
\newcommand{\s}[1]{\ensuremath{\mathsf{#1}}}
\newcommand{\II}{\ensuremath{I\!I}}
\DeclareMathOperator{\Exp}{Exp}
\DeclareMathOperator{\Ad}{Ad}
\DeclareMathOperator{\arccosh}{arccosh}
\DeclareMathOperator{\Id}{Id}
\DeclareMathOperator{\Isom}{Isom}
\DeclareMathOperator{\proj}{proj}
\DeclareMathOperator{\rank}{rank}
\DeclareMathOperator{\spann}{span}
\DeclareMathOperator{\inj}{inj}
\DeclareMathOperator{\ImagPart}{Im}
\renewcommand{\Im}{\ImagPart}
\newcommand\strt{\rule[-.15em]{0em}{1.2em}}
\begin{document}
	\title[Hopf fibrations and totally geodesic submanifolds]{Hopf fibrations and totally geodesic submanifolds}
		\author[C.~E. Olmos]{Carlos E. Olmos}
	\address{Universidad Nacional de Córdoba, Argentina}
	\email{olmos@famaf.unc.edu.ar}
	
\author[A. Rodr\'{\i}guez-V\'{a}zquez]{Alberto Rodr\'{\i}guez-V\'{a}zquez}
\address{KU Leuven, Department of Mathematics, Celestijnenlaan 200B, Leuven, Belgium}
\email{alberto.rodriguezvazquez@kuleuven.be}
	\thanks{On the one hand, the first author was supported by  Famaf-UNC and CIEM-Conicet. On the other hand, the second author has been supported by the projects PID2019-105138GB-C21, PID2022-138988NB-I00
		(MCIN/AEI/10.13039/501100011033/FEDER, EU, Spain) and ED431F 2020/04 (Xunta de Galicia, Spain), the FPU program (Ministry of Education, Spain), the Methusalem grant METH/15/026 of the Flemish Government (Belgium), and the FWO Postdoctoral grant with project number 1262324N.}
	
	\subjclass[2010]{Primary 53C30, Secondary 53C35, 53C40}
	\keywords{Totally geodesic, Hopf fibrations}
\maketitle

\begin{abstract}
We classify totally geodesic submanifolds in Hopf-Berger spheres, which constitute a special family of homogeneous spaces diffeomorphic to spheres constructed via Hopf fibrations.  As a byproduct of our investigations, we have discovered very intriguing examples of totally geodesic submanifolds. In particular, we would like to highlight the following three:  totally geodesic submanifolds isometric to  real projective spaces,   uncountably many isometric but non-congruent totally geodesic submanifolds, and a totally geodesic submanifold that is not extrinsically homogeneous. 
Remarkably, all these examples only arise in  certain Hopf-Berger spheres with positive curvature.
\end{abstract}


\section{Introduction}
\label{sect:intro}

During the 1930s, Heinz Hopf \cite{hopf1,hopf2} introduced the fibrations bearing his name. These are constructed using complex numbers $\C$, quaternions $\H$, and octonions $\O$ and comprise the following:
\[\s{S}^1\rightarrow \s{S}^{2n+1}\rightarrow \C \s P^n,\qquad\s{S}^3\rightarrow \s{S}^{4n+3}\rightarrow \H\s  P^n,\qquad \s{S}^7\rightarrow \s{S}^{15}\rightarrow \O \s{P}^1.  \]
We will refer to these fibrations as the complex, quaternionic, or octonionic Hopf fibrations, respectively. These fibrations play a fundamental role in both geometry and topology. From a topological perspective, they contributed to the area of homotopy theory by providing an example of a homotopically non-trivial map from $\s{S}^3$ to $\s{S}^2$, which generates $\pi_3(\s{S}^2)= \mathbb{Z}$. Moreover, Adams \cite{adams} proved that a fiber bundle where the fiber, total space, and base space are connected spheres must be a Hopf fibration with base equal to $\C\s{P}^1=\s{S}^2$, $\H\s{P}^1=\s{S}^4$, or $\O\s{P}^1=\s{S}^8$. From a geometric point of view, Hopf fibrations have spurred the search of Riemannian manifolds with positive curvature by providing positively-curved non-symmetric homogeneous spaces, as for example, Berger spheres discovered in~\cite{berger}. Furthermore, Gromoll, Grove and Wilking~\cite{grove,wilkingrigidity} proved that a Riemannian submersion from a round sphere with connected fibers of positive dimension is metrically congruent to a Hopf fibration.

The  purpose of this article is two-fold. Firstly, to classify totally geodesic submanifolds in Hopf-Berger spheres, which constitute a special family of homogeneous spaces diffeomorphic to spheres constructed using Hopf fibrations.  Secondly, to provide certain general tools for the study of totally geodesic submanifolds in homogeneous spaces.

Let us equip the base and the total spaces of the Hopf fibrations with the corresponding symmetric metrics of diameter $\pi/2$, and sectional curvature equal to $1$, respectively. Under these conditions Hopf fibrations become Riemannian submersions with totally geodesic fibers.  Let us consider, for each $\tau>0$, the total space  of the complex, quaternionic, or octonionic Hopf fibration endowed with the Riemannian metric obtained from rescaling  the  metric tensor of the round sphere  by a factor $\tau$ in the vertical directions.  Such Riemannian homogeneous spaces are denoted by  $\s{S}_{\C,\tau}^{2n+1}$, $\s{S}_{\H,\tau}^{4n+3}$, and $\s{S}_{\O,\tau}^{15}$, depending on whether the Hopf fibration considered is the complex, the quaternionic, or the octonionic one, respectively. We refer to such spaces as the \textit {Hopf-Berger spheres}. Also, Hopf-Berger spheres can be regarded as geodesic spheres of rank one symmetric spaces, see~Subsection~\ref{subsect:geomhopf}.

The problem of classifying totally geodesic submanifolds in a given Riemannian manifold is a classical topic in the field of submanifold geometry. In the setting of Riemannian symmetric spaces, we recall that this problem has been extensively studied. However, despite all the efforts toward a general classification, we only have classifications for symmetric spaces of rank one~\cite{wolfrankone}, symmetric spaces of rank two~\cite{chen1, chen2, kleindga,kleintams, kleinosaka}, exceptional symmetric spaces~\cite{exceptional},  and some special classes of totally geodesic submanifolds such as reflective ones~\cite{Leung,LeungIII,LeungIV}, non-semisimple maximal ones~\cite{BO1}, or products of rank one symmetric spaces~\cite{RV:prod}. The curvature tensor of a symmetric space is parallel under the Levi-Civita connection and can be expressed by means of an easy formula in terms of Lie brackets. Hence, the  problem of classifying totally geodesic submanifolds turns out to be equivalent to classifying subspaces of the tangent spaces that satisfy the harmless-looking but extremely complicated condition of being a Lie triple system, see~for instance~\cite[\S 11.1.1]{BCO}. Additionally, totally geodesic submanifolds in symmetric spaces are extrinsically homogeneous, i.e.\ orbits of the isometry group of the ambient space. In the more general setting of Riemannian homogeneous spaces, the classification of totally geodesic submanifolds is a harder problem for a number of reasons. Firstly, in this setting, we need to look for subspaces of the tangent space that are not only invariant by the curvature tensor, which has a much more involved expression, see~Equation~(\ref{eq:curvature31}), but also by all its covariant derivatives, see~Theorem~\ref{th:fundamentaltg}. Secondly, although complete totally geodesic submanifolds of homogeneous spaces are intrinsically  homogeneous (see~\cite[Corollary 8.10]{kobayashi}), they are not necessarily extrinsically  homogeneous, see for instance~\cite{nikolayevskytg}. This picture suggests the necessity of developing new tools to classify totally geodesic submanifolds in homogeneous spaces.

Moreover, the utilization of totally geodesic submanifolds has been of capital importance when studying positively-curved spaces. As a matter of fact, one example is   the investigation of pinching constants for homogeneous spaces of positive curvature, see~\cite{puttmann}. The reason for this is that every critical point for the sectional curvature of a totally geodesic submanifold is a critical point for the sectional curvature of the ambient space. Another example is Wilking's connectedness principle~\cite[Theorem 1]{wilkingacta}, which provides an obstruction for the existence of totally geodesic submanifolds in compact Riemannian manifolds with positive curvature. This has allowed to develop Grove's program toward the obtention of rigidity results for positively-curved spaces under the presence of symmetry, which holds the ultimate hope that by relaxing the symmetry hypotheses new examples would be constructed.  Fixed-point components of isometries are extrinsically homogeneous totally geodesic submanifolds~(see~\cite[Lemma A.1]{puttmann}), and thus Wilking's connectedness principle provides a very useful tool to obtain such rigidity results, and its application has been quite successful in the last years, see for instance~\cite{kennard} or \cite{wilkingannals}. However, in this article we  present numerous  totally geodesic submanifolds which are not fixed-point components of isometries (those which are not well-positioned, see Lemma~\ref{lemma:fixedpoint}); or even more, an example which is not even  extrinsically homogeneous, see~Theorem~\ref{th:4inhomo}. This indicates that the existence of totally geodesic submanifolds is not completely linked to the presence of symmetry, and therefore the scope of Wilking's connectedness principle could  possibly be broadened. 

Certainly, the setting of homogeneous spaces of positive curvature turns out to be an interesting context where to study totally geodesic submanifolds since every totally geodesic submanifold of dimension $d\ge 2$ is again a homogeneous space with positive curvature. Hopf-Berger spheres provide some of the easiest examples of non-symmetric homogeneous metrics with positive sectional curvature. Indeed, they have positive sectional curvature if and only if $\tau\in(0,4/3)$. This was proved by Volper and Berestovskii in~\cite{Vol1,Vol2,Vol3},  see also~\cite{verdiani-ziller}. Interestingly, the condition $\tau\in(0,4/3)$ also characterizes the metrics with positive sectional curvature of the space $\C\s{P}_{\tau}^{2n+1}$,  defined analogously as the Hopf-Berger spheres but using the Riemannian submersion $\s{S}^2\rightarrow \C\s{P}^{2n+1}\rightarrow \H\s{P}^n$, see~\cite{Vol4} for details.

Let us revisit some  widely known facts concerning totally geodesic submanifolds of round spheres. The round sphere $\s{S}_1^n$ can be seen as a geodesic sphere of $\R^{n+1}$ with radius~$1$. Furthermore, every complete totally geodesic submanifold of $\s{S}_1^n$ is the intersection of a complete totally geodesic submanifold of $\R^{n+1}$ passing through the origin (a linear subspace) with $\s{S}_1^n$. The following theorem generalizes this well-known geometric fact to the setting of 2-point homogeneous spaces and provides the classification of totally geodesic submanifolds in Hopf-Berger spheres with $\tau\ge 1/2$. 
\vspace{0.2cm}

\begin{maintheorem}
	\label{th:a}
	Let $\s{S}^n_{\F,\tau}$, with $\tau\neq1$ and $\F\in\{\C,\H,\O\}$,  be a Hopf-Berger sphere, $\Sigma$ a submanifold of $\s{S}^n_{\F,\tau}$ with dimension $d\ge 2$,  and $\bar{M}$ the symmetric space of rank one where $\s{S}^n_{\F,\tau}$  is realized as a geodesic sphere. Then, if $\tau\ge 1/2$ or $\dim\Sigma\ge \dim\F$, the following statements are equivalent:
	\begin{enumerate}[i)]
		\item $\Sigma$ is a complete totally geodesic submanifold of $\s{S}^n_{\F,\tau}$.
		\item $\Sigma$ is the intersection of $\s{S}^n_{\F,\tau}$ (regarded as a geodesic sphere of $\bar{M}$) with a complete totally geodesic submanifold $M$  of $\bar{M}$, with dimension $d+1$, containing the center of the geodesic sphere $\s{S}^n_{\F,\tau}$.
	\end{enumerate}
	Every pair $(\Sigma,\s{S}^n_{\F,\tau})$ satisfying the above equivalent conditions is listed in Table~\ref{table:hopfbergertgwellpos}. Conversely, each pair listed in Table~\ref{table:hopfbergertgwellpos} such that $\tau\ge1/2$ or $\dim \Sigma\ge \dim \F$ corresponds to a unique congruence class of  totally geodesic submanifolds  of $\s{S}^n_{\F,\tau}$.
	
\end{maintheorem}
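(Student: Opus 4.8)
The plan is to exploit the realization of $\s{S}^n_{\F,\tau}$ as a geodesic sphere $S_r(p)$ of the rank one symmetric space $\bar M$ (Subsection~\ref{subsect:geomhopf}), transferring the classification from the non-symmetric Berger metric to the symmetric ambient $\bar M$, where totally geodesic submanifolds are governed by the Lie triple system condition. I would treat the two implications of the equivalence separately, then read off the table and the converse from the classification in $\bar M$.

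I would first establish (ii) $\Rightarrow$ (i), which holds for every $\tau$. Given a complete totally geodesic $M\subset\bar M$ through the center $p$, the intersection $\Sigma=M\cap S_r(p)$ is a geodesic sphere of $M$ centered at $p$. For $X,Y$ tangent to $\Sigma$, total geodesy of $M$ gives $\II^{\Sigma\subset\bar M}(X,Y)=\II^{\Sigma\subset M}(X,Y)$, and the latter points along the radial direction $\gamma'_{px}$ because $\Sigma$ is a distance sphere inside $M$. Since $M$ passes through $p$, this radial direction is the unit normal of $S_r(p)$ in $\bar M$; decomposing $\II^{\Sigma\subset\bar M}$ into its $TS_r(p)$-tangential and normal parts then shows that the tangential part, which is precisely $\II^{\Sigma\subset S_r(p)}$, vanishes. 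Hence $\Sigma$ is totally geodesic in $S_r(p)=\s{S}^n_{\F,\tau}$.

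The hard direction is (i) $\Rightarrow$ (ii). Fixing $x=\exp_p(rv_0)\in\Sigma$, I would use the Jacobi field identification along $\gamma_{v_0}$ to pull $T_x\Sigma$ back to $p$, defining a candidate subspace $\mathfrak m:=\R v_0\oplus W\subset T_p\bar M$, where $W\subset v_0^\perp$ is this pullback. Since $\bar M$ is symmetric, it suffices to show that $\mathfrak m$ is a Lie triple system, i.e.\ $R^{\bar M}(\mathfrak m,\mathfrak m)\mathfrak m\subseteq\mathfrak m$; then $M=\exp_p(\mathfrak m)$ is the desired totally geodesic submanifold and $M\cap S_r(p)=\Sigma$ follows by completeness and homogeneity. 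To obtain the Lie triple identity I would invoke the fundamental criterion of Theorem~\ref{th:fundamentaltg}, namely invariance of $T_x\Sigma$ under the curvature tensor of the Berger metric and under its first covariant derivative, and combine the explicit curvature of the Hopf-Berger sphere (Equation~(\ref{eq:curvature31})) with the dictionary relating $R^{S_r}$ to $R^{\bar M}$ and the radial direction. The main obstacle lies exactly here: the first covariant derivative $\nabla R^{S_r}$, nonzero because the Berger metric is not symmetric, must be used to control how $T_x\Sigma$ meets the vertical distribution $\mathcal V$ of dimension $\dim\F-1$. The hypotheses $\tau\ge 1/2$ or $\dim\Sigma\ge\dim\F$ are precisely what rule out the ``horizontal'' subspaces whose cones fail the Lie triple condition; these are the source of the exotic totally geodesic submanifolds occurring for small $\tau$ and small dimension (cf.\ Theorem~\ref{th:4inhomo}), and must be excluded here. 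Proving that outside those ranges $\mathfrak m$ is forced to be a Lie triple system is the technical heart of the argument.

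Finally, with (i) $\Leftrightarrow$ (ii) in hand, complete totally geodesic submanifolds of $\s{S}^n_{\F,\tau}$ correspond to Lie triple systems of $T_p\bar M$ containing the radial direction, that is, to complete totally geodesic submanifolds of the rank one symmetric space $\bar M$ through $p$. Applying Wolf's classification~\cite{wolfrankone} and intersecting each such $M$ with $S_r(p)$ yields exactly the list in Table~\ref{table:hopfbergertgwellpos}. For the converse, each entry is realized through (ii) $\Rightarrow$ (i), hence is nonempty; uniqueness of the congruence class follows because totally geodesic submanifolds of $\bar M$ of a fixed type through $p$ form a single orbit of the isotropy group $K_p$, and $K_p$ acts by isometries on $S_r(p)=\s{S}^n_{\F,\tau}$. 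Since for $\tau\neq 1$ the isometry group of the Hopf-Berger sphere preserves the Hopf structure and is thus essentially $K_p$, distinct entries, separated by dimension and intrinsic type, give non-congruent submanifolds, establishing the claimed bijection between table entries and congruence classes.
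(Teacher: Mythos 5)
Your outline of the easy implication (ii) $\Rightarrow$ (i) and of the congruence statement matches the paper (Corollary~\ref{cor:geodsphereinter} and Remark~\ref{rem:congwellpos}), but the hard implication (i) $\Rightarrow$ (ii) is not actually proved: you write that ``proving that outside those ranges $\mathfrak m$ is forced to be a Lie triple system is the technical heart of the argument'' and stop there. That heart is exactly what is missing, and your proposed route --- pulling $T_x\Sigma$ back to $T_p\bar M$ by Jacobi fields and verifying the Lie triple condition directly from $R$ and $\nabla R$ of the Berger metric --- supplies no mechanism by which the hypotheses $\tau\ge 1/2$ or $\dim\Sigma\ge\dim\F$ enter. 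The paper factors the argument through the notion of a \emph{well-positioned} submanifold, i.e.\ $T_q\Sigma=(T_q\Sigma\cap\mathcal H_q)\oplus(T_q\Sigma\cap\mathcal V_q)$ for all $q$, and proves two separate things. First, the hypotheses force well-positionedness: for $\dim\Sigma\ge\dim\F$ this is a dimension count producing a horizontal tangent vector together with the eigenvalue analysis of the Jacobi operators (Lemma~\ref{lemma:splitting}, Corollary~\ref{cor:tghighdimension}); for $\tau\ge1/2$ it requires Theorem~\ref{th:notwellpos}, namely that any not well-positioned totally geodesic submanifold must have $\upalpha$-isotropic tangent spaces, where $\upalpha$ is the bilinear form induced by the second fundamental form of the geodesic sphere in $\bar M$, and $\upalpha$-isotropic subspaces exist only for $\tau\le 1/2$ (with $\tau=1/2$ excluded by a further computation). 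That step rests on a nontrivial classification of not well-positioned totally geodesic surfaces and is nowhere reflected in your plan; your description of the excluded examples as ``horizontal'' subspaces also misidentifies them, since the exotic submanifolds are precisely those with no horizontal and no vertical tangent vectors.

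Second, once $\Sigma$ is well-positioned its tangent spaces are invariant under the shape operator of the geodesic sphere, and the paper builds the totally geodesic $M\subset\bar M$ not by checking a Lie triple identity but by forming the radial cone $\bigcup_{s}f_s(\Sigma)$, where $f_s$ carries $\s{S}_t(p)$ to $\s{S}_s(p)$ along radial geodesics; Lemma~\ref{lemma:submersiontg} (rescaling the metric on the vertical distribution preserves total geodesy of well-positioned submanifolds) shows each slice $f_s(\Sigma)$ is totally geodesic, and shape-operator invariance glues the slices into a totally geodesic submanifold of the ball, which then extends to a complete one containing $p$. Your sketch contains neither the dichotomy that isolates the $\upalpha$-isotropic examples nor this cone construction, so as written it is a plan rather than a proof.
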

\begin{table}[h!]
	\renewcommand{\arraystretch}{1.2}
	\centering
	\begin{tabular}{lll}
		\hline
		\multicolumn{1}{|l}{$\s{S}^{2n+1}_{\C,\tau}$} &                                                          & \multicolumn{1}{l|}{}                  \\ \hline
		\multicolumn{1}{|l|}{}               & \multicolumn{1}{l|}{$\s{S}^{2k+1}_{\C,\tau}$}                     & \multicolumn{1}{c|}{$1\leq k\leq n-1$} \\
		\multicolumn{1}{|l|}{}               & \multicolumn{1}{l|}{$\s{S}_1^k$}                     & \multicolumn{1}{c|}{$2\leq k\leq n$\phantom{.aaa}}  \\ \hline
		\multicolumn{1}{|l}{$\s{S}^{4n+3}_{\H,\tau}$} &                                                          & \multicolumn{1}{l|}{}                  \\ \hline
		\multicolumn{1}{|l|}{}               & \multicolumn{1}{l|}{$\s{S}^{4k+3}_{\H,\tau}$}                     & \multicolumn{1}{c|}{$1\leq k\leq n-1$} \\
		\multicolumn{1}{|l|}{}               & \multicolumn{1}{l|}{$\s{S}^{2k+1}_{\C,\tau}$}                     & \multicolumn{1}{c|}{$1\leq k\leq n$\phantom{.aaa}} \\
		\multicolumn{1}{|l|}{}               & \multicolumn{1}{l|}{$\s{S}^{k}_{1}$}                & \multicolumn{1}{c|}{$2\leq k\leq n$\phantom{.aaa}}  \\
		\multicolumn{1}{|l|}{}               & \multicolumn{1}{l|}{$\s{S}^{k}_{1/\tau}$}                & \multicolumn{1}{c|}{$2\leq k\leq 3$\phantom{.aaa}}\\
		\hline
		\multicolumn{1}{|l}{$\s{S}^{15}_{\O,\tau}$} &                                                          & \multicolumn{1}{l|}{}                  \\ \hline
		\multicolumn{1}{|l|}{}               & \multicolumn{1}{l|}{$\s{S}^{7}_{\H,\tau}$}                     & \multicolumn{1}{l|}{}  \\
		\multicolumn{1}{|l|}{}               & \multicolumn{1}{l|}{$\s{S}^{3}_{\C,\tau}$}                     & \multicolumn{1}{l|}{}  \\
		\multicolumn{1}{|l|}{}               & \multicolumn{1}{l|}{$\s{S}^k_{1/\tau}$}                     & \multicolumn{1}{c|}{$2\leq k \leq 7$ \phantom{aaa}} \\ \hline
		&                                                          &                                       
	\end{tabular}
	\caption{Well-positioned totally geodesic submanifolds of dimension $d\ge2$ in Hopf-Berger spheres. }
	\label{table:hopfbergertgwellpos}
\end{table}
The condition  $\tau\ge1/2$ has an interesting geometric interpretation. It turns out that  the closed geodesic balls with radius $t$ given by Equation \eqref{eq:radius_tau_homothety} are geodesically convex if and only if $\tau\ge 1/2$. This follows since the principal curvatures of the shape operator  of the  geodesic spheres of the corresponding rank one symmetric spaces with respect to an outer normal vector field are non-positive, see Equations~\eqref{eq:b1} and \eqref{eq:b2}.  

\begin{remark}
	Notice that for $\tau=1$ the sphere $\s{S}^n_{\F,\tau}$ is round. If $\tau\neq 1$, we have to assume that $\Sigma$ has dimension $d\ge 2$, since there always are non-closed geodesics in $\s{S}^n_{\F,\tau}$, see Lemma~\ref{lemma:slopegeod},  and a non-closed geodesic in $\s{S}^n_{\F,\tau}$  cannot arise as the intersection with a totally geodesic submanifold of $\bar{M}$.
\end{remark}

The totally geodesic submanifolds described in Theorem~\ref{th:a} satisfy a natural geometric property which will be fundamental for our study. We say that a  totally geodesic submanifold $\Sigma$ of a Hopf-Berger sphere $\s{S}^n_{\F,\tau}$ is \textit{well-positioned} if  $T_p \Sigma= (T_p\Sigma\cap \mathcal{H}_p)\oplus(T_p\Sigma\cap \mathcal{V}_p)$ for every $p\in M$, where $\mathcal{H}$ and $\mathcal{V}$ denote the horizontal and vertical distributions of the corresponding Hopf fibration, respectively. The examples of totally geodesic submanifolds in Table~\ref{table:hopfbergertgwellpos} are all obtained as  intersections of geodesic spheres of a rank one symmetric space with a complete totally geodesic submanifold passing through the center of the corresponding geodesic sphere. It turns out that this is equivalent to being well-positioned (see Theorem~\ref{th:wellpos}).  Furthermore, every well-positioned totally geodesic submanifold of a Hopf-Berger sphere is extrinsically homogeneous, see Remark~\ref{rem:congwellpos}.

The classification given in Theorem~\ref{th:a}  works for those Hopf-Berger spheres with $\tau\ge 1/2$. However, we managed to achieve the classification for each value of $\tau>0$.  Below, we  describe the classification of totally geodesic submanifolds in $\s{S}^n_{\F,\tau}$, for each $\F\in\{\C,\H,\O  \}$.

Let us start with $\F=\C$. In this case totally geodesic submanifolds are well-positioned and  the classification was already obtained by Torralbo and Urbano~\cite{torralbo} when $\tau<1$. Here the classification is a consequence of Theorem~\ref{th:a}.
Now we will discuss the case when $\F\in\{\H,\O \}$, where not well-positioned totally geodesic submanifolds do arise. It is important to highlight that these examples appear only in certain positively-curved Hopf-Berger spheres (those with $\tau<\tfrac{1}{2}$).

In order to study not well-positioned totally geodesic submanifolds, we will  introduce a pseudo-Riemannian metric $\upalpha$  on $\s{S}^n_{\F,\tau}$, with $\F\in\{\H,\O \}$. This pseudo-Riemannian metric $\upalpha$ is induced by the second fundamental form of the  embedding of the corresponding geodesic sphere into $\H \s P^n$ and $\O \s P^2$, respectively. In particular, we prove (see Theorem~\ref{th:notwellpos}) that a complete submanifold $\Sigma$ of $\s{S}^n_{\F,\tau}$ of dimension $d\ge2$, with $\tau<\tfrac{1}{2}$, is a not well-positioned totally geodesic submanifold   if and only if $T_p\Sigma$ is $\upalpha$-isotropic and curvature invariant for every $p\in\Sigma$. This allows to reduce the seemingly infinite number of conditions given by Theorem~\ref{th:fundamentaltg}, to check that the Riemannian exponential map of a subspace is a  totally geodesic submanifold, to just two conditions.

It turns out that a good amount of not well-positioned totally geodesic submanifolds in $\s{S}^n_{\F,\tau}$ are isometric to round spheres. The  construction of these not well-positioned totally geodesic spheres can be described in purely geometric terms. Let us consider a Helgason sphere $\s{S}_{4}^{\dim \F}$ of $\F \s P^n$, with $\F\in\{\H,\O\}$, i.e.\ a maximal totally geodesic sphere with maximal sectional curvature. The idea is to deform a totally geodesic hypersurface $\widehat{\Sigma}$ of $\s{S}_{4}^{\dim \F}$ in such a way that it is $\upalpha$-isotropic and curvature invariant. As a matter of fact, we prove that every not well-positioned totally geodesic sphere  is congruent to a totally geodesic submanifold of the deformation of $\widehat{\Sigma}$,  see Theorem \ref{th:notwellsphere}.

\medskip

\begin{maintheorem}
	\label{th:C}
	Let $\Sigma$ be a complete  submanifold of $\s{S}^{4n+3}_{\H,\tau}$, $n\ge 1$, of dimension $d\ge 2$. Then, $\Sigma$ is totally geodesic if and only if it is congruent to a well-positioned totally geodesic submanifold (see  Table~\ref{table:hopfbergertgwellpos}) or a not well-positioned totally geodesic submanifold (see~Table~\ref{table:quaternionictgnotwellpos}).
\end{maintheorem}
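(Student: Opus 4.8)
The plan is to organize the proof around the dichotomy built into the definition of well-positioned submanifold: by definition $\Sigma$ is well-positioned when $T_p\Sigma=(T_p\Sigma\cap\mathcal{H}_p)\oplus(T_p\Sigma\cap\mathcal{V}_p)$ holds for every $p\in\Sigma$, and otherwise this splitting fails at some point, so that the two alternatives are exhaustive and mutually exclusive. A useful preliminary reduction comes from Theorem~\ref{th:a}: because $\dim\F=4$ for $\F=\H$, any complete totally geodesic $\Sigma$ with either $\tau\ge1/2$ or $\dim\Sigma\ge4$ already satisfies condition (ii) of Theorem~\ref{th:a}, hence is an intersection and thus well-positioned. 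Consequently a not well-positioned example can occur only when $\tau<1/2$ and $\dim\Sigma\in\{2,3\}$, which confines the delicate part of the classification to low dimensions.

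In the well-positioned case I would apply Theorem~\ref{th:wellpos}, which identifies $\Sigma$ with the intersection of the geodesic sphere $\s{S}^{4n+3}_{\H,\tau}\subset\H\s{P}^{n+1}$ and a complete totally geodesic submanifold $M\subset\H\s{P}^{n+1}$ through its center $o$. The problem then reduces to Wolf's classification~\cite{wolfrankone} of totally geodesic submanifolds of the rank one symmetric space $\H\s{P}^{n+1}$ passing through $o$, namely the quaternionic, complex and real subspaces together with the Helgason sphere. Intersecting each such $M$ with the geodesic sphere and recording the induced metric yields precisely the Hopf-Berger subspheres $\s{S}^{4k+3}_{\H,\tau}$ and $\s{S}^{2k+1}_{\C,\tau}$, the round spheres $\s{S}^k_1$, and the rescaled fiber spheres $\s{S}^k_{1/\tau}$ of Table~\ref{table:hopfbergertgwellpos}; one discards the intersections of dimension below $2$ and invokes Remark~\ref{rem:congwellpos} to pass to congruence classes. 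This step is uniform in $\tau$.

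In the not well-positioned case I would use the reduction above together with Theorem~\ref{th:notwellpos}, which replaces the infinite tower of conditions of Theorem~\ref{th:fundamentaltg} by the two pointwise requirements that every $T_p\Sigma$ be $\upalpha$-isotropic and curvature invariant for the curvature tensor~\eqref{eq:curvature31}. Fixing a tangent space, I would classify, modulo the isotropy group $\s{Sp}(n)\cdot\s{Sp}(1)$ acting on $\H^n\oplus\Image\H$, those subspaces of dimension $2$ and $3$ that are simultaneously isotropic for the indefinite metric $\upalpha$ and curvature invariant. Exponentiating the surviving subspaces and verifying completeness produces the entries of Table~\ref{table:quaternionictgnotwellpos}, among them the not well-positioned round spheres obtained from deforming a totally geodesic hypersurface of a Helgason sphere $\s{S}^{4}_{4}\subset\H\s{P}^{n+1}$, as described in Theorem~\ref{th:notwellsphere}.

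The main obstacle is this final classification of $\upalpha$-isotropic, curvature-invariant subspaces. Because $\upalpha$ is indefinite, its isotropic subspaces form a positive-dimensional variety rather than a rigid finite list, so the interaction with curvature invariance has to be analyzed directly; moreover the resulting congruence count does not reduce to a transitive isotropy action, which is exactly what permits isometric but non-congruent totally geodesic submanifolds to occur in the final list. Carrying out this classification and then sorting the representatives up to congruence is where the effort concentrates. The converse direction is comparatively routine: each well-positioned entry is totally geodesic by construction through Theorem~\ref{th:wellpos}, while each not well-positioned entry is seen to be totally geodesic by verifying $\upalpha$-isotropy and curvature invariance on its explicit model and appealing to Theorem~\ref{th:notwellpos}.
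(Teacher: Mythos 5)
Your proposal follows essentially the same route as the paper: the dichotomy between well-positioned and not well-positioned submanifolds, Theorem~\ref{th:wellpos} together with Wolf's classification and Remark~\ref{rem:congwellpos} for the former, and Theorem~\ref{th:notwellpos} plus an explicit classification of $\upalpha$-isotropic, curvature-invariant subspaces of dimensions $2$ and $3$ for the latter (the paper carries this out in Theorem~\ref{th:notwellsphere}, Lemma~\ref{lemma:notwellposproj2} and Lemma~\ref{lemma:notwellposproj3}). One point needs correcting. You assert that the congruence count ``does not reduce to a transitive isotropy action'' and that this ``permits isometric but non-congruent totally geodesic submanifolds to occur in the final list.'' That phenomenon occurs only in the octonionic sphere $\s{S}^{15}_{\O,\tau}$ (Theorem~\ref{th:D}); for $\s{S}^{4n+3}_{\H,\tau}$ the paper proves the opposite in Proposition~\ref{prop:congquaternionic}: any two isometric totally geodesic submanifolds of dimension $d\ge 2$ through $o$ are isotropy-congruent at $o$. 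This congruence statement is not optional, since Table~\ref{table:quaternionictgnotwellpos} lists isometry types and the theorem asserts congruence to an entry of that table, so one must show each type forms a single congruence class; the paper does this by reducing the $2$-dimensional spheres to the unique $3$-dimensional one via Theorem~\ref{th:notwellsphere} and an argument identifying the latter as an orbit of a group isomorphic to $\s{SO}_4$. A further minor imprecision: for $\tau>1$ the ambient rank one symmetric space is $\H\s{H}^{n+1}$ rather than $\H\s{P}^{n+1}$, though this does not affect the structure of the argument.
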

\begin{table}[H]
	\renewcommand{\arraystretch}{1.2}
	\begin{tabular}{|lll|}
		\hline
		$\s{S}^{4n+3}_{\H,\tau}$, & $\tau<1/2$                                &                                    \\ \hline
		\multicolumn{1}{|l|}{}    & \multicolumn{1}{l|}{$\s{S}^k_{4-4\tau}$}  & $k\in\{2,3\}$                      \\
		\multicolumn{1}{|l|}{}    & \multicolumn{1}{l|}{$\R\s{P}^2_{2/3}$}    & $\tau=1/3$                         \\
		\multicolumn{1}{|l|}{}    & \multicolumn{1}{l|}{$\R\s{P}^2_{1-\tau}$}  & $\tau<1/3$, $n\ge 2$               \\
		\multicolumn{1}{|l|}{}    & \multicolumn{1}{l|}{$\R\s{P}^3_{3/4}$}    & $\tau=1/4$, $n\ge 2$               \\
		\multicolumn{1}{|l|}{}    & \multicolumn{1}{l|}{$\R\s{P}^3_{1-\tau}$} & $\tau<1/4$, $n\ge 3$       \\
		\hline                  
	\end{tabular}
	\caption{Not well-positioned totally geodesic submanifolds of dimension $d\ge2$ in~$\s{S}^{4n+3}_{\H,\tau}$ for $n\ge 1$.}
	\label{table:quaternionictgnotwellpos}
\end{table}
Surprisingly, in $\s{S}^{4n+3}_{\H,\tau}$, we have discovered (see Table~\ref{table:quaternionictgnotwellpos}) not well-positioned totally geodesic submanifolds of $\s{S}^{4n+3}_{\H,\tau}$ isometric to $\R\s{P}^2_{1-\tau}$ when $\tau\leq 1/3$; and to $\R\s{P}^3_{1-\tau}$ when $\tau\leq 1/4$, where we denote by $\R\s{P}^\ell_{\kappa}$ the real projective space of dimension $\ell$ with constant sectional curvature equal to $\kappa>0$. The explicit description of these examples is given in Lemma~\ref{lemma:notwellposproj2} and Lemma~\ref{lemma:notwellposproj3}. These are, up to the knowledge of the authors, the first known examples of totally geodesic embeddings of real projective spaces in Riemannian manifolds diffeomorphic to spheres with positive curvature. Indeed, Wilking's connectedness principle (see~\cite[Theorem 1]{wilkingacta}) implies that $\s{S}^7$ cannot carry a metric of positive sectional curvature admitting a totally geodesic submanifold diffeomorphic to $\R \s{P}^k$ with $k\ge 4$. Since we have exhibited an example of a totally geodesic submanifold diffeomorphic to $\R \s{P}^2$ in $\s{S}^7_{\H,1/3}$,  it is natural to raise the question of whether there will be examples of Riemannian metrics on $\s{S}^7$ with positive sectional curvature admitting totally geodesic submanifolds diffeomorphic to $\R\s{P}^3$.

\begin{maintheorem}
	\label{th:D}
	Let $\Sigma$ be a complete  submanifold of $\s{S}^{15}_{\O,\tau}$ of dimension $d\ge 2$. Then, $\Sigma$ is totally geodesic if and only if it is congruent to a well-positioned totally geodesic submanifold listed in Table~\ref{table:hopfbergertgwellpos} or a not well-positioned totally geodesic submanifold listed in Table \ref{table:octonionictgnotwellpos}.
\end{maintheorem}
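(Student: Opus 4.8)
The plan is to follow the same two-step scheme as in the proof of Theorem~\ref{th:C}, separating the well-positioned from the not well-positioned totally geodesic submanifolds, the essential new difficulty being that the octonions are non-associative, so that the fibre $\s{S}^7$ is no longer a Lie group and the relevant isometry group is $\mathrm{Spin}(9)$, the isotropy group of the Cayley plane $\O\s{P}^2$ in which $\s{S}^{15}_{\O,\tau}$ sits as a geodesic sphere. First I would dispose of the well-positioned case. If $\tau\ge 1/2$, or if $\dim\Sigma\ge\dim\O=8$, then Theorem~\ref{th:a} already shows that a complete totally geodesic $\Sigma$ of dimension $d\ge 2$ is the intersection of the geodesic sphere with a totally geodesic submanifold of $\O\s{P}^2$ through its center, and that these are exactly the entries of Table~\ref{table:hopfbergertgwellpos}; by Theorem~\ref{th:wellpos} these coincide with the well-positioned ones. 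Since not well-positioned totally geodesic submanifolds occur only for $\tau<1/2$, this settles the equivalence whenever $\Sigma$ is well-positioned and reduces the remaining work to the case $\tau<1/2$ with $\Sigma$ not well-positioned.

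For that case I would invoke Theorem~\ref{th:notwellpos}: a complete submanifold $\Sigma$ of dimension $d\ge 2$ is a not well-positioned totally geodesic submanifold if and only if $W:=T_p\Sigma$ is $\upalpha$-isotropic and curvature invariant for every $p\in\Sigma$. This replaces the a priori infinite list of conditions coming from the covariant derivatives of the curvature tensor in Theorem~\ref{th:fundamentaltg} by a purely linear-algebraic classification of the subspaces $W\subset T_p\s{S}^{15}_{\O,\tau}=\mathcal H_p\oplus\mathcal V_p$ (with $\dim\mathcal H_p=8$ and $\dim\mathcal V_p=7$) satisfying these two conditions, carried out up to the action of $\mathrm{Spin}(9)$. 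Here I would write the curvature tensor via Equation~\eqref{eq:curvature31} and the pseudo-Riemannian metric $\upalpha$ explicitly in terms of the horizontal/vertical splitting and of octonionic multiplication. The failure of $W$ to be well-positioned means that its horizontal and vertical projections form a nontrivial graph over one another; the $\upalpha$-isotropy condition pins down the relation between these projections, and curvature invariance then rigidly constrains the graph through the octonionic product and the structure of the $16$-dimensional spin representation $\Delta_9$ of $\mathrm{Spin}(9)$.

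To organize the classification I would use the geometric reduction of Theorem~\ref{th:notwellsphere}: every not well-positioned totally geodesic sphere is congruent to a totally geodesic submanifold of the deformation of a totally geodesic hypersurface $\widehat\Sigma=\s{S}^7$ of a Helgason sphere $\s{S}_4^{8}\subset\O\s{P}^2$. For each dimension $2\le k\le 7$ and each relevant threshold value of $\tau$ I would determine which deformed subspaces are simultaneously $\upalpha$-isotropic and curvature invariant, compute the induced metric (obtaining the constant curvatures recorded in Table~\ref{table:octonionictgnotwellpos}), and decide, using the non-closedness of the ambient geodesics from Lemma~\ref{lemma:slopegeod}, whether the resulting complete totally geodesic submanifold closes up as a sphere or, after antipodal identification, as a real projective space $\R\s{P}^k$. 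For the converse I would verify directly that each entry of Table~\ref{table:octonionictgnotwellpos} yields a subspace meeting the two conditions of Theorem~\ref{th:notwellpos}, hence a totally geodesic submanifold, and that distinct entries lie in distinct $\mathrm{Spin}(9)$-orbits and therefore represent distinct congruence classes.

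I expect the main obstacle to be the final step of the linear-algebraic classification. Because $\O$ is non-associative, the identities that made the quaternionic computation manageable (where $\s{S}^3$ is a group) no longer hold, and the curvature-invariance condition must be controlled directly through the non-associative octonionic product and the triality phenomena of the subgroup $\mathrm{Spin}(8)\subset\mathrm{Spin}(9)$. In particular, determining the exact dimension range of the not well-positioned projective-space examples, together with the precise critical values of $\tau$ at which they appear, will require a careful case analysis of these octonionic identities rather than a direct transcription of the quaternionic argument.
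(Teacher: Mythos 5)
Your overall architecture (well-positioned case via Theorem~\ref{th:a} and Theorem~\ref{th:wellpos}; not well-positioned case via the $\upalpha$-isotropic plus curvature-invariant characterization of Theorem~\ref{th:notwellpos}) is the same as the paper's, but your organizational step for the not well-positioned case has a genuine gap. You propose to reduce the classification to subspaces of the deformed hypersurface $\widehat\Sigma$ of the Helgason sphere via Theorem~\ref{th:notwellsphere}, and then to decide whether each candidate ``closes up as a sphere or, after antipodal identification, as a real projective space.'' Theorem~\ref{th:notwellsphere}, however, only constrains totally geodesic \emph{spheres}; by Lemma~\ref{lemma:notwellposrk1} a not well-positioned totally geodesic submanifold is a priori an arbitrary compact rank one symmetric space, and the projective-space examples do \emph{not} arise from subspaces of $T_o\widehat\Sigma$: the totally geodesic $\R\s{P}^2_{2/3}$ at $\tau=1/3$ has constant curvature $1-\tau$, whereas every totally geodesic surface of $\widehat\Sigma\cong\s{S}^7_{4-4\tau}$ has curvature $4-4\tau$, so its tangent plane lies in no conjugate of $T_o\widehat\Sigma$. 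Your scheme would therefore miss this entry of Table~\ref{table:octonionictgnotwellpos}, and conversely it gives you no mechanism to \emph{exclude} larger non-spherical examples ($\R\s{P}^3$, a totally geodesic $\C\s{P}^2$, etc.). The ``antipodal identification'' picture is also not available: complete totally geodesic submanifolds here are embedded (Corollary~\ref{cor:embedded55}), and spheres versus projective spaces correspond to genuinely different tangent subspaces, not to different closings-up of the same one.

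The missing ingredient is the content of Lemma~\ref{lemma:dim7octonionic}\,\textit{ii)}: every not well-positioned totally geodesic submanifold of $\s{S}^{15}_{\O,\tau}$ of dimension greater than two is congruent to a totally geodesic submanifold of $\widehat\Sigma$. The paper proves this by extracting a totally geodesic surface, using Lemma~\ref{lemma:2octo} to place its tangent plane inside a well-positioned $\s{S}^7_{\H,\tau}$, invoking the quaternionic classification (Lemma~\ref{lemma:notwellposproj2}) to see that the surface is either a $2$-sphere inside some conjugate of $\widehat\Sigma$ or the $\tau=1/3$ projective plane, and then ruling out the latter alternative in dimension $\ge 3$ by an explicit curvature computation showing the ambient submanifold would have to be homothetic to $\C\s{P}^2$, which is impossible. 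Dimension two is then handled separately through the quaternionic case, and the congruence statement for spheres reduces, as in Theorem~\ref{th:congoctonionic}, to the action of $\s{Spin}_7$ on Grassmannians of $\R^8$ (transitive except on $4$-planes, where it has cohomogeneity one, producing the $[0,1]$-family of non-congruent $3$-spheres). Without an argument of this kind your classification is neither exhaustive nor demonstrably free of spurious higher-dimensional projective examples.
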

\begin{table}[H]
	\renewcommand{\arraystretch}{1.2}
	\begin{tabular}{|lll|}
		\hline
		$\s{S}^{15}_{\O,\tau}$,   & $\tau<1/2$                                &                                    \\ \hline
		\multicolumn{1}{|l|}{}    & \multicolumn{1}{l|}{$\s{S}^k_{4-4\tau}$}  & $k\in\{2,5,6,7\}$                  \\
		\multicolumn{1}{|l|}{}    & \multicolumn{1}{l|}{$\s{S}^3_{4-4\tau}$}  & Infinitely many congruence classes      \\
		\multicolumn{1}{|l|}{}    & \multicolumn{1}{l|}{$\s{S}^4_{4-4\tau}$}  & Not extrinsically homogeneous \\
		\multicolumn{1}{|l|}{}    & \multicolumn{1}{l|}{$\R\s{P}^2_{2/3}$}    & $\tau=1/3$                         \\ \hline
	\end{tabular}
	\caption{Not well-positioned totally geodesic submanifolds of dimension $d\ge2$ in~$\s{S}^{15}_{\O,\tau}$.}
	\label{table:octonionictgnotwellpos}
\end{table}
In this case a not well-positioned maximal totally geodesic sphere $\widehat{\Sigma}$ has dimension~$7$. We define a $3$-form $\upvarphi$ in $\s{S}^{15}_{\O,\tau}$ that plays an important role in the study of the congruence of the not well-positioned totally geodesic $3$-dimensional spheres. The $3$-form $\upvarphi$ is such that its restriction to the tangent space of a not well-positioned totally geodesic $7$-dimensional sphere $\widehat{\Sigma}$ agrees with the associative calibration defined by Harvey and Lawson in its foundational article on calibrated geometries, see~\cite{harvey-lawson}.  Moreover, the absolute value of $\upvarphi$ associates a real number in $[0,1]$ to the tangent space of a not well-positioned totally geodesic $3$-dimensional sphere in such a way that two not well-positioned totally geodesic $3$-dimensional spheres $\Sigma_1^3$ and $\Sigma_2^3$ passing through some point $o\in\s{S}^{15}_{\O,\tau}$ are congruent in $\s{S}^{15}_{\O,\tau}$ if and only if $|\upvarphi(T_o\Sigma^3_1)|=|\upvarphi(T_o\Sigma^3_2)|$, see Proposition~\ref{prop:calibcong} and Example~\ref{ex:3dimoctonionic}. All not well-positioned totally geodesic $3$-dimensional spheres are mutually isometric. In particular, this implies that there exists a family of uncountable isometric and non-congruent totally geodesic submanifolds in $\s{S}^{15}_{\O,\tau}$, when $\tau<1/2$. This is the first such family in  a homogeneous space described in the literature up to the knowledge of the authors.

As we pointed out, a totally geodesic submanifold of a homogeneous space must be intrinsically homogeneous but this does not imply that it is an orbit of some subgroup of the isometry group of the ambient space. Indeed, we prove that every totally geodesic submanifold of a Hopf-Berger sphere is extrinsically homogeneous  except not well-positioned totally geodesic $4$-dimensional spheres in $\s{S}^{15}_{\O,\tau}$, see Remark~\ref{rem:congwellpos}, Corollary~\ref{cor:extrinsichomquaternionic} and Theorem~\ref{th:4inhomo}. Hopf-Berger spheres are g.o.\ spaces, i.e.\  spaces such that all their geodesics are orbits. Thus, this example shows that the class of homogeneous spaces where all the totally geodesic submanifolds are orbits is strictly contained in the class of g.o.\ spaces, and as far as the authors are aware, this is the first  example of a not extrinsically homogeneous totally geodesic submanifold in a geodesic orbit space described in the literature.

This article is organized as follows. In Section~\ref{sect:prelim}, we summarize some standard facts about totally geodesic submanifolds, geodesic spheres and Riemannian submersions (Subsection~\ref{subsect:tggeodspheressubmersions}),  we fix some notation concerning homogeneous spaces (Subsection~\ref{subsect:homprelim}), we  explain some well-known facts about rank one symmetric spaces and their totally geodesic submanifolds~(Subsection~\ref{subsect:rkonetg}), and we summarize some aspects of the geometry of Hopf-Berger spheres~(Subsection~\ref{subsect:geomhopf}). The purpose of Section~\ref{sect:tgandhom} is to provide some general tools for studying totally geodesic submanifolds in homogeneous spaces. In particular, we prove the following three lemmas:  Lemma~\ref{lemma:DRinvariant}, which gives a simple sufficient condition to check that a submanifold is totally geodesic;  Lemma~\ref{lemma: g.o.tg}, which  provides an obstruction for the existence of totally geodesic submanifolds in compact g.o.\ spaces, and  Lemma~\ref{lemma:finclas}, which relates the number of isotropy-congruence classes to the homogeneity of a totally geodesic submanifold. In Section~\ref{sect:wellpos},  we characterize well-positioned totally geodesic submanifolds in Hopf-Berger spheres~(see~Theorem~\ref{th:wellpos}) as intersections of totally geodesic submanifolds of a rank one symmetric space $\bar M$ with a Hopf-Berger sphere regarded as a geodesic sphere of $\bar M$. Moreover, in Subsection~\ref{subsect:examplestg}, we analyze some examples of well-positioned totally geodesic submanifolds. In Section~\ref{sect:notwellpos}, we study not well-positioned totally geodesic submanifolds. Subsection~\ref{subsect:geodesics} is devoted to the $1$-dimensional examples, i.e.\ geodesics, and Subsection~\ref{subsect:charactnotwellpos} is devoted to give a characterization~(Theorem~\ref{th:notwellpos}) of not well-positioned totally geodesic submanifolds of Hopf-Berger spheres by means of the pseudo-Riemannian metric $\upalpha$ and the curvature tensor. Furthermore, in Subsection~\ref{subsect:notwellposspheres}, we completely characterize  not well-positioned maximal totally geodesic spheres (see Theorem~\ref{th:notwellsphere}). In Section~\ref{sect:quaternionic}, we construct and classify  totally geodesic projective spaces in $\s{S}^{4n+3}_{\H,\tau}$ (see Lemma~\ref{lemma:notwellposproj2} and Lemma~\ref{lemma:notwellposproj3}), which allows us to prove Theorem~\ref{th:C}. Finally, in Section~\ref{sect:octonionic}, we analyze the moduli space of congruence classes of not well-positioned totally geodesic spheres and the extrinsic homogeneity of these submanifolds. This will lead us to prove that there exists a family with uncountably many isometric non-congruent totally geodesic submanifolds~(see Theorem~\ref{th:congoctonionic}), and that every not well-positioned totally geodesic sphere of dimension $4$ is not extrinsically homogeneous (see Theorem~\ref{th:4inhomo}). Thus, we can conclude the proof of Theorem~\ref{th:D}.

\section{Preliminaries}
\label{sect:prelim}
\subsection[Totally geodesic submanifolds, geodesic spheres, Riemannian submersions]{Totally geodesic submanifolds, geodesic spheres, and Riemannian submersions} 
\label{subsect:tggeodspheressubmersions}
Let $(\bar{M},\langle \cdot,\cdot\rangle)$ be a  complete Riemannian manifold. We  denote by $\bar{\nabla}$ its Levi-Civita connection, by $\bar{R}$ its curvature tensor defined by the convention $\bar{R}(X,Y)=[\bar{\nabla}_X,\bar{\nabla}_Y]-\bar{\nabla}_{[X,Y]}$,
and  by $\inj(p)$  its injectivity radius at $p\in \bar{M}$.  

Let $M$ be an immersed submanifold of $\bar{M}$. We denote its induced Levi-Civita connection by $\nabla$, and its second fundamental form by $\II$. A connected submanifold $M$ is {totally geodesic} if every geodesic of $M$ is also a geodesic of $\bar{M}$. Equivalently,  a connected submanifold $M$ of $\bar{M}$ is totally geodesic if and only if $\II=0$. Moreover, every totally geodesic submanifold $M$ of $\bar{M}$ can be extended to a complete totally geodesic submanifold of $\bar{M}$, see Theorem~\ref{th:fundamentaltg}. Given a complete totally geodesic submanifold $M$ of $\bar{M}$ passing through $p\in\bar{M}$, we have $\overline{\exp}_p(V)=M$, for some linear subspace $V$ of $T_p \bar{M}$, where ${\overline{\exp}}_p$ denotes the Riemannian exponential map of $\bar{M}$ at $p\in\bar{M}$. 
We say that a subspace $V$ of $T_p \bar M$ is a \textit{totally geodesic subspace} if $\overline{\exp}_p(V)$ is a totally geodesic submanifold of $\bar M$.

Indeed, one has the following result, which characterizes those subspaces of the tangent space whose image under the exponential map are totally geodesic, see~\cite[Sections 10.3.1 and 10.3.2]{BCO}.
\begin{theorem}
	\label{th:fundamentaltg}
	Let $\bar{M}$ be a real analytic complete Riemannian manifold, $p\in\bar{M}$ and $V$ a linear subspace of $T_p\bar{M}$. There exists a complete totally geodesic immersed submanifold $M$ of $\bar{M}$ such that $p\in M$ and $T_pM=V$ if and only if $(\bar{\nabla}^k \bar{R})_p$ leaves $V$ invariant for every $k\ge 0$.
\end{theorem}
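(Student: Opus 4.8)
The plan is to prove Theorem~\ref{th:fundamentaltg} by establishing both implications separately, treating the ``only if'' direction as an elementary consequence of parallelism of the second fundamental form, and the ``if'' direction as the genuine content, where one builds the submanifold locally and then extends it using real analyticity.

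\medskip

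\emph{The necessity ($\Rightarrow$).} Suppose $M$ is a complete totally geodesic immersed submanifold with $p\in M$ and $T_pM=V$. Since $\II\equiv 0$, the Gauss and Codazzi equations show that for tangent vectors the ambient curvature tensor $\bar R$ and the intrinsic curvature tensor $R$ of $M$ agree, and more importantly that $V=T_pM$ is invariant under $\bar R(X,Y)$ for $X,Y\in V$. To handle the covariant derivatives, I would exploit that a totally geodesic submanifold is \emph{autoparallel}: for vector fields tangent to $M$, the ambient connection $\bar\nabla$ preserves tangency, i.e.\ $\bar\nabla_X Y$ is again tangent to $M$ whenever $X,Y$ are. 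Consequently the tangent bundle $TM$ is a $\bar\nabla$-parallel subbundle of $T\bar M|_M$ along $M$. The key observation is then that, because $\II=0$, the restriction of $(\bar\nabla^k\bar R)$ to $M$ can be computed entirely within $TM$ by repeated covariant differentiation in directions tangent to $M$, and parallel transport of $V$ along curves in $M$ stays inside the tangent spaces of $M$. An induction on $k$ shows $(\bar\nabla^k\bar R)_p$ maps $V^{\otimes(k+2)}$ into $V$ and in particular leaves $V$ invariant in each slot; the inductive step uses that differentiating a tensor that preserves $TM$ in a direction tangent to $M$ again preserves $TM$, since $\bar\nabla$ does.

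\medskip

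\emph{The sufficiency ($\Leftarrow$), the main obstacle.} This is the hard direction and the place where real analyticity is essential. Assume $(\bar\nabla^k\bar R)_p$ leaves $V$ invariant for all $k\ge 0$. The strategy is the classical one for Cartan-type existence theorems: parallel transport $V$ along radial geodesics from $p$ to obtain a candidate distribution, show it is integrable with a totally geodesic leaf, and propagate the result globally. Concretely, I would define a distribution $\mathcal{D}$ on a normal neighbourhood of $p$ by declaring $\mathcal{D}_q$ to be the parallel transport of $V$ along the unique radial geodesic from $p$ to $q$. The analyticity hypothesis guarantees that the Taylor expansion of $\bar R$ (and hence of all geometric data) at $p$ is determined by the $(\bar\nabla^k\bar R)_p$, so the invariance of $V$ under every $(\bar\nabla^k\bar R)_p$ forces the parallel-transported subspace to remain invariant under the full curvature tensor at each nearby point. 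One then verifies that $\mathcal{D}$ is involutive and that its integral manifold through $p$ is totally geodesic (its second fundamental form vanishes because the curvature-invariance propagated by parallel transport is exactly the obstruction to $\II$ being nonzero), yielding a local totally geodesic submanifold $M_0$ with $T_pM_0=V$. The main difficulty lies precisely in turning the algebraic invariance conditions on the jets of $\bar R$ into the analytic statement that the parallel-transported distribution is genuinely integrable with vanishing second fundamental form; this is where one cannot avoid the Jacobi-field / Fermi-coordinate analysis and where real analyticity (rather than mere smoothness) is indispensable.

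\medskip

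\emph{Extension to a complete submanifold.} Finally, I would upgrade the local submanifold $M_0$ to a complete one. By completeness of $\bar M$, every geodesic of $M_0$ extends indefinitely in $\bar M$, and because $M_0$ is totally geodesic these extended geodesics remain geodesics of the would-be submanifold. The standard argument is that $M:=\overline{\exp}_p(V)$ is the desired complete totally geodesic submanifold: one shows the totally geodesic property, being a closed analytic condition, propagates along geodesics emanating from $p$, so $\overline{\exp}_p(V)$ is immersed and totally geodesic wherever the exponential map is defined, which is everywhere by completeness. Since all of this is precisely the content cited from \cite[Sections 10.3.1 and 10.3.2]{BCO}, I would in practice invoke that reference for the technical core and present only the conceptual skeleton above, emphasizing the role of analyticity in passing from the infinite jet condition at $p$ to an honest submanifold.
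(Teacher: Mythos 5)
Your outline follows essentially the same route as the paper, which itself gives no argument beyond citing Hermann's theorem and \cite[Sections 10.3.1--10.3.2]{BCO}: necessity from autoparallelism of $TM$ under $\bar\nabla$ plus induction on $k$; sufficiency from the analytic propagation of the curvature-invariance of the parallel-transported subspace along radial geodesics, followed by Cartan's local existence theorem; and completeness from Hermann's extension theorem. All of that is correct, and you rightly identify analyticity as the ingredient that converts the infinite jet condition at $p$ into curvature-invariance of $P_{\gamma_v}(V)$ for all $t$. The one place where your mechanism deviates from the standard one is the claim that the distribution $\mathcal{D}$, obtained by radially parallel-transporting $V$ to \emph{every} point of a normal neighbourhood, is involutive and that one then takes its leaf through $p$. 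That is not how Cartan's theorem is proved and it is an unjustified detour: the hypothesis only controls the curvature along geodesics emanating from $p$ \emph{tangent to $V$}, so there is no reason for $\mathcal{D}$ to be involutive at points off the candidate submanifold (and even where it is, as in constant curvature, the leaves other than the one through $p$ are not totally geodesic, so Frobenius buys you nothing). The correct local argument works directly with the set $\overline{\exp}_p(V_\rho)$, using Jacobi fields to identify its tangent spaces with the parallel transports of $V$ and then showing $\II=0$; no neighbourhood-wide distribution is needed. Since you explicitly defer this technical core to the cited reference, the misstep is repairable rather than fatal, but as written that step should be removed or replaced by the Jacobi-field argument.
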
 
The above well-known theorem follows from the main theorem in \cite{Her} (cf.~\cite[Section 10.3.2]{BCO}) and the following observation that uses the analyticity of $\bar{M}$: 
\noindent if $V\subset T_p \bar {M}$ is the tangent space  of a totally geodesic submanifold at $p\in \bar M$ and $\rho:=\inj(p)$, then 
$\Sigma = {\exp}_p(V_\rho)$ is totally geodesic in $\bar{M}$, where $V_\rho = \{ v\in V: \Vert v \Vert <\rho\}$.

In what follows, we  give an overview of some well-known facts about the extrinsic geometry of geodesic spheres  in Riemannian manifolds. Let $v(s)$, $s\in(-\varepsilon,\varepsilon)$, be a smooth curve in the unit tangent sphere $\s{S}(T_p\bar M)$ and let us consider the geodesic variation $\Gamma(s,t):=\gamma _{v(s)}(t)=\overline{\exp}_p(t v(s))$. Let us denote by $\tfrac{D \,}{\partial t}$ and $\tfrac{D \,}{\partial s}$ the partial covariant derivatives. Then, if $0<t<{\inj}(p)$, the field along $\gamma _{v(s)}(t)$ given by  $\dot\gamma_{v(s)}(t)=\tfrac{\partial \,}{\partial t}
\gamma _{v(s)}(t)$
is the outer unit normal at $\gamma _{v(s)}(t)$ to the geodesic sphere $\s{S}_t(p):= \overline{\exp} _p (t\s{S}(T_p\bar{M}))$ of radius $t>0$ and center $p\in\bar M$.
Since $ 
\Vert  \dot\gamma_{v(s)}(t)\Vert = 1$,  we have that
$\tfrac{D \,}{\partial s}_{\vert s= 0}\dot\gamma_{v(s)}(t)$ and $\dot\gamma_{v(s)}(t)$ are orthogonal. Thus,
\begin{equation*}\label{AA1}
	\tfrac{D \,}{\partial s}_{\vert s= 0}\dot\gamma_{v(s)}(t) = -\mathcal{S}_{\xi}^t J(t),
\end{equation*}
where $\mathcal{S}_\xi^t$ denotes the shape operator of the geodesic sphere $\s{S}_t(p)$ with respect to the outer unit direction $\xi$, and $J(t)=\tfrac{\partial}{\partial s}_{\vert s=0} \Gamma(s,t)$  is a  Jacobi field along 
$\gamma _{v(0)}(t)$ with initial conditions $J(0)=0$ and $\tfrac{D \,}{\partial t}_{\vert t=0}J(t) = \dot v(0)$. Now using that 
$\tfrac{D \,}{\partial s}_{\vert s= 0}\dot \gamma_{v(s)}(t) = 
\tfrac{D \,}{\partial t} J(t)$, we obtain the following expression for the shape operator of the geodesic sphere 
\begin{equation}\label{eq:jacobisphere}
	\mathcal{S}_{\xi}^t J(t) = -\tfrac{D \,}{\partial t} J(t).
\end{equation}

\begin{lemma}\label{lemma:riemannianintersectiontg} Let $\bar{M}$ be a Riemannian manifold and let $\Sigma$ be an embedded totally geodesic submanifold of $\bar{M}$. Let $M$ be an embedded submanifold of $\bar{M}$ satisfying the following conditions:
	\begin{enumerate}[i)]
		\item $\Sigma \cap M$ is non-empty.
		\item $T_q\Sigma$ contains the normal space $\nu _qM$ of $M$ for every $q\in \Sigma \cap M$.
	\end{enumerate}	
	Then,  any connected component of $\Sigma \cap M$ is an embedded totally geodesic submanifold of $M$ and  $T_q(\Sigma \cap M)$ is invariant under the shape operator $\mathcal{S}_\xi$ of $M$, for every $\xi \in \nu _q M$ and $q\in \Sigma \cap M$.
\end{lemma}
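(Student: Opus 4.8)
The plan is to extract everything from two structural facts: that condition (ii) forces the intersection to be transversal, and that the defining property of a totally geodesic submanifold is exactly that the ambient covariant derivative of a $\Sigma$-tangent field along a curve in $\Sigma$ stays tangent to $\Sigma$ (equivalently $\II_\Sigma=0$ in the Gauss formula). First I would record transversality: at any $q\in\Sigma\cap M$ the orthogonal splitting $T_q\bar M=T_qM\oplus\nu_qM$ together with $\nu_qM\subseteq T_q\Sigma$ gives $T_q\bar M=T_qM+T_q\Sigma$, so $\Sigma$ and $M$ meet transversally and each connected component $N$ of $\Sigma\cap M$ is an embedded submanifold with $T_qN=T_q\Sigma\cap T_qM$. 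The conceptual crux shared by both conclusions is then that \emph{anything landing in $\nu_qM$ automatically lies in $T_q\Sigma$ by (ii)}, which is what lets me intersect with $T_qM$ to return to $T_qN$.

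For the first conclusion I would fix $q\in N$ and $X,Y\in T_qN$, extended to vector fields tangent to $N$. Since $N\subseteq\Sigma$ and $\Sigma$ is totally geodesic, the Gauss formula for $\Sigma$ yields $\bar\nabla_XY\in T_q\Sigma$. The Gauss formula for $M$ reads $\bar\nabla_XY=\nabla^M_XY+\II_M(X,Y)$ with $\II_M(X,Y)\in\nu_qM\subseteq T_q\Sigma$ by (ii). Subtracting, $\nabla^M_XY\in T_q\Sigma$, and trivially $\nabla^M_XY\in T_qM$, so $\nabla^M_XY\in T_q\Sigma\cap T_qM=T_qN$. Hence the component of $\nabla^M_XY$ normal to $N$ inside $M$ vanishes, i.e.\ the second fundamental form of $N$ in $M$ is zero, so $N$ is totally geodesic in $M$.

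For the shape operator statement I would fix $q\in N$, $X\in T_qN$, and $\xi\in\nu_qM$, and extend $\xi$ to a normal field of $M$ along a curve $c$ in $N$ with $c(0)=q$ and $c'(0)=X$. Because $c(t)\in\Sigma\cap M$, condition (ii) gives $\xi(t)\in\nu_{c(t)}M\subseteq T_{c(t)}\Sigma$, so $\xi$ is tangent to $\Sigma$ along a curve lying in $\Sigma$; total geodesy of $\Sigma$ then forces $\bar\nabla_X\xi\in T_q\Sigma$. The Weingarten formula for $M$ gives $\bar\nabla_X\xi=-\mathcal{S}_\xi X+\nabla^\perp_X\xi$ with $\nabla^\perp_X\xi\in\nu_qM\subseteq T_q\Sigma$, whence $\mathcal{S}_\xi X=\nabla^\perp_X\xi-\bar\nabla_X\xi\in T_q\Sigma$; combined with $\mathcal{S}_\xi X\in T_qM$ this yields $\mathcal{S}_\xi X\in T_q\Sigma\cap T_qM=T_qN$, so $T_qN$ is $\mathcal{S}_\xi$-invariant.

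I do not expect a serious obstacle here: the only points requiring care are verifying that the intersection is a genuine submanifold (handled by the transversality in the first step) and choosing the extensions of the tangent field $Y$ and the normal field $\xi$ so that the Gauss/Weingarten formulae for $\Sigma$ apply along curves in $\Sigma$. Both are routine, and the argument reduces entirely to the bookkeeping observation that $\nu_qM\subseteq T_q\Sigma$ makes the $M$-normal parts in each formula harmless.
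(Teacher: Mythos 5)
Your proof is correct and follows essentially the same route as the paper's: transversality from $\nu_qM\subseteq T_q\Sigma$, then the observation that both $\bar\nabla_XY$ and $\bar\nabla_v\xi$ land in $T_q\Sigma=T_q(\Sigma\cap M)\oplus\nu_qM$ by total geodesy of $\Sigma$, so their tangential projections to $M$ lie in $T_q(\Sigma\cap M)$. Your explicit use of the Gauss and Weingarten formulas is just a more detailed write-up of the paper's projection argument.
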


\begin{proof}
	By \textit{i)} and \textit{ii)}, $\Sigma$ and $M$ are transverse at every  $q\in\Sigma\cap M$. Then, it follows that $\Sigma\cap M$ is an embedded submanifold of $\bar{M}$. Moreover, $T_q(\Sigma \cap M)=(T_q\Sigma) \cap (T_qM)$ and $T_q\Sigma =  T_q(\Sigma \cap M)\oplus \nu _qM$,  for every $q\in \Sigma\cap M$.

	We may assume that $\dim (\Sigma \cap M)> 0$. Let $X,Y$ be  tangent   fields of $ \Sigma \cap M$ defined around $q$.  Thus $(\bar{\nabla} _XY)_q \in T_q\Sigma = T_q(\Sigma \cap M)\oplus \nu _qM$. Hence, the tangent projection of $(\bar{\nabla} _XY)_q$  to $M$  belongs to $T_q(\Sigma \cap M)$. This shows that $\Sigma \cap M$ is a totally geodesic submanifold of $M$.  Let $\xi$ be a unit normal vector field of $M$ defined around   $q\in \Sigma \cap M$ and $v \in T_q(\Sigma \cap M)$. Since $\Sigma$ is a totally geodesic submanifold of $\bar{M}$,  $(\bar{\nabla} _v  \xi)_q  \in T_q\Sigma = 
	T_q(\Sigma \cap M) \oplus \nu _qM$. Consequently, the projection  of $(\bar{\nabla} _v  \xi)_q$ to $T_qM$ belongs to $T_q(\Sigma \cap M)$, and this shows that $\mathcal{S}_{\xi _q}T_q(\Sigma \cap M)\subset T_q(\Sigma \cap M)$.
\end{proof}
As a consequence of Lemma~\ref{lemma:riemannianintersectiontg} and the previous discussion, we obtain the following result.

\begin{corollary}\label{cor:geodsphereinter} Let $\bar{M}$ be a Riemannian manifold,  $p\in \bar{M}$, and  $\s{S}_t(p)$ the geodesic sphere of radius $t\in(0,\inj(p))$. Let  $ V\subset T_p\bar{M}$ be a vector subspace  and   consider the set  $V_r: = \{v\in  V: \Vert v\Vert <r\}$, where $r\in(t,\inj(p))$.  Assume that 
	$\Sigma = \overline{\exp}_p ( V _r)$  is a totally geodesic submanifold of $\bar M$.  Then, $\s{S}_t(p)\cap \Sigma$ is a totally geodesic submanifold of  $\s{S}_t(p)$ whose tangent space is invariant under the shape operator $\mathcal{S}_{\xi}^t$ of $\s{S}_t(p)$, where $\xi$ is the normal unit vector field of $\s{S}_t(p)$. 
	\end {corollary}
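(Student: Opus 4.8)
The plan is to apply Lemma~\ref{lemma:riemannianintersectiontg} with the totally geodesic submanifold taken to be $\Sigma = \overline{\exp}_p(V_r)$ and the ambient submanifold taken to be $M = \s{S}_t(p)$. First I would check that both are embedded submanifolds of $\bar M$, so that the lemma is applicable: the geodesic sphere $\s{S}_t(p)$ is embedded because $t < \inj(p)$, while $\Sigma$ is embedded because $r < \inj(p)$ forces $\overline{\exp}_p$ to restrict to a diffeomorphism on the open ball of radius $r$, and $V_r$ is contained in that ball. It then remains only to verify the two hypotheses of the lemma.

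For hypothesis \textit{i)}, I would first observe that for any unit vector $v \in V$ the point $\overline{\exp}_p(tv)$ lies in $\Sigma \cap \s{S}_t(p)$, so the intersection is non-empty (here one tacitly assumes $V \neq \{0\}$, which is harmless as the conclusion is vacuous otherwise). More precisely, I would identify \emph{every} intersection point: if $q \in \Sigma \cap \s{S}_t(p)$ then $q = \overline{\exp}_p(w)$ for some $w \in V_r$, and the condition $q \in \s{S}_t(p)$ together with $\Vert w\Vert < \inj(p)$ forces $d(p,q) = \Vert w\Vert = t$; hence $w = tv$ with $v := w/t$ a unit vector of $V$. This explicit description is what makes hypothesis \textit{ii)} transparent.

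The heart of the argument --- and the step I expect to require the most care, although it is ultimately short --- is hypothesis \textit{ii)}. Here I would exploit the \emph{cone structure} of $\Sigma$: since $sv \in V_r$ for every $s \in (0,r)$, the radial geodesic $s \mapsto \overline{\exp}_p(sv)$ stays inside $\Sigma$ on this interval. Its velocity at $s = t$ is, by the discussion preceding Equation~\eqref{eq:jacobisphere}, precisely the outer unit normal $\xi_q = \dot\gamma_v(t)$ of the geodesic sphere $\s{S}_t(p)$ at $q$. Being tangent to a curve contained in $\Sigma$, this vector lies in $T_q\Sigma$, so $\nu_q M = \R\,\xi_q \subset T_q\Sigma$, which is exactly hypothesis \textit{ii)}.

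With both hypotheses in place, Lemma~\ref{lemma:riemannianintersectiontg} yields that every connected component of $\Sigma \cap \s{S}_t(p)$ is an embedded totally geodesic submanifold of $\s{S}_t(p)$, and that $T_q(\Sigma \cap \s{S}_t(p))$ is invariant under $\mathcal{S}_\xi$ for each $\xi \in \nu_q M$. Since $\nu_q M$ is spanned by the outer unit normal, this invariance is precisely invariance under the shape operator $\mathcal{S}_\xi^t$, giving the claim. As being totally geodesic is a local property, passing from connected components to all of $\Sigma \cap \s{S}_t(p)$ is immediate.
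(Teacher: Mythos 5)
Your proposal is correct and follows exactly the route the paper intends: the paper derives this corollary directly from Lemma~\ref{lemma:riemannianintersectiontg} together with the preceding discussion identifying $\dot\gamma_v(t)$ as the outer unit normal of $\s{S}_t(p)$, which is precisely how you verify hypotheses \textit{i)} and \textit{ii)}. Your filling-in of the details (the identification of the intersection points via $d(p,q)=\Vert w\Vert$ inside the injectivity radius, and the cone structure of $\Sigma$ giving $\nu_q\s{S}_t(p)=\R\,\xi_q\subset T_q\Sigma$) is accurate.
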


	The following lemma shows that there is a certain class of totally geodesic submanifolds of the total space of a given Riemannian submersion that remain totally geodesic when we rescale the metric on the vertical distribution, see~ \cite[Lemma 3.12]{dearricott} for a proof.
	
	\begin{lemma}\label{lemma:submersiontg} Let  $\pi\colon M  \to B$ be a Riemannian submersion  and denote by  $\mathcal{H}$ and $\mathcal{V}$ the horizontal and vertical distributions, respectively. Consider the family of Riemannian metrics $\langle \cdot, \cdot \rangle^\tau $ of $M$ obtained by rescaling the metric tensor  $\langle \cdot, \cdot \rangle $ of $M$ by a factor $\tau>0$ in the vertical distribution. Moreover, let $\Sigma$ be a totally geodesic submanifold of $M$, with respect to the original metric, such that
		\[T_q\Sigma = W_1(q) \oplus  W_2(q) , \quad \text{for all $q\in \Sigma$}, \]	
		where $W_1(q)\subset \mathcal H_q$ and $W_2(q)\subset \mathcal{V} _q$.  Then, $\Sigma$ is a totally geodesic submanifold of $M$ with respect to any metric $\langle \cdot, \cdot \rangle^\tau $.
	\end{lemma}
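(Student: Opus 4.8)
The plan is to compare the two Levi-Civita connections directly. Set $g:=\langle\cdot,\cdot\rangle$ and $g^{\tau}:=\langle\cdot,\cdot\rangle^{\tau}$, and write $g^{\tau}(\cdot,\cdot)=g(P\,\cdot,\cdot)$ with $P=\Id+(\tau-1)\Pi_{\mathcal V}$, where $\Pi_{\mathcal V}$ is the $g$-orthogonal projection onto the vertical distribution and $\Pi_{\mathcal H}=\Id-\Pi_{\mathcal V}$; here $P$ is $g$-self-adjoint and positive definite. The first step is the linear-algebraic remark that $\mathcal H\perp\mathcal V$ for both metrics and that, because $W_1(q)\subset\mathcal H_q$ and $W_2(q)\subset\mathcal V_q$, the decomposition $T_qM=T_q\Sigma\oplus\nu_q\Sigma$ is orthogonal with respect to both $g$ and $g^{\tau}$ and has the same normal space $\nu_q\Sigma$ for each. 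Hence the normal projection $\Pi_{\nu}$ is metric-independent, and the second fundamental form of $\Sigma$ in $(M,g^{\tau})$ is $\II^{\tau}(X,Y)=\Pi_{\nu}(\nabla^{\tau}_XY)$. Since $\Sigma$ is totally geodesic for $g$ we have $\Pi_{\nu}(\nabla_XY)=0$, so it remains only to prove that the difference tensor $D:=\nabla^{\tau}-\nabla$ maps $T\Sigma\times T\Sigma$ into $T\Sigma$.

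For this I would invoke the standard identity
\[ 2\,g^{\tau}(D(X,Y),Z)=(\nabla_Xg^{\tau})(Y,Z)+(\nabla_Yg^{\tau})(X,Z)-(\nabla_Zg^{\tau})(X,Y), \]
combined with $(\nabla_Eg^{\tau})(F,G)=g((\nabla_EP)F,G)$ and $\nabla_EP=(\tau-1)\nabla_E\Pi_{\mathcal V}$. A direct computation gives $(\nabla_E\Pi_{\mathcal V})F=-\Pi_{\mathcal V}(\nabla_EF)$ when $F$ is horizontal and $(\nabla_E\Pi_{\mathcal V})F=\Pi_{\mathcal H}(\nabla_EF)$ when $F$ is vertical; evaluating on horizontal and vertical arguments rewrites every instance of $\nabla\Pi_{\mathcal V}$ in terms of the O'Neill tensors $A$ and $T$ of the submersion. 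To conclude $D(X,Y)\in T\Sigma$ it suffices to show $g^{\tau}(D(X,Y),Z)=0$ for all $Z\in\nu\Sigma$, and by bilinearity we may take each of $X,Y\in T\Sigma$ and $Z\in\nu\Sigma$ to be purely horizontal or purely vertical.

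The heart of the proof is the ensuing case analysis, and this is where the main obstacle lies: some of the O'Neill terms are not annihilated by symmetry alone and must be removed using the total-geodesy hypothesis. Projecting $\II(X,Y)=\Pi_{\nu}(\nabla_XY)=0$ onto the horizontal and vertical parts of $\nu\Sigma$ yields the two facts I will need, namely $A_XU\in W_1$ for $X\in W_1$, $U\in W_2$, and $T_UV\in W_1$ for $U,V\in W_2$. Together with the algebraic O'Neill identities—$A_XY=-A_YX$ and the skew-adjointness of $A_X$ interchanging $\mathcal H$ and $\mathcal V$, and $T_UV=T_VU$ with the skew-adjointness of $T_U$—these let one verify in each case that $g^{\tau}(D(X,Y),Z)$ vanishes. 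The purely horizontal case closes using only that $A$ is alternating, while the remaining cases are settled by combining the skew-adjointness and (anti)symmetry of $A$ and $T$ with the two total-geodesy conditions $A_XU\in W_1$ and $T_UV\in W_1$. The only genuine difficulty is the bookkeeping of horizontal versus vertical components; once $D$ is shown to preserve $T\Sigma$, it follows that $\II^{\tau}=\Pi_{\nu}\circ\nabla^{\tau}$ vanishes on $T\Sigma\times T\Sigma$, so $\Sigma$ is totally geodesic for every $g^{\tau}$.
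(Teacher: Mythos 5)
Your proposal is correct. Note that the paper does not actually prove this lemma --- it only cites \cite[Lemma 3.12]{dearricott} --- so there is no internal proof to compare against; but your argument is a sound, self-contained version of the standard computation, and it is the same in spirit as the cited one (which packages the difference of the two Levi-Civita connections via the known formulas for the canonical variation in terms of the O'Neill tensors, rather than rederiving them from the Koszul identity as you do). The two preliminary observations are right: $\nu_q\Sigma$ is the same for $g$ and $g^\tau$ precisely because $T_q\Sigma$ splits as $W_1\oplus W_2$, and the whole problem reduces to showing that $D=\nabla^\tau-\nabla$ preserves $T\Sigma$. Your identity $2g^\tau(D(X,Y),Z)=(\tau-1)\bigl[g((\nabla_X\Pi_{\mathcal V})Y,Z)+g((\nabla_Y\Pi_{\mathcal V})X,Z)-g((\nabla_Z\Pi_{\mathcal V})X,Y)\bigr]$ is correct, each $(\nabla_E\Pi_{\mathcal V})$ interchanges $\mathcal H$ and $\mathcal V$, and the six cases close exactly as you indicate: the two cases with $Z$ of ``opposite type'' to both tangent arguments vanish for trivial type reasons; the case $X,Y\in W_1$, $Z$ vertical uses only $A_YX=-A_XY$; the case $X\in W_1$, $U\in W_2$, $Z$ vertical uses only $T_UZ=T_ZU$ and skew-adjointness; and the two remaining cases are precisely where the total-geodesy inputs $A_XU\in W_1$ and $T_UV\in W_1$ (obtained by projecting $\nabla_XU,\nabla_UV\in T\Sigma$ to $\mathcal H$) are needed. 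Everything is tensorial, so no smoothness of $q\mapsto W_i(q)$ is required. The only thing I would ask you to make explicit in a final write-up is the case bookkeeping itself, since that is where all the content sits; as a sketch, the proposal identifies every ingredient correctly.
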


	\subsection{Homogeneous spaces}
	\label{subsect:homprelim}
	For a nice introduction to the theory of homogeneous spaces, one can consult \cite{arvanitoyeorgos}, \cite[Chapter 4]{berestovskii}, or \cite[Chapter X]{kobayashi}.
	A Riemannian manifold $M$ is \textit{homogeneous} if there exists some subgroup $\s{G}$ of $\mathrm{Isom}(M)$ such that $\s{G}$ acts transitively on $M$. We fix a point $o\in M$, so  $M$ is diffeomorphic to $\s{G}/\s{K}$, where $\s{K}=\s G_o$ is the isotropy at $o$, by the map $\Phi\colon \s{G}/\s{K}\rightarrow M$ defined by $g\s{K}\rightarrow g(o)$. We pull back the metric of $M$ by $\Phi$ to $\s{G}/\s{K}$, turning $\Phi$ into an isometry. Furthermore, the metric  $\langle\cdot,\cdot\rangle$ induced in $\s{G}/\s{K}$  is $\s{G}$-invariant.

	For any $X\in\g{g}$, where $\g{g}$ is the Lie algebra of $\s G$, we can associate a Killing vector field $X^*$ given by $X_p^{*}=\frac{d}{dt}_{|t=0} (\Exp(t X)\cdot p)$, for every $p\in M$, where $\Exp$ denotes the Lie exponential map of $\s{G}$. Homogeneous spaces can be characterized in terms of Killing vector fields as follows. A Riemannian manifold $M$ is homogeneous if and only if for every $p\in M$ and every $v\in T_p M$, there is a Killing vector field $X\in\Gamma(TM)$ such that $X_p=v$. Moreover, Riemannian homogeneous spaces $\s G/\s K$ always have a reductive decomposition. A \textit{reductive decomposition} is a splitting $\g{g}=\g{k}\oplus\g{p}$, where $\g{k}$ is the Lie algebra of $\s{K}$ and $\g{p}$ is an $\Ad(\s{K})$-invariant subspace of $\g{g}$. Thus, we have the bracket relations $[\g{k},\g{p}]\subset\g{p}$ and $[\g{k},\g{k}]\subset\g{k}$. If we consider the linearization at $o\in M$ of the isotropy action of $\s{K}$ on $M$, we get the \textit{isotropy representation} of the homogeneous space $M$, which is defined as $k\in \s{K}\mapsto k_{*o}\in \s{GL}(T_o M)$, where $k_{*o}$ denotes the differential of $k$ at $o\in M$. This is equivalent to the adjoint representation of $\s{G}$ restricted to $\s{K}$ on $\g{p}$, since $\g{p}$ and $T_o M$ can be identified by the map which sends $X\in\g{p}$ to $X^*_o\in T_o M$.

	Let us denote by $X_{\g{k}}$ and $X_{\g{p}}$ the projection of $X\in\g{g}$ onto $\g{k}$ and $\g{p}$, respectively. We define a symmetric bilinear map $U\colon\g{p}\times\g{p}\rightarrow \g{p}$ by
	\[2 \langle U(X, Y),Z \rangle=\langle [Z,X]_{\g{p}},Y\rangle + \langle X, [Z,Y]_{\g{p}}\rangle, \quad \text{where $X,Y,Z\in\g{p}$,} \]
	and $\langle\cdot,\cdot \rangle$ denotes the inner product on $\g{p}$ induced by the Riemannian metric on $M$.
	The reductive decomposition $\g{g}=\g{k}\oplus\g{p}$ is \textit{naturally reductive} if $U$ is identically zero. In particular, if $U\equiv0$, every geodesic $\gamma$ of $M$ passing through $o\in M$ is given by  $t\mapsto\Exp(tX)$ with $X\in\g{p}$, where $\Exp$ denotes the Lie exponential map of $\s{G}$. A homogeneous space where every geodesic is an orbit of a $1$-parameter subgroup of the isometry group is said to be a \textit{geodesic orbit space}, or, for short, g.\ o.\ space. Thus, naturally reductive spaces are g.\ o. Moreover, we say that the reductive  decomposition $\g{g}=\g{k}\oplus\g{p}$ is \textit{normal homogeneous} if there exists some $\Ad(\s{G})$-invariant inner product $q$ on $\g{g}$ such that $\langle\cdot,\cdot\rangle=q_{\rvert\g{p}\times\g{p}}$ and $\g{p}=\g{k}^\perp$, where $\g{k}^{\perp}$ denotes the orthogonal complement of $\g{k}$ in $\g{g}$ with respect to $q$. For further generalizations of this concept, one can consult \cite[Chapter 6]{berestovskii}. It turns out that every normal homogeneous reductive decomposition is naturally reductive.  Indeed, we have the chain of strict inclusions
	\[
	\left\{ \parbox[c]{0.24\linewidth}{
		\centering{
			Normal\\
			homogeneous spaces}
	} \right\}
	\subsetneq\left\{ \parbox[c]{0.24\linewidth}{
		\centering{
			Naturally reductive\\ homogeneous spaces}
	} \right\}\subsetneq
	\left\{ \parbox[c]{0.17\linewidth}{
		\centering{
			Geodesic orbit \\ spaces}
	} \right\}.
	\]

	Let us consider the canonical connection $\nabla^c$ associated with a reductive decomposition $\g{g}=\g{k}\oplus\g{p}$, which is the unique $\s{G}$-invariant affine connection on $M$ such that
	\begin{equation*}
		\label{eq:nablac}
		(\nabla^c_{X^*}Y^*)_o=(-[X,Y]_{\g{p}})^*_o, \quad\text{where $X,Y\in\g{p}$.}
	\end{equation*}
	We can express the Levi-Civita connection of $M$ at $o$ as
	\begin{equation}
		\label{eq:nabla}
		(\nabla_{X^*} Y^*)_o=\left(-\frac{1}{2}[X,Y]_{\g{p}} + U(X,Y) \right)^*_o, \quad \text{where $X,Y\in \g{p}$.}
	\end{equation}
	The difference tensor $D$ at $o\in M$ is defined as $D=(\nabla-\nabla^c)_o$, and using the identification of $T_o M$ and $\g{p}$, we have
	\begin{equation}
		\label{eq:dtensor}
		D_X Y=\frac{1}{2}[X,Y]_{\g{p}}+ U(X,Y), \quad\text{for every $X,Y\in\g{p}$.}  
	\end{equation}

	Using the formula for the Levi-Civita connection and the identification of $\g{p}$ with $T_o M$, we can compute the curvature tensor of $M$, which is given by
	\begin{equation}
		\begin{aligned}
			\label{eq:curvature31}
			R_o(X,Y)Z&=\frac{1}{2}[Z,[X,Y]_{\g{p}}]_{\g{p}}-[[X,Y]_{\g{k}},Z]]_{\g{p}}- U(Z,[X,Y]_{\g{p}}) + \frac{1}{4}[[Z,Y]_{\g{p}}, X]_{\g{p}}\\
			&-\frac{1}{2}U([Z,Y]_{\g{p}},X)-\frac{1}{2}[U(Z,Y),X]_{\g{p}}+ U(U(Z,Y),X)-\frac{1}{4}[[Z,X]_{\g{p}},Y]_{\g{p}}\\ &+\frac{1}{2}U([Z,X]_{\g{p}},Y)+\frac{1}{2}[U(Z,X),Y]_{\g{p}} - U(U(Z,X),Y),
		\end{aligned}
	\end{equation}
	where $X,Y,Z\in\g{p}$.
	We end up this section with a remark that will be useful to compute the covariant derivatives of the curvature tensor at the base point $o\in M=\s{G}/\s{K}$.
	\begin{remark}
		\label{rem:covderivative}
		By \cite[Proposition 2.7, Chapter X]{kobayashi} the curvature tensor $R$ of a Riemannian homogeneous space $M=\s{G}/\s{K}$ satisfies $\nabla^c R=0$, since it is $\s G$-invariant. Let $\g{g}=\g{k}\oplus\g{p}$ be a reductive decomposition for $M=\s{G}/\s{K}$. Then, using the definition of the difference tensor and the identification of $\g{p}$ with $T_o M$ we have
		\begin{align*}
			(\nabla_V R)(X,Y,Z)&=((\nabla_V - \nabla^c_V) R)(X,Y,Z)\\
			&= D_V R(X,Y)Z- R(D_V X,Y)Z-R(X,D_V Y)Z-R(X,Y)D_VZ,
		\end{align*}
		where $X,Y,Z,V\in\g{p}$.
	\end{remark}

	\subsection{Rank one symmetric spaces and their totally geodesic submanifolds}
	\label{subsect:rkonetg}
	In this section we discuss some facts  about totally geodesic submanifolds in symmetric spaces of rank one that will be useful later.
	
	Let $\bar{M}=\bar{\s{G}}/\bar{\s{K}}$ be a connected Riemannian symmetric space, where $\bar{\s{G}}$ is up to some finite quotient equal to $\mathrm{Isom}^0(\bar{M})$, the connected component of the identity of the isometry group of $\bar{M}$, and $\bar{\s{K}}$ is the isotropy at some fixed point $p\in \bar{M}$. Let $s_q$ be the geodesic reflection of $\bar{M}$ at $q\in \bar{M}$. It turns out that a submanifold $\Sigma$ of $\bar{M}$ is a complete totally geodesic submanifold of $\bar{M}$ if and only if $s_q \Sigma=\Sigma$ for every $q\in\Sigma$. Recall that the rank of a symmetric space is defined as the dimension of a maximal flat totally geodesic submanifold.

	The complete Riemannian manifolds with non-zero constant sectional curvature are: real hyperbolic spaces,  round spheres and real projective spaces. These are symmetric spaces of  rank one. Furthermore, the remaining symmetric spaces of rank one are hyperbolic and projective spaces over $\C$, $\H$ and $\O$. The totally geodesic submanifolds in symmetric spaces of rank one were classified by Wolf in~\cite{wolfrankone}, see Figure~\ref{fig:tgrk1}.
	
	Let $\kappa\in(0,\infty)$. We  denote by $\s{S}^n_{\kappa}$, $\R \s P^n({\kappa})$, and $\R \s H^n({\kappa})$, the round sphere of sectional curvature $\kappa$, the real projective space of sectional curvature $\kappa$ and the real hyperbolic space with sectional curvature $-\kappa$, respectively. In the case that $\kappa=1$ or $\kappa=-1$, we  write $\s{S}^n$, $\R \s P^n$, or $\R \s H^n$, respectively.

	\begin{table}[]
		
		\begin{minipage}{.5\textwidth}
			\centering
			\begin{tabular}{lll}
				\hline
				\multicolumn{1}{|l}{$\mathbb{C}\s P^n$} &                                                          & \multicolumn{1}{l|}{}                  \\ \hline
				\multicolumn{1}{|l|}{}               & \multicolumn{1}{l|}{$\mathbb{C}\s P^k$}                     & \multicolumn{1}{c|}{$2\leq k\leq n-1$} \\
				\multicolumn{1}{|l|}{}               & \multicolumn{1}{l|}{$\mathbb{R}\s P^k$}                     & \multicolumn{1}{c|}{$2\leq k\leq n$\phantom{......}} \\
				\multicolumn{1}{|l|}{}               & \multicolumn{1}{l|}{$\s{S}_4^k$}                & \multicolumn{1}{l|}{$1\leq k\leq 2$\phantom{...} } \\ \hline
				\multicolumn{1}{|l}{$\mathbb{H}\s P^n$} &                                                          & \multicolumn{1}{l|}{}                  \\ \hline
				\multicolumn{1}{|l|}{}               & \multicolumn{1}{l|}{$\mathbb{H}\s P^k$}                     & \multicolumn{1}{c|}{$2\leq k\leq n-1$} \\
				\multicolumn{1}{|l|}{}               & \multicolumn{1}{l|}{$\mathbb{C}\s P^k$}                     & \multicolumn{1}{c|}{$2\leq k\leq n$\phantom{......}} \\
				\multicolumn{1}{|l|}{}               & \multicolumn{1}{l|}{$\mathbb{R}\s P^k$}                & \multicolumn{1}{c|}{$2\leq k\leq n$\phantom{......}} \\
				\multicolumn{1}{|l|}{}               & \multicolumn{1}{l|}{$\s{S}^k_4$} & \multicolumn{1}{c|}{$1\leq k\leq 4$\phantom{......}}   \\ \hline
				\multicolumn{1}{|l}{$\mathbb{O}\s P^2$} &                                                          & \multicolumn{1}{l|}{}                  \\ \hline
				\multicolumn{1}{|l|}{}               & \multicolumn{1}{l|}{$\mathbb{H}\s P^2$}                     & \multicolumn{1}{l|}{}  \\
				\multicolumn{1}{|l|}{}               & \multicolumn{1}{l|}{$\mathbb{C}\s P^2$}                     & \multicolumn{1}{l|}{}  \\
				\multicolumn{1}{|l|}{}               & \multicolumn{1}{l|}{$\mathbb{R}\s P^2$}                & \multicolumn{1}{l|}{}                  \\
				\multicolumn{1}{|l|}{}               & \multicolumn{1}{l|}{$\s{S}_4^k$}                     & \multicolumn{1}{c|}{\hspace{-0.6cm}$1\leq k \leq 8$} \\ \hline
				&                                                          &                                       
			\end{tabular}
		\end{minipage}%
		\begin{minipage}{0.5\textwidth}
			
			\begin{tabular}{lll}
				\hline
				\multicolumn{1}{|l}{$\mathbb{C}\s H^n$} &                                                          & \multicolumn{1}{l|}{}                  \\ \hline
				\multicolumn{1}{|l|}{}               & \multicolumn{1}{l|}{$\mathbb{C}\s H^k$}                     & \multicolumn{1}{c|}{$2\leq k\leq n-1$} \\
				\multicolumn{1}{|l|}{}               & \multicolumn{1}{l|}{$\mathbb{R}\s H^k$}                     & \multicolumn{1}{c|}{$2\leq k\leq n$\phantom{......}} \\
				\multicolumn{1}{|l|}{}               & \multicolumn{1}{l|}{$\R \s H^k(4)$}                & \multicolumn{1}{l|}{$1\leq k\leq 2$\phantom{...} } \\ \hline
				\multicolumn{1}{|l}{$\mathbb{H}\s H^n$} &                                                          & \multicolumn{1}{l|}{}                  \\ \hline
				\multicolumn{1}{|l|}{}               & \multicolumn{1}{l|}{$\mathbb{H}\s H^k$}                     & \multicolumn{1}{c|}{$2\leq k\leq n-1$} \\
				\multicolumn{1}{|l|}{}               & \multicolumn{1}{l|}{$\mathbb{C}\s H^k$}                     & \multicolumn{1}{c|}{$2\leq k\leq n$\phantom{......}} \\
				\multicolumn{1}{|l|}{}               & \multicolumn{1}{l|}{$\mathbb{R}\s H^k$}                & \multicolumn{1}{c|}{$2\leq k\leq n$\phantom{......}} \\
				\multicolumn{1}{|l|}{}               & \multicolumn{1}{l|}{$\R \s H^k(4)$} & \multicolumn{1}{c|}{$1\leq k\leq 4$\phantom{......}}   \\ \hline
				\multicolumn{1}{|l}{$\mathbb{O}\s H^2$} &                                                          & \multicolumn{1}{l|}{}                  \\ \hline
				\multicolumn{1}{|l|}{}               & \multicolumn{1}{l|}{$\mathbb{H}\s H^2$}                     & \multicolumn{1}{l|}{}  \\
				\multicolumn{1}{|l|}{}               & \multicolumn{1}{l|}{$\mathbb{C}\s H^2$}                     & \multicolumn{1}{l|}{}  \\
				\multicolumn{1}{|l|}{}               & \multicolumn{1}{l|}{$\mathbb{R}\s H^2$}                & \multicolumn{1}{l|}{}                  \\
				\multicolumn{1}{|l|}{}               & \multicolumn{1}{l|}{$\R \s H^k(4)$}                     & \multicolumn{1}{c|}{\hspace{-0.6cm}$1\leq k \leq 8$} \\ \hline
				&                                                          &                                       
			\end{tabular}
		\end{minipage}
		\caption{Totally geodesic submanifolds in symmetric spaces of rank one and non-constant sectional curvature, up to congruence.}
		\label{fig:tgrk1}
	\end{table}
	
	Now we assume that $\bar{M}= \bar{\s{G}}/\bar{\s{K}}$ has rank one and that the  minimal absolute value of its sectional curvature is equal to $1$. One has that $\bar{\s{K}}$, via the isotropy representation, acts transitively on  $\s{S}(T_p\bar{M})$, the unit sphere of $T_p\bar{M}$. Moreover, $\bar{\s{K}}$ acts transitively on the geodesic sphere $\s{S}_t(p)$ of radius $t$ centered at $p\in\bar M$, for each $t\in(0,\inj(p))$.   This implies that geodesic spheres are $\bar{\s{K}}$-homogeneous spaces. Recall that the Jacobi operator $\bar{R}_v$  associated with $v\in T_p\bar M$ is defined by $\bar{R}_v(X)=\bar{R}(X,v,v)$ for every $X\in T_p \bar M$.  In this case, the Jacobi operators $\{ \bar{R}_v: v\in T_p\bar{M}, \, \Vert v\Vert =t\}$ are all conjugate by elements of $\bar{\s{K}}$.
	
	Let us assume that $\bar M$ does not have constant sectional curvature.
	It is well known that, for any non-zero $v\in T_p\bar M$, the Jacobi operator $\bar{R}_{v}$ restricted to $T_p \bar{M}\ominus \R v$ has only two different eigenvalues $\lambda _1$ and $\lambda _2$, where $\ominus$ denotes the orthogonal complement. Let $v\in T_p \bar{M}$ be of unit length. Then,  $\lambda _1=\pm 4,  \ \lambda _2 = \pm 1,$
	where the signs are positive if $\bar{M}$ is of  compact type, and negative if $\bar{M}$ is of  non-compact type.  Observe that  $\bar{R}_v$ has exactly one more eigenvalue that is equal to zero and has associated eigenspace $\mathbb R v$. Let $\mathbb{V}_i(v)$ be the eigenspace of $\bar R_v$ associated with $\lambda_i$ for each $i\in\{1,2\}$. When  $\bar{M}$ is of compact type,  ${\inj}(\bar{M})=\pi/2$. However, if $\bar{M}$ is of  non-compact type, then it follows by the Cartan-Hadamard Theorem that  $\inj(\bar{M}) = +\infty$.

	Fix $p\in\bar M$. For each $i\in\{1,2\}$ we define  $\mathbb V_i^t(v)$  to be the parallel transport of $\mathbb V_i(v)$ along the unit speed geodesic $\gamma _v\colon[0,\inj(\bar M))\rightarrow \bar M$ with initial conditions $p\in\bar{M}$ and $v\in T_p\bar{M}$. The equation 
	$\bar{\nabla} \bar{R} =0$ implies that $\mathbb V_i^t(v)$ is the eigenspace of $\bar{R}_{ \dot\gamma_v(t)}$ associated with 
	$\lambda _i$, for each $i\in\{1, 2\}$.  Moreover, $\mathbb V_i^t(v)\perp \dot\gamma_v(t)$ and then we have the following orthogonal decomposition 
	\begin{equation*}\label{AA3}
		T_{\gamma _v(t)}\s{S}_t(p) = \mathbb V_1^t(v) \oplus \mathbb V_2^t(v).
	\end{equation*}
	
	Let $\bar{\s{K}}_v$ be the isotropy of $\bar{\s{K}}$ at $v$. Since $\bar{\s{K}}$ preserves the curvature tensor $\bar{R}$ of $\bar M$ at $p$, then $\bar{\s{K}}_v$ commutes with the Jacobi operator $\bar{R}_v$ and therefore leaves $\mathbb V_i(v)$ invariant. Notice that  $\bar{\s{K}}_v$ coincides with the isotropy $\bar{\s{K}}_{\gamma _v(t)}$ of the action of $\bar{\s{K}}$ on the geodesic sphere $\s{S}_t(p)$ at $\gamma _v(t)$ for every $t\in(0,\inj(\bar{M}))$. Moreover,  
	$\bar{\s{K}}_{\gamma _v(t)}$ leaves invariant $\mathbb V_i^t(v)$ since $\bar{\s{K}}_{\gamma _v(t)}$ and $\bar{R}_{ \dot\gamma_v(t)}$ commute. Hence, for any fixed $t\in(0,\inj(\bar{M}))$, the subspace 
	$\mathbb V_i^t(v)$ extends to a $\bar{\s{K}}$-invariant distribution $\mathcal D_i^t$ on the geodesic sphere $\s{S}_t(p)$. Thus, 
	\begin{equation*}\label {AA4}
		T\s{S}_t(p) = \mathcal D^t_1\oplus \mathcal D^t_2.
	\end{equation*}
	The distributions $\mathcal{D}_1$ and $\mathcal{D}_2$ coincide with the vertical and horizontal distributions, respectively, defined by the Hopf fibrations.

	\begin{remark}
		\label{rem:shapeoperatoreigenspaces}
		Using Equation~\eqref{eq:jacobisphere} and solving  the Jacobi Equation~(see e.g.\ \cite[p.~393]{BCO}) explictly, one can prove that $\mathbb{V}^t_1(v)$ and $\mathbb{V}^t_2(v)$ are the eigenspaces of the shape operator $\mathcal{S}^t$ with respect to the outer normal unit direction of $\s{S}_t(p)$ in $\bar{M}$ with corresponding eigenvalues $\beta^t_1$ and $\beta^t_2$, respectively. These eigenvalues are given by the following expressions (see e.g.~\cite[p.~16]{bettiol}):
		\begin {equation}\label {eq:b1}
		\beta _1^t = \left\{
		\begin{array}{ll}
			- 2\cot (2t)  & \mbox{if } \bar M  \mbox{ is of  compact type,} \\
			- 2\coth (2t)  &  \mbox{if } \bar M  \mbox{ is of  non-compact type.} 
		\end{array}
		\right.
	\end{equation}
	
	\begin {equation}\label {eq:b2}
	\beta _2^t = \left\{
	\begin{array}{ll}
		-\cot ( t)  & \mbox{if } \bar M  \mbox{ is of  compact type,} \\
		- \coth (t)  &  \mbox{if } \bar M  \mbox{ is of  non-compact type.} 
	\end{array}
	\right.
\end{equation}
Notice that  $\beta _1^t \neq  \beta _2^t$ for every $t\in(0,\inj(\bar{M}))$. Moreover, since the connected group $\bar{\s{K}}$ acts isometrically on  $\s{S}_t(p)$, the shape operator  $\mathcal{S}^t$ is a $\bar{\s{K}}$-invariant tensor. Then, 
$\mathcal D ^t_i$ is an $\mathcal{S}^t$-invariant distribution of $\s{S}_t(p)$ and $
\mathcal{S}^t_{\vert \mathcal D_i^t} = \beta _i ^t \Id_{\vert \mathcal D_i^t}$, for each $i\in\{1,2\}$.
\end{remark}

\begin{remark}\label{fact}  If $\bar{M}$ is of compact type, it is well known that $\mathbb{V}_1(v) \oplus \R v\subset T_p\bar M$ is the tangent space of a totally geodesic  sphere of $\bar{M}$ isometric  to $\s{S}_4^{m+1}$, which we denote by $\s{S}^{m+1}(v)$, where $m=\rank(\mathcal{D}^t_1)$. This is a so-called Helgason sphere, see \cite{helgasonsphere} and \cite{ohnita} for more information on Helgason spheres.

This implies that $\mathcal D ^t_1$ is an integrable
distribution of $\s{S}_t(p)$ whose integral submanifold through $\gamma_w(t)=\overline{\exp} (tw)$, where $w\in T_p\bar{M}$ is of unit length,  is the intersection of $\s{S}^{m+1}(w)$ with $\s{S}_t(p)$. Moreover, by Corollary \ref {cor:geodsphereinter}, $\mathcal D ^t_1$ is an autoparallel distribution of $\s{S}_t(p)$.
The same is true when $\bar{M}$ is of non-compact type. But in this case $\mathbb{V}_1(v) \oplus \mathbb Rv$  is the tangent space at $p$ of a totally geodesic hyperbolic space of $\bar M$ that is isometric to $\R \s H^{m+1}(4)$.
\end{remark}
\vspace{.15cm}
We finish this section with a remark that will be useful in Section~\ref{sect:wellpos}.

\begin{remark}\label{remark:CC1} Let $\bar{M}= \bar{\s{G}}/\bar{\s{K}}$ be a simply connected compact symmetric space with non-constant sectional curvature and minimal sectional curvature $1$. Recall that  $\inj(\bar{M})$ is $\pi/2$. Let $p\in \bar{M}$ and let $M$ be a complete totally geodesic submanifold of $\bar{M}$ that contains $p$. 
Then, $M$ is a rank one symmetric space of compact type  with $\inj(M) = \pi/2$ or a closed geodesic of length $\pi$.

Let  $B^E_{\pi/2} (0)$  be the Euclidean  open ball of $T_p\bar{M}$ of radius $\pi/2$ centered at the origin. Hence, $\overline{\exp}_p\colon B^E_{\pi/2} (0)\to B_{\pi/2} (p) $ is a diffeomorphism, where  $B_{\pi/2} (p)$ is the open ball of $\bar{M}$ with center $p$ and radius $\pi/2$. Now, since $\inj(M) = \pi/2$,  the points $x$ of $M$ that are in the complement of $\overline{\exp}_p (B^E_{\pi/2} (0)\cap T_pM) $ must be at a distance of $\pi/2$ from $p$  in $\bar{M}$. Indeed, if $\gamma\colon[0,\pi/2] \to M$ is a  minimizing unit speed geodesic in $M$ from $p$ to $x$, we have that $\gamma$ restricted to $[0,\pi/2)$ is a  minimizing geodesic of $\bar M$, since $M$ is totally geodesic. Thus, the distance in $\bar{M}$ from $p$ to $x$ is $\pi/2$. 
This implies  that 
$M \cap B_{\pi/2} (p) =  \overline{\exp}_p (B^E_{\pi/2} (0)\cap T_pM)$ and then
\[M\cap \s{S}_t(p)=\overline{\exp}_p (B^E_{\pi/2} (0)\cap T_pM)\cap\s{S}_t(p), \quad \text{where $t\in(0,\pi/2)$.}    \]

\end{remark}
\subsection[The geometry of Hopf-Berger spheres]{The geometry of Hopf-Berger spheres}
\label{subsect:geomhopf}
In this section we summarize some facts concerning the geometry of Hopf-Berger spheres.

It is well-known that any homogeneous metric on the even-dimensional sphere $\s{S}^{2n}$ is isometric to a round metric, while homogeneous metrics on odd-dimensional spheres $\s{S}^{2n+1}$ of dimension bigger than three are homothetic to a metric lying in one of the following families, see~\cite{Z2}:
\begin{enumerate}
\item[(1)] a $1$-parameter family of $\s{U}_{n+1}$-invariant metrics on $\s{S}^{2n+1}$,
\item[(2)] a $1$-parameter family of $\s{Sp}_1\s{Sp}_{n+1}$-invariant metrics on $\s{S}^{4n+3}$,
\item[(3)] a $1$-parameter family of $\s{Spin}_9$-invariant metrics on $\s{S}^{15}$,
\item[(4)] a $3$-parameter family of $\s{Sp}_{n+1}$-invariant metrics on $\s{S}^{4n+3}$.
\end{enumerate}
It turns out that metrics in (2) lie in (4). Moreover, metrics in (1), (2) and (3) can be obtained by rescaling the round metric of the total space of a Hopf fibration in the direction of the fibers. In what follows, we will outline the construction of Hopf-Berger spheres.

Let $\{\s{S}_{\mathbb{F},\tau}^n\}_{\tau>0}$ denote the family of complex, quaternionic or octonionic Hopf-Berger spheres of dimension $n$, according  to the value of~ $\mathbb{F}\in\{\C,\H,\O\}$. These were proved to be geodesic orbit spaces in~\cite{tamarugo}. This means that every geodesic of $\s{S}_{\mathbb{F},\tau}^n$ is the orbit of a $1$-parameter subgroup of $\Isom(\s{S}_{\mathbb{F},\tau}^n)$. More generally, Hopf-Berger spheres were proved to be weakly-symmetric spaces in~\cite[Theorem 2]{BV}. A space $M$ is said \textit{weakly symmetric} if given two points $p,q\in M$ one can find an isometry $\varphi$ of $M$ mapping $p$ to $q$ and $q$ to $p$. Weakly symmetric spaces are in particular geodesic orbit spaces, see~\cite{BKV} or \cite[Theorem~5.3.6]{berestovskii}.

Furthermore, according to~\cite{Z1}, $\s{S}_{\mathbb{F},\tau}^n$ admits, for each $\tau>0$, a naturally reductive decomposition when  $\mathbb{F}\in\{\C,\H\}$. However,  $\s{S}_{\mathbb{\O},\tau}^{15}$ does not admit a naturally reductive decomposition for most $\tau\in(0,+\infty)$, see~\cite{Z2}.

On the one hand, when $\tau=1$, Hopf-Berger spheres are round. Through this article, we  denote  round spheres with constant sectional curvature equal to $\kappa>0$  by  $\s{S}^n_{\kappa}$.
On the other hand, every Hopf-Berger sphere with $\tau\neq 1$ is homothetic to a geodesic sphere of a rank one symmetric space $\bar{M}$, see~\cite{Z2}. We  consider the metrics of these symmetric spaces rescaled in such a way that the minimal absolute value of their sectional curvatures is equal to $1$. Notice that two geodesic spheres of $\bar{M}$ of the same radius are congruent. Furthermore, it can be proved that every geodesic sphere $\s{S}(t)$ of radius $t$ in a symmetric space of rank one $\bar{M}$ can be obtained by performing a homothety (cf.~\cite{bettiol}) of ratio $\alpha$ to $\s{S}^n_{\F,\tau}$, with $\F\in\{\C,\H,\O\}$ and $\tau\in(0,+\infty)$, where the values of $t,\alpha$ and $\tau$ are related by:
\begin{equation}
\label{eq:radius_tau_homothety}
\begin{aligned}
	&t=\arccos(\sqrt{\tau}),&	&\alpha=\sqrt{1-\tau}, &  &\text{for } 0< \tau< 1,&  &\text{when $\bar{M}$ is  compact},\\
	&t=\arccosh(\sqrt{\tau}),&  &\alpha=\sqrt{\tau-1}, & &\text{for } 1< \tau< +\infty,& &\text{when $\bar{M}$ is not compact.}
\end{aligned}
\end{equation}

The complex and quaternionic Hopf fibrations are invariant under the action of $\s{U}_{n+1}$ and $\s{Sp}_{n+1}\s{Sp}_{1}$, respectively. The complex and quaternionic Hopf-Berger spheres can be expressed as $\s{S}^{2n+1}_{\C,\tau}=\s{U}_{n+1}/\s{U}_n$ and  $\s{S}^{4n+3}_{\H,\tau}=\s{Sp}_{n+1}/\s{Sp}_n$. Now we will describe the reductive decomposition for $\s{S}^n_{\F,\tau}=\s{G}/\s{K}$, for $\F\in\{\C,\H\}$, where the Lie groups $\s{G}$ and $\s{K}$ are as above.  Let $\g{g}$ be the Lie algebra of $\s{G}$ for $\F\in\{\C,\H\}$. We denote by $\mathrm{Im}(x)$ the imaginary part of an element $x\in\F$ and by $A^*$ the conjugate transpose of a matrix $A$ with entries in $\F\in\{\C,\H\}$. Then, we consider the following subspaces of $\g{g}$:
\[ \g{k}=\left(
\begin{array}{c|c}
Z & 0 \\
\hline
0 & 0
\end{array}
\right), \quad \g{p}_1=\left(
\begin{array}{c|c}
0 & 0 \\
\hline
0 & \mathrm{Im}(x)
\end{array}
\right), \quad \g{p}_2=\left(
\begin{array}{c|c}
0 & v \\
\hline
-v^* & 0
\end{array}
\right),   \]
where $x\in\F$, $v\in\F^n$ and $Z$ belongs to $\g{u}_n$ or $\g{sp}_n$, when $\F=\C$ or  $\F=\H$, respectively.
Observe that $\g{k}$ is the Lie algebra of $\s{K}$ and $[\g{k},\g{p}_1]=0$. 
On the one hand,  $\g{p}_1$ is spanned by $\{X_l\}^{\dim_{\R} \F-1}_{l=1}$, where each $X_l$ is the matrix filled with zeroes except the last entry, which is equal to one of the imaginary units $i,j$ or $k$, for $X_1, X_2$ or $X_3$, respectively. On the other hand, $\g{p}_2$ is identified with $\F^n$, and thus, we can consider the set $\{Y_j\}^n_{j=1}$ of $\g{p}_2$ where $Y_j$ is identified with the $j$-th element of the canonical basis of $\F^n$ for $\F\in\{\C,\H\}$.  Furthermore, the adjoint representation of $\s{G}$ restricted to $\s{K}$ acts on $\g{p}_2$ as the standard representation of  $\s{U}_n$ or $\s{Sp}_n$, depending on whether $\F$ is equal to $\C$ or $\H$. 

Now let $\F=\O$. The octonionic Hopf fibration is invariant under the action of $\s{Spin}_9$ and $\s{S}^{15}_{\O,\tau}=\s{Spin}_9/\s{Spin}_7$. For positive integers $i,j$, such that $i<j<9$, let $E_{ij}\in\g{spin}_9$ be the matrix filled with zeroes except for the entries $(i,j)$ and $(j,i)$ which are equal to $1$ and $-1$, respectively. We define the following elements in $\g{g}=\g{spin}_9$.
\begin{align*}
X_1&= E_{15} + E_{26} + E_{37} + E_{48},&
X_2&= E_{17} + E_{28} - E_{35} - E_{46},\\
X_3&= E_{13} - E_{24} - E_{57} + E_{68},&
X_4&= E_{16} - E_{25} - E_{38} + E_{47},\\
X_5&= E_{18} - E_{27} + E_{36} - E_{45},&
X_6&=E_{12} + E_{34} - E_{56} - E_{78},\\
X_7&=E_{14} + E_{23} - E_{58} - E_{67},&
Y_1&=2 E_{19},\\
Y_2&=2 E_{29},&
Y_3&=2 E_{39},\\
Y_4&=2 E_{49},&
Y_5&=2 E_{59},\\
Y_6&=2 E_{69},&
Y_7&=2 E_{79},\\
Y_8&=2 E_{89}.
\end{align*}

Then, we consider $\g{p}_1$ and $\g{p}_2$ as the subspaces of $\g{g}$ spanned by $\{X_i\}_{i=1}^7$ and $\{Y_i\}_{i=1}^8$, respectively. We define $\g{k}$ as the orthogonal complement of $\g{p}_1\oplus\g{p}_2$ with respect to the Killing form of $\g{g}=\g{spin}_9$. It turns out that $\g{k}\cong \g{spin}_7$ is the Lie algebra of the isotropy $\s{K}=\s{Spin}_7$ for the action of $\s{G}=\s{Spin}_9$ in $\s{S}^{15}_{\O,\tau}$ (cf.\ \cite[p. 476]{verdiani-ziller}). Moreover, $\g{k}$ acts on $\g{p}_1$ as the standard representation of $\g{so}_7= \g{spin}_7$ and on $\g{p}_2$ as the irreducible spin representation of dimension eight.

To sum up, for each Hopf-Berger sphere $\s{S}^n_{\F,\tau}$, we can check that $[\g{k},\g{p}_1\oplus\g{p}_2]\subset\g{p}_1\oplus\g{p}_2$. Thus, $\g{g}=\g{k}\oplus\g{p}$, with $\g{p}:=\g{p}_1\oplus\g{p}_2$, 
defines a reductive decomposition for $\s{S}^n_{\F,\tau}=\s G/ \s K$.

Notice that for each $X_i\in\g{p}_1$, the map $J\colon\g{p}_2\rightarrow \g{p}_2$ given by $J_{X_i} Y:=[Y,X_i]$ defines a complex structure in $\g{p}_2$. In the following, we simply write $J_i$ to denote $J_{X_i}$, for each $i\in\{1,\ldots, \dim \F-1\}$. Let $m:=\dim\g{p}_2/\dim\F$. We consider the unique $\s K$-invariant inner product $\langle\cdot,\cdot\rangle$ in $\g{p}$ such that $\{X_i/\sqrt{\tau},Y_j,J_i Y_j\}_{i,j}^{\dim\F-1,m}$ and $\{X_i/(2\sqrt{\tau}),Y_j\}_{i,j}^{\dim\F-1,8}$ are orthonormal bases for $\g{p}$ when $\F\in\{\C,\H\}$ and $\F=\O$, respectively. Thus, the tangent space of $\s{S}^n_{\F,\tau}$ at a base point $o$ is identified with $\g{p}$ as inner product spaces. Furthermore, $\g{p}_1$ and $\g{p}_2$ can be identified with the vertical and horizontal spaces  at $o\in\s{S}^n_{\F,\tau}$ defined by  the corresponding Hopf fibration, respectively.

It can be checked that $J_i$ is a skew-symmetric endomorphism of $\g{p}_2$ with respect to the inner product $\langle\cdot,\cdot\rangle$, hence $J_i$ is an orthogonal complex structure for each subindex $i\in\{1,\ldots,\dim \F-1\}$. On the one hand, if $\F=\H$, we have the relations $J_j J_i= J_k$, where $(i,j,k)$ is a cyclic permutation of $(1,2,3)$. On the other hand, if $\F=\O$, the relations among the complex structures $\{J_i\}_{i=1}^7$ are given by the labeling of the Fano plane indicated in Figure~\ref{fig:fanoplane}. The Fano plane has 7 points and 7 lines. Each line $I$ contains exactly three points, and each of these triples has a cyclic ordering shown by the arrows in such a way that if $(i,j,k)$ are cyclically ordered,  we have $J_j J_{i\rvert\H_I Y_1}=J_{k\rvert\H_I Y_1}$, where $\H_I Y_1:=\spann\{Y_1,J_l Y_1\}^3_{l=1}$.
\begin{figure}[h!]
\centering
\includegraphics[width=50mm]{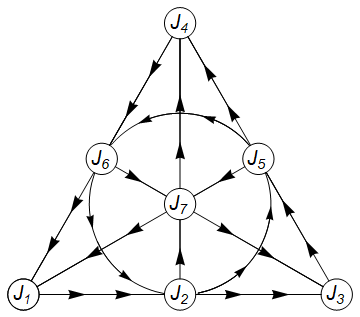}
\caption{Fano plane with the appropiate labeling. \label{fig:fanoplane}}
\end{figure}

\begin{remark}
\label{rem:isom}
The isometry group of $\s{S}^n_{\F,\tau}$, $\tau\neq1$, can be identified with the  isotropy of the symmetric space of rank one $\bar M$ where $\s{S}^n_{\F,\tau}$ is embedded as a geodesic sphere, see~\cite[\S 8.5]{arvthesis}. Therefore,
\[\Isom(\s{S}^{2n+1}_{\C,\tau})\cong\s{U}_{n+1}\rtimes\mathbb{Z}_2, \phantom{a} \Isom(\s{S}^{4n+3}_{\C,\tau})\cong \s{Sp}_{n+1}\s{Sp}_1, \phantom{a} \Isom(\s{S}^{15}_{\O,\tau})\cong\s{Spin}_9,\]
where $\s{Sp}_{n+1}\s{Sp}_1:= \s{Sp}_{n+1}\times_{\mathbb{Z}_2} \s{Sp}_1$. 
\end{remark}

\begin{remark}
\label{rem:isotropyrep}
Let $ v_1\in \g{p}_1$ and $v_2\in \g{p}_2$ be non-zero elements of lengths $r$ and $s$, respectively. Moreover, denote by  $\s{S}_1(r)$ the sphere of radius $r$ of $\g{p}_1$,  and by 
$\s{S}_2(s)$ the sphere of radius $s$ of $\g{p}_2$. If $\F=\O$, then $\s{K}$ acts polarly on $\g{p}$ and its orbits are given by 
$$\s{K}\cdot (v_1+v_2) = \s{S}_1(r)\times \s{S}_2(s).$$
When $\F=\H$, we can extend the group $\s{G}=\s{Sp}_{n+1}$ acting transitively on $\s{S}^{4n+3}_{\H,\tau}$ to the Lie group $\check{\s{G}}=\s{Sp}_{n+1}\s{Sp}_1$ which also acts transitively and isometrically on $\s{S}^{4n+3}_{\H,\tau}$. The isotropy of the action of $\check{\s{G}}$ at $o\in\s{S}^{4n+3}_{\H,\tau}$ is  $\check{\s{K}}=\s{Sp}_{n}\s{Sp}_1$. Then $\check{\s{K}}$ acts polarly on $\g{p}$ and its orbits are given by 
$$\check{\s{K}}\cdot (v_1+v_2) = \s{S}_1(r)\times \s{S}_2(s).$$

Now, let $\F=\C$ and consider $\check{\s{K}}$, the disconnected Lie group generated by $\s{K}$ and $\sigma$, where $\sigma$ is the  isometry lying in the isotropy of $\s{S}^{2n+1}_{\C,\tau}$ induced by the standard conjugation in $\C^{n+1}$. Then, $\check{\s{K}}$ and $\s{K}$ act polarly in $\g{p}$ and their orbits are given by
\[\s{K}\cdot (v_1+v_2) = \{v_1\}\times \s{S}_2(s), \quad \check{\s{K}}\cdot (v_1+v_2) = \s{S}_1(r)\times \s{S}_2(s).  \]

We define the \textit{slope} of a non-vertical vector $v\in\g{p}$ as the quotient $\tfrac{||v_1||}{||v_2||}$, where $v_1$ and $v_2$ are the vertical and horizontal projections of $v$, respectively. By convention, we say that a vertical vector has slope $\infty$. The previous discussion implies that two vectors in $\g{p}$ with the same length lie in the same orbit of the full isotropy representation of $\s{S}^n_{\F,\tau}$ if and only if they have the same slope.
\end{remark}

\subsubsection{The curvature tensor of Hopf-Berger spheres}
\label{subsubsect:curvaturetensorhb}
Now we compute the curvature tensor $R$ and the difference tensor $D$ of $\s{S}^n_{\F,\tau}$ when $\F\in\{\C,\H\}$. Notice that $D$  restricted to $\g{p}_1$ is  zero and satisfies $D_X Y=-D_Y X$ for every  $X,Y\in\g{p}_2$.
Let $Y,Z,W\in\g{p}_2$ spanning a totally real subspace of $\g{p}_2$ and let $(i,j,k)$ be a cyclic permutation of $(1,2,3)$. Then, we have
\begin{equation}
\label{eq:dtensorcomplex}
\begin{aligned}
	D_{Y} X_i&=\tau J_i Y,& D_{X_i} Y&=(\tau-1) J_i Y,& D_Y J_iY&=-X_i,\\
	D_{J_i Y} J_j Y&=X_{k},& D_Y Z&=0. 
\end{aligned}
\end{equation}
By using Equation~(\ref{eq:curvature31}), 
we have for a cyclic permutation $(i,j,k)$  of $(1,2,3)$,
\begin{equation}
\begin{aligned}
	\label{eq:rtensorcomplexquaternionic}
	R(X_i,X_j)Y&=2\tau(1-\tau)J_k Y,&   R(X_i,X_j)X_i&=-X_j, \\ R(X_i,Y)X_i&=-\tau^2 Y,&	R(X_i,Y)J_jY&=(1-\tau)X_k, \\
	R(X_i,Y)X_j&=\tau(1-\tau)J_k Y, & R(X_i,Y)Y&=\tau X_i,\\
	R(Y,J_i Y)Y&=(-4+3\tau) J_i Y ,& R(Y,J_i Y)Z&=2(-1+\tau)J_i Z,\\
	R(Y,J_i Y)X_j&=2(1-\tau)X_k,& 	R(Y,J_i Y)J_i Y&=(4-3\tau) Y,\\ 
	R(Y,Z)Y&=-Z,& R(Y,Z)J_i Y&=(-1+\tau)J_i Z,
\end{aligned}
\end{equation}
\begin{equation}
\begin{aligned}
	\label{eq:rtensorcomplexquaternionic0}
	R(X_i,X_j)X_k&=R(X_i,Y)J_i Y=R(X_i,Y)Z= R(Y,J_i Y)X_i=R(Y,J_i Y)J_j Y\\
	&=R(Y,Z)X_i=R(Y,Z)W=0.
\end{aligned}
\end{equation}
Let us consider $R_X\colon\g{p}\rightarrow\g{p}$, the Jacobi operator associated with a vector $X\in\g{p}$, which is given by $R_X(Y)=R(Y,X)X$.

Let $X\in\g{p}_1$ be of unit length. By Equations~(\ref{eq:rtensorcomplexquaternionic}) and (\ref{eq:rtensorcomplexquaternionic0}), we deduce that $R_X$ leaves  $\g{p}_1$ and $\g{p}_2$ invariant. Moreover, $R_{X{|\g{p}_2}}=\tau \Id_{\g{p}_2}$, and the eigenvalues of $R_{X{|\g{p}_1}}$ are $1/\tau$ and $0$, with multiplicity $\dim \F-2$ and $1$, respectively. Let $Y\in\g{p}_2$ be a unit vector. By Equations~(\ref{eq:rtensorcomplexquaternionic}) and (\ref{eq:rtensorcomplexquaternionic0}), we deduce that $R_Y$ leaves  $\g{p}_1$ and $\g{p}_2$ invariant. Moreover, $R_{Y{|\g{p}_1}}=\tau \Id_{\g{p}_1}$, and the eigenvalues of $R_{Y{|\g{p}_2}}$ are $4-3\tau$, $1$ and $0$ of multiplicities $\dim \F-1$, $n-2\dim \F +1$ and $1$, respectively.

Now we compute the curvature tensor $R$ and the difference tensor $D$ of $\s{S}^{15}_{\O,\tau}$. We have again that $D$  restricted to $\g{p}_1$ is  zero and satisfies $D_X Y=-D_Y X$ for every  $X,Y\in\g{p}_2$.
Let $(i,j,k)$ be an ordered triple contained in a line of the Fano plane, see Figure~\ref{fig:fanoplane}.
Then, we have
\begin{equation}
\label{eq:dtensoroctonions}
\begin{aligned}
	D_{Y_1} X_i&=2\tau J_i Y_1, & D_{X_i} Y_1&=(2\tau-1) J_i Y_1,& D_{Y_1} J_iY_1&=-X_i/2,\\ 
	D_{J_i Y_1} J_j Y_1&=-X_{k}/2. 
\end{aligned}
\end{equation}
Moreover, by using Equation~(\ref{eq:curvature31}), we get
\begin{equation}
\begin{aligned}
	\label{eq:rtensoroctonions}
	R(X_i,X_j)Y_1&=8(\tau-\tau^2)J_k Y_1,& R(X_i,X_j)X_i&=-4X_j,\\ R(X_i,Y_1)Y_1&=\tau X_i,& 	R(X_i,Y_1)X_j&=4(\tau-\tau^2)J_k Y_1,\\
	R(X_i,Y_1)X_i&=-4\tau^2 Y_1,& R(X_i,Y_1)J_jY_1&=(1-\tau)X_k,\\
	R(Y_1,J_i Y_1)Y_1&=(-4+3\tau) J_i Y_1,& R(Y_1,J_i Y_1)X_j&=2(1-\tau)X_k, \\
	R(Y_1,J_i Y_1)J_iY_1&=(4-3\tau) Y_1.
\end{aligned}
\end{equation}
\begin{equation}
\begin{aligned}
	\label{eq:rtensoroctonions0}
	R(X_i,X_j)X_k&=R(X_i,Y_1)J_i Y_1= R(Y_1,J_i Y_1)X_i=R(Y_1,J_i Y_1)J_j Y_1=0.
\end{aligned}
\end{equation}
Let $X\in\g{p}_1$ be of unit length. By Equations~(\ref{eq:rtensoroctonions}) and (\ref{eq:rtensoroctonions0}), we deduce that $R_X$ leaves $\g{p}_1$ and $\g{p}_2$ invariant. Moreover, $R_{X{|\g{p}_2}}=\tau \Id_{\g{p}_2}$ and the eigenvalues of $R_{X{|\g{p}_1}}$ are $1/\tau$ and $0$, with multiplicity $6$ and $1$, respectively. Let $Y\in\g{p}_2$ be a unit vector. By Equations~(\ref{eq:rtensoroctonions}) and (\ref{eq:rtensoroctonions0}), we have that, $R_Y$ leaves $\g{p}_1$ and $\g{p}_2$ invariant. Moreover, $R_{Y{|\g{p}_1}}=\tau \Id_{\g{p}_1}$ and the eigenvalues of $R_{Y{|\g{p}_2}}$ are $4-3\tau$ and $0$ with multiplicities $7$ and $1$, respectively.

In Table~\ref{table:jacobi} we summarize the  pairs $(\lambda,m_{\lambda})$ of eigenvalues and multiplicities for the Jacobi operator.

\begin{table}[htb]
\centering
\begin{tabular}{l|l|l|}
	& $R_X$ & $R_Y$  
	\strt \\ \hline
	\multirow{2}{5ex}{$\g{p}_1$}
	& $(0,1)$ & $(\tau,\dim(\F)-1)$  
	\strt \\ 
	& $(1/\tau,\dim(\F)-2)$  &   
	\strt \\ \cline{1-3}
	\multirow{3}{5ex}{$\g{p}_2$}
	& $(\tau,n-\dim(\F)+1)$ & $(0,1)$ 
	\strt \\ 
	&  & $(4-3\tau,\dim(\F)-1)$ 
	\strt \\ 
	&  & $(1,n-2\dim(\F)+1)$ 
\end{tabular}
\caption{Eigenvalues and multiplicities of the Jacobi operator of $\s{S}^n_{\F,\tau}$ associated with unit vectors $X$ and $Y$ in $\g{p}_1$ and $\g{p}_2$, respectively.}
\label{table:jacobi}
\end{table}

\section{Totally geodesic submanifolds and homogeneity}
\label{sect:tgandhom}
Let $M=\s G/\s K$ be a Riemannian homogeneous space. Since every $\s{G}$-invariant tensor is parallel under the canonical connection $\nabla^c$, we have $\nabla^c R=0$ and $\nabla^c D=0$, where $R$ and $D$ denote the curvature tensor and the difference tensor of $M=\s G/\s K$, respectively. Thus, by Equations (\ref{eq:nabla}) and (\ref{eq:dtensor}), the covariant derivatives of the curvature tensor can be expressed in terms of $D$ and $R$ (see~Remark~\ref{rem:covderivative}). Notice that homogeneous spaces are real analytic, see~\cite[Lemma~1.1]{Bohm1}. Hence, every subspace $\g{p}_{\Sigma}$ of $\g{p}$  invariant under $D$ and $R$ is invariant under every covariant derivative of $R$, which implies by Theorem~\ref{th:fundamentaltg} that $\g{p}_{\Sigma}$ is the tangent space of some totally geodesic submanifold of $M$ and $\Sigma=\exp_o \g{p}_{\Sigma}$ is a complete totally geodesic submanifold of $M$, where $\exp_o$ denotes the Riemannian exponential map of $M$ at $o$. Hence, we obtain the following result.
\begin{lemma}
\label{lemma:DRinvariant}
Let $M=\s{G}/\s{K}$ be a Riemannian homogeneous space with base point $o\in M$ and reductive decomposition $\g{g}=\g{k}\oplus\g{p}$. Let $\g{p}_{\Sigma}$ be a subspace of $\g{p}$ invariant under $D$ and $R$. Then, there   is a complete totally geodesic submanifold $\Sigma$ of $M$ such that $T_o\Sigma=\g{p}_\Sigma$.
\end{lemma}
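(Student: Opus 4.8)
The plan is to reduce the statement to Theorem~\ref{th:fundamentaltg}: since homogeneous spaces are real analytic, it suffices to show that $(\nabla^k R)_o$ leaves $\g{p}_\Sigma$ invariant for every $k\ge 0$, and then the submanifold $\Sigma=\exp_o\g{p}_\Sigma$ furnished by that theorem is automatically complete and totally geodesic with $T_o\Sigma=\g{p}_\Sigma$. First I would record that, because $\s{G}$ acts by isometries and hence preserves both the metric and its Levi-Civita connection $\nabla$, the tensor $\nabla^k R$ is $\s{G}$-invariant for every $k$; by \cite[Proposition 2.7, Chapter X]{kobayashi} this forces $\nabla^c(\nabla^k R)=0$, exactly as for $R$ itself in Remark~\ref{rem:covderivative}.

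The core is an induction on $k$ establishing that $(\nabla^k R)_o$ maps $\g{p}_\Sigma\times\cdots\times\g{p}_\Sigma$ into $\g{p}_\Sigma$. The case $k=0$ is the hypothesis that $\g{p}_\Sigma$ is $R$-invariant. For the inductive step I would use that $\nabla=\nabla^c+D$ at $o$, so that for any $\s{G}$-invariant tensor $T$ (which is $\nabla^c$-parallel) the Levi-Civita derivative at $o$ is the pure derivation action of $D$, namely
\begin{equation*}
(\nabla_V T)(X_1,\dots,X_s)=D_V\bigl(T(X_1,\dots,X_s)\bigr)-\sum_{i=1}^{s}T(X_1,\dots,D_V X_i,\dots,X_s),
\end{equation*}
which is precisely the formula of Remark~\ref{rem:covderivative} applied to $T=\nabla^k R$. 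Taking $V,X_1,\dots,X_s\in\g{p}_\Sigma$ and invoking both that $\g{p}_\Sigma$ is $D$-invariant and the inductive hypothesis that $T=\nabla^k R$ preserves $\g{p}_\Sigma$, every term on the right-hand side lands in $\g{p}_\Sigma$; hence $\nabla^{k+1}R=\nabla(\nabla^k R)$ preserves $\g{p}_\Sigma$, closing the induction.

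With the invariance of all $(\nabla^k R)_o$ in hand, Theorem~\ref{th:fundamentaltg} produces a complete totally geodesic immersed submanifold $\Sigma$ through $o$ with $T_o\Sigma=\g{p}_\Sigma$, namely $\Sigma=\exp_o\g{p}_\Sigma$. I expect the only delicate point to be bookkeeping: verifying that the Levi-Civita derivative of a $\s{G}$-invariant tensor at $o$ really is the pure derivation action of $D$ with no leftover $\nabla^c$-term (this is exactly where $\nabla^c(\nabla^k R)=0$ enters) and that the notion of ``$\g{p}_\Sigma$-invariance of a $(1,s)$-tensor'' is stable under this derivation operation. Both reduce to the assumption that $D$ and $R$ preserve $\g{p}_\Sigma$, so once these conventions are fixed the argument is structurally a routine induction.
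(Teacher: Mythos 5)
Your proposal is correct and follows essentially the same route as the paper: the authors likewise observe that every $\s{G}$-invariant tensor (in particular $R$ and $D$, and hence each $\nabla^k R$) is $\nabla^c$-parallel, so that the covariant derivatives of $R$ at $o$ are expressed through $D$ and $R$ via Remark~\ref{rem:covderivative}, whence invariance of $\g{p}_\Sigma$ under $D$ and $R$ propagates to all $(\nabla^k R)_o$ and Theorem~\ref{th:fundamentaltg} applies using real analyticity. Your explicit induction merely spells out what the paper leaves implicit.
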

We remark that Lemma \ref{lemma:DRinvariant} provides a sufficient condition to obtain totally geodesic submanifolds in homogeneous spaces in terms of a simple linear algebraic property. However, there are many totally geodesic submanifolds whose tangent space is not invariant under the difference tensor $D$.

Now observe that the integral lines of a Killing vector field $X$ of constant length are geodesics. In fact, by making use of the Killing equation, one has that the gradient of the real function $\Vert X\Vert^2$ is $\frac12 \nabla_XX$. Then $\nabla _XX =0$ and every integral line of $X$ is a geodesic. 

Recall that a geodesic orbit  space (g.o.~in short) is a Riemannian manifold $M$ such that every geodesic is the orbit of a one parameter subgroup of $\Isom(M)$ or, equivalently, an integral line of a Killing vector field of $M$. Observe that a totally geodesic submanifold $\Sigma$ of $M$ is also a g.o.\ space. 
This follows from the fact that the projection to $T\Sigma$ of the restriction to $\Sigma$ of a Killing vector field of 
$M$ is a Killing vector field of $\Sigma$.

\begin{lemma}\label{lemma: g.o.tg}
Let $M=\s G/\s K$ be  a compact  g.o.\ space and let $\Sigma$ be a totally geodesic  submanifold of $M$ of dimension at least $2$. Then, either $\Sigma$ is compact or $\Sigma$ has a totally geodesic and flat submanifold $F$ of dimension at least $2$.  
\end{lemma}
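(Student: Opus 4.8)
The plan is to first reduce to the case that $\Sigma$ is complete (extending it if necessary by Theorem~\ref{th:fundamentaltg}) and to record that, by the observation preceding the statement, $\Sigma$ is itself a g.o.\ space; in particular it is intrinsically homogeneous and real-analytic. The engine of the proof is a flat-torus phenomenon in the compact ambient space: since $M$ is compact, $\s{G}:=\Isom(M)^0$ is compact, and the g.o.\ property writes any geodesic of $M$ as $t\mapsto\Exp(tX)\cdot p$ for some $X\in\g{g}$. The closure $T:=\overline{\{\Exp(tX):t\in\R\}}$ is then a torus, its orbit $N:=T\cdot p=\overline{\{\Exp(tX)\cdot p\}}$ is the closure of the geodesic, and $N$ is \emph{flat}: being an orbit of an abelian group its isotropy representation is trivial, so the induced left-invariant metric on the torus $N$ is flat. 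Consequently every geodesic $\gamma$ of $\Sigma\subseteq M$ \emph{recurs} in $M$, as choosing $t_n\to\infty$ with $\Exp(t_nX)\to e$ gives $\gamma(t_n)\to\gamma(0)$ and $\dot\gamma(t_n)\to\dot\gamma(0)$ in $TM$. I would combine this with the rigidity that, since $\Sigma$ is totally geodesic, its tangent distribution $T\Sigma$ is $\bar\nabla$-parallel along each of its own geodesics, i.e.\ $T_{\gamma(t)}\Sigma$ is the parallel transport of $T_{\gamma(0)}\Sigma$ in $M$.

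Next I would analyse $\Sigma$ intrinsically through the de Rham decomposition of its universal cover, $\widetilde\Sigma=\R^k\times\Sigma_1\times\cdots\times\Sigma_r$, with each $\Sigma_i$ irreducible and non-flat; as $\widetilde\Sigma$ is homogeneous, the Euclidean factor $\R^k$ is a totally geodesic flat and each $\Sigma_i$ is homogeneous. If $k\ge2$, any $2$-plane of $\R^k$ exponentiates to a complete totally geodesic flat which descends to an immersed totally geodesic flat submanifold $F\subseteq\Sigma$ of dimension at least $2$, and we are done. If instead $k=0$ and every $\Sigma_i$ is compact, then $\widetilde\Sigma$ is compact, hence so is $\Sigma=\widetilde\Sigma/\pi_1(\Sigma)$, and again we are done. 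Thus the entire content is concentrated in the remaining situation: $\Sigma$ is non-compact yet admits \emph{no} Euclidean de Rham factor of dimension $\ge2$, equivalently no flat totally geodesic $2$-plane.

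To treat this case I would use that a complete non-compact homogeneous Riemannian manifold contains a geodesic \emph{line} $\ell$ (obtained by accumulating minimizing segments to points escaping to infinity and extending the resulting ray to a line via homogeneity). Regarded as a geodesic of the compact g.o.\ space $M$, the line $\ell$ recurs in $M$ by the engine above, whereas intrinsically it is minimizing with $d_\Sigma(\ell(0),\ell(t))=t\to\infty$. Feeding the recurrence $\dot\ell(t_n)\to\dot\ell(0)$ into the parallelism $T_{\ell(t)}\Sigma=P_t\,T_{\ell(0)}\Sigma$ should force the line into a flat factor of rank $\ge2$ — which has been excluded — and thereby force $\Sigma$ to be compact. \textbf{The main obstacle} is precisely this step: promoting the \emph{ambient} recurrence supplied by the flat-torus closure into an \emph{intrinsic} totally geodesic flat (or a contradiction) inside $\Sigma$. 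The difficulty is genuine, because the flat torus $\overline{\ell}$ lives in $M$ and is in general \emph{not} totally geodesic — indeed in a positively curved Hopf-Berger sphere it cannot be — and the immersion $\Sigma\hookrightarrow M$ need not be proper, so one cannot simply intersect $\overline{\ell}$ with $\Sigma$. Concretely, the crux is to rule out a non-compact irreducible non-flat de Rham factor (for instance a totally geodesic $\R\s{H}^2$, or a split factor $\R\times\s{S}^2$) as a totally geodesic submanifold of a compact g.o.\ space, which is exactly where the compactness of $M$, acting through recurrence, must be played against the divergence of geodesics in the non-flat factor.
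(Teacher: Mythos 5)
Your proposal correctly disposes of the easy cases (a Euclidean de Rham factor of dimension $\ge 2$, or a compact universal cover), but the entire difficulty of the lemma sits in the case you leave open, and you openly acknowledge that you cannot close it: ruling out a non-compact $\Sigma$ whose universal cover has Euclidean factor of dimension at most one and at least one non-compact irreducible non-flat factor (your $\R\s H^2$-type scenario). The recurrence supplied by the flat-torus closure of a geodesic in $M$ is too weak for this: as you note yourself, that torus is not totally geodesic, the immersion $\Sigma\hookrightarrow M$ need not be proper, and parallelism of $T\Sigma$ along its geodesics does not convert ambient recurrence into an intrinsic flat. So the proposal is a genuinely incomplete argument, not a proof.

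The idea you are missing is the one the paper builds the proof on: instead of tracking recurrence of individual geodesics, project the ambient Killing vector fields of $M$ to $T\Sigma$. Because $M$ is compact these projected fields are \emph{bounded} Killing fields of $\Sigma$, and because $M$ is g.o.\ they span $T_q\Sigma$ at every point, so the ideal $\g b$ of bounded Killing fields of $\Sigma$ is transitive. The structure theorem for bounded Killing fields \cite{boundedkilling} then gives a transitive group $\s B=\s H\s A$ with $\s H$ compact semisimple and $\s A$ central abelian; non-compactness of $\Sigma$ forces $\dim \s A\ge 1$, the $\s A$-orbits are flat and totally geodesic (constant-length commuting Killing fields), and this is exactly the mechanism that excludes your problematic non-compact irreducible factors. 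Note also that even after this, the case $\dim\s A=1$ is not automatic: the paper must still show that the codimension-one foliation by $\s H$-orbits is totally geodesic and that its orthogonal complement is autoparallel (via polarity of the cohomogeneity-one $\s H$-action), whence the two complementary autoparallel distributions are parallel and $\Sigma$ locally splits off a $2$-dimensional flat. Your de Rham framework would absorb this last step, but without the bounded-Killing-field input you never reach a splitting to begin with.
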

\begin{proof}

Let us assume that   $\Sigma$ is not compact. Then, the projection of the restriction to $\Sigma$, of a Killing vector field of $M$ to $T\Sigma$ is a bounded Killing vector field since $M$ is compact. Since there is a projected Killing vector field in any direction tangent to $\Sigma$, we conclude that $\mathfrak b\cdot q= T_q\Sigma$, where $\mathfrak{b}$ denotes the ideal, in the full isometry algebra of $\Isom(\Sigma)$, of bounded intrinsic Killing vector fields of $\Sigma$.
Let $\s B$ be the (transitive) Lie subgroup of isometries of $\Sigma$ associated with $\mathfrak b$. By making use of \cite [Theorem 1.3] {boundedkilling}, one has that 
$\s B=\s H \s A$  (almost direct product)  where $\s H$, if non-trivial, is compact semisimple and 
$\s A$  is contained in the center of 
$\Isom(\Sigma)$ and thus it is abelian. Observe that $\dim \s A\geq 1$ since $\Sigma$ is not compact. We claim that $\s A$ is closed  in  $\Isom(\Sigma)$.  Let  $\bar{\g{a}}$ be the Lie algebra of the closure of $\s{A}$ in $\Isom(\Sigma)$ and $q\in\Sigma$. Then, since $[\g{b},\g{a}]=0$, we have$$||X^*_q||=|| g_*X^*_q||=||(\Ad(g)X)^*_{g(q)}||=||X^*_{g(q)}||,\quad\text{for every $g\in \s B$ and $X\in\bar{\g{a}}$.}$$  Hence, $X^*$ is a Killing vector field of  constant length, and thus, bounded since $\s{B}$ acts transitively on~$\Sigma$. 
Then, every orbit of $\s A$ is a totally geodesic and flat (closed) submanifold of $\Sigma$.  By  \cite[Lemma~5.3]{DO-nullity2}, see also Remark~5.4 therein, $\dim \s A \cdot q = \dim \s A$, for all $q\in M$. Hence, if $\dim \s A \geq 2$, we are done.

Let us assume that $\dim \s A =1$. Since $\s B =  \s H\s A$ is transitive on $\Sigma$ and $\Sigma$ is not compact, the orbits of the normal subgrop  $\s H$ of $\s B$ induces a $\s B$-invariant foliation $\mathfrak F$ of codimension one on $\Sigma$. We claim that  $\mathfrak F$ is a totally geodesic  foliation. Let us prove that $\s H\cdot q$ is a totally geodesic submanifold of $\Sigma$ for each $q\in\Sigma$. Notice that $X\in\g{b}$ induces a Killing vector field of $\Sigma$ that projects to a vector field $\bar{X}$ in  the quotient $\s H\setminus \Sigma$ given by the orbits of the action of $\s H$ on $\Sigma$.  Then, if $X_q\in T_q (\s H\cdot q)$, we have
\[ \pi_{*h(q)} X_{h(q)}=\bar{X}_{\pi(h(q))}=\bar{X}_{\pi(q)}=\pi_{*q}X_q=0  \quad \text{for every $h\in\s H$},  \]
where $\pi\colon \Sigma\rightarrow \s{H}\setminus\Sigma$ denotes the  quotient map.  Then the restriction of $X$ to $\s H\cdot q$ is always tangent to $\s H\cdot q$. Hence, $\Exp(t X\cdot q)\in \s H\cdot q$ for every $t\in \R$.
This implies that $\s H\cdot q$ is a totally geodesic submanifold of $\Sigma$ since $\Sigma$ is g.o.\ with respect to the presentation $\s B/\s B_q$.

Let $\mathcal F$ be the autoparallel distribution associated to $\mathfrak F$.   Observe that 
$\s H$ acts polarly on $\Sigma$, since it is a cohomogeneity one action. Hence, the perpendicular $1$-dimensional distribution $\mathcal F^\perp$ is autoparallel. Then $\mathcal F$ and $\mathcal F^ \perp$ are parallel distributions. In fact, any two orthogonally complementary autoparallel distributions must be parallel. Then $\Sigma$ locally splits and it admits a $2$-dimensional totally geodesic submanifold (e.g.\ a product of geodesics). 
\end{proof}

\begin{remark}\label{remark:section}
Let $M=\s G/\s K$ be a compact Riemannian homogeneous manifold and let $\Sigma$ be a totally geodesic and flat (immersed) submanifold that contains the base point 
$o= e\cdot \s{K}$. Let us consider the isotropy action of $\s{K}$ on $T_oM$.

Then $\s K\cdot w$ is perpendicular to $T_o\Sigma$ at $w$, for all 
$w\in T_o\Sigma$. In fact, let $Y$ be in the Lie algebra $\mathfrak k$ of $\s{K}$ and let us consider the Killing vector field $Y^*$ induced by $Y$. Since $M$ is compact, $Y^*$ is bounded. Then the projection $\bar Y$ to $T\Sigma$ of $Y^*_{\vert \Sigma}$ is a bounded Killing vector field of $\Sigma$ which vanishes at 
$o$. Then $\bar Y =0$ and so $ Y^*_{\vert \Sigma}$ is always perpendicular to $\Sigma$. Then $\nabla _{T_q\Sigma}Y^* \perp T_q\Sigma$ for all $q\in \Sigma$ and in particular for $q=o$. We have that  $ (\mathrm{Exp}(t Y))_{*o}(w) 
= \mathrm{e}^{t(\nabla  Y^*)_o}(w)$ (see formula (2.2.1) in \cite{DO-nullity}). Hence, differentiating at $t=0$, we obtain that 
$Y.w =\nabla _w Y^*$, where $Y.w:=\tfrac{d}{dt}_{\rvert t=0}  (\mathrm{Exp}(t Y))_{*o}(w)$. Consequently, $Y.T_o\Sigma \perp T_o\Sigma$ for all $Y$ in the Lie algebra of~$\s K$.
\end{remark}

Let $M=\s G/\s K$ be a homogeneous Riemannian manifold and let $\Sigma$ be an embedded  (connected)   complete totally geodesic submanifold that contains the base point $o=e\s K$. Let 
$\s{G}_\Sigma:= \{g\in \s{G}:g\Sigma=\Sigma\}$
be the  Lie sugroup of $\s{G}$ that leaves  $\Sigma$
invariant. Observe that 
$\s{G}'_\Sigma:= \{g\in \s{G}:g_{\vert \Sigma}=\mathrm{Id}_{\vert \Sigma}\}$
is a normal subgroup of $\s{G}_\Sigma$ and $\s{G}_\Sigma/\s{G}'_\Sigma$ acts effectively on $\Sigma$. In the case that $\s{G}$ is compact  there exists a normal subgroup $\s{N}$ of $\s{G}_\Sigma$ such that $\s{G}_\Sigma = \s{G}'_\Sigma\cdot \s{N}$ (almost direct product) and $\s{N}$ acts almost effectively on $\Sigma$. 

The totally geodesic submanifold $\Sigma$ is called 
\textit{extrinsically homogeneous with respect to the presentation group $\s G$ of $M$}, if $\s{G}_\Sigma$ acts transitively on $\Sigma$.  Moreover, $\Sigma$ is \textit{extrinsically homogeneous} if it is extrinsically homogeneous with respect to $\s{G}=\Isom(M)$.

We say that two  complete totally geodesic submanifolds $\Sigma_1$ and $\Sigma _2$ passing through $o$ are \textit{isotropy-congruent at $o$} if there exists $k\in \s K$ such that 
$k\Sigma_1 = \Sigma _2$ or, equivalently, if 
$k_{*o}T_o\Sigma_1= T_o\Sigma _2$.

\begin{lemma}\label{lemma:finclas} Let $M=\s{G}/\s{K}$ be a homogeneous Riemannian manifold and $\Sigma$ be a complete totally geodesic submanifold passing through $o$. If there exist finitely many  isotropy-congruence classes at $o$ of totally geodesic submanifolds isometric to $\Sigma$, then every  complete embedded totally geodesic submanifold of $M$ isometric to $\Sigma$ is extrinsically homogeneous with respect to $\s{G}$. 
\end{lemma}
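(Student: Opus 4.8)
The plan is to reduce extrinsic homogeneity to a statement about tangent spaces and then use the finiteness hypothesis to show that the ``Gauss map'' of an arbitrary copy of $\Sigma$ takes values in a single isotropy-congruence class. Throughout I use that $\s{K}$ is compact, being the isotropy group at $o$ of a group of isometries, and that a complete totally geodesic submanifold through a point is recovered as $\exp$ of its tangent space at that point (Subsection~\ref{subsect:tggeodspheressubmersions}).

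Let $\Sigma'$ be a complete embedded totally geodesic submanifold isometric to $\Sigma$, and fix $p,q\in\Sigma'$. The first observation is that it suffices to produce $h\in\s{G}$ with $h(p)=q$ and $h_{*p}T_p\Sigma'=T_q\Sigma'$. Indeed, $h\Sigma'$ is then a complete totally geodesic submanifold through $q=h(p)$ with $T_q(h\Sigma')=h_{*p}T_p\Sigma'=T_q\Sigma'$, so $h\Sigma'=\exp_q(T_q(h\Sigma'))=\exp_q(T_q\Sigma')=\Sigma'$ by uniqueness; hence $h\in\s{G}_{\Sigma'}$ and $h(p)=q$. Thus transitivity of $\s{G}_{\Sigma'}$ on $\Sigma'$ follows once such an $h$ is found for every pair $p,q$.

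To build $h$, I would encode each tangent space $T_p\Sigma'$ by transporting it to $o$. Let $\mathcal{T}$ be the set of tangent spaces at $o$ of complete totally geodesic submanifolds of $M$ through $o$ that are isometric to $\Sigma$; then $\s{K}$ acts on $\mathcal{T}$, and the hypothesis says exactly that $\mathcal{T}/\s{K}$ is finite. For $p\in\Sigma'$ choose $g_p\in\s{G}$ with $g_p(p)=o$ and set $V_p:=(g_p)_{*p}T_p\Sigma'\in\mathcal{T}$; since two such choices of $g_p$ differ by left multiplication by an element of $\s{K}$, the class $[V_p]\in\mathcal{T}/\s{K}$ is well defined, yielding a map $\phi\colon\Sigma'\to\mathcal{T}/\s{K}$. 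The crucial step is that $\phi$ is locally constant. Taking a local smooth section of the submersion $\s{G}\to M$, $g\mapsto g^{-1}(o)$, near a point $p_0$ lets me choose $g_p$ depending continuously on $p$, so that $V_p$ traces a continuous path in $\mathcal{T}$. Here finiteness and compactness combine decisively: each $\s{K}$-orbit in $\mathcal{T}$ is compact, hence closed in the Grassmannian; since $\mathcal{T}$ is a finite union of such orbits, the complement of any one orbit in $\mathcal{T}$ is again closed, so every orbit is also open in $\mathcal{T}$. Thus the orbit partition of $\mathcal{T}$ is by relatively open-and-closed sets, and the continuous path $V_p$ cannot leave the orbit of $V_{p_0}$. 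As $\Sigma'$ is connected, $\phi$ is therefore constant. Consequently, for any $p,q$ there is $k\in\s{K}$ with $k_{*o}V_p=V_q$, and $h:=g_q^{-1}\,k\,g_p$ satisfies $h(p)=g_q^{-1}(k(o))=q$ and $h_{*p}T_p\Sigma'=T_q\Sigma'$, which closes the argument.

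The main obstacle is precisely the local constancy of $\phi$: a priori the $\s{K}$-orbits of tangent spaces could accumulate on one another along the path $V_p$, and it is only the interplay of the finiteness of $\mathcal{T}/\s{K}$ with the compactness of $\s{K}$ (which forces each orbit to be closed, hence open in the finite union $\mathcal{T}$) that prevents the class from jumping. Everything else is routine bookkeeping with the transporting isometries $g_p$ and the uniqueness of complete totally geodesic submanifolds with prescribed tangent space at a point.
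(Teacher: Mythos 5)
Your proof is correct, but it follows a genuinely different route from the paper's. The paper argues by contraposition: assuming $\s{G}_\Sigma$ does not act transitively on $\Sigma$, its orbits are closed sets of measure zero (via Sard's theorem), so a countable union of them cannot exhaust $\Sigma$; one then extracts a sequence of points $q_n$ in pairwise distinct $\s{G}_\Sigma$-orbits, translates each to $o$, and checks that the resulting copies $g_n\Sigma$ are pairwise non-isotropy-congruent at $o$, yielding infinitely many classes. You instead give a direct argument: you reduce transitivity of $\s{G}_{\Sigma'}$ to finding, for each pair $p,q$, an isometry matching both the points and the tangent planes (correctly invoking uniqueness of the complete totally geodesic submanifold with prescribed tangent space), and then show that the induced map $p\mapsto[V_p]\in\mathcal{T}/\s{K}$ is locally constant because a finite union of compact $\s{K}$-orbits in the Grassmannian is partitioned into relatively clopen pieces. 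Your approach is constructive and avoids measure theory, and it isolates the precise mechanism by which finiteness forces homogeneity; its one extra ingredient is the compactness of $\s{K}$ (needed so that the orbits in $\mathcal{T}$ are closed), which holds here since $\s{K}$ is the isotropy group of a point for a closed subgroup of $\Isom(M)$ --- and in all applications in this paper $\s{G}$ itself is compact. The paper's contrapositive argument trades that topological input for the (equally implicit) facts that the $\s{G}_\Sigma$-orbits are closed and of measure zero. Both proofs are sound; yours is arguably more transparent about where the hypothesis is used.
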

\begin{proof} Let $\Sigma$ be a totally geodesic submanifold of $M$ passing through $o$. Assume that the closed subgroup $\s{G}_\Sigma:= \{g\in \s{G}:g\Sigma=\Sigma\}$ does not act  transitively on $\Sigma$, and let us prove that there are infinitely many isotropy-congruence classes at $o$ of totally geodesic submanifolds isometric to $\Sigma$. Then $\s{G}_\Sigma\cdot p$ is a closed subset of $\Sigma$ for every $p\in \Sigma$. Moreover, the sets $\s{G}_{\Sigma}\cdot p$ do coincide or have empty intersection. By Sard's Theorem, each orbit of $\s{G}_{\Sigma}$ on $\Sigma$ has measure zero in $\Sigma$. Since a countable union of measure zero subsets  of $\Sigma$ has measure zero in $\Sigma$, there is a sequence $\{q_n\}_n\in \mathbb N$ in $\Sigma$ such that $q_i$ do not belong to $\s{G}_\Sigma \cdot q_{j}$ for each $i\neq j$. Now take $g_n\in \s{G}$ in such a way that $g_nq_n =o$, $n\in \mathbb N$, and define $\Sigma_n:= g_n\Sigma$. 
Let $i$ and $j$ be distinct elements in $\mathbb{N}$ and  $k\in \s{K}$ such that $k\Sigma_{i}= \Sigma_{j}$. Then $g':= g_j^{-1}kg_i\in \s{G}_\Sigma$ and $g'q_{i}=q_{j}$. This yields a contradiction showing that $\Sigma _{i}$ is not congruent to $\Sigma_{j}$ for every $i\neq j$, since $q_i$ and $q_j$ do not belong to the same $\s{G}_{\Sigma}$-orbit. Therefore  there are infinitely many congruence classes at $o$ of totally geodesic submanifolds of $M$ isometric to $\Sigma$.
\end{proof}

\begin{remark}
Notice that two totally geodesic submanifolds $\Sigma_1$ and $\Sigma_2$ of $M$ passing through $o\in M$ can be congruent but not isotropy-congruent at $o\in M$. Indeed,  we will see that there are not well-positioned totally geodesic $4$-dimensional  spheres, which are congruent in $\s{S}^{15}_{\O,\tau}$ but not isotropy-congruent at $o\in\s{S}^{15}_{\O,\tau}$, since they are not extrinsically homogeneous, see~Theorem~\ref{th:4inhomo}. 
\end{remark}

\section{Well positioned totally geodesic submanifolds of Hopf-Berger spheres}
\label{sect:wellpos}
The goal of this section is to give a characterization of well-positioned totally geodesic submanifolds in Hopf-Berger spheres.

Motivated by Lemma~\ref{lemma:submersiontg}, we  say that a  totally geodesic submanifold $\Sigma$ of a Hopf-Berger sphere $\s{S}^n_{\F,\tau}$ is \textit{well-positioned at $p\in\Sigma$} if  $T_p \Sigma= (T_p\Sigma\cap \mathcal{H}_p)\oplus(T_p\Sigma\cap \mathcal{V}_p)$, where $\mathcal{H}$ and $\mathcal{V}$ denote the horizontal and vertical distributions of the corresponding Hopf fibration, respectively. Moreover, we say that the totally geodesic submanifold $\Sigma$ is \textit{well-positioned} if the previous property holds for every $p\in\Sigma$.

The following lemma implies that if a totally geodesic submanifold has a vertical or horizontal tangent vector, then it is well-positioned at the corresponding point.
\begin{lemma}
\label{lemma:splitting}
Let $\Sigma$ be a totally geodesic submanifold of $\s{S}^n_{\F,\tau}$, with $\tau\neq 1$. Let $\g{p}_{\Sigma}\subset\g{p}$ be identified with the tangent space of $\Sigma$ at $o\in\s{S}^n_{\F,\tau}$.
Then, if $\g{p}_{\Sigma}\cap\g{p}_i\neq 0$ for some $i\in\{1,2\}$, we have  $\g{p}_{\Sigma}=(\g{p}_{\Sigma}\cap\g{p}_1)\oplus(\g{p}_{\Sigma}\cap\g{p}_2)$ and $\Sigma$ is well-positioned at $o$.
\end{lemma}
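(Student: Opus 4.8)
The plan is to exploit the Jacobi operators attached to the distinguished vertical and horizontal directions, whose spectra separate $\g{p}_1$ from $\g{p}_2$ precisely because $\tau\neq1$. Since $\Sigma$ is totally geodesic with $T_o\Sigma=\g{p}_\Sigma$, Theorem~\ref{th:fundamentaltg} guarantees that $\g{p}_\Sigma$ is invariant under the curvature tensor $R$ at $o$; in particular, for every $v\in\g{p}_\Sigma$ the Jacobi operator $R_v=R(\cdot,v)v$ maps $\g{p}_\Sigma$ into itself. Recall moreover that each $R_v$ is a self-adjoint endomorphism of $\g{p}$, so any $R_v$-invariant subspace is the direct sum of its intersections with the eigenspaces of $R_v$.

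First I would treat the case $i=1$. Choose a unit vector $X\in\g{p}_\Sigma\cap\g{p}_1$. From the curvature computations recorded in Table~\ref{table:jacobi}, $R_X$ acts on $\g{p}_2$ as $\tau\,\Id$, while on $\g{p}_1$ its eigenvalues lie in $\{0,1/\tau\}$. As $\tau>0$ and $\tau\neq1$, the value $\tau$ differs from both $0$ and $1/\tau$, so $\g{p}_2$ is exactly the $\tau$-eigenspace of $R_X$ while $\g{p}_1$ is the sum of the remaining eigenspaces. Applying the self-adjoint decomposition to the $R_X$-invariant subspace $\g{p}_\Sigma$ and regrouping the eigenspaces into their $\g{p}_1$- and $\g{p}_2$-parts yields $\g{p}_\Sigma=(\g{p}_\Sigma\cap\g{p}_1)\oplus(\g{p}_\Sigma\cap\g{p}_2)$.

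The case $i=2$ is entirely parallel. Pick a unit vector $Y\in\g{p}_\Sigma\cap\g{p}_2$. Now $R_Y$ acts on $\g{p}_1$ as $\tau\,\Id$, whereas its eigenvalues on $\g{p}_2$ lie in $\{4-3\tau,1,0\}$. Again the condition $\tau\neq1$ together with $\tau>0$ ensures $\tau\notin\{4-3\tau,1,0\}$, so that $\g{p}_1$ is the full $\tau$-eigenspace of $R_Y$ and is thereby separated from $\g{p}_2$. The same spectral splitting of the $R_Y$-invariant subspace $\g{p}_\Sigma$ gives the desired decomposition.

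Since $\g{p}_1$ and $\g{p}_2$ are identified with the vertical space $\mathcal{V}_o$ and the horizontal space $\mathcal{H}_o$, respectively, the decomposition $\g{p}_\Sigma=(\g{p}_\Sigma\cap\g{p}_1)\oplus(\g{p}_\Sigma\cap\g{p}_2)$ is exactly the assertion that $\Sigma$ is well-positioned at $o$. The only point requiring care, and the crux of the argument, is this spectral separation: one must verify in each case that the eigenvalue carried by the opposite summand ($\tau$ on $\g{p}_2$ under $R_X$, and $\tau$ on $\g{p}_1$ under $R_Y$) coincides with no eigenvalue on the same summand. This is precisely where the hypothesis $\tau\neq1$ enters, and it is also where the round case $\tau=1$ genuinely fails, since there all Jacobi operators are multiples of the identity and no such separation is available.
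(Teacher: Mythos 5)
Your proof is correct and follows essentially the same route as the paper's: both arguments use the curvature-invariance of $\g{p}_\Sigma$, the Jacobi operators $R_X$ (for $X\in\g{p}_\Sigma\cap\g{p}_1$) and $R_Y$ (for $Y\in\g{p}_\Sigma\cap\g{p}_2$), and the fact from Table~\ref{table:jacobi} that for $\tau\neq 1$ the eigenvalue $\tau$ carried on the opposite summand is disjoint from the spectrum on the same summand, so that the self-adjoint spectral decomposition forces $\g{p}_\Sigma=(\g{p}_\Sigma\cap\g{p}_1)\oplus(\g{p}_\Sigma\cap\g{p}_2)$. Your write-up merely makes explicit the spectral-splitting step that the paper leaves implicit.
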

\begin{proof}
Let us identify $\g{p}_\Sigma\subset\g{p}$ with the tangent space at $o\in\s{S}^n_{\F,\tau}$ of a totally geodesic submanifold $\Sigma$ in $\s{S}^n_{\F,\tau}$. Also,  if $X\in\g{p}_\Sigma\cap\g{p}_1$ is a non-zero vector, we have  $R_X\g{p}_{\Sigma}\subset\g{p}_{\Sigma}$. Then, the sets of eigenvalues of $R_{X\rvert\g{p}_1}$ and $R_{X\rvert\g{p}_2}$ have non-trivial intersection if and only if $\tau=1$, see~Table~\ref{table:jacobi}.  If $Y\in\g{p}_\Sigma\cap\g{p}_2$ is a non-zero vector, we have $R_Y\g{p}_{\Sigma}\subset\g{p}_{\Sigma}$. Then, the sets of eigenvalues of $R_{Y\rvert\g{p}_1}$ and $R_{Y\rvert\g{p}_2}$ have non-trivial intersection if and only if $\tau=1$, see~Table~\ref{table:jacobi}.
Hence, if there is some non-zero vector $Z\in\g{p}_{\Sigma}\cap\g{p}_i$, for some $i\in\{1,2\}$, we have $\g{p}_{\Sigma}=(\g{p}_{\Sigma}\cap\g{p}_1)\oplus(\g{p}_{\Sigma}\cap\g{p}_2)$ when $\tau\neq1$. 
\end{proof}

\begin{lemma} \label{lemma:wellatp}
Let $\Sigma$ be a complete totally geodesic submanifold of $\s{S}^{n}_{\F,\tau}$, with $\tau\neq 1$.	
If there exists $p\in \Sigma$ such that $\Sigma$ is well-positioned at $p$, then $\Sigma$ is well-positioned. 
\end{lemma}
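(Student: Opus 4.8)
The plan is to show that the set $U=\{q\in\Sigma:\Sigma\text{ is well-positioned at }q\}$ is open, closed and non-empty in $\Sigma$; since $\Sigma$ is connected this forces $U=\Sigma$. The starting observation is that Lemma~\ref{lemma:splitting}, although stated at the base point $o$, holds verbatim at every point $x\in\s{S}^n_{\F,\tau}$: by homogeneity there is an isometry carrying $x$ to $o$, and every isometry of $\s{S}^n_{\F,\tau}$ preserves the distributions $\mathcal H$ and $\mathcal V$ (by Remark~\ref{rem:isom} the isometry group is the isotropy group of $\bar M$, which preserves the Hopf fibration, cf.\ Subsection~\ref{subsect:rkonetg}). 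Transporting the curvature argument of Lemma~\ref{lemma:splitting} by such an isometry yields the pointwise characterization
\[\Sigma\text{ is well-positioned at }x\iff (T_x\Sigma\cap\mathcal V_x\neq0)\ \text{or}\ (T_x\Sigma\cap\mathcal H_x\neq0),\]
where $\Leftarrow$ is Lemma~\ref{lemma:splitting} and $\Rightarrow$ is immediate. In particular $p\in U$, so $U\neq\emptyset$.

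For closedness I would consider $f(q)=\dim(T_q\Sigma\cap\mathcal V_q)$ and $g(q)=\dim(T_q\Sigma\cap\mathcal H_q)$. Both are upper semicontinuous, because the dimension of the intersection of two continuously varying subspaces can only jump up at special points. By the characterization above, the complement $\Sigma\setminus U$ equals $\{f=0\}\cap\{g=0\}$, and $\{f=0\}=\{f<1\}$ and $\{g=0\}=\{g<1\}$ are open by upper semicontinuity. Hence $\Sigma\setminus U$ is open and $U$ is closed.

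The crux is openness of $U$, and this is where the geometry of the fibration enters. Fix $q\in U$ and write $T_q\Sigma=V_q\oplus H_q$ with $V_q=T_q\Sigma\cap\mathcal V_q$ and $H_q=T_q\Sigma\cap\mathcal H_q$. I would use two facts: horizontal geodesics of a Riemannian submersion stay horizontal, and the fibres of the Hopf fibration are totally geodesic (their tangent space $\g{p}_1$ is $D$- and $R$-invariant, cf.\ Lemma~\ref{lemma:DRinvariant}), so a geodesic tangent to a fibre stays vertical. Thus a geodesic of $\Sigma$ issuing from a point of $U$ in a purely horizontal (resp.\ purely vertical) direction remains horizontal (resp.\ vertical) and, by the characterization, stays inside $U$. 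To fill a whole neighbourhood of $q$ I would assemble these two motions into the broken-geodesic map
\[\Theta\colon V_q\oplus H_q\to\Sigma,\qquad \Theta(v+h)=\overline{\exp}_{q_h}\big(\mathrm{pr}^{\mathcal V}_{q_h}(P_h v)\big),\quad q_h:=\overline{\exp}_q(h),\]
where $P_h$ is parallel transport along the horizontal geodesic $s\mapsto\overline{\exp}_q(sh)$ and $\mathrm{pr}^{\mathcal V}_{q_h}$ is the orthogonal projection onto $\mathcal V_{q_h}$. Since $q_h\in U$ for small $h$, the vector $P_h v\in T_{q_h}\Sigma$ decomposes along $V_{q_h}\oplus H_{q_h}$, so its vertical projection lies in $T_{q_h}\Sigma\cap\mathcal V_{q_h}$; hence the second leg of $\Theta$ is a vertical geodesic of $\Sigma$, its endpoint again lies in $U$, and $\Theta$ takes values in $U$. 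A direct computation gives $d\Theta_0=\mathrm{Id}$ (the curve $t\mapsto\Theta(th)$ is $\overline{\exp}_q(th)$ and the curve $t\mapsto\Theta(tv)$ is $\overline{\exp}_q(tv)$), so $\Theta$ is a local diffeomorphism onto a neighbourhood of $q$ in $\Sigma$, which is therefore contained in $U$. Thus $U$ is open.

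The hard part is precisely this last step: a geodesic in a genuinely mixed direction need not stay well-positioned, so one cannot fill a neighbourhood with single geodesics. The point of $\Theta$ is to reach nearby points by first moving horizontally and then vertically—two motions that individually preserve the well-positioned locus—while still having invertible differential at $q$. Once $U$ is shown to be open, closed and non-empty, connectedness of $\Sigma$ forces $U=\Sigma$, i.e.\ $\Sigma$ is well-positioned.
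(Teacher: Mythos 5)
Your proof is correct and follows essentially the same route as the paper's: both are open--closed--connectedness arguments resting on Lemma~\ref{lemma:splitting} (transported to arbitrary points by the isotropy), on horizontal geodesics of the submersion staying horizontal, and on the fibers being totally geodesic so that vertical geodesics stay vertical. The only cosmetic difference is the device used for openness: the paper fills a neighbourhood of $q$ by the normal exponential map in $\Sigma$ of the vertical slice $\Sigma\cap F(q)$, whereas you fill it with the broken-geodesic map $\Theta$ (horizontal leg, then vertical leg); these are interchangeable, and your verification that $d\Theta_0=\Id$ and that $\Theta$ lands in the well-positioned locus is sound.
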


\begin{proof} For the sake of notational simplicity  and since the following arguments are local, we will assume that $\Sigma$ is embedded. 

Firstly, let us suppose that
$T_p\Sigma\cap \mathcal{V}_p=\{0\}$. Let $q\in \Sigma$ be arbitrary  and let $\gamma$ be a geodesic of $\Sigma$ with $\gamma (0)= p$, $\gamma (1)= q$. Observe that $\gamma$ is a horizontal geodesic since Hopf fibrations are Riemannian submersions and $\gamma'(0)\in\mathcal{H}_p$. Hence, $\gamma '(1)\in \mathcal{H}_q$. Then, by Lemma \ref{lemma:splitting}, 
$T_q \Sigma= (T_q\Sigma\cap \mathcal{H}_q)\oplus(T_q\Sigma\cap \mathcal{V}_q)$, and thus, $\Sigma$ is well-positioned at $q$. 

Now let us assume that 
$T_p\Sigma\cap \mathcal{V}_p\neq \{0\}$. If 
$T_p\Sigma\subset \mathcal{V}_p$, then $\Sigma$ is locally contained around $p$ in the fiber $F(p)$, which is the totally geodesic integral submanifold of $\mathcal{V}$ by $p$. Then   $\Sigma$ is well-positioned around $p$. Thus, we may assume that $T_p\Sigma\cap \mathcal{H}_p\neq \{0\}$. Let $\Sigma'$ be the totally geodesic submanifold obtained by intersecting  $\Sigma$ and $F(p)$ around $p$. 
Let $\nu (\Sigma ')$ be the normal bundle of $\Sigma '$ as a submanifold of $\Sigma$. Observe that $\Sigma$ is well-positioned at $q$ for every element $q\in\Sigma'$. Then  $\nu_q(\Sigma')$ is contained in $\mathcal{H}_q$.  One has that there exists $\delta >0$ such that the  normal exponential map from $\nu_\delta (\Sigma ')$ into $\Sigma$ is a diffeomorhism into its image $U\subset \Sigma$, where $\nu _\delta (\Sigma ')$ are the normal vectors with length less that $\delta$. 
This implies that any 
element $x\in U\setminus\Sigma '$ is of the form $x =\gamma (1)$, with $\gamma$ a geodesic of $\Sigma$ with initial velocity in $\nu_q$ for some $q$. Moreover, $\gamma'(1)\in T_x\Sigma\cap \mathcal{H}_x$. Then, by Lemma \ref{lemma:splitting}, $\Sigma$ is well-positioned at $x$. If $x\in \Sigma '$, then $T_x\Sigma $ has a non-trivial vertical vector and hence, by Lemma \ref{lemma:splitting}, $\Sigma$ is well-positioned at $x$.
We have proved that the points where $\Sigma$ is well-positioned form an open subset of $\Sigma$. 
Moreover, by continuity and Lemma~\ref{lemma:splitting}, we have that the set of points of $\Sigma$ where $\Sigma$ is not well-positioned defines an open subset of $\Sigma$. Since $\Sigma$ is connected, we conclude that $\Sigma$ is well-positioned at every point, and thus, $\Sigma$ is well-positioned. 
\end{proof}

\begin{remark}\label {rem:pull-back} Let $\bar{M}$ be a symmetric space of rank one with minimal absolute value of its sectional curvature equal to one, and  consider  for each $t\in(0,\inj(p))$ the natural map
$h^t\colon \s{S}(T_p \bar{M})\to \s{S}_t(p)$ given by 
$h^t (q):= \overline{\exp} _p(tq) = \gamma _q(t)$, 
where $\s{S}_t(p)$ is endowed with the Riemannian metric induced by $\bar{M}$. We compute the pull-back metric $\langle \cdot,\cdot \rangle^t$  induced by $h^t$ on  $\s{S}(T_p \bar{M})$ at a point $q\in\s{S}(T_p \bar{M})$. 

Let $w\in  T_q\s{S}(T_p \bar{M})$
and let  $c(s)$ be  a curve in $\s{S}(T_p \bar{M})$ with initial conditions $c(0)=q$ and $\dot c(0) = w$.   Then $h^t(c(s)) = 
\gamma _{c(s)}(t)$ is a variation of radial geodesics starting at $p$. Then 
$$h^t_{*} (\dot c(0))= \frac {\partial \,}{\partial s}_{\rvert s=0}\gamma _{c(s)}(t) = J_w(t),$$
where $ J_w(t)$ is the Jacobi field along $\gamma _q(t)$ with initial conditions $J(0)=0$ and 
$J'_w(0)= w$. Hence,
$$\langle w, w\rangle^t = \langle  J_w(t), J_w(t)\rangle.$$
There are two complementary distributions $\mathcal F _1$ and $\mathcal F_2= \mathcal F_1^\perp$ (with respect to $\langle\cdot,\cdot\rangle^1$) on $\s{S}(T_p \bar{M})$ defined by  the eigenspaces associated with the eigenvalues $\lambda _1=\pm 4$ and $\lambda _2 = \pm 1$ of the Jacobi operator 
$\bar{R}_{\cdot,q}q$ of $\bar{M}$ at $p$ (the plus sign is for the compact case and the minus sign for the non-compact case). One has that $(h^t)_*(\mathcal F_i) = \mathcal D^t_i$, for each $i\in\{1,2\}$ (see Section~\ref{subsect:rkonetg} for the definition of $ \mathcal D^t_i$). Let $\widetilde{w}_i$ be the parallel vector field along $\gamma_q$ with initial condition $\widetilde{w}_i(0)=w_i$. If $w_i\in \mathcal F_i(q)$, then  $J_{w_i}(t)=  \frac {1}{\sqrt { \lambda _i}}\sin (\sqrt {\lambda _i}\,  t)\tilde w _i(t)$ if $\bar{M}$ is of compact type, or 
$J_{w_i}(t) = \frac {1}{\sqrt { -\lambda _i}} \sinh  (\sqrt {-\lambda _i}\,  t)\tilde w _i(t)$ if $\bar{M}$ is of non-compact type. 
In addition to that, $\langle \mathcal F_1 , \mathcal F_2  \rangle^t = 0$  and 
\begin{equation*}
	\begin{aligned}
		\langle w_1  , w_1   \rangle^t & = \tfrac{1}{4}\sin^2 (2t)\Vert w_1\Vert^2,&
		\langle w_2,w_2   \rangle^t&= \sin^2 ( t)\Vert w_2\Vert^2, \hspace{0.3cm} \text{when $\bar{M}$ is  compact, or}\\
		\langle w_1  , w_1   \rangle^t &= \tfrac{1}{4}\sinh^2 (2t)\Vert w_1\Vert^2,&
		\langle w_2  , w_2   \rangle^t&= \sinh^2 ( t)\Vert w_2\Vert^2, \text{
			when $\bar{M}$ is non-compact.}
	\end{aligned}
\end{equation*}
Recall that $\s{S}^n_{\F,1}$ is a round sphere of constant sectional curvature equal to $1$. Moreover, $\s{S}^n_{\F,\tau}$ is homothetic to the geodesic sphere $\s{S}_t(p)$ via a homothety of ratio $\alpha=\sin(t)$ or $\alpha=\sinh(t)$, depending on whether $\bar{M}$ is of compact or non-compact type, respectively. Notice that this proves that  the radius $t$ of the geodesic sphere and $\tau$ are related by $t=\arccos(\sqrt{\tau})$ in the compact setting, as we pointed out in Equation~(\ref{eq:radius_tau_homothety}), since $\tau=\tfrac{\sin^2(2t)}{4\sin^2(t)}=\cos^2(t)$. The relation $t=\arccosh(\sqrt{\tau})$ in the non-compact setting is obtained analogously.
\end{remark}

\begin{theorem}
\label{th:wellpos}
Let $\Sigma$ be a totally geodesic  submanifold of $\s{S}^n_{\F,\tau}$, $\tau\neq 1$, and let $\bar{M}$ be the corresponding rank one symmetric space such that  $\s{S}^n_{\F,\tau}$ arises as a geodesic sphere of $\bar M$ centered at $p\in \bar{M}$. Then, the following statements are equivalent:
\begin{enumerate}[i)]
	\item $\Sigma$ is a  well-positioned totally geodesic submanifold of $\s{S}^n_{\F,\tau}$.
	\item $T_q\Sigma$ has a vertical or a horizontal non-zero vector for every $q\in \Sigma$.
	\item $T_q\Sigma$ is invariant under the shape operator of $\s{S}^n_{\F,\tau}$ in $\bar{M}$ for every $q\in \Sigma$.
	\item $\Sigma$ is the intersection of a totally geodesic submanifold $M$ of $\bar{M}$ containing $p\in\bar M$ with $\s{S}^n_{\F,\tau}$ regarded as a geodesic sphere of $\bar{M}$.
\end{enumerate}
\end{theorem}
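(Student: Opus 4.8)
The plan is to prove the equivalence by establishing a cycle of implications, $i)\Rightarrow ii)\Rightarrow iii)\Rightarrow iv)\Rightarrow i)$, exploiting the geometric fact that $\s{S}^n_{\F,\tau}$ is realized as a geodesic sphere $\s{S}_t(p)$ of the rank one symmetric space $\bar M$, and that the horizontal/vertical distributions $\mathcal H,\mathcal V$ coincide with the shape-operator eigendistributions $\mathcal D^t_2,\mathcal D^t_1$ identified in Remark~\ref{rem:shapeoperatoreigenspaces}. First I would dispose of $i)\Rightarrow ii)$, which is immediate: if $\Sigma$ is well-positioned at every $q$, then $T_q\Sigma=(T_q\Sigma\cap\mathcal H_q)\oplus(T_q\Sigma\cap\mathcal V_q)$, and since $\dim\Sigma\ge 1$ at least one summand is nonzero, producing a horizontal or vertical vector.

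For $ii)\Rightarrow iii)$ I would argue pointwise. Fix $q\in\Sigma$ and assume $T_q\Sigma$ contains a nonzero vector lying in $\mathcal V_q=\mathcal D^t_1$ or in $\mathcal H_q=\mathcal D^t_2$. By Lemma~\ref{lemma:splitting} (transported to the point $q$ via homogeneity, or equivalently applied in the identification of $T_q\Sigma$ with a subspace $\g{p}_\Sigma\subset\g{p}$), the presence of a single nonzero vector in one of the two distributions forces the full splitting $T_q\Sigma=(T_q\Sigma\cap\mathcal V_q)\oplus(T_q\Sigma\cap\mathcal H_q)$. Now by Remark~\ref{rem:shapeoperatoreigenspaces} the shape operator $\mathcal S^t$ of the geodesic sphere acts as the scalar $\beta^t_1$ on $\mathcal D^t_1=\mathcal V_q$ and as $\beta^t_2$ on $\mathcal D^t_2=\mathcal H_q$; since each summand of $T_q\Sigma$ sits inside an eigenspace of $\mathcal S^t$, the sum $T_q\Sigma$ is $\mathcal S^t$-invariant. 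This gives $iii)$.

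The step $iii)\Rightarrow iv)$ is the geometric heart and the part I expect to be the \emph{main obstacle}. Here I would use Corollary~\ref{cor:geodsphereinter} in reverse: given that $T_q\Sigma$ is invariant under the shape operator of the geodesic sphere, I want to produce a totally geodesic submanifold $M$ of $\bar M$ through $p$ whose intersection with $\s{S}_t(p)$ recovers $\Sigma$. The natural candidate is $M:=\overline{\exp}_p(V)$, where $V\subset T_p\bar M$ is the linear span (the ``cone'') over $T_q\Sigma$ pulled back along the radial direction $\dot\gamma(t)$ together with $\R\dot\gamma(0)$. The difficulty is showing $V$ is a totally geodesic subspace of $\bar M$, i.e.\ a Lie triple system; the key leverage is that $T_q\Sigma$ decomposes into pieces $V_i:=T_q\Sigma\cap\mathcal D^t_i$, and each $\mathcal D^t_i$ is the parallel transport of the Jacobi eigenspace $\mathbb V_i(v)$ along $\gamma_v$, so that $V$ is built from subspaces of $\mathbb V_1(v)\oplus\R v$ and $\mathbb V_2(v)\oplus\R v$. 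Using the well-known structure of totally geodesic submanifolds of rank one symmetric spaces (Figure~\ref{fig:tgrk1}) and the Helgason-sphere description of $\mathbb V_1(v)\oplus\R v$ in Remark~\ref{fact}, together with the analyticity criterion of Theorem~\ref{th:fundamentaltg}, I would verify that $V$ is curvature-invariant and hence totally geodesic. Then Remark~\ref{remark:CC1} identifies $M\cap\s{S}_t(p)=\overline{\exp}_p(B^E_{\pi/2}(0)\cap V)\cap\s{S}_t(p)$ as exactly $\Sigma$.

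Finally, $iv)\Rightarrow i)$ follows from Corollary~\ref{cor:geodsphereinter} combined with the splitting of $T_q\Sigma$ into shape-operator eigenspaces. If $\Sigma=M\cap\s{S}_t(p)$ with $M$ totally geodesic in $\bar M$, then by the corollary $T_q\Sigma$ is invariant under $\mathcal S^t$ for every $q\in\Sigma$; since $\mathcal S^t$ has exactly the two distinct eigenvalues $\beta^t_1\neq\beta^t_2$ with eigenspaces $\mathcal V_q$ and $\mathcal H_q$ (Remark~\ref{rem:shapeoperatoreigenspaces}), any $\mathcal S^t$-invariant subspace splits as the direct sum of its intersections with $\mathcal V_q$ and $\mathcal H_q$. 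Hence $T_q\Sigma=(T_q\Sigma\cap\mathcal V_q)\oplus(T_q\Sigma\cap\mathcal H_q)$ for all $q$, which is precisely the definition of $\Sigma$ being well-positioned. This closes the cycle and completes the proof.
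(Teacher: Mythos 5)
Your treatment of $i)\Leftrightarrow ii)$, $ii)\Rightarrow iii)$ and $iv)\Rightarrow i)$ is correct and coincides with the paper's argument (Lemma~\ref{lemma:splitting} plus the fact that $\beta_1^t\neq\beta_2^t$, and Corollary~\ref{cor:geodsphereinter} together with Remark~\ref{remark:CC1} for the converse direction). The problem is the implication $iii)\Rightarrow iv)$, which you correctly identify as the heart of the theorem but do not actually prove. You propose to take $V=\R v\oplus\s{P}^{-1}_{\gamma_v(t)}(T_q\Sigma)\subset T_p\bar M$ and to verify that $V$ is a Lie triple system ``using the well-known structure of totally geodesic submanifolds of rank one symmetric spaces, the Helgason-sphere description of $\mathbb V_1(v)\oplus\R v$, and the analyticity criterion.'' These ingredients are not sufficient. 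The only pointwise information you feed into the construction is that $T_q\Sigma$ splits as a sum of a subspace of $\mathbb V_1^t(v)$ and a subspace of $\mathbb V_2^t(v)$; but an arbitrary subspace of $\R v\oplus\mathbb V_1(v)\oplus\mathbb V_2(v)$ respecting this splitting is in general \emph{not} curvature invariant in $\bar M$ (e.g.\ in $\C\s P^n$ the subspace $\R v\oplus\R Jv\oplus W$ with $W\subset(\C v)^\perp$ a generic real subspace is not a Lie triple system). Any correct argument along your lines must use the full strength of the hypothesis that $\Sigma$ is totally geodesic in the geodesic sphere --- i.e.\ the invariance of $T_q\Sigma$ under the \emph{intrinsic} curvature tensor of $\s{S}_t(p)$ --- and convert it into curvature invariance in $\bar M$ via the Gauss and Codazzi equations of the geodesic sphere, none of which appear in your sketch. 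As written, the central step is a statement of intent rather than a proof.

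For comparison, the paper avoids the Lie-triple-system verification altogether. It uses the radial maps $f_s\colon\s{S}_t(p)\to\s{S}_s(p)$, $\overline{\exp}_p(tv)\mapsto\overline{\exp}_p(sv)$, computes the pull-back metrics (Remark~\ref{rem:pull-back}), and applies Lemma~\ref{lemma:submersiontg} --- which is exactly where the shape-operator invariance, i.e.\ the splitting of $T_q\Sigma$ into $\mathcal D_1^t$ and $\mathcal D_2^t$, is used --- to conclude that $f_s(\Sigma)$ is totally geodesic in $\s{S}_s(p)$ for every $s$. The union $\widehat\Sigma=\bigcup_{0<s<\inj(p)}f_s(\Sigma)$ is then a totally geodesic submanifold of the punctured ball, its complete extension $\widetilde\Sigma$ contains $p$, and Remark~\ref{remark:CC1} gives $\Sigma=\widetilde\Sigma\cap\s{S}_t(p)$. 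If you want to salvage your single-point approach you would need to supply the Gauss--Codazzi computation showing that $T_q\Sigma\oplus\R\xi_q$ is curvature invariant in $T_q\bar M$; otherwise the dynamical argument of the paper is the cleaner route.
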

\begin{proof}
First of all, let us identify $\s{S}^n_{\F,\tau}$ with the geodesic sphere $\s{S}_t(p)$ of the appropiate symmetric space $\bar{M}$ of rank one centered at $p\in\bar{M}$ and  $t=\arccos(\sqrt{\tau})$, in the compact case; or $t=\arccosh(\sqrt{\tau})$,  in the non-compact case, see Equation~(\ref{eq:radius_tau_homothety}).

Now, observe that \textit{i)} and~\textit{ii)} are equivalent by Lemma~\ref{lemma:splitting}. Moreover, let us recall that \textit{i)} and~\textit{iii)} are equivalent since the only eigenvalue of the shape operator $\mathcal{S}^t$ of $\s{S}_t(p)$ restricted to the horizontal distribution is different to the only eigenvalue of the shape operator  restricted to the vertical distribution, for every $t\in(0,\inj(p))$, see~Equations~(\ref{eq:b1}), (\ref{eq:b2}) and the paragraph below them.

Now, by Corollary~\ref{cor:geodsphereinter} and Remark~\ref{remark:CC1}, the intersection of a totally geodesic submanifold of $\bar M$ that contains $p$ with  $\s{S}_t(p)$ is totally geodesic in $\s{S}_t(p)$ and its tangent space is invariant under $\mathcal{S}^t$. Hence, \textit{iv)} implies \textit{iii)}.

Let us prove that \textit{iii)}  implies \textit{iv)}.
Let $0<s , t <\inj(p)$ and consider the map $f_s: \s{S}_t(p)\to \s{S}_{s}(p)$ given by 
$f_s(\overline{\exp} _p (tv) ) = \overline{\exp} _p (sv)$,  for all $v\in T_pM$ of unit length. Then the pull-back by $f_s$ of the Riemannian metric of 
$\s{S}_{s}(p)$, if $M$ is of  compact type, is given by modifying the metric on $\s{S}_t(p)$ by the factor 
$\frac{\sin^2 (2s)}{\sin^2 (2t)}$ on the distribution $\mathcal D_1^t$ and by the factor 
$\tfrac {\sin^2 (  s)}{\sin^2 (t)}$ on the distribution $\mathcal D_2^t$ (in the non-compact case the trigonometric functions have to be replaced by the corresponding hyperbolic functions).  By Remark~\ref{rem:pull-back}, if we   rescale this pull-back metric by a factor of $\frac{\sin^2 (t) }{\sin^2 (s)}$, we are under the assumptions of  Lemma~\ref {lemma:submersiontg}.  This shows that $\Sigma$ is also totally geodesic with respect to the pull-back metric for every $s\in(0,\inj(p))$. Or equivalently, $f_s(\Sigma)$ is a totally geodesic submanifold of $\s{S}_{s}(p)$. Furthermore, using that $T_q\Sigma$ is invariant under the shape operator for every $q$ and that the integral curves of the outer unit normal vector field to the geodesic spheres are geodesics,  it is standard to show that $\widehat{\Sigma}:=\bigcup _{0<s < \inj(p)}f_s(\Sigma)$ is a totally geodesic submanifold of the open ball $B_{\inj(p)} (p)$ which contains a piece of the radial geodesic $\gamma_v(t)
=\overline{\exp}_p(tv)$.  Since a totally geodesic submanifold of a symmetric space extends to a complete totally geodesic submanifold, the complete extension $\widetilde {\Sigma}$ of $\widehat{\Sigma}$ contains $p$. By making use of Remark~\ref{remark:CC1}, we have $\Sigma=\widetilde{\Sigma}\cap \s{S}_t(p)$, and  it follows that \textit{iii)}  implies~\textit{iv)}.
\end{proof}
Notice that if $\dim(\Sigma)\ge\dim\F$, then $T_p\Sigma$ has a  horizontal non-zero vector for every $p\in\Sigma$. Thus, by combining Lemma~\ref{lemma:splitting} and Theorem~\ref{th:wellpos} we have the following:
\begin{corollary}
\label{cor:tghighdimension}
Let $\Sigma$ be a totally geodesic submanifold of $\s{S}^n_{\F,\tau}$. If $\dim\Sigma\ge\dim\F$, then $\Sigma$ is well-positioned.
\end{corollary}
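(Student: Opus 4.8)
The plan is to reduce the statement to condition \textit{ii)} of Theorem~\ref{th:wellpos}, namely that $T_q\Sigma$ contains a non-zero horizontal vector for every $q\in\Sigma$, and then to invoke the equivalence \textit{i)} $\Leftrightarrow$ \textit{ii)} established there. The only genuine content is a dimension count, and the hypothesis $\dim\Sigma\ge\dim\F$ is exactly the threshold that forces $T_q\Sigma$ to meet the horizontal distribution non-trivially at every point.

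First I would recall that the fiber of the Hopf fibration underlying $\s{S}^n_{\F,\tau}$ is a sphere of dimension $\dim\F-1$, so the vertical distribution $\mathcal{V}$ has rank $\dim\F-1$ and consequently $\dim\mathcal{H}_q=n-(\dim\F-1)$ for every $q\in\s{S}^n_{\F,\tau}$, where $n=\dim\s{S}^n_{\F,\tau}$. Then, for an arbitrary $q\in\Sigma$, the standard inequality for the dimension of an intersection of subspaces of $T_q\s{S}^n_{\F,\tau}$ gives
\[
\dim\bigl(T_q\Sigma\cap\mathcal{H}_q\bigr)\;\ge\;\dim T_q\Sigma+\dim\mathcal{H}_q-n\;=\;\dim\Sigma-(\dim\F-1)\;\ge\;1,
\]
where the last inequality is precisely the assumption $\dim\Sigma\ge\dim\F$. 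Hence $T_q\Sigma$ contains a non-zero horizontal vector at every $q\in\Sigma$.

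This is exactly condition \textit{ii)} of Theorem~\ref{th:wellpos}, so its equivalence with \textit{i)} immediately yields that $\Sigma$ is well-positioned. Alternatively, since the argument is local and $\s{S}^n_{\F,\tau}$ is homogeneous, one may place the base point $o$ at any $q\in\Sigma$ and apply Lemma~\ref{lemma:splitting} to the horizontal vector just produced to conclude that $\Sigma$ is well-positioned at $q$; for a complete $\Sigma$ a single such point together with Lemma~\ref{lemma:wellatp} would already suffice. There is essentially no obstacle here: the identity $\dim\mathcal{V}=\dim\F-1$ is immediate from the description of the fibers, and once a non-zero horizontal vector is found at each point the conclusion is a direct citation. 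The only point requiring mild care is that the horizontal vector must be exhibited at \emph{every} point of $\Sigma$ rather than merely at the base point, which is why phrasing the conclusion through condition \textit{ii)} of Theorem~\ref{th:wellpos} is the most economical route.
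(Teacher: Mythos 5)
Your proof is correct and follows essentially the same route as the paper: the paper's justification is precisely that $\dim\Sigma\ge\dim\F$ forces a non-zero horizontal vector in $T_p\Sigma$ at every point (your dimension count makes this explicit, since $\rank\mathcal{V}=\dim\F-1$), after which Lemma~\ref{lemma:splitting} and Theorem~\ref{th:wellpos} give the conclusion. The only caveat, shared with the paper's own statement, is that the cited results assume $\tau\neq1$.
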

\begin{remark}
\label{rem:intersect}
Firstly, observe that the intersection between a geodesic sphere and a totally geodesic submanifold passing through the origin is transverse. Hence, the well-positioned totally geodesic submanifolds of Hopf-Berger spheres are embedded.

Secondly, notice that the intersection of a complete totally geodesic submanifold $M$ of $\bar{M}$ passing through $p$ with a geodesic sphere $\s{S}_t(p)$ of $\bar{M}$ is a geodesic sphere of ${M}$ of radius $t>0$. Indeed, let us denote by $\s{S}'_t(p)$ the geodesic sphere of $M$ of radius $t>0$ centered at $p$. Let us denote by $d_M$ and $d_{\bar{M}}$ the distances on $M$ and $\bar M$ induced by the respective Riemannian metrics. Then,
\[\s{S}_t(p)\cap M=\{q\in M: d_{\bar{M}}(p,q)=t\}=\{q\in M: d_{M}(p,q)=t\}=\s{S}'_t(p),\]
where we have used that $M$ is a complete totally geodesic submanifold of $\bar M$.
\end{remark}
\begin{remark}
\label{rem:congwellpos}
Notice that  the isotropy of $\bar M$ at $p$ is  equal to $\Isom(\s S^n_{\F,\tau})$, see  Remark~\ref{rem:isom}. Moreover, for each $i\in\{1,2\}$, let $\s{S}_i(t)$ be a geodesic sphere of a rank one symmetric space $M_i$ of radius $t\in(0,\inj(M_i))$. Then,  $\s{S}_1(t)$ and $\s{S}_2(t)$ are isometric if and only if $M_1$ and $M_2$ are isometric.   Thus, by Remark~\ref{rem:intersect}, two well-positioned totally geodesic submanifolds $\Sigma_1$ and $\Sigma_2$ of $\s{S}_t(p)$ passing through $o\in\s{S}_t(p)$   are isotropy-congruent at $o\in\s{S}_t(p)$ if and only if they are the intersection of $\s{S}_t(p)$ with  totally geodesic submanifolds $M_1$ and $M_2$ of $\bar M$ which are isotropy-congruent at $p$. Moreover, observe that $M_1$ and $M_2$ are isotropy-congruent at $p$ if and only if they are congruent in $\bar M$ and contain $p\in\bar M$. Hence,   by using the classification of totally geodesic submanifolds in symmetric spaces of rank one~(see Figure~1) we obtain the classification in Table~\ref{table:hopfbergertgwellpos}.  Consequently, two  isometric well-positioned totally geodesic submanifolds of $\s{S}^n_{\F,\tau}$ passing through $o\in\s{S}^n_{\F,\tau}$ must be isotropy-congruent at $o$. This implies, by Lemma~\ref{lemma:finclas}, that every well-positioned totally geodesic submanifold of $\s{S}^n_{\F,\tau}$ is extrinsically homogeneous.
\end{remark}

\subsection{Examples of well-positioned totally geodesic submanifolds in Hopf-Berger spheres}
\label{subsect:examplestg}
A good source of examples of well-positioned totally geodesic submanifolds of Hopf-Berger spheres is provided by fixed-point components of isometries as it shows the following lemma.
\begin{lemma}
\label{lemma:fixedpoint}
Let $\Omega$ be a subset of $\Isom(\s{S}^n_{\F,\tau})$. Then, the set of fixed points of $\Omega$ is a well-positioned totally geodesic submanifold of $\s{S}^n_{\F,\tau}$.
\end{lemma}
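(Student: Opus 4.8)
The plan is to reduce the statement to a pointwise fact about the linear isotropy, combining two ingredients: the classical fact that fixed-point sets of isometries are totally geodesic, and the fact that (for $\tau\neq 1$) every isometry of $\s{S}^n_{\F,\tau}$ preserves the horizontal and vertical distributions.

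First I would note that $\mathrm{Fix}(\Omega)$ coincides with the fixed-point set of the closed subgroup $\s{H}\le\Isom(\s{S}^n_{\F,\tau})$ generated by $\Omega$, so we may assume $\Omega=\s{H}$ is a closed group. Fix a point $p\in\mathrm{Fix}(\s{H})$ and let $W=\{v\in T_p\s{S}^n_{\F,\tau}: h_{*p}v=v \text{ for all } h\in\s{H}\}$ be the common fixed subspace of the linear isotropy. For $v\in W$ the geodesic $t\mapsto\exp_p(tv)$ is fixed pointwise by $\s{H}$, since $h(\exp_p(tv))=\exp_{h(p)}(t\,h_{*p}v)=\exp_p(tv)$; conversely every fixed point near $p$ arises this way. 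Hence the connected component $\Sigma$ of $\mathrm{Fix}(\s{H})$ through $p$ is, locally, $\exp_p(W\cap B^E_\rho(0))$, an embedded totally geodesic submanifold with $T_p\Sigma=W$ (this is the standard statement recorded in \cite[Lemma A.1]{puttmann}).

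The key step is to show that each $h_{*p}$ preserves the splitting $T_p\s{S}^n_{\F,\tau}=\mathcal{H}_p\oplus\mathcal{V}_p$. Here I would invoke Remark~\ref{rem:isom}, which identifies $\Isom(\s{S}^n_{\F,\tau})$ with the isotropy $\bar{\s{K}}$ of the ambient rank one symmetric space $\bar{M}$ at the center of the geodesic sphere. Consequently every isometry of $\s{S}^n_{\F,\tau}$ is the restriction of an isometry of $\bar{M}$ fixing that center, and therefore preserves the geodesic sphere together with its shape operator $\mathcal{S}^t$. By Remark~\ref{rem:shapeoperatoreigenspaces} the distributions $\mathcal{V}$ and $\mathcal{H}$ are exactly the two eigendistributions of $\mathcal{S}^t$ associated with the \emph{distinct} eigenvalues $\beta_1^t\neq\beta_2^t$; since an isometry commutes with $\mathcal{S}^t$, it must map each eigendistribution to itself, so $g_{*q}\mathcal{V}_q=\mathcal{V}_{g(q)}$ and $g_{*q}\mathcal{H}_q=\mathcal{H}_{g(q)}$ for all $g$ and $q$.

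Finally I would combine the two facts. Writing $v\in W$ as $v=v_{\mathcal{H}}+v_{\mathcal{V}}$ with $v_{\mathcal{H}}\in\mathcal{H}_p$ and $v_{\mathcal{V}}\in\mathcal{V}_p$, the relation $h_{*p}v=v$ together with the $h_{*p}$-invariance of the summands forces, by uniqueness of the decomposition, $h_{*p}v_{\mathcal{H}}=v_{\mathcal{H}}$ and $h_{*p}v_{\mathcal{V}}=v_{\mathcal{V}}$ for every $h\in\s{H}$; hence both $v_{\mathcal{H}}$ and $v_{\mathcal{V}}$ lie in $W=T_p\Sigma$. This gives $T_p\Sigma=(T_p\Sigma\cap\mathcal{H}_p)\oplus(T_p\Sigma\cap\mathcal{V}_p)$, which is precisely the condition that $\Sigma$ be well-positioned at $p$. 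Since $p\in\Sigma$ was arbitrary, $\Sigma$ is well-positioned, and the same holds for each connected component of $\mathrm{Fix}(\Omega)$. The main obstacle is the invariance of the distributions under the isometry group; once that is established through the identification with $\bar{\s{K}}$ and the shape-operator characterization of $\mathcal{H}$ and $\mathcal{V}$, the well-positioned property is immediate from elementary linear algebra.
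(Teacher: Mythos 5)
Your proof is correct, and it takes a genuinely different route from the paper's. The paper lifts $\Omega$ via Remark~\ref{rem:isom} to isometries of the ambient rank one symmetric space $\bar M$ fixing the center $p$, observes that the fixed-point set of $\Omega$ in $\bar M$ is a complete totally geodesic submanifold $\bar\Sigma$ through $p$, identifies $\mathrm{Fix}(\Omega_{\vert\s{S}_t(p)})$ with $\bar\Sigma\cap\s{S}_t(p)$, and then invokes the implication \textit{iv)}$\Rightarrow$\textit{i)} of Theorem~\ref{th:wellpos}. You instead stay inside the Hopf-Berger sphere: you use the classical fact that fixed-point sets of isometries are totally geodesic with tangent space the common fixed subspace $W$ of the linear isotropy, and then verify the well-positioned condition directly by showing that every isometry preserves the eigendistributions $\mathcal{V}$ and $\mathcal{H}$ of the shape operator (since $\beta_1^t\neq\beta_2^t$, via Remarks~\ref{rem:isom} and~\ref{rem:shapeoperatoreigenspaces}), whence the invariant splitting $T_p\s{S}^n_{\F,\tau}=\mathcal{H}_p\oplus\mathcal{V}_p$ forces $W=(W\cap\mathcal{H}_p)\oplus(W\cap\mathcal{V}_p)$. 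Your argument is more elementary and self-contained, needing only the shape-operator characterization of the two distributions rather than the full characterization theorem; what the paper's approach buys in exchange is the explicit identification of the fixed set as $\bar\Sigma\cap\s{S}_t(p)$ (hence its connectedness via Remark~\ref{remark:CC1} and its position in Table~\ref{table:hopfbergertgwellpos}), which is used downstream. One shared caveat: both arguments rely on Remark~\ref{rem:isom}, which holds only for $\tau\neq1$ (for the round sphere $\tau=1$ the statement would in fact fail for a generic great subsphere), so the implicit standing assumption $\tau\neq1$ should be kept in mind.
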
	
\begin{proof}
Let us regard the Hopf-Berger spheres as geodesic spheres $\s{S}_t(p)$ of an appropiate rank one symmetric space $\bar M$. Recall that every isometry of $\s{S}_t(p)$ can be identified with a certain isometry of $\bar M$ lying in the isotropy at $p\in\bar M$, see~Remark~\ref{rem:isom}.
Let $\Omega$ be a subset of isometries of $\bar M$ that fix $p$. Consider the totally geodesic submanifold $\bar \Sigma$ of $\bar M$ given by the connected component by $p$ of the fixed set of $\Omega$.  Let $\Sigma \subset \s{S}_t(p)$ be  the  set of fixed points of $\Omega_{\vert \s{S}_t(p)}$, where $\Omega_{\vert \s{S}_t(p)}$ denotes the set of elements in $\Omega$ restricted to $\s{S}_t(p)$.  	
Then $\Sigma$ is a totally geodesic submanifold of $ \s{S}_t(p)$ and $\Sigma = \bar{\Sigma} \cap \s{S}_t(p)$.
Thus, Theorem 
\ref{th:wellpos} implies that $\Sigma$ is a well positioned totally geodesic submanifold of $\s{S}_t(p)$.

Also notice that by  \ref{remark:CC1},  $\Sigma$ is connected. In fact, $\Sigma$ can be regarded, using normal coordinates at $p$, as the intersection of 
$T_p\bar \Sigma$ with the Euclidean sphere centered at $0$ and radius~$t$. \qedhere

\end{proof}

It will be convenient for the subsequent discussions to posses explicit descriptions of the tangent space of some well-positioned totally geodesic submanifolds of $\s S^n_{\F,\tau}$. Consider the following subspaces $(1)_{\F}$ and $(2)_{\F}$ of $\g{p}$, where $\F\in\{\C,\H,\O\}$.

\begin{enumerate}
\item[$(1)_{\C}$] $\g{p}_{\Sigma}=V$, where $V$ is a totally real subspace of $\g{p}_2$.
\item[$(2)_{\C}$]  $\g{p}_{\Sigma}=\g{p}_1\oplus V$, where $V\subset\g{p}_2$ is totally complex. 
\item[$(1)_{\H}$] $\g{p}_{\Sigma}=\R X \oplus V$, where $X\in\g{p}_1$ and $V$ is a totally complex subspace of $\g{p}_2$ with respect to $J_X$, see   in Subsection~\ref{subsect:geomhopf} for the definition of the map $J$, and also   \cite[\S2.4 \textup{(2)}]{DDR}.
\item[$(2)_{\H}$] $\g{p}_{\Sigma}=\g{p}_1 \oplus V$, where $V$ is invariant under $J_X$ for every $X\in\g{p}_1$.
\item[$(1)_{\O}$] $\g{p}_{\Sigma}=\spann\{X_i\}_{i\in I} \oplus \H_I Y_1$, where $I$ is a line of the Fano plane (see Figure~\ref{fig:fanoplane}).
\item[$(2)_{\O}$]
$\g{p}_{\Sigma}=\g{p}_1$.
\end{enumerate}
It is clear from Equations~(\ref{eq:dtensorcomplex}-\ref{eq:rtensoroctonions0}) that these subspaces are invariant under the difference and curvature tensor. Hence, by Lemma~\ref{lemma:DRinvariant}, they induce totally geodesic submanifolds in~$\s{S}_{\mathbb{F},\tau}^{n}$. Additionally, it can be checked that these subspaces are invariant under $U$, see Subsection~\ref{subsect:homprelim} for the definition of $U$.

On the one hand, if we consider Example~$(1)_{\F}$, for each $\F\in\{\C,\H,\O\}$, we have that $\g{k}_\Sigma:=[\g{p}_{\Sigma},\g{p}_{\Sigma}]_\g{k}$ is isomorphic to $\g{so}_k$, $\g{u}_k$, or $\g{sp}_1\oplus\g{sp}_1$, where $k$ is the real or complex dimension of~$V$, respectively. The Lie algebra generated by $\g{p}_{\Sigma}$ is $\g{g}_{\Sigma}=\g{k}_\Sigma\oplus\g{p}_{\Sigma}$, which is isomorphic to $\g{so}_{k+1}$, $\g{u}_{k+1}$ or $\g{sp}_2\oplus\g{sp}_1$, respectively.  On the other hand, if we consider Example~$(2)_{\F}$, we have that $\g{k}_\Sigma$ is isomorphic to $\g{u}_k$, $\g{sp}_k$, or  $\g{so}_7$, where $k$ is the complex or quaternionic dimension of $V$, respectively, and the Lie algebra generated by $\g{p}_{\Sigma}$ is $\g{g}_{\Sigma}=\g{k}_\Sigma\oplus\g{p}_{\Sigma}$, which is isomorphic to $\g{u}_{k+1}$, $\g{sp}_{k+1}$, or $\g{so}_{8}$. Let $\s{G}_{\Sigma}$ be the connected subgroup of $\s{G}$ with Lie algebra $\g{g}_{\Sigma}$.
Observe that $\g{p}_{\Sigma}$ is identified with $T_o(\s G_\Sigma\cdot o)$, and the connected component of the isotropy of $\s G_\Sigma$ at $o\in \s S^n_{\F,\tau}$ is $\s K_\Sigma$, which is the connected Lie subgroup of $\s G$ with Lie algebra $\g{k}_\Sigma$. Now,  using Equations~(\ref{eq:nabla}) and (\ref{eq:dtensor}), and the fact that $\g{p}_\Sigma$ is invariant under $D$ and $U$, we have that the second fundamental form of $\Sigma=\s G_\Sigma\cdot o$ vanishes at $o$. Thus, since $\Sigma$ is an orbit, $\Sigma$ is a totally geodesic submanifold of $\s S^n_{\F,\tau}$, and  $\Sigma=\exp_o(\g{p}_{\Sigma})$ by the uniqueness of complete totally geodesic submanifolds.  Hence,  these totally geodesic submanifolds are extrinsically homogeneous submanifolds.

Moreover, by making use of the Gauss equation, we can compute the curvature of these examples using Equations~(\ref{eq:dtensorcomplex}), (\ref{eq:rtensorcomplexquaternionic}), (\ref{eq:rtensorcomplexquaternionic0}) and  (\ref{eq:dtensoroctonions}), (\ref{eq:rtensoroctonions}), (\ref{eq:rtensoroctonions0}). It turns out that Example~$(1)_{\F}$ is isometric, for each $\F\in\{\C,\H,\O\}$, to $\s{S}^k_1$, $\s{S}^{2k+1}_{\C,\tau}$, or $\s{S}^7_{\H,\tau}$, respectively. Furthermore, Example~$(2)_{\F}$ is isometric, for each $\F\in\{\C,\H,\O\}$, to $\s{S}_{\mathbb{C},\tau}^{2k+1}$, $\s{S}_{\mathbb{H},\tau}^{4k+3}$, or $\s{S}^7_{1/\tau}$, respectively.

\begin{remark}
\label{rem:tginclusions}
Notice that for $\tau\neq 1$ and $n\ge1$, we have the   totally geodesic inclusions
\begin{equation*}
	\s{S}^{n}_{1}\subset\phantom{.}\s{S}^{2n+1}_{\C,\tau}\phantom{}\subset\s{S}^{4n+3}_{\H,\tau},\qquad
	\s{S}^3_{\C,\tau}\subset\phantom{.}\s{S}^7_{\H,\tau}\phantom{tt}\subset\s{S}^{15}_{\O,\tau},
\end{equation*}
for the totally geodesic submanifolds constructed in Subsection~\ref{subsect:examplestg}.
\end{remark}

\begin{lemma}
\label{lemma:2octo}
Let $V$ be a curvature invariant $2$-plane in $\s{S}^{15}_{\O,\tau}$ with $\tau\neq1$. Then, $V$ is contained in the tangent space of a well-positioned totally geodesic $\s{S}^7_{\H,\tau}$ in $\s{S}^{15}_{\O,\tau}$.
\end{lemma}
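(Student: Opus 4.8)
The plan is to work entirely at the base point $o$, identifying $T_o\s{S}^{15}_{\O,\tau}$ with $\g{p}=\g{p}_1\oplus\g{p}_2$ and $V$ with a curvature-invariant $2$-plane $V\subset\g{p}$, and to show that the isotropy group $\s{K}=\s{Spin}_7$ can move $V$ into the tangent space $\g{p}_{\Sigma_0}=\spann\{X_i\}_{i\in I_0}\oplus\H_{I_0}Y_1$ of the standard well-positioned $\s{S}^7_{\H,\tau}$ of Example~$(1)_{\O}$, where $I_0$ is a Fano line through the point $1$. This reduction suffices: by Remark~\ref{rem:congwellpos} all well-positioned totally geodesic $\s{S}^7_{\H,\tau}$ through $o$ are mutually isometric, hence isotropy-congruent at $o$, so their tangent spaces form a single $\s{K}$-orbit; thus $k\cdot V\subset\g{p}_{\Sigma_0}$ for some $k\in\s{K}$ is equivalent to $V$ being contained in $k^{-1}\g{p}_{\Sigma_0}$, which is again of this type.

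First I would dispose of the positioned configurations. The proof of Lemma~\ref{lemma:splitting} only uses $R_X$-invariance of the subspace for $X$ in its intersection with $\g{p}_i$, which is part of curvature invariance; hence if $V$ meets $\g{p}_1$ or $\g{p}_2$ non-trivially, then $V=(V\cap\g{p}_1)\oplus(V\cap\g{p}_2)$. This leaves three easy cases. If $V\subset\g{p}_1$, then since $\s{K}$ acts as $\s{SO}_7$ on $\g{p}_1$, transitively on $2$-planes, I may assume $V=\spann\{X_1,X_2\}\subset\spann\{X_i\}_{i\in I_0}$ for the unique Fano line $I_0\ni 1,2$. If $V\subset\g{p}_2$, then using that $\s{Spin}_7$ is transitive on orthonormal $2$-frames of $\g{p}_2$ (since $\s{Spin}_7\to\s{S}^7$ and $\s{G}_2\to\s{S}^6$ are transitive) I may assume $V=\spann\{Y_1,J_1Y_1\}\subset\H_{I_0}Y_1$. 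If $V=\R a\oplus\R\xi$ is split, I would first move $\xi$ to $Y_1$ and then, since $\s{G}_2=\mathrm{Stab}_{\s{K}}(Y_1)$ acts transitively on the unit sphere of $\g{p}_1$, move $a$ to $X_1$, so that $V=\spann\{X_1,Y_1\}\subset\g{p}_{\Sigma_0}$. In each case one verifies directly from \eqref{eq:rtensoroctonions}–\eqref{eq:rtensoroctonions0} and Table~\ref{table:jacobi} that such planes are curvature invariant, so nothing is lost in this step.

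The substantive case is when $V$ is \emph{genuinely mixed}, i.e.\ $V\cap\g{p}_1=V\cap\g{p}_2=0$, so both projections $\pi_{\mathcal V},\pi_{\mathcal H}$ restrict to isomorphisms on $V$. Here I would normalize $\pi_{\mathcal H}(V)=\spann\{Y_1,J_1Y_1\}$ by $\s{Spin}_7$-transitivity on $2$-planes of $\g{p}_2$, and write a basis $u=Y_1+a$, $v=J_1Y_1+b$ with $a,b\in\g{p}_1$. The curvature tensor respects the $\mathbb{Z}_2$-grading in which $\g{p}_2$ is odd and $\g{p}_1$ is even, namely $R(\cdot,\cdot)\cdot$ lands in $\g{p}_2$ exactly when an odd number of its arguments are horizontal, as one reads off from \eqref{eq:rtensoroctonions}–\eqref{eq:rtensoroctonions0}. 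Imposing $R(u,v)u\in V$ and $R(u,v)v\in V$ and separating horizontal from vertical parts then forces the horizontal components to be proportional to $Y_1,J_1Y_1$ and yields explicit algebraic constraints on $a,b$. The decisive constraint comes from $R(a,b)Y_1$, which by \eqref{eq:rtensoroctonions} is governed by the octonionic cross product on $\Im\O\cong\g{p}_1$; requiring it to stay in $\spann\{Y_1,J_1Y_1\}$ forces $a$ and $b$ to have cross product a multiple of $X_1$, while the remaining horizontal-valued mixed terms $R(a,J_1Y_1)a$ and $R(Y_1,b)a$ should pin $a,b$ down to lie in the \emph{associative} $3$-plane $\spann\{X_1,X_j,X_k\}$ with $\{1,j,k\}$ a Fano line. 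Taking $I_0=\{1,j,k\}$, both $\pi_{\mathcal H}(V)\subset\H_{I_0}Y_1$ and $\pi_{\mathcal V}(V)\subset\spann\{X_i\}_{i\in I_0}$ hold, so $V\subset\g{p}_{\Sigma_0}$; the residual symmetry $\s{SU}_3=\mathrm{Stab}_{\s{K}}(Y_1,J_1Y_1)$ can be used to put $I_0$ in a preferred position if desired.

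I expect this genuinely mixed case to be the main obstacle: the heart of the argument is the Fano-plane bookkeeping in $R(a,b)Y_1$, $R(a,J_1Y_1)a$ and $R(Y_1,b)a$, together with the verification that the curvature-invariance equations admit no solutions other than those with $a,b$ in a common associative $3$-plane through $X_1$. The $\mathbb{Z}_2$-grading and the transitivity facts cut down the free parameters substantially, but the concluding step is an explicit, if finite, computation with the octonionic curvature formulas and the complex structures $J_i$; particular care is needed because $R(X_i,J_1Y_1)\,\cdot$ is not one of the tabulated expressions and must first be derived from \eqref{eq:rtensoroctonions}–\eqref{eq:rtensoroctonions0} via the pair symmetry of $R$.
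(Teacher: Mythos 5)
Your overall architecture is reasonable and genuinely different from the paper's: you first dispose of the planes meeting $\g{p}_1$ or $\g{p}_2$ (correctly noting that the proof of Lemma~\ref{lemma:splitting} only uses curvature invariance), and in the remaining case you normalize the horizontal \emph{projection} of $V$ to $\spann\{Y_1,J_1Y_1\}$ and write $u=Y_1+a$, $v=J_1Y_1+b$ with $a,b\in\g{p}_1$. The paper instead uses the isotropy representation at the outset to confine a basis of an arbitrary $V$ to a small coordinate slice (vertical parts in $\spann\{X_1,\dots,X_4\}$, horizontal parts in $\spann\{Y_1,J_1Y_1\}$) and then runs a short contradiction argument on the coefficient of $X_4$. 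Your split cases, the reduction via Remark~\ref{rem:congwellpos}, and the parity observation that $R$ respects the $\mathbb{Z}_2$-grading with $\g{p}_2$ odd are all correct and would indeed streamline the bookkeeping.

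The problem is that in the genuinely mixed case -- which is the entire content of the lemma -- the decisive verification is only announced, not performed (``should pin $a,b$ down'', ``I expect''), and the one concrete deduction you do offer is not valid as stated. Curvature invariance does not constrain $R(a,b)Y_1$ in isolation: it only forces the \emph{sum} of the horizontal-valued terms of $R(u,v)u$, namely $R(Y_1,J_1Y_1)Y_1+R(Y_1,b)a+R(a,J_1Y_1)a+R(a,b)Y_1$, to lie in $\spann\{Y_1,J_1Y_1\}$. Since $R(a,J_1Y_1)a$ and $R(Y_1,b)a$ also take values in $\g{p}_2$ and generically have components along the $J_mY_1$ with $m\neq 1$ (e.g.\ $\langle R(X_2,J_1Y_1)X_3,Y_1\rangle\neq 0$, obtained from the tabulated identities via the pair symmetry of $R$), they can a priori cancel the component of $R(a,b)Y_1=c\,J_{a\times b}Y_1$ orthogonal to $\spann\{Y_1,J_1Y_1\}$; so you cannot conclude $a\times b\in\R X_1$ without first computing $R(X_i,J_1Y_1)X_j$ -- which, as you yourself note, is not among the displayed formulas -- and then analyzing the coupled system. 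Note also that the logic of your sketch is inverted: if you \emph{had} established $a\times b\in\R X_1\setminus\{0\}$, the associativity would be automatic, since $\spann\{a,b,a\times b\}$ is always an associative $3$-plane containing $X_1$; the ``remaining mixed terms'' are not needed to pin down associativity, they are needed to prove the cross-product condition in the first place. As it stands, the explicit algebraic computation that the paper carries out is exactly what is missing from your argument.
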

\begin{proof}
First of all, using the isotropy of $ \s{S}^{15}_{\O,\tau}$ (see~\cite{verdiani-ziller}), we can assume that the $2$-plane $V$ is spanned by $u,v\in\g{p}$, where
\begin{align*}
	u&=a_1 X_1/(2\sqrt{\tau})+a_2 X_2/(2\sqrt{\tau})+a_3 X_3/(2\sqrt{\tau})+ a_4 Y_1,\\
	v&=b_1 X_1/(2\sqrt{\tau})+b_2 X_2/(2\sqrt{\tau})+b_3 X_3/(2\sqrt{\tau})+ b_4 X_4/(2\sqrt{\tau})+ b_5 Y_1 +b_6 J_1Y_1,
\end{align*}
where we are using the notation of Subsection~\ref{subsect:geomhopf}.
Let us proceed by contradiction.
Then, we can assume that $b_4\neq 0$, since otherwise, $V$ is contained in the tangent space of some well-positioned totally geodesic submanifold $\s{S}^7_{\H,\tau}$ of $\s{S}^{15}_{\O,\tau}$, see~example $(1)_{\O}$ above. Now,
\begin{equation}
	\label{eq:Rinvarocto}
	\begin{aligned}
		0&=\frac{1}{2\sqrt{\tau}}\langle R(u,v)v,X_6\rangle=-3a_4 b_4 b_6(-1+\tau),\\
		0&=\langle R(u,v)v,Y_6\rangle=3a_1 b_4 b_6(-1+\tau),
	\end{aligned}
\end{equation}
since $V$ is curvature invariant. Thus we either have that $b_6=0$ or $a_1=a_4=0$.

Let us assume that $b_6=0$. Thus,
\begin{equation*}
	\begin{aligned}
		0&=\langle R(u,v)v,Y_8\rangle=3a_3 b_4 b_5(-1+\tau),\\
		0&=\langle R(u,v)v,Y_4\rangle=3a_2 b_4 b_5(-1+\tau),\\
		0&=\langle R(u,v)v,Y_2\rangle=-3a_1 b_4 b_5(-1+\tau),
	\end{aligned}
\end{equation*}
since $V$ is curvature invariant. Hence, $a_1=a_2=a_3=0$ or $b_5=0$. However, in the first case, $V$ is contained in the tangent space of some  well-positioned totally geodesic submanifold $\s{S}^7_{\H,\tau}$ of $\s{S}^{15}_{\O,\tau}$. Thus, $b_5=0$. Observe that we can assume that $a_4\neq 0$, since otherwise $V$ is contained in the tangent space of some totally geodesic submanifold $\s{S}^7_{\H,\tau}$ of $\s{S}^{15}_{\O,\tau}$. Then,
\begin{equation*}
	\begin{aligned}
		0&=\langle R(v,u)u,Y_8\rangle=-3a_3 a_4 b_4(-1+\tau),\\
		0&=\langle R(v,u)u,Y_4\rangle=-3a_2 a_4 b_4(-1+\tau),\\
		0&=\langle R(v,u)u,Y_2\rangle=3a_1 a_4 b_4(-1+\tau),
	\end{aligned}
\end{equation*}
since $V$ is curvature invariant. Hence, $a_1=a_2=a_3=0$ and $V$  is contained in the tangent space of some well-positioned totally geodesic submanifold $\s{S}^7_{\H,\tau}$ of $\s{S}^{15}_{\O,\tau}$.

Let us assume that $b_6\neq0$. Then, by Equation (\ref{eq:Rinvarocto}), $a_1=a_4=0$. Moreover,
\begin{equation*}
	\begin{aligned}
		0&=\langle R(u,v)v,Y_8\rangle=3 b_4(a_3b_5+a_2b_6)(-1+\tau),\\
		0&=\langle R(u,v)v,Y_4\rangle =3b_4(a_2b_5-a_3b_6)(-1+\tau),
	\end{aligned}
\end{equation*}
since $V$ is curvature invariant. Hence, $a_2=a_3=0$, implying that $V$  is contained in the tangent space of some  well-positioned totally geodesic submanifold $\s{S}^7_{\H,\tau}$ of $\s{S}^{15}_{\O,\tau}$.\qedhere
\end{proof}

\section{Not well-positioned totally geodesic submanifolds}
The purpose of this section is to study not well-positioned totally geodesic submanifolds. We start by studying the $1$-dimensional case, i.e.\ geodesics. Subsequently, we prove Theorem~\ref{th:notwellpos}, which gives a characterization of not well-positioned totally geodesic submanifolds in Hopf-Berger spheres. As a consequence of these and other previous results we are able to give the proof of Theorem~\ref{th:a}. Finally, we prove Theorem~\ref{th:notwellsphere}, which provides a description of  not well-positioned maximal totally geodesic spheres in Hopf-Berger spheres. 
\label{sect:notwellpos}
\subsection{Geodesics of Hopf-Berger spheres}
In what follows we will focus on $1$-dimensional totally geodesic submanifolds of Hopf-Berger spheres, i.e.\ geodesics.
\label{subsect:geodesics}
\begin{lemma}\label{lemma:noflat} Every flat totally geodesic submanifold of a Hopf-Berger sphere
$ \s{S}^n_{\F,\tau}$ is a geodesic.
\end{lemma}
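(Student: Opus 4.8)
The plan is to show that $\s{S}^n_{\F,\tau}$ admits no flat totally geodesic submanifold of dimension $\ge 2$; the statement then follows, since a flat totally geodesic submanifold of dimension $\le 1$ is a point or a geodesic. We may assume $\tau\neq 1$, because for $\tau=1$ the sphere is round of constant curvature $1$ and has no flat $2$-planes at all. First I would reduce everything to a single tangent plane. Suppose $F$ is a flat totally geodesic submanifold with $\dim F\ge 2$; by homogeneity assume $o\in F$, and pass to the complete extension, which is still flat and totally geodesic. Choosing a $2$-plane $W\subset T_oF$, the surface $\Sigma=\exp_o(W)$ is totally geodesic in $F$ (hence in $\s{S}^n_{\F,\tau}$) and, by the Gauss equation, flat. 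Thus it suffices to prove that there is no curvature-invariant $2$-plane $W=\spann\{u,v\}$, with $u,v$ orthonormal, on which the ambient sectional curvature vanishes.

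Next I would translate this into a condition on the Jacobi operators $R_w:=R(\cdot,w)w$. Since $W$ is tangent to a totally geodesic submanifold, Theorem~\ref{th:fundamentaltg} (with $k=0$) gives that $W$ is curvature invariant, i.e.\ $R(u,v)u,R(u,v)v\in W$, while flatness together with the Gauss equation gives $\langle R(u,v)v,u\rangle=0$. As $R(u,v)$ is skew-symmetric we also have $\langle R(u,v)u,u\rangle=\langle R(u,v)v,v\rangle=0$, so the $W$-components of $R(u,v)u$ and of $R(u,v)v$ vanish; since these vectors already lie in $W$, we conclude $R(u,v)u=0$ and $R(u,v)v=0$. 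Equivalently $R_uv=R(v,u)u=0$ and $R_vu=R(u,v)v=0$, that is, $v\in\Ker R_u$ and $u\in\Ker R_v$ with $u\perp v$. So the whole problem reduces to excluding an orthonormal pair each of which lies in the kernel of the other's Jacobi operator.

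I would then rule this out using the explicit eigenvalue data. By Remark~\ref{rem:isotropyrep} the isotropy representation acts transitively on vectors of fixed length and slope, so up to isotropy $u$ has a standard form $u=\cos\theta\,\hat X_1+\sin\theta\,Y_1$ with $\hat X_1\in\g{p}_1$ and $Y_1\in\g{p}_2$ unit. If $u\in\g{p}_1$, Table~\ref{table:jacobi} shows that the eigenvalues of $R_u$ are $0$ (only on $\R u$), $1/\tau$ and $\tau$, all nonzero off $\R u$, whence $\Ker R_u=\R u$, contradicting $0\neq v\perp u$; the horizontal case $u\in\g{p}_2$ is identical for $\tau\neq\tfrac43$. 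For mixed $u$ I would compute $R_u$ directly from the curvature identities \eqref{eq:rtensorcomplexquaternionic}--\eqref{eq:rtensoroctonions0}: since these couple $u$ only to the $\F$-line of $Y_1$ together with $\g{p}_1$, the operator $R_u$ is block diagonal and acts as a strictly positive scalar on each distant block (for instance $R_u Y_j=(a^2\tau^2+c_2^2)Y_j$ for $j\neq1$), so $\Ker R_u$ is confined to $\spann\{X_i\}\cup\{Y_1,J_iY_1\}$, reducing the question to the subsphere $\s{S}^7_{\H,\tau}$, respectively its octonionic analogue. Inside this block a finite computation shows $\Ker R_u=\R u$ for $\tau\neq1$, again contradicting $v\in\Ker R_u\setminus\R u$.

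The \emph{main obstacle} is precisely this last explicit determination of $\Ker R_u$ for a mixed unit vector $u$, which is heaviest in the octonionic case, where the metric is not naturally reductive and the off-diagonal coupling is governed by the Fano-plane multiplication of the $J_i$. A second delicate point is the exceptional value $\tau=\tfrac43$ (and, more generally, the sign change of $4-3\tau$), where the Jacobi operator of a horizontal vector degenerates and $\Ker R_u$ can acquire the $(4-3\tau)$-eigenspace; there I would not be able to conclude from $v\in\Ker R_u$ alone and would instead invoke the symmetric condition $u\in\Ker R_v$, forcing both $u$ and $v$ into these special horizontal eigenspaces and then checking, via \eqref{eq:rtensorcomplexquaternionic} and \eqref{eq:rtensoroctonions}, that $R(u,v)u=R(u,v)v=0$ cannot hold. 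Once every slope and every value of $\tau\neq1$ is eliminated, no curvature-invariant flat $2$-plane exists, and the lemma follows.
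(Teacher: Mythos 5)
Your reduction to excluding a flat curvature-invariant $2$-plane, and the derivation $R(u,v)u=R(u,v)v=0$ (equivalently $v\in\Ker R_u$ and $u\in\Ker R_v$) from curvature invariance plus vanishing sectional curvature, are both correct, and even granting all the kernel computations the eigenvalue analysis does dispose of the vertical, the mixed, and the generic horizontal configurations. But the argument genuinely breaks down at the degenerate value you flag, $\tau=4/3$, and the repair you propose there does not work. For a unit horizontal $Y\in\g{p}_2$ and the plane $W=\spann\{Y,J_iY\}$, equations \eqref{eq:rtensorcomplexquaternionic} and \eqref{eq:rtensoroctonions} give $R(Y,J_iY)Y=(3\tau-4)J_iY$ and $R(Y,J_iY)J_iY=(4-3\tau)Y$; at $\tau=4/3$ both vanish, so $W$ \emph{is} curvature invariant and \emph{does} have zero sectional curvature. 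Thus the conditions $R(u,v)u=R(u,v)v=0$ are satisfiable, contrary to the final claim of your sketch, and curvature invariance alone (the $k=0$ case of Theorem~\ref{th:fundamentaltg}) cannot finish the proof. One would have to invoke the first covariant derivative of $R$: using Remark~\ref{rem:covderivative} together with $D_{Y}J_iY=-X_i$ one finds $(\nabla_{Y}R)(Y,J_iY)Y=R(Y,X_i)Y=-\tau X_i\notin W$ at $\tau=4/3$, which does kill this plane -- but this is a step your proposal neither performs nor identifies as necessary, and the analogous degeneracies would have to be tracked through the quaternionic and octonionic cases as well.

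For contrast, the paper's proof avoids all Jacobi-operator casework and is uniform in $\tau$ and $\F$. By Remark~\ref{remark:section}, the tangent plane at $o$ of a flat totally geodesic surface is perpendicular to every isotropy orbit, hence is a section of the cohomogeneity-two polar isotropy representation of $\s{K}$; any such section is spanned by a vertical and a horizontal vector, so the surface is well-positioned at $o$ and hence everywhere (Lemma~\ref{lemma:wellatp}). Theorem~\ref{th:wellpos} then realizes it as a geodesic sphere of a three-dimensional totally geodesic submanifold of $\bar M$, i.e.\ a round $2$-sphere, contradicting flatness. That conceptual route is exactly what sidesteps the $\tau=4/3$ degeneracy on which your computation founders.
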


\begin{proof}
Let us recall that $\s K$ acts polarly on $T_o\s{S}^n_{\F,\tau}$ (see Remark~\ref{rem:isotropyrep}). Moreover, the principal $\s K$-orbits are products of spheres and they have codimension two.
Let $\Sigma '$ be a flat totally geodesic submanifold of $\s{S}^n_{\F,\tau}$ of dimension greater than or equal two and let $\Sigma$ be a totally geodesic (and hence flat) surface of $\Sigma '$. We may assume that $o\in \Sigma$. Then, by Remark \ref{remark:section},  
$T_o\Sigma$ is a section for the polar representation of $\s K$ in $T_o\s{S}^n_{\F,\tau}$,  since $\s{K}$ acts with cohomogeneity two. Hence, $T_o\Sigma$
is spanned by a horizontal and a vertical vector, and thus $\Sigma$ is well-positioned at $o$. By Lemma~\ref{lemma:wellatp},  $\Sigma$ is well-positioned. Then, by Theorem \ref{th:wellpos} and Remark~\ref{rem:intersect}, $\Sigma$ is a geodesic sphere of a totally geodesic submanifold of dimension three  of the rank one symmetric space containing $\s{S}^n_{\F,\tau}$ as a geodesic sphere. Thus, $\Sigma$ is a round sphere of dimension two, and therefore we obtain a contradiction with the fact that $\Sigma$ is flat.
\end{proof}

\begin{lemma}\label{lemma:geodesicin3dim}
Let $\tau\neq 1$ and $\gamma$ be a geodesic in $\s{S}^n_{\F,\tau}=\s{G}/\s{K}$. Then, there exists a totally geodesic submanifold $\Sigma$ of $\s{S}^n_{\F,\tau}$ satisfying that:

\begin{enumerate}[i)]
	\item $\gamma$ is contained in $\Sigma$.
	\item $\Sigma$ is well-positioned.
	\item	$\Sigma$ is isometric to $\s{S}^3_{\C,\tau}$.
	\item  $\Sigma$ is an orbit of a subgroup of $\Isom(\s{S}^{n}_{\F,\tau})$ locally isomorphic to $\s{U}_2$.
\end{enumerate}

\end{lemma}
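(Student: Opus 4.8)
The plan is to reduce, via homogeneity and the isotropy representation, to a single explicit $3$-dimensional model subspace of $\g{p}$ built from one vertical vector together with a $J$-complex line, and then to read off all four properties from the analysis of Example~$(1)_\H$ and Example~$(2)_\C$ in Subsection~\ref{subsect:examplestg}. First I would use that $\s{S}^n_{\F,\tau}$ is homogeneous to assume $\gamma$ passes through the base point $o$, and write $v:=\gamma'(0)=v_1+v_2$ with $v_1\in\g{p}_1=\mathcal V_o$ and $v_2\in\g{p}_2=\mathcal H_o$. By Remark~\ref{rem:isom} the group $\Isom(\s{S}^n_{\F,\tau})$ is the isotropy of $\bar M$ at $p$, and by Remark~\ref{rem:isotropyrep} its action on $\g{p}$ (using the extended isotropy when $\F\in\{\C,\H\}$) has orbits $\s{S}_1(\lVert v_1\rVert)\times\s{S}_2(\lVert v_2\rVert)$. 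Hence there is an isometry $g$ fixing $o$ with $g_{*o}v=\tilde v\in\spann\{X_1,Y_1\}$, where $X_1\in\g{p}_1$, $Y_1\in\g{p}_2$ are the distinguished vectors of Subsection~\ref{subsect:geomhopf}. Since $g$ lies in the isotropy of $\bar M$ at $p$, it preserves the curvature tensor and therefore the eigenspace distributions $\mathcal V$ and $\mathcal H$; in particular it carries well-positioned submanifolds to well-positioned ones.

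Next I would take the model subspace
\[\g{p}_\Sigma:=\R X_1\oplus\spann\{Y_1,J_1Y_1\}\subset\g{p},\]
which visibly contains $\tilde v$ and splits as a vertical summand $\R X_1\subset\g{p}_1$ plus a horizontal summand $\spann\{Y_1,J_1Y_1\}\subset\g{p}_2$. For $\F=\C$ this is Example~$(2)_\C$ with $\dim_\C V=1$ (recall $\g{p}_1=\R X_1$), for $\F=\H$ it is Example~$(1)_\H$ with $\dim_\C V=1$, and for $\F=\O$ it is, by Remark~\ref{rem:tginclusions}, the tangent space of the totally geodesic $\s{S}^3_{\C,\tau}$ sitting inside the totally geodesic $\s{S}^7_{\H,\tau}$ of $\s{S}^{15}_{\O,\tau}$. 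In each case the explicit formulas for $D$ and $R$ (Equations~\eqref{eq:dtensorcomplex}--\eqref{eq:rtensoroctonions0}) show that $\g{p}_\Sigma$ is invariant under the difference and curvature tensors, so by Lemma~\ref{lemma:DRinvariant} the set $\Sigma_0:=\exp_o(\g{p}_\Sigma)$ is a complete totally geodesic submanifold with $T_o\Sigma_0=\g{p}_\Sigma$. The computations of Subsection~\ref{subsect:examplestg} then yield \emph{iii)} and \emph{iv)}: via the Gauss equation $\Sigma_0$ is isometric to $\s{S}^3_{\C,\tau}$, and, since $\g{k}_\Sigma=[\g{p}_\Sigma,\g{p}_\Sigma]_{\g{k}}\cong\g{u}_1$ generates together with $\g{p}_\Sigma$ the subalgebra $\g{g}_\Sigma\cong\g{u}_2$, the submanifold $\Sigma_0=\s{G}_\Sigma\cdot o$ is an orbit of the connected subgroup $\s{G}_\Sigma\subset\s{G}$ locally isomorphic to $\s{U}_2$. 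Finally, the splitting of $\g{p}_\Sigma$ shows that $\Sigma_0$ is well-positioned at $o$, hence well-positioned by Lemma~\ref{lemma:wellatp}.

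To conclude I would transport the model back. As $\Sigma_0$ is complete and totally geodesic with $\tilde v\in T_o\Sigma_0$, the geodesic $g\gamma$ (whose initial velocity at $o$ is $\tilde v$) lies in $\Sigma_0$; applying $g^{-1}$, the submanifold $\Sigma:=g^{-1}\Sigma_0$ contains $\gamma$, is isometric to $\s{S}^3_{\C,\tau}$, is an orbit of the subgroup $g^{-1}\s{G}_\Sigma g$ locally isomorphic to $\s{U}_2$, and is well-positioned because $g^{-1}$ preserves $\mathcal V$ and $\mathcal H$. The one delicate point is the octonionic case, where $\g{p}_\Sigma$ is not literally one of the listed examples; there I would justify total geodesy either by the direct bracket/curvature check using Equations~\eqref{eq:dtensoroctonions} and \eqref{eq:rtensoroctonions}--\eqref{eq:rtensoroctonions0}, or, more conceptually, by realizing $\Sigma_0$ as a totally geodesic $\s{S}^3_{\C,\tau}$ inside the totally geodesic $\s{S}^7_{\H,\tau}$ of Example~$(1)_\O$ (Remark~\ref{rem:tginclusions}), invoking transitivity of the totally geodesic property together with the inclusion $\s{U}_2\subset\s{Sp}_2\s{Sp}_1\subset\s{Spin}_9$.
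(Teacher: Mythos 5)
Your proof is correct and follows essentially the same route as the paper's: both reduce to the three-dimensional subspace $\R v_1\oplus\spann\{v_2,J_{v_1}v_2\}$, recognize it as Example $(1)_{\H}$ (resp.\ $(2)_{\C}$) of Subsection~\ref{subsect:examplestg} so that total geodesy, the isometry type $\s{S}^3_{\C,\tau}$, the $\s{U}_2$-orbit structure, and well-positionedness all follow from that discussion, and handle $\F=\O$ via the totally geodesic inclusions of Remark~\ref{rem:tginclusions}. The only difference is cosmetic: the paper works directly with $\spann\{v_1,v_2,[v_1,v_2]\}$ (treating the degenerate cases $v_1=0$ or $v_2=0$ by an auxiliary geodesic), whereas you first normalize $v$ by the isotropy representation and then transport the model back by $g^{-1}$.
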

\begin{proof}	
Let us assume that $\gamma$ is a (non-constant) geodesic of $\s{S}^n_{\F,\tau}$ with initial velocity $v_1+v_2\in T_o\s{S}^n_{\F,\tau}$, where $v_1\in \g{p}_1$ and $v_2\in\g{p}_2$.  If $v_1$ or $v_2$ is equal to zero, let us take any (not well-positioned) geodesic $\widetilde{\gamma}$ whose velocity has horizontal or vertical projection equal to $v_2$ or $v_1$, respectively. Hence, if  $\widetilde{\gamma}$ is contained in a well-positioned totally geodesic submanifold $\Sigma$, then $\gamma$ is also contained in $\Sigma$. Hence, we may assume that $v_1$ and $v_2$ are both not zero. Then, the initial velocity of $\gamma$ is contained in the subspace spanned by $\{v_1, v_2, [v_1,v_2]\}$. This subspace is clearly the tangent space of a well-positioned totally geodesic submanifold $\Sigma$ isometric to $\s{S}^3_{\C,\tau}$, see~Subsection~\ref{subsect:examplestg} $(2)_{\C}$ or $(2)_{\H}$, and Remark~\ref{rem:tginclusions}. Moreover, by the  discussion in Subsection~\ref{subsect:examplestg}, we have that $\Sigma$ is an orbit of a subgroup of $\Isom(\s{S}^n_{\F,\tau})$ locally isomorphic to $\s{U}_2$.\qedhere

\end{proof}
We define the \textit{slope} of a geodesic $\gamma$ in $\s{S}^n_{\F,\tau}$ to be the quotient between the lengths of the vertical and horizontal projections of the velocity of $\gamma$. This quantity is well defined for every geodesic since Hopf-Berger spheres are geodesic orbit spaces, see~\cite{tamarugo}. Moreover, using the full isotropy representation of $\s{S}^n_{\F,\tau}$ described in Remark~\ref{rem:isotropyrep}, it is easy to see that two geodesics are congruent in $\s{S}^n_{\F,\tau}$ if and only if they have the same slope.

\begin{lemma}
\label{lemma:slopegeod}
Let $\s{S}^n_{\F,\tau}$, with $\tau\neq 1$, and let $\gamma$ be a closed geodesic in $\s{S}^n_{\F,\tau}$. Then, the set of possible slopes for $\gamma$ is countable. 
\end{lemma}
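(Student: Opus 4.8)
The plan is to reduce the statement to the three-dimensional Berger sphere $\s{S}^3_{\C,\tau}$ and there compute the slope of a closed geodesic explicitly in terms of the eigenvalues of its infinitesimal generator. By Lemma~\ref{lemma:geodesicin3dim}, the geodesic $\gamma$ is contained in a well-positioned totally geodesic submanifold $\Sigma$ isometric to $\s{S}^3_{\C,\tau}$, which is moreover an orbit of a subgroup of $\Isom(\s{S}^n_{\F,\tau})$ locally isomorphic to $\s{U}_2$. Since $\Sigma$ is well-positioned, $T_q\Sigma=(T_q\Sigma\cap\mathcal{H}_q)\oplus(T_q\Sigma\cap\mathcal{V}_q)$ for every $q\in\Sigma$, so the horizontal and vertical projections of $\dot\gamma$ computed inside $\Sigma$ agree with those computed in $\s{S}^n_{\F,\tau}$; hence the slope of $\gamma$ in $\s{S}^n_{\F,\tau}$ equals its slope as a geodesic of $\Sigma\cong\s{S}^3_{\C,\tau}$, and it suffices to prove the statement there. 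By homogeneity I may assume $\gamma$ passes through the base point $o$ of $\s{S}^3_{\C,\tau}=\s{U}_2/\s{U}_1$.

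Next I would use that $\s{S}^3_{\C,\tau}$ is naturally reductive (see Subsection~\ref{subsect:geomhopf}), so every geodesic through $o$ has the form $\gamma(t)=\Exp(tX)\cdot o$ with $X\in\g{p}=\g{p}_1\oplus\g{p}_2$. Writing $X=sX_1+v$ with $sX_1\in\g{p}_1$ vertical and $v\in\g{p}_2$ horizontal, the velocity $\dot\gamma(0)$ is identified with $X$, so (recalling that $\{X_1/\sqrt{\tau},Y_1,J_1Y_1\}$ is orthonormal) its vertical and horizontal lengths are $\sqrt{\tau}\,|s|$ and $\|v\|$. Regarding $X$ as a skew-Hermitian matrix in $\g{u}_2$, a computation of its trace and determinant shows that its eigenvalues $i\mu_1,i\mu_2$ satisfy $\mu_1+\mu_2=s$ and $\mu_1\mu_2=-\|v\|^2$. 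Consequently the slope of $\gamma$ equals
\[ \frac{\sqrt{\tau}\,|s|}{\|v\|}=\frac{\sqrt{\tau}\,|\mu_1+\mu_2|}{\sqrt{-\mu_1\mu_2}}, \]
an expression invariant under rescaling $(\mu_1,\mu_2)$, and thus depending only on the ratio $r=\mu_1/\mu_2$.

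Finally I would determine when $\gamma$ is closed. Diagonalizing $X=g\,\mathrm{diag}(i\mu_1,i\mu_2)\,g^{-1}$, the orbit becomes $g\cdot(e^{i\mu_1 t}w_1,e^{i\mu_2 t}w_2)$ with $(w_1,w_2)=g^{-1}o$; since the base point $o$ is an eigenvector of $X$ exactly when the horizontal part $v$ vanishes, the condition $v\neq0$ forces both $w_1,w_2$ nonzero, and then this curve is periodic if and only if $\mu_1/\mu_2\in\mathbb{Q}$. When $v=0$ the geodesic is a Hopf fiber, of slope $\infty$. For $v\neq0$ we have $\mu_1\mu_2=-\|v\|^2<0$, so $r=\mu_1/\mu_2\in\mathbb{Q}_{<0}$, and by the displayed formula the slope equals $\sqrt{\tau}\,|r+1|/\sqrt{-r}$; as $r$ ranges over the countable set $\mathbb{Q}_{<0}$ this produces only countably many values, to which I add the single value $\infty$, proving the claim. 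The step requiring the most care is the passage from periodicity of the orbit to rationality of $\mu_1/\mu_2$, together with the observation that the slope is a function of $r$ alone, so that the remaining continuous parameters (the length of $X$ and the position of $\gamma$) contribute no additional slopes; this last point is precisely what distinguishes closed geodesics, whose slopes are discrete, from the generic geodesics of $\s{S}^n_{\F,\tau}$, whose slopes fill an interval.
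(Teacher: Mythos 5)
Your reduction to $\s{S}^3_{\C,\tau}$ via Lemma~\ref{lemma:geodesicin3dim} is exactly the paper's first step, and your justification that well-positionedness of $\Sigma$ makes the slope intrinsic is a correct (and welcome) elaboration of a point the paper leaves implicit. From there you diverge: the paper computes $\Exp(sX)\cdot o$ in coordinates and extracts the closedness condition by brute force, whereas you want to read everything off the spectrum of the infinitesimal generator. That spectral idea is sound and arguably cleaner — but your execution contains a genuine error.

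The problem is the sentence ``every geodesic through $o$ has the form $\gamma(t)=\Exp(tX)\cdot o$ with $X\in\g{p}=\g{p}_1\oplus\g{p}_2$.'' The decomposition $\g{g}=\g{k}\oplus(\g{p}_1\oplus\g{p}_2)$ is \emph{not} naturally reductive for the Berger metric when $\tau\neq 1$: from Equation~\eqref{eq:dtensorcomplex} one gets $U(X_1,Y_1)=\tfrac12(D_{X_1}Y_1+D_{Y_1}X_1)=\tfrac{2\tau-1}{2}J_1Y_1\neq 0$ for $\tau\neq\tfrac12$, so $\Exp(t(sX_1+v))\cdot o$ has nonzero acceleration $U(\cdot,\cdot)^*$ at $o$ and is not a geodesic. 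The paper's statement that $\s{S}^3_{\C,\tau}$ ``admits a naturally reductive decomposition'' refers to a \emph{different} complement, namely $\g{p}^\tau=\R X^\tau\oplus\g{p}_2$ with $X^\tau=\tfrac{1}{\sqrt{\tau}}\bigl(X_1+(1-2\tau)Z\bigr)$, whose vertical generator has a nontrivial $\g{k}$-component. Consequently the matrix whose eigenvalues you compute is not the one generating the geodesic, and the relations $\mu_1+\mu_2=s$, $\mu_1\mu_2=-\Vert v\Vert^2$ (hence the claim that $r=\mu_1/\mu_2$ is always negative and that closedness is equivalent to $r\in\mathbb{Q}_{<0}$) are relations for the wrong matrix. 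The argument can be repaired: taking the correct generator $W=sX^\tau+v$, one finds $\mu_1+\mu_2=\tfrac{2s(1-\tau)}{\sqrt{\tau}}$ and $\mu_1\mu_2=\tfrac{s^2(1-2\tau)}{\tau}-\Vert v\Vert^2$, so that $(\mu_1+\mu_2)^2/(\mu_1\mu_2)=(r+1)^2/r$ is still a finite-to-one function of the slope $|s|/\Vert v\Vert$, periodicity still forces $r\in\mathbb{Q}\cup\{0,\infty\}$ (note that now $\mu_1\mu_2$ may vanish, which contributes one further slope), and countability follows. But as written, the proof rests on a false description of the geodesics and does not go through.
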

\begin{proof}
First of all, by Lemma~\ref{lemma:geodesicin3dim}, we can assume that $\gamma$ is a closed geodesic of $\s{S}^3_{\C,\tau}$ with $\tau\neq1$.  Let us define a naturally reductive decomposition for $\s{S}^3_{\C,\tau}=\s{U}_2/\s{U}_1$. Keeping the notation in Subsection~\ref{subsect:geomhopf} applied to $\s S^3_{\F,\tau}$, we define  $\g{p}^\tau_1$ as the one-dimensional subspace of $\g{g}=\g{u}_2$ spanned by the unit vector
\[X^\tau:=\frac{1}{\sqrt{\tau}}\left(X_1+(1-2\tau)Z\right), \quad \text{where} \hspace{0.2cm} Z=\left(
\begin{array}{c|c}
	i & 0 \\
	\hline
	0 & 0
\end{array}
\right)\in \g{u}_1\subset\g{u}_2. \]  Hence, it can be checked that $\g{g}=\g{k}\oplus\g{p}^\tau$,  where $\g{p}^\tau:=\g{p}^\tau_1\oplus\g{p}_2$, is a naturally reductive decomposition of $\s{S}^3_{\C,\tau}$, for each $\tau\in(0,\infty)$. In particular, every geodesic of $\s{S}^3_{\C,\tau}$ is of the form $\Exp(s X)\cdot o$, for some $X\in\g{p}^\tau$. For our purposes, we can assume that $o=(0,1)\in \s{S}^3_{\C,\tau}\subset \C^2$. Let $X:=\alpha_1 Y_1 + \alpha_2 J_1 Y_1 +\alpha_3 X^\tau$, where $\alpha_i\in\R$ for every $i\in\{1,2,3\}$ satisfy $\sum_{i=1}^3\alpha^2_i=1$, and consider the geodesic $\gamma_X(s)=\Exp(s X)\cdot o$. We define the quantities
\[P:=\sqrt{\alpha_1^2+\alpha_2^2+\tau\alpha_3^2},\qquad
Q:=\frac{e^{-\frac{i s}{\sqrt{\tau}}\left( P \sqrt{\tau} + (\tau-1)\alpha_3\right)}}{2 P}.\]

A direct computation shows that
\[\gamma_X(s)=Q((-1+e^{2 i P s})(-i \alpha_1 + \alpha_2) ,(1+e^{2 i P s})P+(-1+e^{2 i P s})\sqrt{\tau}\alpha_3).  \]

If $X$ is vertical, $\alpha_3=\pm1$, $\alpha_1=\alpha_2=0$, and  we have  $\gamma_X(s)=(0,e^{\pm\frac{is}{\sqrt{\tau}}})$.  If $X$ is horizontal, $\alpha_3=0$, $\alpha_1^2+\alpha_2^2=1$ and we have
\[\gamma_X(s)=\frac{e^{{-i}{s}}}{2}((-1+e^{2is})(\alpha_2-i \alpha_1),1+e^{2is}).\] 

Now, let us assume that $X$ has non-trivial projection onto $\g{p}^\tau_1$ and $\g{p}_2$. Then, if  $\gamma_X$ is closed, 
\[-1+e^{2 i P s}=0,\quad \textrm{and} \quad 2PQ=1, \quad \text{for some $s\in\R$, $s\neq 0$}.  \]

Hence,
$\alpha_3=\frac{P \sqrt{\tau} (j+2 k )}{ j(1-\tau)}=\frac{\sqrt{(1-\alpha_3^2)+\tau\alpha_3^2} \sqrt{\tau} (j+2k )}{ j(1-\tau)}$, where $j\in\mathbb{Z}\setminus\{0\}$ and $k\in\mathbb{Z}$. Since the slope of $\gamma_X$ is given by $\sqrt{\tfrac{\alpha^2_3}{1-\alpha^2_3}}$, we deduce that the set of possible slopes for $\gamma_X$ is countable.\qedhere
\end{proof}

\subsection {A characterization of not well-positioned totally geodesic submanifolds}
\label{subsect:charactnotwellpos}
Let us start by proving that all the geodesics of a not well-positioned totally geodesic submanifold are closed, have the same slope, and thus, the same length.

\begin{lemma}\label{lemma:notwellpos(-1)}
Let $f\colon\Sigma\rightarrow \s{S}^n_{\F,\tau}$ be a not well-positioned  totally geodesic  immersion of a complete Riemannian manifold $\Sigma$ with dimension $d\ge 2$. Then, the following statements hold:
\begin{enumerate}[i)]
	
	\item $\Sigma$ is compact.
	
	\item Every geodesic of $\Sigma$ is closed.
	
	\item The slope of $ f_{*}(v)$ does not depend on $v\in T\Sigma\setminus\{0\}$.
\end{enumerate}
\end{lemma}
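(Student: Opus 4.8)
The plan is to establish the three assertions in the order (i), (ii), (iii), deducing (iii) from (ii). First I would record the basic dichotomy coming from Lemma~\ref{lemma:splitting} and Lemma~\ref{lemma:wellatp}: since $f$ is nowhere well-positioned, at every $p$ the tangent space $T_p\Sigma$ meets neither $\mathcal{H}_p$ nor $\mathcal{V}_p$ in a nonzero vector. In particular $\Sigma$ carries no horizontal and no vertical tangent directions, so the quotient $\Vert f_*(v)_{\mathcal{V}}\Vert/\Vert f_*(v)_{\mathcal{H}}\Vert$ is a well-defined finite positive number for every $0\neq v\in T\Sigma$, and it defines a continuous function $s\colon U\Sigma\to(0,\infty)$ on the unit tangent bundle.

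For (i) I would use that being totally geodesic is a transitive relation: any flat totally geodesic submanifold $F$ of $\Sigma$ is a flat totally geodesic submanifold of the ambient Hopf--Berger sphere, hence by Lemma~\ref{lemma:noflat} it is $1$-dimensional. Thus $\Sigma$ contains no flat totally geodesic submanifold of dimension $\geq 2$. Since the ambient space is a compact g.o.\ space and $\dim\Sigma\geq 2$, Lemma~\ref{lemma: g.o.tg} then forces $\Sigma$ to be compact.

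The crux is (ii). Let $\gamma$ be a geodesic of $\Sigma$ and suppose it is not closed. By Lemma~\ref{lemma:geodesicin3dim} it is contained in a well-positioned totally geodesic $S\cong\s{S}^{3}_{\C,\tau}$. Inside the compact g.o.\ space $S$ the closure $T:=\overline{\gamma(\R)}$ is the orbit of the torus $\overline{\{\Exp(tX)\}}\subset\Isom(S)$ generating $\gamma$; as $\gamma$ is not closed this orbit is genuinely $2$-dimensional, and being the orbit of an abelian group of isometries it is, in the induced metric, a flat torus. Since $\Sigma$ is compact, hence closed, and $\gamma(\R)\subset\Sigma$, we obtain $T\subset S\cap\Sigma$. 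Now the connected component $W$ of $S\cap\Sigma$ through a point $q$ of $T$ is totally geodesic in the ambient space, because in a homogeneous space the intersection of two subspaces that are invariant under the curvature tensor and all its covariant derivatives is again such a subspace (Theorem~\ref{th:fundamentaltg}), so $T_qW=T_qS\cap T_q\Sigma$. As $\dim W\geq\dim T=2$ and $W\subset S$ with $\dim S=3$, either $\dim W=3$, forcing $S\subset\Sigma$, which is impossible since $T_qS$ contains a vertical vector while $T_q\Sigma$ contains none; or $\dim W=2$, in which case $T_qW=T_qT$, so $W$ coincides with the flat torus $T$ near $q$ and is therefore a $2$-dimensional flat totally geodesic submanifold of the ambient space, contradicting Lemma~\ref{lemma:noflat}. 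Hence every geodesic of $\Sigma$ is closed.

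Finally, (iii) follows quickly: by (ii) every unit $v\in U\Sigma$ is the initial velocity of a closed geodesic, so its slope lies in the countable set of slopes of closed geodesics provided by Lemma~\ref{lemma:slopegeod}. Since $d\geq 2$ the unit tangent bundle $U\Sigma$ is connected, and a continuous function taking values in a countable set is constant; thus $s$ is constant, which is exactly (iii). I expect the main obstacle to be part (ii): the delicate point is to check that the closure of a hypothetical non-closed geodesic really is an honest $2$-dimensional flat torus lying inside $\Sigma$, and then to extract from it a $2$-dimensional flat totally geodesic submanifold of the ambient space to which Lemma~\ref{lemma:noflat} applies. The non-well-positioned hypothesis enters precisely in excluding the degenerate alternative $S\subset\Sigma$.
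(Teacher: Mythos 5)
Your proof is correct and follows the paper's overall strategy closely: part~i) is obtained exactly as in the paper from Lemma~\ref{lemma: g.o.tg} together with Lemma~\ref{lemma:noflat}, and part~iii) is the same countability-plus-connectedness argument based on Lemma~\ref{lemma:slopegeod}. The one place where you genuinely diverge is the final contradiction in part~ii). Both you and the paper place the geodesic inside a well-positioned $\s{S}^3_{\C,\tau}$ via Lemma~\ref{lemma:geodesicin3dim} and observe that the closure of a non-closed geodesic is a $2$-dimensional torus orbit of an abelian isometry group. At that point the paper finishes with a one-line dimension count: the $2$-plane $T_{f(q)}\tilde F$ and the $2$-dimensional horizontal subspace of the $3$-dimensional $T_{f(q)}N$ must intersect nontrivially, producing a horizontal vector in $f_*(T\Sigma)$ and contradicting Lemmas~\ref{lemma:splitting} and~\ref{lemma:wellatp}. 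You instead form the totally geodesic intersection $W$ of $S$ with (a local sheet of) $f(\Sigma)$, rule out $\dim W=3$ by the absence of vertical vectors, identify $W$ locally with the flat torus when $\dim W=2$, and invoke Lemma~\ref{lemma:noflat} a second time. This is valid, but it is longer, requires the additional (true but unstated in the paper at this point) fact that the intersection of two totally geodesic submanifolds through $q$ is locally $\exp_q$ of the intersection of their tangent spaces, and it glosses over the immersed-versus-embedded issue that the paper handles explicitly by showing $f$ restricts to a covering map $F\to\tilde F$ between the two orbit closures; you should insert that step (or at least work on a single embedded sheet near a preimage of $q$) before writing $S\cap\Sigma$. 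The paper's horizontal-vector count buys a shorter argument that never needs flatness of the torus; your route buys nothing extra here, but it does no harm.
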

\begin{proof}
Firstly,  \textit{i)} is a consequence of Lemma \ref{lemma: g.o.tg} and Lemma \ref{lemma:noflat}.  

Let $\gamma_v$ be the geodesic of $\Sigma$ with initial conditions $q\in\Sigma$ and $v\in T_q\Sigma$. Also, consider the geodesic $\tilde \gamma = f\circ \gamma_v$ of $\s{S}^n_{\F,\tau}$.  By Proposition~\ref{lemma:geodesicin3dim}, there exists an embeded and compact totally geodesic submanifold  $N$ of $\s{S}^n_{\F,\tau}$, which is isometric to $\s{S}^3_{\C,\tau}$,  such that $\tilde {\gamma}(\mathbb R)\subset N$. Since $\Sigma$ and   $N$ are compact it is not difficult to show that 
$f(F) = \tilde F$, where $F$ and $\tilde F$ are the closure of $\gamma _v(\mathbb R)$ and 
$\tilde {\gamma}(\mathbb R)$, respectively. Observe that 
$\tilde F\subset N$. 

Since $\Sigma$ and $N$ are compact g.o.\ spaces, then 
$F$ and $\tilde F$ are the orbits of abelian compact Lie subgroups $\s{A}$, $\tilde{ \s{A}}$ of the full isometry groups of 
$\Sigma$ and $N$, respectively. In particular, 
$F$ and $\tilde F$ are compact submanifolds of $\Sigma$ and $N$, respectively. Moreover, since $f$ is an immersion, $f\colon F\to \tilde F$ is a covering map. Asumme that $\gamma _v$ is not closed. Then $\dim (\tilde F)= \dim (F)\geq 2$. Thus, the subspace $f_{*q} (T_qF) = T_{f(q)}\tilde F \subset T_{f(q)}N$ intersects non-trivially the horizontal $2$-dimensional subspace of $N$, since $\dim (N) = 3$. Consequently, by  Lemma~\ref{lemma:wellatp} and  Lemma~\ref{lemma:splitting}, $f(\Sigma)$ is well positioned, yielding a contradiction. This proves \textit{ii)}.

Observe that $f\circ \gamma _v$ is a closed geodesic for all $v\in T\Sigma$. Consequently, by  Lemma~ \ref{lemma:slopegeod}, we can prove using a continuity argument that the slope of $f_{*q}(v)$ does not depend on $v\neq 0$.
\end{proof}
\begin{lemma}
\label{lemma:notwellposrk1}
Let $\Sigma$ be a not well-positioned totally geodesic submanifold of $\s{S}^n_{\F,\tau}=\s{G}/\s{K}$ with dimension $d\ge 2$. Then, $\Sigma$ is isometric to a compact rank one symmetric space.
\end{lemma}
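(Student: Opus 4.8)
The plan is to show that $\Sigma$, with the induced metric, is a compact homogeneous Riemannian manifold all of whose geodesics are closed with a common period, and then to invoke the rigidity of such spaces. First I would gather the structural facts already at hand. By \cite[Corollary 8.10]{kobayashi} every complete totally geodesic submanifold of a homogeneous space is intrinsically homogeneous, so $\Sigma$ is a homogeneous Riemannian manifold; moreover it is a geodesic orbit space, being totally geodesic in the geodesic orbit space $\s{S}^n_{\F,\tau}$ (see the discussion preceding Lemma~\ref{lemma: g.o.tg}). By Lemma~\ref{lemma:notwellpos(-1)}, $\Sigma$ is compact, every geodesic of $\Sigma$ is closed, and the slope of $f_*(v)$ is independent of $v\in T\Sigma\setminus\{0\}$.

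Next I would record that $\Sigma$ is a $P_\ell$-manifold. Since $\Sigma$ is compact and all of its geodesics are closed, the geodesic flow on the unit tangent bundle is periodic (Wadsley's theorem), so there is a common period $\ell$ with $\gamma(t+\ell)=\gamma(t)$ for every unit-speed geodesic $\gamma$; thus $\Sigma$ is a compact homogeneous $P_\ell$-manifold. The constant-slope conclusion of Lemma~\ref{lemma:notwellpos(-1)} furthermore shows that all the image geodesics $f\circ\gamma_v$ are mutually congruent in $\s{S}^n_{\F,\tau}$ (two geodesics of a Hopf-Berger sphere with the same slope are congruent, see the discussion preceding Lemma~\ref{lemma:slopegeod}), hence share a common length; since $f$ restricts to each closed geodesic of $\Sigma$ as a covering of circles, the geodesics of $\Sigma$ likewise have a common length. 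This uniformity is what will eventually pin down the isometry type.

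Finally I would conclude that $\Sigma$ is a compact rank one symmetric space. Observe first that $\Sigma$ contains no flat totally geodesic submanifold of dimension $\geq 2$: such a submanifold would also be flat and totally geodesic in $\s{S}^n_{\F,\tau}$, contradicting Lemma~\ref{lemma:noflat}. In particular, the de Rham decomposition of the universal cover of the compact homogeneous manifold $\Sigma$ admits no Euclidean factor, because a factor $\mathbb{R}^k$ with $k\geq 1$ produces (using a geodesic of the complementary factor when $k=1$, and $\dim\Sigma\geq 2$) a $2$-dimensional flat totally geodesic submanifold. Hence $\Sigma$ is, up to finite cover, a compact simply connected irreducible homogeneous $P_\ell$-manifold, and I claim that such a manifold is isometric to a compact rank one symmetric space. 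This last rigidity statement is the heart of the argument and the step I expect to be the main obstacle: I would establish it by combining the Bott--Samelson theorem, which forces the integral cohomology ring to be that of a rank one symmetric space (a sphere, or a projective space over $\C$, $\H$, or $\O$), with homogeneity, using that on each of these manifolds the only homogeneous metrics all of whose geodesics are closed are the symmetric ones — the Berger-type deformations, such as the Hopf-Berger spheres themselves, always possessing non-closed geodesics by Lemma~\ref{lemma:slopegeod}.
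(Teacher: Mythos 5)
Your strategy is genuinely different from the paper's proof; it is essentially the alternative argument that the authors themselves record in the remark immediately following the lemma, namely reducing to the statement that a compact homogeneous Riemannian manifold all of whose geodesics are closed (with a common period) is a compact rank one symmetric space, i.e.\ \cite[Theorem 7.55]{Bessegeod}. The paper's actual proof avoids all of this machinery: since every unit vector of $T_o\Sigma$ has the same slope by Lemma~\ref{lemma:notwellpos(-1)}, the unit sphere $\s{S}(T_o\Sigma)$ is contained in a single orbit of the full ambient isotropy (Remark~\ref{rem:isotropyrep}); projecting the corresponding elements of $\g{k}$ to intrinsic Killing vector fields of $\Sigma$ vanishing at $o$ shows that the isotropy group of $\Sigma$ acts transitively on $\s{S}(T_o\Sigma)$, so $\Sigma$ is a compact two-point homogeneous space and one concludes by Szab\'o's theorem \cite{Szabo}. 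That route is shorter, purely infinitesimal at $o$, and works verbatim for immersed submanifolds.

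Two steps of your write-up need repair. First, the inference ``the image geodesics have a common length and $f$ restricted to each closed geodesic is a covering of circles, hence the geodesics of $\Sigma$ have a common length'' is not valid as stated: the covering multiplicity can a priori vary from geodesic to geodesic, so equal prime periods downstairs do not give equal prime periods upstairs. (This is precisely why the paper's remark restricts the Besse route to the embedded case.) Your appeal to Wadsley's theorem rescues the argument, since it produces a common period $\ell$ with no length comparison and Besse's theorem applies to homogeneous $P_\ell$-manifolds, but then the common-length paragraph is doing no work and should be deleted rather than left as a false claim. Second, and more seriously, your final step does not cite the rigidity theorem but proposes to re-derive it from Bott--Samelson together with the assertion that on a manifold with the cohomology ring of a rank one symmetric space the only homogeneous metrics with all geodesics closed are the symmetric ones. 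That assertion is the entire content of the theorem and is not covered by Lemma~\ref{lemma:slopegeod}: Bott--Samelson controls only the cohomology ring, not the diffeomorphism type or the transitive group, and Ziller's classification also contains the three-parameter family of $\s{Sp}_{n+1}$-invariant metrics on $\s{S}^{4n+3}$ as well as the homogeneous metrics on $\C\s{P}^{2n+1}$, none of which are Hopf-Berger spheres. Cite \cite[Theorem 7.55]{Bessegeod} and your argument closes; as written, the step you yourself identify as the heart of the proof is a sketch with a genuine gap.
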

\begin{proof}
Let us assume that $o\in \Sigma$ and let $\s{S}(T_o\Sigma)$ be the unit sphere of $T_o\Sigma$. Every vector in $T_o\Sigma$ has the same slope by Lemma~\ref{lemma:notwellpos(-1)}. Let $v\in \s{S}(T_o\Sigma)$, then $\s{K}\cdot v$ contains $\s{S}(T_o\Sigma)$, see Remark~\ref{rem:isotropyrep}. Let $w\in T_v(\s{S}(T_o\Sigma))$. Then, there exists an element $X\in\g{k}\subset \g{so}(T_o\s{S}^n_{\F,\tau} )$ such that $Xv=w$.  Let ${\g{k}}_{\Sigma}$ be the Lie algebra of the full isotropy group of $\Sigma$ at $o$, and $X_{\Sigma}$ be the projection of the restriction of $X$ to $T_o\Sigma$. Then $X_{\Sigma}\in\g{k}_{\Sigma}$. Moreover, $X_{\Sigma} v= w$. This implies that the isotropy of $\Sigma$ acts transitively on the unit sphere of its tangent space at $o$. Therefore, since $\Sigma$ is a compact homogeneous space, $\Sigma$ is a compact $2$-point homogeneous space. Consequently, $\Sigma$ is isometric to a compact rank one symmetric space, see~\cite{Szabo}.\qedhere

\end{proof}

\begin{remark}
Notice that if $\Sigma$ is embedded, the proof of Lemma~\ref{lemma:notwellposrk1} follows by a celebrated result in the theory of manifolds with all the geodesics closed. This result asserts that a compact homogeneous space with all its geodesics closed and of the same length is isometric to a compact rank one symmetric space, see \cite[Theorem 7.55, p.~196]{Bessegeod}. Indeed, by Lemma~\ref{lemma:notwellpos(-1)}, $\Sigma$ is a compact manifold with all its geodesics closed and of the same length, since they all have the same slope. Moreover, since $\Sigma$ is a  totally geodesic submanifold of a homogeneous space is intrinsically homogeneous. Then, $\Sigma$ is isometric to a compact rank one symmetric space.
\end{remark}

Recall that every Hopf-Berger sphere can be regarded, up to a homothety, as a geodesic sphere of an appropriate rank one symmetric space $\bar M$. Let $\s{S}_{t}(p)$ be the geodesic sphere of  $\bar M$ with center $p$ and radius $0<t< \inj(p)$. Observe that $\inj(p)=\pi/2$ if $\bar M$ is of  compact type and $\infty$ if $\bar M$ is of  non-compact type. Let  $\II^t$ be the second fundamental form of 
$\s{S}_{t}(p)$ and $\mathcal{S}^t$ the  shape operator of $\s{S}_{t}(p)\subset \bar M$. By Remark \ref{rem:shapeoperatoreigenspaces}, we have
\begin{equation}
\label{eq: 432} \mathcal{S}^t_{\vert \mathcal D_i^t}= \beta _i^t \Id_{\vert \mathcal D_i^t}, \ \ \ \ i=1,2, 
\end{equation} 
where $\beta _1^t$ and  $\beta _2^t$ are scalars defined by equations (\ref{eq:b1}) and (\ref{eq:b2}), respectively. From these expressions one obtains that 
$\beta _1^t\neq \beta _2^t$.

We define a  symmetric bilinear form $\upalpha$ on $\s{S}_t(p)$ given by
\begin{equation}
\label{eq:alphadef}
\upalpha_x(v,w)=\langle\II_x^t(v,w),\xi_x\rangle, \quad\text{$v,w\in T_x\s{S}_t(p)$ and $x\in\s{S}_t(p)$},
\end{equation}
where $\xi$ is the outer unit normal field to the geodesic sphere.
Observe that $\upalpha$ is positive definite if and only if $t\in(0,\tfrac{\pi}{4})$ or $t>1$, see Equations~\eqref{eq:b1} and~\eqref{eq:b2}. By identifying $\s{S}_t(p)$ with the appropiate Hopf-Berger sphere $\s{S}^n_{\F,\tau}$, we can equip $\s{S}^n_{\F,\tau}$ with $\upalpha$. Consequently,  $\upalpha$ is non-degenerate for $\tau\neq \frac{1}{2}$. Moreover, $\upalpha$ defines a  Riemannian metric if and only if $\tau>\tfrac{1}{2}$, and a pseudo-Riemannian metric with index equal to $\dim\F -1$ for $\tau<\tfrac{1}{2}$, where $\F\in\{\C,\H,\O\}$.

We say that a vector $v\in T_p\s{S}^n_{\F,\tau}$ is \textit{$\upalpha$-isotropic} if $\upalpha(v,v)=0$, and a subspace $V$ of $T_p\s{S}^n_{\F,\tau}$ is \textit{$\upalpha$-isotropic} if every vector $v\in V$ is $\upalpha$-isotropic.

\begin{theorem}
\label{th:notwellpos}
Let $\Sigma$ be a complete immersed submanifold of $\s{S}^n_{\F,\tau}$, $\tau\neq1$, with dimension $d\ge2$, and let $\bar{M}$ be the rank one symmetric space containing $\s{S}^n_{\F,\tau}$ as a geodesic sphere. If $\tau\neq\tfrac{1}{2}$, then the following statements are equivalent:
\begin{enumerate}
	\item[i)] $\Sigma$ is a not well-positioned totally geodesic submanifold of $\s{S}^n_{\F,\tau}$.
	\item[ii)] $T_p\Sigma$ is $\upalpha$-isotropic and invariant under the curvature tensor of $\s{S}^n_{\F,\tau}$ for every $p\in \Sigma$.
\end{enumerate}
Moreover, under any of the above conditions $\Sigma$ is a totally geodesic submanifold of $\bar M$, and if $\tau=\tfrac{1}{2}$, every totally geodesic submanifold of $\s{S}^n_{\F,\tau}$ is well-positioned.
\end{theorem}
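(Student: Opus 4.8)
The plan is to realize $\s{S}^n_{\F,\tau}$, up to the homothety of Equation~\eqref{eq:radius_tau_homothety} (which preserves totally geodesic submanifolds, the isotropy action and slopes, and rescales $\upalpha$ by a positive constant, hence changes nothing relevant), as the geodesic sphere $\s{S}_t(p)\subset\bar M$, and to exploit the single identity
\[ \II^{\bar M}_\Sigma(v,w)=\II^{\s{S}_t}_\Sigma(v,w)+\upalpha(v,w)\,\xi, \]
valid for any submanifold $\Sigma$ of $\s{S}_t(p)$, where $\upalpha(v,w)=\langle\mathcal{S}^t v,w\rangle$ by Equation~\eqref{eq:alphadef} and $\mathcal{S}^t=\beta_1^t\Id$ on $\mathcal{V}$, $\mathcal{S}^t=\beta_2^t\Id$ on $\mathcal{H}$ with $\beta_1^t\neq\beta_2^t$ (Equations~\eqref{eq:b1}, \eqref{eq:b2}). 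From $\beta_1^t/\beta_2^t=(2\tau-1)/\tau$ one reads off that $\upalpha$ is definite for $\tau>1/2$, degenerate with kernel $\mathcal{V}$ for $\tau=1/2$, and of Lorentzian-type signature for $\tau<1/2$, where the $\upalpha$-isotropic vectors are exactly those of slope $\sigma_0$ with $\sigma_0^2=\tau/(1-2\tau)\in(0,\infty)$. The backbone is the immediate consequence of the displayed identity: if $\Sigma$ is totally geodesic in $\s{S}_t(p)$ (so $\II^{\s{S}_t}_\Sigma=0$), then $\Sigma$ is totally geodesic in $\bar M$ if and only if $T_p\Sigma$ is $\upalpha$-isotropic. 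This already yields the ``moreover'' assertion once the equivalence is established.

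For ii)$\Rightarrow$i) I would first show that an $\upalpha$-isotropic, curvature-invariant subspace $V:=T_o\Sigma$ is a Lie triple system of the symmetric space $\bar M$. For $v,w,u\in V$ the Gauss equation gives the $T\s{S}_t$-component of $\bar R(v,w)u$ as $R(v,w)u$ corrected by terms carrying the factors $\langle\mathcal{S}^t v,u\rangle=\upalpha(v,u)$ and $\langle\mathcal{S}^t w,u\rangle=\upalpha(w,u)$, both zero by isotropy; hence this component equals $R(v,w)u\in V$. For the radial ($\xi$) component I would use Codazzi, $\langle\bar R(v,w)u,\xi\rangle=(\nabla_v\upalpha)(w,u)-(\nabla_w\upalpha)(v,u)$, together with $\nabla^c\upalpha=0$ (as $\upalpha$ is $\s{G}$-invariant), which rewrites the right-hand side purely through the difference tensor, $(\nabla_v\upalpha)(w,u)=-\upalpha(D_vw,u)-\upalpha(w,D_vu)$; the explicit tensors \eqref{eq:dtensorcomplex}--\eqref{eq:rtensoroctonions0} and isotropy of $V$ then force it to vanish. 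Thus $N:=\overline{\exp}^{\bar M}_o(V)$ is totally geodesic in $\bar M$ with $T_oN=V\perp\xi$. To confine $N$ to the sphere I would show $\xi$ stays normal to $N$: since $N$ is totally geodesic, an $\bar M$-parallel field $E$ along a geodesic $\gamma\subset N$ with $E(0)\in V$ stays tangent to $N$, and at points of $\s{S}_t(p)$ one has $\tfrac{d}{ds}\langle\xi,E\rangle=\mathrm{Hess}\,d(p,\cdot)(\dot\gamma,E)=\upalpha(\dot\gamma,E)$, which vanishes on the isotropic $V$; a connectedness argument in the spirit of Remark~\ref{remark:CC1} then keeps $N$ inside the level set $\s{S}_t(p)$. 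By the backbone equivalence $N$ is totally geodesic in $\s{S}_t(p)$, and since $\tau\neq1/2$ makes $\upalpha$ nondegenerate, no nonzero vector of $V$ is purely vertical or horizontal, so by Lemma~\ref{lemma:splitting} the submanifold $\Sigma=N$ is not well-positioned (for $\tau>1/2$ there is no nonzero isotropic $V$, matching the absence of such $\Sigma$).

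For i)$\Rightarrow$ii), curvature-invariance of $T_p\Sigma$ is automatic from Theorem~\ref{th:fundamentaltg} with $k=0$. By Lemma~\ref{lemma:notwellpos(-1)} every tangent vector of $\Sigma$ has one and the same slope $\sigma_0$, and not being well-positioned forces $\sigma_0\in(0,\infty)$ (Lemma~\ref{lemma:splitting}); polarizing the two constant quadratic forms $v\mapsto\|v_{\mathcal{V}}\|^2$ and $v\mapsto\|v_{\mathcal{H}}\|^2$ on $T_p\Sigma$ yields $\upalpha|_{T_p\Sigma}=a\langle\cdot,\cdot\rangle$ with $a=\beta_1^t\tfrac{\sigma_0^2}{1+\sigma_0^2}+\beta_2^t\tfrac{1}{1+\sigma_0^2}$, so $\Sigma$ is totally umbilical in $\bar M$ and $\upalpha$-isotropy is equivalent to $a=0$, i.e.\ to $\sigma_0$ being the isotropic slope. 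Proving $a=0$ is the main obstacle. I would extract it from the next totally geodesic obstruction $(\nabla R)_o(T_o\Sigma)\subseteq T_o\Sigma$ of Theorem~\ref{th:fundamentaltg}: expanding $\nabla R$ through $R$ and $D$ by Remark~\ref{rem:covderivative} and inserting \eqref{eq:dtensorcomplex}--\eqref{eq:rtensoroctonions0} together with the constant-slope relations turns this invariance into a single scalar equation on $\sigma_0$ whose only root is $\sigma_0^2=\tau/(1-2\tau)$; equivalently, one rules out $a\neq0$ by feeding $\II^{\bar M}_\Sigma=a\langle\cdot,\cdot\rangle\xi$ into the Codazzi equation of $\bar M$ and deducing that $T_p\Sigma$ would then be $\mathcal{S}^t$-invariant, that is, well-positioned, a contradiction. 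Finally, the case $\tau=1/2$ is covered by the same computation: there $\beta_1^t=0$, so $a=\beta_2^t/(1+\sigma_0^2)\neq0$ for every finite slope, whence no not-well-positioned totally geodesic submanifold can exist and every one is well-positioned.
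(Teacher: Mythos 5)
Your global architecture is sound, and parts of it are genuinely cleaner than the paper's: the splitting $\II^{\bar M}_\Sigma=\II^{\s{S}_t}_\Sigma+\upalpha(\cdot,\cdot)\xi$ does deliver the ``moreover'' clause at once, and the observation that constancy of the slope (Lemma~\ref{lemma:notwellpos(-1)}) forces $\II^{\bar M}_\Sigma=a\langle\cdot,\cdot\rangle\xi$ for a single constant $a$, so that everything reduces to $a=0$, is a nice reformulation. But the two steps that carry the actual content are not proved. In i)$\Rightarrow$ii), neither route to $a=0$ works as described. The Codazzi route fails outright: inserting $\II^{\bar M}_\Sigma=a\langle\cdot,\cdot\rangle\xi$ into the Codazzi equation of $\bar M$ gives $(\bar R(v,w)u)^{\perp}=-a\bigl(\langle w,u\rangle(\mathcal{S}^tv)^{\perp}-\langle v,u\rangle(\mathcal{S}^tw)^{\perp}\bigr)$, and there is no a priori reason for the left-hand side to vanish --- that would already mean $T_p\Sigma$ is $\bar R$-invariant, i.e.\ essentially what is being proved --- so no contradiction with $a\neq0$ and no $\mathcal{S}^t$-invariance of $T_p\Sigma$ is obtained. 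The $\nabla R$ route is the right idea, but ``a single scalar equation on $\sigma_0$'' is not what the invariance $(\nabla R)(T_o\Sigma)\subset T_o\Sigma$ produces: the constant-slope condition leaves the subspace itself undetermined, and one faces a system in all the components of a basis of $T_o\Sigma$. The paper only makes this tractable by first proving that $\Sigma$ is a compact rank one symmetric space (Lemma~\ref{lemma:notwellposrk1}), hence contains a totally geodesic surface, and then reducing via Lemma~\ref{lemma:2octo} to $2$-planes in the quaternionic case, where a multi-step elimination of coefficients finally yields isotropy. Your proposal contains no substitute for this reduction, so the decisive computation is neither set up nor carried out.

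In ii)$\Rightarrow$i) the confinement of $N=\overline{\exp}_o(V)$ to the geodesic sphere is circular: the identity $\tfrac{d}{ds}\langle\xi,E\rangle=\upalpha(\dot\gamma,E)$ is only meaningful at points already known to lie on $\s{S}_t(p)$, and its vanishing for $s>0$ requires $\dot\gamma(s)$ and $E(s)$ to span an $\upalpha$-isotropic subspace of the level sphere through $\gamma(s)$, which is precisely what is being established. (The same remark applies to your Codazzi verification that $V$ is a Lie triple system: the cancellation of $-\upalpha([v,w]_{\g{p}},u)-\upalpha(w,D_vu)+\upalpha(v,D_wu)$ is asserted, not checked, and it involves $U$ as well as the brackets.) The paper closes this loop differently, by showing that the velocities of the relevant geodesics remain $\upalpha$-isotropic so that the exponential maps of $\s{S}^n_{\F,\tau}$ and $\bar M$ agree on $T_o\Sigma$, and, for the model example, by the isometric-flow construction of Subsection~\ref{subsect:notwellposspheres}; some mechanism of this kind is needed before your argument is complete.
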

\begin{proof}
Let us prove that a not well-positioned totally geodesic submanifold ${\Sigma}$  of $\s{S}^n_{\F,\tau}$ with dimension $d\ge 2$ is $\upalpha$-isotropic. Firstly, by Lemma~\ref{lemma:notwellposrk1}, we have that ${\Sigma}$ is isometric to a compact rank one symmetric space. Thus, by the classification of totally geodesic submanifolds in rank one symmetric spaces, $\Sigma$ contains a totally geodesic surface $\widetilde{\Sigma}$, which is a not well-positioned totally geodesic submanifold of $\s{S}^n_{\F,\tau}$, see Figure~\ref{fig:tgrk1}. By Corollary~\ref{cor:tghighdimension}, we have that $\F=\H$ or $\F=\O$. Moreover, by Lemma~\ref{lemma:2octo},  we can assume without loss of generality that $\widetilde{\Sigma}$ is a not well-positioned totally geodesic submanifold of $\s{S}^{4m+3}_{\H,\tau}$.

Let  $\g{p}_{\widetilde{\Sigma}}$ be the tangent space of $\widetilde{\Sigma}$ at $o\in \widetilde{\Sigma}$ identified with a subspace of the reductive complement~$\g{p}$ of $\s{S}^{4m+3}_{\H,\tau}$, see Subsection~\ref{subsect:geomhopf}.
Using the isotropy of $\s{S}^{4m+3}_{\H,\tau}$ (see~\cite{verdiani-ziller}), we can assume that $m=2$, and $\g{p}_{\widetilde{\Sigma}}$ is spanned by the basis $\{u,v\}$ given by
\begin{equation*}
	\begin{aligned}
		u&=a_1 X_1/\sqrt{\tau}+a_2 X_2/\sqrt{\tau}+ a_3 X_3/\sqrt{\tau} +a_4 Y_1,\\
		v&=b_1 X_1/\sqrt{\tau}+b_2 X_2/\sqrt{\tau} + b_3 X_3/\sqrt{\tau} +b_4 Y_1+ b_5 J_1 Y_1+b_6 Y_2,
	\end{aligned}
\end{equation*}
where $a_i, b_j\in\R$ for $i\in\{1,\ldots,4\}$ and $j\in\{1,\ldots,6\}$.

By Lemma~\ref{lemma:splitting}, $\g{p}_{\widetilde{\Sigma}}$ does not contain horizontal or vertical vectors, since $\widetilde{\Sigma}$ is not well-positioned. Hence, $a_4\neq 0$. Moreover, we can assume without loss of generality that $b_4=0$. Otherwise, the set $\{u,v-b_4/a_4 u\}$ is also a basis for $\g{p}_{\widetilde{\Sigma}}$ where the second vector projects trivially over $Y_1$. 

Since $\g{p}_{\widetilde{\Sigma}}$ is curvature invariant, we have
\begin{equation}
	\label{eq:tgsurface1}
	\begin{aligned}
		0&=\langle R(u,v)v, J_1 Y_2\rangle=3(a_3b_2-a_2b_3+a_4b_5)(-1+\tau)b_6,\\
		0&=\langle R(u,v)v, J_2Y_2\rangle=3(a_3b_1-a_1b_3)(1-\tau)b_6,\\
		0&=\langle R(u,v)v, J_3Y_2\rangle=3(a_2b_1-a_1b_2)(-1+\tau)b_6.
	\end{aligned}
\end{equation}
Every totally geodesic submanifold of a homogeneous space is intrinsically homogeneous and every homogeneous space of dimension two has constant sectional curvature. Then, every covariant derivative of the curvature tensor $R$ restricted to $\g{p}_{\widetilde{\Sigma}}$ vanishes. Thus,
\[0=\langle (\nabla_u R)(v,u)u, Y_1\rangle=-4a_1a_4^2b_5\sqrt{\tau}(-1+\tau). \]
This implies that $a_1=0$ or $b_5=0$. In the latter case, $b_6\neq 0$, since otherwise there would be a vertical vector in $\g{p}_{\widetilde{\Sigma}}$. Thus,  by Equation~(\ref{eq:tgsurface1}) 
\[a_3b_2-a_2b_3=a_3b_1 -a_1b_3=a_2b_1-a_1b_2=0.   \]
Hence,  $(a_1,a_2,a_3)$ and $(b_1,b_2,b_3)$ are proportional and there would be a horizontal vector in~$\g{p}_{\widetilde{\Sigma}}$. Now assume that $b_5\neq0$, and thus $a_1=0$. Then,
\[
0=\langle R(u,v)v,J_2 Y_1\rangle=3a_2b_1b_5(1-\tau),\quad
0=\langle R(u,v)v, J_3 Y_1\rangle=3a_3b_1b_5(1-\tau).\]
Thus, $b_1=0$ or $a_2=a_3=0$, but the latter cannot happen because it would imply the existence of a horizontal vector. Hence, $b_1=0$. Moreover, we have
\[0=\langle(\nabla_u R)(v,u)v, X_1\rangle=4 a_4(a_2 b_2 + a_3 b_3) b_5 (-1+\tau)(-1+2\tau).   \]
This yields $\tau=1/2$ or $a_2 b_2 + a_3 b_3=0$. Let us assume that $\tau=1/2$. Then,
\[
0=\langle(\nabla_u R)(v,u)u, J_2 Y_1\rangle=-\sqrt{2} a^3_4 b_2,\quad
0=\langle(\nabla_u R)(v,u)u, J_3 Y_1\rangle=-\sqrt{2} a^3_4 b_3.\]
Hence, $b_2=b_3=0$ and we obtain a contradiction due to the existence of a horizontal vector in $\g{p}_{\widetilde{\Sigma}}$. This proves that there are no not well-positioned totally geodesic surfaces in $\s{S}^n_{\F,1/2}$.

Now, let us assume that $a_2 b_2 + a_3 b_3=0$. Then, there exist $r,s\in\R$ and $\varphi\in[0,2\pi]$ such that
\[ a_2=r\cos(\varphi), \quad a_3=r\sin(\varphi), \quad b_2=-s\sin(\varphi), \quad b_3=s\cos(\varphi).    \]
Thus, 
\begin{align*}
	0&=\langle (\nabla_u R)(v,u)u,X_1\rangle=4 a_4 b_5(-1+\tau)(a^2_4\tau +r^2(-1+2\tau)),\\
	0&=\langle (\nabla_v R)(u,v)v,X_1\rangle=4 a_4 b_5(-1+\tau)(s^2(-1+2\tau) + \tau b^2_5+\tau b^2_6).
\end{align*}
Hence, 
\[
a_4=\pm r\sqrt{\frac{1-2\tau}{\tau}},\qquad b_5=\pm\sqrt{\frac{s^2(1-2\tau) - \tau b^2_6}{\tau}}.
\]
To sum up, we have
\begin{align*}
	u&=r \cos(\varphi)\frac{X_2}{\sqrt{\tau}} + r\sin(\varphi)\frac{X_3}{\sqrt{\tau}} \pm \sqrt{ \frac{r^2-2r^2\tau}{\tau}}Y_1,\\
	v&= -s \sin(\varphi)\frac{X_2}{\sqrt{\tau}} + s \cos(\varphi)\frac{X_3}{\sqrt{\tau}}\pm  \sqrt{\frac{s^2(1-2\tau) - \tau b^2_6}{\tau}}J_1Y_1+ b_6 Y_2.
\end{align*}
Now a simple computation yields that $\g{p}_{{\widetilde{\Sigma}}}$ is $\upalpha$-isotropic. Therefore, by Lemma~\ref{lemma:notwellpos(-1)}, ${\Sigma}$ is $\upalpha$-isotropic. This completes the proof that \textit{i)} implies \textit{ii)} and since there are no not well-positioned totally geodesic surfaces in $\s{S}^n_{\F,1/2}$, we also have proved that there are no not well-positioned totally geodesic submanifolds in $\s{S}^n_{\F,1/2}$. 
\color{black}

Conversely, by the Gauss equation associated with the embedding of $\s{S}^n_{\F,\tau}$ in $\bar M$, if $T_o\Sigma$ is $\upalpha$-isotropic and invariant under the curvature tensor of $\s{S}^n_{\F,\tau}$, then $T_o\Sigma$ is invariant under the curvature tensor of~$\bar M$.  Thus, $\bar{\Sigma}:=\overline{\exp}_o(T_o\Sigma)$ is a totally geodesic submanifold of $\bar M$. Observe that the velocity of every geodesic $\gamma$ of $\Sigma$ is $\upalpha$-isotropic for every $t\in\R$, since $\Sigma$ is a g.o.\ space. This implies that $\gamma$ is a geodesic of $\bar \Sigma$, and the Riemannian exponential map $\exp_o$ of $\s{S}^n_{\F,\tau}$, and the Riemannian exponential map  $\overline{\exp}_o$ of $\bar{M}$ agree on $T_o\Sigma$. Thus, $\Sigma=\exp_o(T_o\Sigma)=\overline{\exp}_o(T_o\Sigma)=\bar{\Sigma}$. Consequently, $\Sigma$ is a totally geodesic submanifold of~$\bar M$.\qedhere

\end{proof}
By Remark~\ref{rem:intersect}, well positioned totally geodesic submanifolds of $\s{S}^n_{\F,\tau}$ are embedded. Moreover, by Theorem~\ref{th:notwellpos}, every not well-positioned totally geodesic submanifold is also embedded. Then, we have the following result.
\begin{corollary}\label{cor:embedded55} Let $\Sigma$ be a complete totally geodesic submanifold of $\s{S}^n_{\F,\tau}$. Then, $\Sigma$ is embedded in~$\s{S}^n_{\F,\tau}$.
\end{corollary}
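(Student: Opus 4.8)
The plan is to prove the corollary by the well-positioned versus not well-positioned dichotomy, handling each case with the structural results already established. Throughout I understand $d=\dim\Sigma\ge 2$, so that Theorem~\ref{th:wellpos} and Theorem~\ref{th:notwellpos} apply; realize $\s{S}^n_{\F,\tau}$ as the geodesic sphere $\s{S}_t(p)$ of the rank one symmetric space $\bar M$. Either $\Sigma$ is well-positioned or it is not, and in both cases I would reduce embeddedness in $\s{S}^n_{\F,\tau}$ to embeddedness of a submanifold of $\bar M$.

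In the well-positioned case, Theorem~\ref{th:wellpos} identifies $\Sigma$ with the intersection $M\cap\s{S}_t(p)$, where $M$ is a complete totally geodesic submanifold of $\bar M$ containing the center $p$. Because $M$ is totally geodesic and contains $p$, at each point $q\in M\cap\s{S}_t(p)$ the unit radial direction, which spans the normal space $\nu_q\s{S}_t(p)$, is tangent to $M$; hence $M$ and $\s{S}_t(p)$ meet transversally, as already observed in Remark~\ref{rem:intersect}. The transverse intersection of two embedded submanifolds of $\bar M$ is an embedded submanifold of $\bar M$, and since it is contained in $\s{S}_t(p)$ it is embedded in $\s{S}_t(p)$ as well. (By the second part of Remark~\ref{rem:intersect} it is in fact the geodesic sphere of $M$ of radius $t<\inj(p)$ centered at $p$.) Thus $\Sigma$ is embedded.

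In the not well-positioned case one necessarily has $\tau<\tfrac12$, so $\bar M$ is of compact type. Here Theorem~\ref{th:notwellpos} supplies the decisive fact that $\Sigma$ is itself a complete totally geodesic submanifold of $\bar M$, namely $\Sigma=\overline{\exp}_o(T_o\Sigma)$ with $T_o\Sigma$ invariant under the curvature tensor of $\bar M$. I would then invoke that complete totally geodesic submanifolds of a symmetric space are extrinsically homogeneous, i.e.\ orbits of $\Isom(\bar M)$; since $\bar M$ is compact its isometry group is compact, and an orbit of a compact group is a closed, hence embedded, submanifold of $\bar M$. As $\Sigma\subseteq\s{S}^n_{\F,\tau}\subseteq\bar M$ and the geodesic sphere carries the subspace topology of $\bar M$, embeddedness of $\Sigma$ in $\bar M$ descends to embeddedness in $\s{S}^n_{\F,\tau}$.

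The transversality bookkeeping in the well-positioned case is routine; the genuine content, and the main obstacle, is the not well-positioned case, where one must upgrade an immersion to an embedding. The key leverage is that Theorem~\ref{th:notwellpos} converts $\Sigma$ into a totally geodesic submanifold of the \emph{symmetric} space $\bar M$, after which embeddedness is forced by the extrinsic homogeneity of totally geodesic submanifolds of symmetric spaces together with the compactness of $\bar M$ (available precisely because $\tau<\tfrac12$ puts us in the compact type). Finally, I would record that the statement is meant for $d\ge2$: for $d=1$ there exist non-closed geodesics whose image is not embedded (cf.\ Lemma~\ref{lemma:slopegeod} and the remark following Theorem~\ref{th:a}), so the hypothesis $d\ge2$ cannot be dropped.
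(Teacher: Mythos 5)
Your proof is correct and follows essentially the same route as the paper: the well-positioned/not well-positioned dichotomy, with embeddedness coming from the transverse intersection $M\cap\s{S}_t(p)$ (Remark~\ref{rem:intersect}) in the first case and from Theorem~\ref{th:notwellpos} realizing $\Sigma$ as a complete totally geodesic submanifold of the compact symmetric space $\bar M$ in the second. You merely make explicit what the paper leaves implicit (extrinsic homogeneity plus compactness of $\Isom(\bar M)$ forcing embeddedness), and your observation that the statement should be read with $\dim\Sigma\ge 2$, since non-closed geodesics are not embedded, is a fair caveat about the corollary as literally stated.
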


Now we are in a position to prove the first main result of this article.
\begin{proof}[Proof of Theorem~\ref{th:a}]
Notice that $\upalpha$-isotropic subspaces exist if and only if $\tau\leq\tfrac{1}{2}$, see~Equations~\eqref{eq:b1}, \eqref{eq:b2}, and \eqref{eq:alphadef}. Then, by Theorem~\ref{th:notwellpos}, if $\tau\ge \tfrac{1}{2}$ we have that every totally geodesic submanifold of $\s{S}^n_{\F,\tau}$ is well-positioned. Also, by Corollary~\ref{cor:tghighdimension}, if $\dim\Sigma\ge\dim \F$, then $\Sigma$ is well-positioned.

Let us identify $\s S^n_{\F,\tau}$ with a geodesic sphere $\s{S}(p)$ centered at some point $p$ of a rank one symmetric space $\bar M$. The intersection of a geodesic sphere $\s{S}(p)$ of a symmetric space of rank one  $\bar{M}$ with a complete totally geodesic submanifold $M$ passing through $p\in \bar{M}$ is a geodesic sphere of~$M$ (see~Remark \ref{rem:intersect}).  Hence, the intersection of $\s{S}(p)$ with a complete totally geodesic submanifold $M$ of $\bar{M}$ of dimension $d'\ge3$ containing $p$ has dimension $d'-1$. Thus, it follows by Theorem~\ref{th:wellpos} that \textit{i)} and \textit{ii)} are equivalent.

Finally, by Remark~\ref{rem:congwellpos}, two isometric well-positioned totally geodesic submanifold must be congruent, and this concludes the proof. 
\end{proof}

\subsection{Not well-positioned maximal totally geodesic spheres}
\label{subsect:notwellposspheres}
This subsection is devoted to the study of  not well-positioned maximal totally geodesic spheres. Thus, by Theorem~\ref{th:a}, we may assume that $\tau <1/2$, and   $\F=\H$ or $\F=\O$.

Recall that we have the following presentations: $\s{S}^{4n+3}_{\H,\tau}=\s{G}/\s{K}=\s{Sp}_{n+1}/\s{Sp}_n$ and $\s{S}^{15}_{\O,\tau}=\s{G}/\s{K}=\s{Spin}_9/\s{Spin}_7$.
Let $o\in \s{S}^n_{\F,\tau}$ be fixed and let 
$F(o)$ be the fiber of the corresponding Hopf fibration by $o$. Let us consider the reductive decomposition of $\s{S}^{n}_{\F,\tau}$ appearing in Subsection~\ref{subsect:geomhopf}, which is given by $\g{g}=\g{k}\oplus\g{p}$, with $\g{p}=\g{p}_1\oplus\g{p}_2$, where $\g{p}_1$ is identified with the vertical distribution $\mathcal{V}$ at $o$, i.e.\ the tangent space of the fiber $F(o)$ at $o\in \s{S}^n_{\F,\tau}$, and $\g{p}_2$ is identified with the horizontal  distribution $\mathcal{H}$ at $o\in\s{S}^n_{\F,\tau}$.

Let $X\in\g{p}_2$ be a unit vector. In order to simplify our computations we will assume that $X=Y_1$, defined as in Subsection~\ref{subsect:geomhopf}. Using Equation~\eqref{eq:nabla}, we have for each $i\in\{1,\ldots,\dim \F-1\}$,
\begin{equation}\label{eq:nablarotation}
(\nabla  _{X^*_i}Y_1^{*})_o =-\tau\frac{\dim \F}{4}(J_i X)^*_o,\qquad (\nabla_{Y_1}^* Y_1^*)_o=0, \qquad (\nabla_{(J_i Y_1)^*} Y_1^*)_o= \frac{4}{\dim\F} (X_i)^*_o.
\end{equation}
Thus, Equation~\eqref{eq:nablarotation} implies that the linear transformation $e^{s(\nabla Y_1^*)_o}$ of $\g{p}$ is the rotation of angle $s\sqrt{\tau}$ in each $2$-plane spanned by $\{(J_i Y_1)^*, X^*_i\}$, and fixes the orthogonal complement of all these $2$-planes. Hence, in particular, $e^{s(\nabla Y_1^*)_o}$ fixes $Y_1$. Thus, since the slope of the subspace $e^{s(\nabla Y_1^*)_o}\mathcal{V}_o$, which is spanned by $\{ e^{s(\nabla Y_1^*)_o} (X_i)^*_o  \}^{\dim \F-1}_{i=1}$, is $0$ for $s=0$, and is $\infty$ for $s=\frac{\pi}{2\sqrt{\tau}}$, by continuity, the subspace $e^{s'(\nabla Y_1^*)_o}\mathcal{V}_o$ is  $\upalpha$-isotropic for some $s'>0$.

Let us denote by $\phi_s$ the flow associated with $Y_1^*$ in $\s{S}^n_{\F,\tau}$. Then, the integral curve of $Y_1^*$ given by $\phi_s(o)=\gamma(s)$ is a geodesic of $\s{S}^n_{\F,\tau}$ by Equation~\eqref{eq:nablarotation}. Moreover, let us consider the Equation~(2.2.1) of~ \cite{DO-nullity}, which relates the parallel transport $\s{P}_\gamma$ along a geodesic $\gamma$ with the pushforward $\phi_{s_{*}}$ of the flow of $Y_1^*$ in the following manner
\begin{equation}
\label{eq:magicformula}
\s{P}^{-1}_{\gamma(s)}\circ\phi_s{_{*o}}=e^{s\nabla (Y_1^*)_o}. 
\end{equation}   

In the following, we  regard, up to a homothetical factor, the Hopf-Berger sphere $\s S^n_{\F,\tau}$ of factor $\tau <1/2$ as a geodesic sphere $\s{S}_{t}(p)$ of radius $t\in(\pi/4,\pi/2)$ of a  
symmetric space $\bar {M}$ of sectional curvatures between $1$ and $4$, see Equation~\eqref{eq:radius_tau_homothety}. 
The shape operator $\mathcal{S}^t$ of  the  geodesic sphere 
$\s{S}_{t}(p)$ with respect to the outer unit normal vector field $\xi$ leaves the vertical and horizontal distributions invariant and has only two eigenvalues of different sign: the vertical one $\beta_1^t = -2\cot(2t)$ and the horizontal one $\beta_2^t=-\cot(t)$, see Remark \ref{rem:shapeoperatoreigenspaces}. 
Let $\bar \nabla$ and $\bar R$ be the Levi-Civita connection and the curvature tensor of $\bar M$. We have
$\langle \bar{\nabla}_{(Y_{1})^*_o}(Y_{1})^*_o, \xi _o\rangle = 
-\langle \mathcal{S}^t(Y_{1})^*_o),(Y_{1})^*_o\rangle = \cot (t)= \sqrt{\frac{\tau}{1-\tau}}$. Then,
\begin{equation}
\label{eq:roty_1xi}
\bar{\nabla}_{Y_1^*}(Y_1^*)_o = \sqrt{\frac{\tau}{1-\tau}}\, \xi _o,\quad\bar{\nabla}_{\xi}(Y_1^*)_o = -\sqrt{\frac{\tau}{1-\tau}}\, (Y_1^*)_o.
\end{equation}

The tangent space of a Helgason sphere $\s{S}_4^{\dim\F}$ that contains the radial geodesic from the center $p$ of $\s{S}_t(p)$ to $o\in\s{S}_t(p)$ is $T_o\s{S}_4^{\dim\F}=\mathcal{V}_o\oplus\R\xi_o$. Hence, $T_o\s{S}_4^{\dim\F}$ is invariant under $\bar R$, since $\s{S}_4^{\dim\F}$ is a totally geodesic submanifold of $\bar M$. 

Recall that the isometries of $\s{S}^n_{\F,\tau}$ can be extended to isometries in the isotropy of $\bar M$ at $p\in\bar M$, see~Remark~\ref{rem:isom}. Hence, for simplicity, we will also denote by $(Y_1)^*$ and $\phi_s$, the Killing vector field induced by $Y_1$ in $\bar M$ and its associated flow, respectively.  Then, taking into account that the flow $\phi_s$ is given by isometries, we deduce that
$\mathrm ({\phi}_s)_{*o} (T_o\s S_4^{\dim \F})$ and $({\phi}_s)_{*o}(\mathcal{V}_o)=\mathcal{V}_{\gamma(s)}\subset T_{\gamma(s)}\s{S}_t(p)$ are totally geodesic subspaces of $T_{\gamma(s)}\bar M$. Since $({\phi}_s)_{*o} (\mathcal{V}_o)$ is perpendicular to $\gamma'(s)$, taking into account that $\gamma (s)$ is a geodesic of $\s{S}_t(p)$ and that $\gamma '(s)$ is a horizontal eigenvector of $\mathcal{S}^t$ for all~$s$,
one has that 
$$\bar {\s P}_{\gamma(s)} (v) = 
{\s P}_{\gamma(s)} (v)\quad\text{for all $v\in  \mathcal{V}_o$},$$
where  $\bar{\s P}_{\gamma(s)}$  denotes the parallel transport in  $\bar{M}$
along $\gamma (s) = \phi_s(o)$. Then, by Equation~\eqref{eq:magicformula}, 
\begin{equation}\label{eq: 0321}
\mathrm{e}^{s(\bar {\nabla} Y_1^{*})_o}
\mathcal{V}_o = \bar{\s{P}}^{-1}_{\gamma(s)}((\phi_s)_{*o} (\mathcal{V}_o)) ={\s{P}}^{-1}_{\gamma(s)}((\phi_s)_{*o} (\mathcal{V}_o)) =  \mathrm{e}^{s( {\nabla} Y_1^{*})_o}
\mathcal{V}_o.
\end{equation}
Therefore, $\mathrm{e}^{s( {\nabla} Y_1^{*})_o}
\mathcal{V}_o$ is the tangent space of a totally geodesic hypersphere of a Helgason sphere of $\bar M$, and thus invariant under $\bar R$. This implies by Gauss equation that $\mathrm{e}^{s( {\nabla} Y_1^{*})_o}
\mathcal{V}_o$ is a $R$-invariant subspace of $\g{p}$.   Consequently, taking into account that the Levi-Civita connection is preserved by homotheties, and the paragraph below Equation~\eqref{eq:nablarotation}, there is some $s'>0$ such that  $\mathrm{e}^{s'( {\nabla} Y_1^{*})_o}
\mathcal{V}_o$ is an $\upalpha$-isotropic, $R$-invariant subspace of $\g{p}$. Therefore, by Theorem~\ref{th:notwellpos}, $\mathrm{e}^{s'( {\nabla} Y_1^{*})_o}
\mathcal{V}_o$ is the tangent space of a not well-positioned totally geodesic submanifold $\widehat{\Sigma}$ of $\s{S}^n_{\F,\tau}$.
Furthermore notice that by Equation~\eqref{eq:roty_1xi},
\[ \mathrm{e}^{s'(\bar {\nabla} Y_1^{*})_o}
T_o\s{S}_4^{\dim\F}= \mathrm{e}^{s'(\bar {\nabla} Y_1^{*})_o}
\mathcal{V}_o\oplus \R(\cos(s'\sqrt{\frac{\tau}{1-\tau}})(Y_1^*)_o+\sin(s'\sqrt{\frac{\tau}{1-\tau}})\xi_o).   \]
Thus, this subspace is tangent to a Helgason sphere of $\bar M$ and contains the tangent space of $\widehat{\Sigma}$ at $o$ which is equal to $\mathrm{e}^{s'(\bar {\nabla} Y_1^{*})_o}
\mathcal{V}_o$. Consequently, $\widehat{\Sigma}$ is a totally geodesic hypersurface of the aforementioned Helgason sphere of $\bar M$, and by using Equation~\eqref{eq:nablarotation}, and the fact that $\mathrm{e}^{s'(\bar {\nabla} Y_1^{*})_o}
\mathcal{V}_o$ is $\upalpha$-isotropic,  we can compute that
\begin{equation}
\label{eq:tangentwidehat}
T_o\widehat{\Sigma}=\spann\left\{\frac{4X_i}{\dim\F\sqrt{1-\tau}} - \sqrt{\frac{2 \tau -1}{\tau -1}}J_i Y_1\right\}^{\dim\F-1}_{i=1}.
\end{equation}
Furthermore, $\mathrm{e}^{s'(\bar {\nabla} Y_1^{*})_o}
T_o\s{S}_4^{\dim\F}$ projects non trivially over $\xi_o$. Thus, the Helgason sphere whose tangent space is equal to $\mathrm{e}^{s'(\bar {\nabla} Y_1^{*})_o}
T_o\s{S}_4^{\dim\F}$ and  the geodesic sphere $\s{S}_t(p)$ intersect transversally at $o$. 
\begin{remark}
\label{rem:reductiondimension}
Observe that if $\F=\H$, by looking at Equation~\eqref{eq:tangentwidehat}, it is clear that $\widehat{\Sigma}$ is totally geodesic of  a well-positioned totally geodesic submanifold $\s{S}^{7}_{\H,\tau}$ of $\s{S}^{4n+3}_{\H,\tau}$, see Subsection~\ref{subsect:examplestg}. Furthermore, the totally geodesic submanifold $\widehat{\Sigma}$ is a maximal totally geodesic submanifold of $\s{S}^7_{\H,\tau}$ and $\s{S}^{15}_{\O,\tau}$. Indeed, by Corollary~\ref{cor:tghighdimension}, $\widehat{\Sigma}$ has the highest dimension that a not well-positioned totally geodesic submanifold can attain, and by Theorem~\ref{th:wellpos}, well-positioned totally geodesic submanifolds of $\s{S}^7_{\H,\tau}$ or $\s{S}^{15}_{\O,\tau}$ have dimension less or equal than $3$ or $7$, respectively; see~Table~\ref{table:hopfbergertgwellpos}.
\end{remark}
\begin{theorem}
\label{th:notwellsphere}
Let $\Sigma$ be a complete not well-positioned  totally geodesic sphere of $\s{S}^n_{\F,\tau}$, $\tau\neq1$.	Then, the following statements hold:
\begin{enumerate}
	\item[i)]
	There is a unique maximal not well-positioned totally geodesic submanifold $\widehat\Sigma$ of $\s{S}^{n}_{\F,\tau}$ isometric to $\s{S}^{\dim\F-1}_{4-4\tau}$  with $T_o\Sigma\subset T_o\widehat\Sigma$. 
	
	\item[ii)] Every two not well-positioned totally geodesic spheres in $\s{S}^{n}_{\F,\tau}$ of dimension~$\dim\F-1$ are congruent in $\s{S}^{n}_{\F,\tau}$ by some element in the isotropy of $\s{S}^n_{\F,\tau}$.
\end{enumerate}
\end{theorem}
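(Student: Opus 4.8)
The plan is to deduce both statements from a normal form for the $\upalpha$-isotropic, curvature-invariant subspaces of $\g{p}$, together with the transitivity of the isotropy on horizontal unit vectors recorded in Remark~\ref{rem:isotropyrep}. First I would note that the construction carried out before the theorem already exhibits a not well-positioned totally geodesic submanifold $\widehat\Sigma$ with $T_o\widehat\Sigma=\mathrm{e}^{s'(\nabla Y_1^*)_o}\mathcal{V}_o$ as in \eqref{eq:tangentwidehat}; in particular $\dim\widehat\Sigma=\dim\F-1$, which by Corollary~\ref{cor:tghighdimension} is the largest possible dimension of a not well-positioned totally geodesic submanifold, so $\widehat\Sigma$ is maximal. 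To pin down its isometry type I would verify that the vectors $W_i=\tfrac{4X_i}{\dim\F\sqrt{1-\tau}}-\sqrt{\tfrac{2\tau-1}{\tau-1}}\,J_iY_1$, $1\le i\le\dim\F-1$, form an orthonormal frame of $T_o\widehat\Sigma$ and then, via the Gauss equation and the curvature formulas \eqref{eq:rtensorcomplexquaternionic}--\eqref{eq:rtensoroctonions0}, compute $\langle R(W_i,W_j)W_j,W_i\rangle=4-4\tau$ for $i\neq j$. Since $\widehat\Sigma$ is a totally geodesic sphere of constant curvature (it is a compact rank one symmetric space by Lemma~\ref{lemma:notwellposrk1}, topologically a hypersphere of a Helgason sphere), this constant value shows $\widehat\Sigma$ is isometric to $\s{S}^{\dim\F-1}_{4-4\tau}$; equivalently, by Theorem~\ref{th:notwellpos} it is a totally geodesic hypersphere of a Helgason sphere of $\bar M$, of curvature $4$ for the metric of $\bar M$, which the homothety relating $\s{S}^n_{\F,\tau}$ to $\s{S}_t(p)$ rescales to $4-4\tau$.

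Next I would establish the normal form. Let $\Sigma$ be any not well-positioned totally geodesic sphere through $o$ and set $W:=T_o\Sigma$. By Theorem~\ref{th:notwellpos}, $W$ is $\upalpha$-isotropic and curvature invariant, and by Lemma~\ref{lemma:splitting} it contains no horizontal or vertical vector, so the vertical projection $P_1\colon W\to\g{p}_1$ is injective. Writing $\upalpha(v,v)=\beta_1^t\lVert v_1\rVert^2+\beta_2^t\lVert v_2\rVert^2$ for $v=v_1+v_2$, and using that $\beta_1^t$ and $\beta_2^t$ have opposite signs when $\tau<\tfrac12$ by \eqref{eq:b1} and \eqref{eq:b2}, the isotropy condition forces all vectors of $W$ to share one fixed slope; polarizing $\upalpha|_W\equiv0$ then shows that $W=\{X+\Phi X:X\in P_1(W)\}$ is the graph of a conformal map $\Phi\colon P_1(W)\to\g{p}_2$ whose conformal factor is prescribed by that slope. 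The decisive point is that curvature invariance rigidifies $\Phi$: I expect the computation to yield $\Phi X=c\,[\eta,X]$ for a single horizontal unit vector $\eta$ and the constant $c$ dictated by the slope. Since $\eta\mapsto[\eta,X]=J_X\eta$ is invertible on $\g{p}_2$ for $X\neq0$, this $\eta$ is uniquely determined by $W$; hence $W\subset\mathrm{e}^{s'(\nabla\eta^*)_o}\mathcal{V}_o$ and $\widehat\Sigma:=\exp_o\bigl(\mathrm{e}^{s'(\nabla\eta^*)_o}\mathcal{V}_o\bigr)$ is the unique maximal not well-positioned totally geodesic submanifold with $T_o\Sigma\subset T_o\widehat\Sigma$, which completes (i).

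Finally, for (ii) I would argue that a not well-positioned totally geodesic sphere of dimension $\dim\F-1$ is maximal, so by the normal form its tangent space equals a full rotated vertical space $\mathrm{e}^{s'(\nabla\eta^*)_o}\mathcal{V}_o$, with $s'$ independent of $\eta$ because it is fixed by the $\s{K}$-invariant form $\upalpha$. Since the isotropy acts transitively on horizontal unit vectors (Remark~\ref{rem:isotropyrep}) and preserves $\mathcal{V}_o$, the Levi-Civita connection and $\upalpha$, it carries $\mathrm{e}^{s'(\nabla\eta_1^*)_o}\mathcal{V}_o$ onto $\mathrm{e}^{s'(\nabla\eta_2^*)_o}\mathcal{V}_o$; therefore any two such spheres are congruent by an isotropy element, which proves (ii). I expect the main obstacle to be precisely the rigidity claim $\Phi X=c\,[\eta,X]$: extracting it from curvature invariance forces one to use the explicit brackets of the complex structures $J_i$, treating $\F=\H$ and $\F=\O$ separately and invoking, in the octonionic case, the Fano-plane relations among $J_1,\dots,J_7$ from Subsection~\ref{subsect:geomhopf}.
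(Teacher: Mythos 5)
Your outline replaces the paper's argument with a normal-form strategy, but the step on which everything hinges is not actually carried out: you assert that curvature invariance forces the conformal graph map to have the form $\Phi X=c\,[\eta,X]$ for a \emph{single} horizontal unit vector $\eta$, and you explicitly flag this as something you ``expect the computation to yield.'' That rigidity claim \emph{is} the content of the uniqueness in \textit{i)} and of the congruence in \textit{ii)}; without it the proposal only re-establishes the existence of one $\widehat\Sigma$, which was already done before the theorem. The computation is not a formality: for a low-dimensional $\Sigma$ (say a $2$- or $3$-sphere in $\s{S}^{15}_{\O,\tau}$) the vertical projection $P_1(T_o\Sigma)$ is a proper subspace of $\g{p}_1$, the conformal map $\Phi$ goes into an $8$-dimensional $\g{p}_2$, and one must rule out, using the Fano-plane relations, all conformal maps that are not restrictions of some $c\,[\eta,\cdot]$. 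The analogous two-dimensional analysis in the paper (the proof of Theorem~\ref{th:notwellpos} and Lemma~\ref{lemma:2octo}) already occupies several pages, and there the extra horizontal components (the $b_6 Y_2$-terms) that your normal form must exclude are precisely what produce the totally geodesic projective planes; so the dichotomy ``graph of $c[\eta,\cdot]$'' versus ``everything else'' cannot be waved through.

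For comparison, the paper avoids this computation entirely. By Theorem~\ref{th:notwellpos} a not well-positioned totally geodesic \emph{sphere} $\Sigma$ is totally geodesic in $\bar M$, hence (being a sphere rather than a projective space) is a subsphere of a Helgason sphere, so for any unit $v\in T_o\Sigma$ one has $T_o\Sigma\ominus\R v\subset\mathbb V_1(v)$, the eigenvalue-$4$ eigenspace of $\bar R_v$, which has dimension $\dim\F-1$. The same holds for $\widehat\Sigma$, whose $(T_o\widehat\Sigma\ominus\R v)$ already has codimension one in $\mathbb V_1(v)$. If $T_o\Sigma\not\subset T_o\widehat\Sigma$, the two sums fill $\R v\oplus\mathbb V_1(v)=T_o\s{S}_4^{\dim\F}$, which is impossible because that Helgason sphere meets the geodesic sphere $\s{S}_t(p)$ transversally while both tangent spaces lie in $T_o\s{S}_t(p)$. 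This gives uniqueness of $\widehat\Sigma$ through any admissible unit vector, and \textit{ii)} then follows from transitivity of the isotropy on vectors of fixed length and slope. If you want to salvage your approach, you must either supply the full bracket computation establishing $\Phi=c\,[\eta,\cdot]|_{P_1(W)}$ for both $\F=\H$ and $\F=\O$, or substitute the eigenspace-dimension argument above for it. Your identification of the isometry type $\s{S}^{\dim\F-1}_{4-4\tau}$ and your deduction of \textit{ii)} from the normal form are fine as far as they go, but they inherit the gap.
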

\begin{proof}
Let us regard, up to a homothetical factor, the Hopf-Berger sphere $\s S^n_{\F,\tau}$  as a geodesic sphere $\s{S}_{t}(p)$ of a  rank one
symmetric space $\bar {M}$ with maximal sectional curvature equal to $4$. Now taking into account that $\s{S}^n_{\F,\tau}$ and $\s{S}_t(p)$ are related by a homothety of ratio $\alpha=\sqrt{1-\tau}$, see~Equation~\eqref{eq:radius_tau_homothety}; we deduce that $\widehat{\Sigma}$ (as constructed above) is isometric to $\s{S}^{\dim\F-1}_{4-4\tau}$.

Let $v\in T_o\Sigma$ be a unit vector. Then, there exists a unique Helgason sphere $\s{S}^{\dim \F}_4$  of $\bar M$ with $v\in T_o\s{S}_4^{\dim\F}$.  We may  asumme that $v\in T_o\widehat{\Sigma}$ by  using the isotropy of $\s{S}^n_{\F,\tau}$, and therefore $\widehat{\Sigma}\subset\s{S}_4^{\dim\F}$. If $\Sigma$ is not contained in $\widehat{\Sigma}$, we have that $\dim(T_o\Sigma + T_o\widehat{\Sigma})\ge \dim \F$. Moreover, this implies that $\dim((T_o\Sigma\ominus\R v) +(T_o\widehat{\Sigma}\ominus\R v))\ge \dim \F-1$. Since $(T_o\Sigma\ominus\R v) +(T_o\widehat{\Sigma}\ominus\R v)$ is contained in the eigenspace associated with the eigenvalue~$4$ of the Jacobi operator $\bar R_v$ (see Subsection~\ref{subsect:rkonetg}), we deduce that $\dim(T_o\Sigma +T_o\widehat{\Sigma})=\dim \F$, and $T_o\Sigma +T_o\widehat{\Sigma}$ must be equal to the tangent space of $\s{S}_4^{\dim\F}$. However, this yields a contradiction since this Helgason sphere $\s{S}_4^{\dim\F}$ intersects the geodesic sphere $\s{S}_t(p)$ transversally at some point, as we pointed out in the paragraph above Equation~\eqref{eq:tangentwidehat}. This proves \textit{i)}.

Finally, notice that \textit{i)} implies \textit{ii)} since the isotropy at $o$ of $\s{S}^n_{\F,\tau}$ acts transitively on the set of vectors with the same length and slope.\qedhere	
\end{proof}

\section{Totally geodesic submanifolds in quaternionic Hopf-Berger spheres}
\label{sect:quaternionic}
The purpose of this section is to achieve the classification of totally geodesic submanifolds in $\s{S}^{4n+3}_{\H,\tau}$.
\subsection{Not well-positioned totally geodesic projective spaces}
\label{subsect:tgproj}
In what follows, we classify not well-positioned totally geodesic submanifolds in $\s{S}^{4n+3}_{\H,\tau}$ isometric to projective spaces.

First of all, notice that a not well-positioned totally geodesic projective space $\Sigma$ in $\s{S}^{4n+3}_{\H,\tau}$ has constant sectional curvature equal to $(1-\tau)$, since  not well-positioned totally geodesic spheres have sectional curvature equal to $4(1-\tau)$, see Theorem~\ref{th:notwellpos} and Theorem~\ref{th:notwellsphere}.
Let $\g{p}_{\Sigma}\subset \g{p}$ be the tangent space of $\Sigma$ at $o$. Then, we can define
\[ T_X(Y):=R(X,Y,X) +(1-\tau)\langle X,X\rangle Y, \quad\text{for $X,Y\in\g{p}$.} \]
Since $\Sigma$ has constant sectional curvature equal to $(1-\tau)$, we have $T_{X}(Y)=0$ for every $X,Y\in\g{p}_\Sigma$.

As a consequence of the classification of totally geodesic submanifolds in rank one symmetric space~(see~Figure~\ref{fig:tgrk1}), Corollary~\ref{cor:tghighdimension}, and Theorem~\ref{th:notwellpos},  every not well-positioned totally geodesic projective space of $\s{S}^{4n+3}_{\H,\tau}$ has dimension two or three. Let us first start with the $2$-dimensional case, i.e.\ projective planes.
\begin{lemma}
\label{lemma:notwellposproj2}
A not well-positioned totally geodesic surface of $\s{S}^{4n+3}_{\H,\tau}$ passing through $o\in\s{S}_{\H,\tau}^{4n+3}$ is isometric to a projective plane $\R \s P^2_{1-\tau}$ if and only if it is congruent  to $\Sigma=\exp_o(\g{p}_\Sigma)$ by some element of the full isotropy of $\s{S}^{4n+3}_{\H,\tau}$, where $\g{p}_\Sigma$ is spanned by one of the following orthogonal bases:

\begin{align*}
	&\textup{(a)}& 	\mathcal{A}^2_{\tau}&:=\left\{\frac{X_2}{\sqrt{\tau}} +\sqrt{ \frac{c_2}{\tau}}Y_1, \frac{X_3}{\sqrt{\tau}}+ \sqrt{\frac{\tau}{c_2}}J_1Y_1+\sqrt{\frac{c_1 c_3}{\tau c_2}} Y_2\right\}, \enspace \tau<\frac{1}{3},\enspace n\ge 2, \enspace \text{or}\\
	&\textup{(b)}& 	\mathcal{A}^2_{1/3}&:=\left\{\frac{X_2}{\sqrt{\tau}} +\sqrt{ \frac{c_2}{\tau}}Y_1, \frac{X_3}{\sqrt{\tau}}+ \sqrt{\frac{\tau}{c_2}}J_1Y_1\right\}, \enspace \tau=1/3,
\end{align*}
where $c_m:=1-m\tau$  for each $m\in\{1,2,3\}$.
\end{lemma}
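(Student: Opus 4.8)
The plan is to read this off the characterization of not well-positioned totally geodesic submanifolds in Theorem~\ref{th:notwellpos}: for $\tau\neq 1/2$, a complete surface is a not well-positioned totally geodesic submanifold of $\s{S}^{4n+3}_{\H,\tau}$ precisely when its tangent space is $\upalpha$-isotropic and curvature invariant at every point. Such a surface is automatically of constant curvature, being a $2$-dimensional homogeneous space (cf.\ Lemma~\ref{lemma:notwellposrk1}), and by Theorem~\ref{th:notwellpos} it is in fact a totally geodesic submanifold of $\bar M=\H\s P^n$. Hence, by Wolf's classification (Figure~\ref{fig:tgrk1}), it is isometric to $\R\s P^2_{1-\tau}$ exactly when its constant curvature in $\bar M$ equals $1$, i.e.\ its curvature in $\s{S}^{4n+3}_{\H,\tau}$ equals $1-\tau$; the competing totally geodesic surface, of curvature $4-4\tau$, is the sphere $\s{S}^2_{4-4\tau}$. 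Thus ``isometric to $\R\s P^2_{1-\tau}$'' is equivalent to $T_o\Sigma$ being $\upalpha$-isotropic, curvature invariant, and of sectional curvature $1-\tau$, i.e.\ $T_XY=0$ on $T_o\Sigma$ with $T$ as defined before the statement.

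For the implication that the two bases produce $\R\s P^2_{1-\tau}$, I would verify the three properties directly on $\g{p}_\Sigma=\spann(\mathcal{A}^2_\tau)$ (resp.\ $\spann(\mathcal{A}^2_{1/3})$). Using the difference and curvature formulas~\eqref{eq:dtensorcomplex}--\eqref{eq:rtensorcomplexquaternionic0} one checks that $R(u,v)u$ and $R(u,v)v$ remain in $\spann\{u,v\}$; using~\eqref{eq:alphadef} together with the shape-operator eigenvalues $\beta_1^t,\beta_2^t$ of~\eqref{eq:b1}--\eqref{eq:b2}, which have opposite signs for $\tau<1/2$, one checks $\upalpha(u,u)=\upalpha(v,v)=\upalpha(u,v)=0$; and one computes $\langle u,u\rangle=\langle v,v\rangle=(1-\tau)/\tau$, $\langle u,v\rangle=0$, and $\langle R(u,v)v,u\rangle=(1-\tau)^3/\tau^2$, so the sectional curvature equals $1-\tau$. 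Theorem~\ref{th:notwellpos} then gives that $\Sigma=\exp_o(\g{p}_\Sigma)$ is a complete not well-positioned totally geodesic surface, and the curvature value identifies it with $\R\s P^2_{1-\tau}$ as above. Reality of the coefficient $\sqrt{c_1 c_3/(\tau c_2)}$ forces $c_3=1-3\tau\ge 0$, i.e.\ $\tau\le 1/3$, with $\tau=1/3$ collapsing to basis (b) and $n\ge 2$ required for $\tau<1/3$ so that $Y_2$ exists.

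For the converse, I would start from the normal form obtained in the proof of Theorem~\ref{th:notwellpos}: after reducing by the isotropy of $\s{S}^{4n+3}_{\H,\tau}$ to the subsphere $\s{S}^{11}_{\H,\tau}$, every $\upalpha$-isotropic, curvature-invariant $2$-plane without horizontal or vertical vectors is spanned by $u=r\cos\varphi\,\tfrac{X_2}{\sqrt\tau}+r\sin\varphi\,\tfrac{X_3}{\sqrt\tau}\pm\sqrt{\tfrac{r^2(1-2\tau)}{\tau}}\,Y_1$ and $v=-s\sin\varphi\,\tfrac{X_2}{\sqrt\tau}+s\cos\varphi\,\tfrac{X_3}{\sqrt\tau}\pm\sqrt{\tfrac{s^2(1-2\tau)-\tau b_6^2}{\tau}}\,J_1Y_1+b_6 Y_2$. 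Applying the $\s{Sp}_1$-rotation in the $(X_2,X_3)$-plane to set $\varphi=0$ and rescaling so that $r=s=1$, I would then impose the extra constant-curvature condition $T_uv=R(u,v)u+(1-\tau)\langle u,u\rangle v=0$ coming from $\Sigma\cong\R\s P^2_{1-\tau}$. This single equation distinguishes the projective plane ($K=1-\tau$) from the sphere ($K=4-4\tau$); solving it pins down $b_6^2=\tfrac{c_1 c_3}{\tau c_2}$ and the $J_1Y_1$-coefficient to $\sqrt{\tau/c_2}$ (using the identity $(1-2\tau)-\tau b_6^2=\tau^2/(1-2\tau)$), which is exactly the basis $\mathcal{A}^2_\tau$, degenerating to $\mathcal{A}^2_{1/3}$ when $\tau=1/3$.

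The main obstacle I anticipate is the converse computation: the bookkeeping of which curvature components and which components of $T_uv$ must vanish (parallel to the chain of vanishing conditions in the proof of Theorem~\ref{th:notwellpos}), together with verifying that after the isotropy normalization the solution set reduces to precisely one isotropy-orbit given by the stated bases, with the constraints $\tau\le 1/3$ and $n\ge 2$ emerging from reality of the coefficients and existence of $Y_2$. The algebra is routine once the right isotropy representative is fixed, but confirming completeness of the normal form---that no not well-positioned totally geodesic $\R\s P^2_{1-\tau}$ escapes it---is the delicate point, and it rests on the $2$-point homogeneity of Lemma~\ref{lemma:notwellposrk1} together with the transitivity of the isotropy on vectors of equal length and slope (Remark~\ref{rem:isotropyrep}).
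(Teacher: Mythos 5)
Your proposal is correct and follows essentially the same route as the paper: check that the two bases span $\upalpha$-isotropic, curvature-invariant planes of sectional curvature $1-\tau$ (hence, via Theorem~\ref{th:notwellpos} and Wolf's classification in $\bar M=\H\s P^n$, projective planes), and conversely normalize by the isotropy and impose the constant-curvature condition $T_u(v)=0$ together with $\upalpha$-isotropy to pin down the coefficients, with $\tau\le 1/3$ and $n\ge 2$ coming from reality of $\sqrt{c_1c_3/(\tau c_2)}$ and the existence of $Y_2$. The only (harmless) difference is that you recycle the normal form already derived in the proof of Theorem~\ref{th:notwellpos}, whereas the paper redoes the normalization from a general pair $(u,v)$ using the local-symmetry conditions $\langle(\nabla_uR)(v,u,u),\cdot\rangle=0$ and the components of $T_u(v)$; both computations yield the same constraint $b_6^2=c_1c_3/(\tau c_2)$ and hence the stated bases.
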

\begin{proof}
Observe that the subspaces spanned by $\mathcal{A}^2_\tau$, $\tau<1/3$ and  $\mathcal{A}^2_{1/3}$ are $\upalpha$-isotropic, curvature invariant, and their sectional curvatures are equal to $(1-\tau)$, and $2/3$, respectively. Then, by Theorem~\ref{th:notwellpos}, the exponential image of both subspaces are totally geodesic projective planes.

Conversely, notice that using the isotropy representation and the fact that $\Sigma$ is not well-positioned, we can assume without loss of generality that $\g{p}_{\Sigma}$ is spanned by two vectors $u$ and $v$ given by
\begin{align*}
	u&=a_1 X_1/\sqrt{\tau} + a_2 X_2/\sqrt{\tau} + a_3 X_3/\sqrt{\tau}+ a_4 Y_1,\\
	v&=b_1 X_1/\sqrt{\tau} + b_2 X_2/\sqrt{\tau} + b_3 X_3/\sqrt{\tau} + b_4 J_1 Y_1 +b_5 Y_2.
\end{align*}
Recall that $\Sigma$ is isometric to a real projective plane, and therefore $\Sigma$ is locally symmetric. Thus,
\[0=\langle (\nabla_u R)(v,u,u),Y_1\rangle=4(1-\tau)\sqrt{\tau}a_1a_4^2b_4.  \]
Taking into account that $\Sigma$ is not well-positioned, we have that $\tau<1/2$ by Theorem~\ref{th:notwellpos}, and $a_4\neq0$ by Lemma~\ref{lemma:splitting}. Thus,  $a_1=0$ or $b_4=0$. Let us assume that $b_4=0$. Then, we have for each $i\in\{1,2,3\}$,
\[0=\langle T_u(v), J_i Y_1\rangle=3(-1)^i(1-\tau)a_4(a_{i+2}b_{i+1}-a_{i+1}b_{i+2}),\enspace\mbox{ (indices modulo 3)}.  \]
This implies the existence of a horizontal vector that yields a contradiction by Lemma~\ref{lemma:splitting}. Let us assume that $a_1=0$. Then,
\[0=\langle T_u(v),J_2 Y_1\rangle=3(1-\tau)a_3a_4b_1,\qquad 0=\langle T_u(v),J_3 Y_1\rangle=3(\tau-1)a_2a_4b_1.   \]
Hence, $b_1=0$, since otherwise $\g{p}_{\Sigma}$ would contain a non-zero horizontal vector. Moreover,
\[0=\langle T_u(v), Y_1\rangle=a_4 \tau  (a_2 b_2+a_3 b_3).  \]
This implies that $a_2=r\cos(\varphi)$, $a_3=r\sin(\varphi)$, $b_2=-s\sin(\varphi)$, $b_3=s\cos(\varphi)$, for some $r,s>0$ and $\varphi\in[0,2\pi)$. However, we can assume without loss of generality, that $r=s=1$. By using the isotropy, we can further assume that $\varphi=0$. Hence,
\[0=\langle T_u(v), Y_2\rangle=b_5 (-a_4^2 \tau -2 \tau +1).  \]
This implies that $a_4=\pm\frac{\sqrt{c_2 }}{\sqrt{\tau }}$. Then, we have
\[0=\langle T_u(v),X_3\rangle=(\tau -1) ( \sqrt{\tau }\mp b_4 \sqrt{1-2 \tau }),  \]
which implies that $b_4=\pm \frac{\sqrt{\tau }}{\sqrt{c_2 }}$. By using the isotropy, we can assume that both $a_4$ and $b_4$ are positive. Finally, since $\g{p}_{\Sigma}$ is $\upalpha$-isotropic, we have
\[0=\upalpha(v,v)=\frac{b_5^2 (2 \tau -1) \tau +3 \tau ^2-4 \tau +1}{(2 \tau -1)\sqrt{(1-\tau) \tau } }. \]
This implies that $b_5=\pm\sqrt{\frac{c_1 c_3}{\tau  c_2}}$. Observe that $b_5\in\R$ when $\tau<1/3$. Using the isotropy we can assume that $b_5>0$, and then we obtain the subspace $\g{p}_\Sigma$ spanned by $\mathcal{A}^2_\tau$. On the one hand, observe that $b_5=0$ when $\tau=1/3$, obtaining the subspace spanned by $\mathcal{A}^2_{1/3}$. On the other hand, if $\tau<1/3$, we have $b_5\neq 0$, and then $\Sigma$ is a totally geodesic submanifold of $\s{S}^{11}_{\H,\tau}$. This proves the desired result.
\end{proof}

\begin{lemma}
\label{lemma:notwellposproj3}
A not well-positioned totally geodesic submanifold of $\s{S}^{4n+3}_{\H,\tau}$ passing through $o\in\s{S}^{4n+3}_{\H,\tau}$ is isometric to a projective space $\R \s P^3_{1-\tau}$ if and only if it is congruent to $\Sigma=\exp_o(\g{p}_\Sigma)$  by some element of the full isotropy of $\s{S}^{4n+3}_{\H,\tau}$, where $\g{p}_\Sigma$ is spanned by one of the following orthogonal bases:
\normalsize

\begin{align*}
	&\textup{(a)}&	\mathcal{A}^3_{\tau}&:=\mathcal{A}^2_{\tau}\cup\left\{ \frac{X_1}{\sqrt{\tau}}-\sqrt{\frac{\tau}{c_2}} J_3 Y_1 + \frac{\tau \sqrt{c_1} }{\sqrt{\tau  c_2 c_3}} J_2 Y_2 - \sqrt{\frac{c_1 c_4}{c_3 \tau}} Y_3 \right\}, \, \tau<1/4, \,  n\ge 3, \, or \\
	&\textup{(b)}&	\mathcal{A}^3_{1/4}&:=\mathcal{A}^2_{1/4}\cup\left\{ \frac{X_1}{\sqrt{\tau}}-\sqrt{\frac{\tau}{c_2}} J_3 Y_1 + \frac{\tau \sqrt{c_1} }{\sqrt{\tau  c_2 c_3}} J_2 Y_2 \right\}, \enspace \tau=1/4, \enspace n\ge 2,
\end{align*}
where $c_m:=1-m\tau$  for each $m\in\{1,2,3,4\}$.
\end{lemma}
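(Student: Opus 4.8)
The plan is to invoke Theorem~\ref{th:notwellpos}, which reduces the classification of not well-positioned totally geodesic submanifolds to the two conditions that $\g{p}_\Sigma:=T_o\Sigma$ be $\upalpha$-isotropic and invariant under the curvature tensor. Since $\Sigma\cong\R\s{P}^3_{1-\tau}$ is locally symmetric of constant sectional curvature $(1-\tau)$, curvature invariance will be supplemented by the vanishing of $T_X(Y)=R(X,Y,X)+(1-\tau)\langle X,X\rangle Y$ on $\g{p}_\Sigma$ and of $\nabla R$ restricted to $\g{p}_\Sigma$. The distinction between $\R\s{P}^3_{1-\tau}$ and $\s{S}^3_{1-\tau}$ is automatic: by Theorem~\ref{th:notwellsphere} a not well-positioned totally geodesic sphere has constant curvature $4-4\tau\neq 1-\tau$, so any $3$-dimensional not well-positioned totally geodesic submanifold of constant curvature $(1-\tau)$ must be isometric to the projective space $\R\s{P}^3_{1-\tau}$.

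For sufficiency, I would verify by direct substitution into the curvature formulas \eqref{eq:rtensorcomplexquaternionic}--\eqref{eq:rtensorcomplexquaternionic0} (together with \eqref{eq:dtensorcomplex} and the definition \eqref{eq:alphadef} of $\upalpha$) that the subspaces spanned by $\mathcal{A}^3_\tau$ (for $\tau<1/4$, $n\ge3$) and by $\mathcal{A}^3_{1/4}$ (for $\tau=1/4$, $n\ge2$) are $\upalpha$-isotropic and satisfy $T_X(Y)=0$ for all $X,Y$ in the span, which yields both curvature invariance and constant sectional curvature $(1-\tau)$. Theorem~\ref{th:notwellpos} then produces a complete not well-positioned totally geodesic submanifold, which by the previous paragraph is isometric to $\R\s{P}^3_{1-\tau}$. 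Note that at $\tau=1/4$ one has $c_4=1-4\tau=0$, so the $Y_3$-component of the third basis vector drops out and $\mathcal{A}^3_{1/4}$ already lies in a copy of $\s{S}^{11}_{\H,\tau}$, explaining the bound $n\ge2$ there, as opposed to $n\ge3$ when $\tau<1/4$.

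For necessity, I would first use that $\R\s{P}^3_{1-\tau}$ contains a totally geodesic $\R\s{P}^2$, whose tangent plane is a not well-positioned totally geodesic surface; by Lemma~\ref{lemma:notwellposproj2} (and since $\tau\le1/4<1/3$) I may, after applying an element of the full isotropy, assume it equals $\spann(\mathcal{A}^2_\tau)$. Writing $\g{p}_\Sigma=\spann(\mathcal{A}^2_\tau)\oplus\R w$, with $w$ a unit vector orthogonal to the plane expanded in the basis $\{X_i/\sqrt\tau,Y_j,J_iY_j\}$, I would impose the constraints coming from $T_u(w)=T_v(w)=T_w(w)=0$, the vanishing of $(\nabla_\bullet R)$ on $\g{p}_\Sigma$, and the $\upalpha$-isotropy conditions $\upalpha(u,w)=\upalpha(v,w)=\upalpha(w,w)=0$, where $\{u,v\}=\mathcal{A}^2_\tau$. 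Reading off the relevant components, exactly as in the proof of Lemma~\ref{lemma:notwellposproj2}, will pin down the coefficients of $w$ successively, with the residual isotropy stabilizing $\spann(\mathcal{A}^2_\tau)$ used to fix signs, and will identify $w$ with the third vector of $\mathcal{A}^3_\tau$. The isotropy (equivalently curvature) condition produces the factor $\sqrt{c_1c_4/(c_3\tau)}$ in front of $Y_3$, which is real precisely when $c_4=1-4\tau\ge0$; hence $\tau\le1/4$, with $\tau=1/4$ forcing the $Y_3$-term to vanish (case $\mathcal{A}^3_{1/4}$, $n\ge2$) and $\tau<1/4$ requiring a nonzero $Y_3$-term (case $\mathcal{A}^3_\tau$, $n\ge3$).

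The main obstacle will be the necessity computation: even after fixing the $\R\s{P}^2$ via Lemma~\ref{lemma:notwellposproj2}, one must carefully organize the polynomial system for the coefficients of $w$ and verify that the combined curvature-invariance, local-symmetry, and $\upalpha$-isotropy conditions leave exactly the claimed solution up to the residual isotropy. Bookkeeping this residual stabilizer and extracting the sharp range $\tau\le1/4$ together with the dimension bounds is the delicate part; the sufficiency direction, by contrast, is a finite verification in the explicit curvature formulas.
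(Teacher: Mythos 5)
Your proposal matches the paper's proof in both structure and substance: sufficiency is a direct verification that the spans of $\mathcal{A}^3_\tau$ and $\mathcal{A}^3_{1/4}$ are $\upalpha$-isotropic and curvature invariant (so Theorem~\ref{th:notwellpos} applies), and necessity proceeds exactly as you describe, by normalizing the contained projective plane to $\spann(\mathcal{A}^2_\tau)$ via Lemma~\ref{lemma:notwellposproj2} and then pinning down the third vector $w$ through the local-symmetry conditions $\nabla R\rvert_{\g{p}_\Sigma}=0$, the constant-curvature conditions $T_\bullet(\cdot)=0$, and $\upalpha$-isotropy, with the range $\tau\le 1/4$ emerging from the reality of the $Y_3$-coefficient $\pm\sqrt{c_1c_4/(\tau c_3)}$. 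The one point your outline glosses over is that $\tau=1/3$ needs separate treatment (there $c_3=0$, the basis $\mathcal{A}^2_\tau$ degenerates to $\mathcal{A}^2_{1/3}$, and the equation that would force $a_{11}=0$ becomes vacuous); the paper disposes of this case with a short extra computation yielding a contradiction, and this would surface automatically once you carry out the details.
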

\begin{proof}
Observe that the subspaces spanned by $\mathcal{A}^3_\tau$, $\tau<1/4$, and  $\mathcal{A}^2_{1/4}$ are $\upalpha$-isotropic, curvature invariant,  and their sectional curvatures are equal to $(1-\tau)$ and $3/4$, respectively. Then, by Theorem~\ref{th:notwellpos}, the exponential image of both subspaces are three dimensional totally geodesic projective spaces.

Conversely,  using the isotropy, the fact that $\Sigma$ is not well-positioned and Lemma~\ref{lemma:notwellposproj2}, we can assume without loss of generality that $\g{p}_{\Sigma}$ is spanned by three vectors $u$, $v$ and $w$ given by
\begin{align*}
	u=&\frac{X_2}{\sqrt{\tau}} +\sqrt{ \frac{c_2}{\tau}}Y_1,\qquad
	v=\frac{X_3}{\sqrt{\tau}}+ \sqrt{\frac{\tau}{c_2}}J_1Y_1+\sqrt{\frac{c_1 c_3}{\tau c_2}} Y_2,\\	w=&\frac{X_1}{\sqrt{\tau}}+a_4 Y_1+ a_5 J_1 Y_1
	+ a_6 J_2 Y_1 +a_7 J_3 Y_1+	\\&  a_8  Y_2+ a_9 J_1 Y_2 +a_{10} J_2 Y_2 +a_{11}J_3 Y_2 + a_{12} Y_3.
\end{align*}
Recall that $\Sigma$ has constant sectional curvature and therefore $\Sigma$ is locally symmetric. Thus,
\begin{equation*}
	0=\sqrt{\tau}\langle (\nabla_u R)(w,u,u),Y_1\rangle=4 a_6 c_1 c_2, \qquad 	0=\sqrt{\tau}\langle (\nabla_u R)(w,u,u),J_2Y_1\rangle=4 a_4 c_1 c_2. 
\end{equation*}
This implies that $a_4=a_6=0$. Now assume that $\tau= 1/3$. Then,
\[0=-3\sqrt{3}\langle (\nabla_v R)(w,v,v), J_2 Y_1\rangle=8 a_5.  \]
Thus, $a_5=0$. However,
\begin{equation*}
	0=\langle T_u(w), J_3 Y_1\rangle=-2(1+a_7),\qquad
	0=\langle T_v(w), J_3 Y_1\rangle=-2(1-a_7),
\end{equation*}
yielding a contradiction with the fact that $\tau=1/3$. Hence, we can assume that $\tau\neq 1/3$. Then,
\[0=\langle (\nabla_v R)(w,v,v),Y_2\rangle=\frac{4 a_{11} c_1^2 c_3}{\sqrt{\tau } c_2}.  \]
Consequently, $a_{11}=0$. Moreover,
\begin{equation*}
	0=\langle T_u(w),X_1\rangle=-3 c_1 (a_7 \sqrt{c_2}+\sqrt{\tau }),\qquad
	0=\langle T_u(w), X_3\rangle=3 a_5 \sqrt{c_2}c_1\tau,
\end{equation*}
which implies that $a_5=0$ and $a_7=-\sqrt{\tau}/\sqrt{c_2}$. Now,
\begin{equation*}
	0=\langle T_v(w), X_3\rangle=\tau a_8\sqrt{\frac{c_1 c_3}{c_2}},\qquad
	0=\langle T_v(w), X_2\rangle=3 a_9 (\tau -1) \sqrt{\frac{c_1 c_3}{c_2}}.
\end{equation*}
Therefore, we deduce that $a_8=a_9=0$. Furthermore,
\[0=\langle T_v(w),X_1\rangle=\frac{3 c_1 \sqrt{\tau } \left(a_{10} c_2 \sqrt{\frac{c_1 c_3}{c_2 \tau }}-c_1\right)}{c_2}.  \]
Hence, we deduce that $a_{10}= \frac{\sqrt{c_1\tau}}{\sqrt{c_2 c_3}}$. Finally, taking into account that $\g{p}_{\Sigma}$ is $\upalpha$-isotropic, we have
\[0=\upalpha(w,w)=\frac{a_{12}^2 (1-3\tau) \tau -4 \tau ^2+5 \tau -1}{c_3\sqrt{(1-\tau ) \tau }}.  \]
Consequently, we have $a_{12}=\pm \sqrt{\frac{c_1 c_4}{\tau c_3}}$. Using the isotropy we can assume that $a_{12}>0$, and then we obtain the subspace $\g{p}_\Sigma$ spanned by $\mathcal{A}^3_\tau$. On the one hand, observe that $a_{12}=0$ when $\tau=1/4$, obtaining the subspace spanned by $\mathcal{A}^3_{1/4}$. On the other hand, if $\tau<1/4$, we have $a_{12}\neq 0$, and then $\Sigma$ is a totally geodesic submanifold of $\s{S}^{15}_{\H,\tau}$. This proves the desired result.\qedhere
\end{proof}

\begin{proposition}
\label{prop:congquaternionic}
Every two isometric totally geodesic submanifolds of $\s{S}^{4n+3}_{\H,\tau}$ passing by $o$ with dimension $d\ge 2$ are isotropy-congruent at $o$.
\end{proposition}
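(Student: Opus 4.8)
The plan is to argue by cases along the classification of totally geodesic submanifolds of $\s{S}^{4n+3}_{\H,\tau}$ established above, showing that each isometry type accounts for a single isotropy-congruence class at $o$. By Theorem~\ref{th:notwellpos} every totally geodesic $\Sigma$ with $d\ge 2$ is either well-positioned, hence one of the submanifolds of Table~\ref{table:hopfbergertgwellpos} by Theorem~\ref{th:wellpos}, or not well-positioned, hence one of those in Table~\ref{table:quaternionictgnotwellpos}. If $\Sigma_1$ and $\Sigma_2$ are isometric and both well-positioned, Remark~\ref{rem:congwellpos} already yields that they are isotropy-congruent at $o$. The remaining work is therefore to treat the not well-positioned types and to check that a well-positioned submanifold can never be isometric to a not well-positioned one, so that no isometry class spans both tables.

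For this last point I would note that, for $\tau<1/2$, the entries of Table~\ref{table:quaternionictgnotwellpos} are either real projective spaces or round spheres $\s{S}^k_{4-4\tau}$ of constant curvature $4-4\tau$. Every well-positioned submanifold is diffeomorphic to a sphere, hence cannot even be homeomorphic to $\R\s{P}^2_{1-\tau}$ or $\R\s{P}^3_{1-\tau}$, which are not simply connected. Among the well-positioned types only $\s{S}^k_{1}$ and $\s{S}^k_{1/\tau}$ have constant curvature, since the Berger spheres $\s{S}^{2k+1}_{\C,\tau}$ and $\s{S}^{4k+3}_{\H,\tau}$ do not for $\tau\neq 1$; and matching the curvature $4-4\tau$ of $\s{S}^k_{4-4\tau}$ with $1$ or with $1/\tau$ forces $\tau=3/4$ or $\tau=1/2$, both excluded by $\tau<1/2$. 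Thus no isometry can cross between the two tables.

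It then remains to see that isometric not well-positioned submanifolds are isotropy-congruent. The three-dimensional spheres $\s{S}^3_{4-4\tau}$ are precisely the maximal not well-positioned spheres ($\dim\F-1=3$), so Theorem~\ref{th:notwellsphere}(ii) applies verbatim. For the projective cases, Lemma~\ref{lemma:notwellposproj2} and Lemma~\ref{lemma:notwellposproj3} exhibit, up to the full isotropy, a single model tangent space $\g{p}_\Sigma$, spanned by $\mathcal{A}^2_\tau$, resp.\ $\mathcal{A}^3_\tau$, together with their degenerate analogues at $\tau=1/3,1/4$; consequently every $\R\s{P}^2_{1-\tau}$, resp.\ every $\R\s{P}^3_{1-\tau}$, is isotropy-congruent to this model and hence any two of the same type are isotropy-congruent to each other. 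The one case not covered by a previously stated result is that of the two-dimensional spheres $\s{S}^2_{4-4\tau}$, which I expect to be the main obstacle.

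To handle $\s{S}^2_{4-4\tau}$ the key is the uniqueness in Theorem~\ref{th:notwellsphere}(i): applied to a closed (not well-positioned) geodesic $\gamma_w=\exp_o(\R w)$, it attaches to every not well-positioned unit direction $w\in T_o\s{S}^{4n+3}_{\H,\tau}$ a \emph{unique} maximal not well-positioned totally geodesic $3$-sphere $\widehat\Sigma_w$ with $w\in T_o\widehat\Sigma_w$. Given two such planes $\Sigma_1,\Sigma_2$, each lies in a maximal $3$-sphere; by Theorem~\ref{th:notwellsphere}(ii) these are isotropy-congruent, so after applying a suitable isometry I may assume both lie in one fixed $\widehat\Sigma$, with $W:=T_o\widehat\Sigma\cong\R^3$. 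I would then show that the stabilizer $\s{K}_{\widehat\Sigma}=\{k\in\s{K}:k\widehat\Sigma=\widehat\Sigma\}$ acts transitively on the unit vectors of $W$: since the full isotropy is transitive on unit vectors of a fixed slope (Remark~\ref{rem:isotropyrep}) and every vector of $W$ carries the common slope of $\widehat\Sigma$, any $k$ moving $v\in W$ to $w'\in W$ carries the maximal sphere through $v$ to the one through $w'$, and since $v,w'\in W=T_o\widehat\Sigma$ the uniqueness just recalled gives $\widehat\Sigma_v=\widehat\Sigma_{w'}=\widehat\Sigma$, whence $k\widehat\Sigma=\widehat\Sigma$, i.e.\ $k_{*o}W=W$. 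As $k_{*o}$ acts orthogonally on $W$, transitivity on unit vectors yields transitivity on orthogonal lines and hence on the $2$-planes $T_o\Sigma_1,T_o\Sigma_2\subset W$; this produces the required $k\in\s{K}$ with $k_{*o}T_o\Sigma_1=T_o\Sigma_2$, completing the proof.
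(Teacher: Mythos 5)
Your proof is correct and follows essentially the same route as the paper: well-positioned submanifolds via Remark~\ref{rem:congwellpos}, the projective spaces via Lemmas~\ref{lemma:notwellposproj2} and~\ref{lemma:notwellposproj3}, the $3$-spheres via Theorem~\ref{th:notwellsphere}, and the $2$-spheres by showing that the stabilizer of the maximal not well-positioned $3$-sphere $\widehat\Sigma$ in the isotropy acts transitively on the unit vectors of $T_o\widehat\Sigma$ (the paper uses exactly your slope-plus-uniqueness argument for this). You deviate only in two harmless ways, both of which are fine: you make explicit the check that no well-positioned submanifold can be isometric to a not well-positioned one (which the paper leaves implicit, and which the statement does require), and for the $2$-spheres you pass from transitivity on unit vectors of $W\cong\R^3$ to transitivity on $2$-planes via orthogonal complements, whereas the paper first identifies the transitive group on $\widehat\Sigma$ as $\s{SO}_4$ (via effectiveness and the classification of transitive sphere actions) and then uses that its isotropy $\s{SO}_3$ is transitive on $\s{G}_2(\R^3)$ — your shortcut is more elementary and equally valid.
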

\begin{proof}
Firstly, the result follows for well-positioned totally geodesic submanifolds by virtue of Remark~\ref{rem:congwellpos}. By Lemma~\ref{lemma:notwellposproj2} and Lemma~\ref{lemma:notwellposproj3}, two isometric  not well-positioned totally geodesic submanifolds of $\s{S}^{4n+3}_{\H,\tau}$  diffeomorphic to projective spaces passing through $o$ are isotropy-congruent at $o$. 
By Theorem~\ref{th:notwellsphere},  there is exactly one  isotropy-congruence class of not well-positioned  totally geodesic $3$-dimensional spheres.

Finally, we tackle the case of $2$-dimensional not well-positioned spheres. Firstly, we claim that  not well-positioned $3$-dimensional totally geodesic spheres are orbits of a subgroup of $\s{S}^{4n+3}_{\H,\tau}$  isomorphic to $\s{SO}_4$. By Lemma~\ref{lemma:finclas} and Theorem~\ref{th:notwellsphere}, every not well-positioned totally geodesic $3$-dimensional sphere $\widehat{\Sigma}$ is congruent to the orbit of some subgroup $\s{H}$ of $\Isom(\s{S}^{4n+3}_{\H,\tau})$ by $o\in\s{S}^{4n+3}_{\H,\tau}$.  By Remark~\ref{rem:reductiondimension}, we can assume without loss of generality that $n=1$. Now take some non-zero element $X$ in $T_o\widehat{\Sigma}$. Let $\s{H}_o$ be the isotropy of $\s{H}$ at $o$. 
Let $X,Y$ in $T_o\widehat{\Sigma}$ be of unit length. Since $X$ and $Y$ have the same slope, see~Lemma~\ref{lemma:notwellpos(-1)}, we have an element $k$ of the isotropy of $\s{S}^n_{\F,\tau}$ such that $k_{*o}X=Y$. By the uniqueness of the totally geodesic submanifold $\widehat{\Sigma}$ (see Theorem~\ref{th:notwellsphere}), we have that $k \widehat{\Sigma}=\widehat{\Sigma}$. This proves that $\widehat{\Sigma}$ is extrinsically $2$-point homogeneous. Therefore, by the classification of transitive actions on spheres, see~\cite{MS43}, $\s{H}$ is locally isomorphic to $\s{SO}_4$.

Now, observe that $\s{H}$ must act effectively on  $\widehat{\Sigma}$, otherwise there would be some $\varphi\in\s{H}$ such that $\widehat{\Sigma}\subset \mathrm{Fix}(\varphi)$, and the set of fixed points of an isometry is a well-positioned totally geodesic submanifold, see Lemma~\ref{lemma:fixedpoint}. Thus,  $\s{H}$ must be isomorphic to $\s{SO}_4$.

By Theorem~\ref{th:notwellsphere}, every not well-positioned two-dimensional sphere can be contained in $\widehat{\Sigma}$. Now since $\s H_o=\s{SO}_3$, there is some element $\varphi\in \s{H}_o$ acting transitively on the $2$-plane Grassmannian $\s{G}_2(T_o\widehat{\Sigma})\equiv\s{G}_2(\R^3)$, and therefore there is just one isotropy-congruence class at $o$ for not well-positioned totally geodesic spheres of dimension two passing through the point $o$.\qedhere
\end{proof}
Since Hopf-Berger spheres are g.o.\ spaces, as a consequence of Proposition~\ref{prop:congquaternionic} and Lemma~\ref{lemma:finclas}, we have the following.
\begin{corollary}
\label{cor:extrinsichomquaternionic}
Every totally geodesic submanifold of $\s{S}^{4n+3}_{\H,\tau}$ is extrinsically homogeneous.
\end{corollary}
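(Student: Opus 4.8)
The plan is to deduce the corollary almost formally from the results already at hand, splitting the argument according to whether the totally geodesic submanifold has dimension one or at least two. Using that $\s{G}$ acts transitively on $\s{S}^{4n+3}_{\H,\tau}$, I would first reduce to the case of a complete totally geodesic submanifold $\Sigma$ passing through the base point $o$; by Corollary~\ref{cor:embedded55} such a $\Sigma$ is automatically embedded, so the embeddedness hypothesis required to invoke Lemma~\ref{lemma:finclas} is free of charge.

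For the one-dimensional case I would argue directly from the geodesic orbit property. Since $\s{S}^{4n+3}_{\H,\tau}$ is a g.o.\ space, every geodesic is the orbit of a one-parameter subgroup of $\Isom(\s{S}^{4n+3}_{\H,\tau})$, and hence is extrinsically homogeneous by the very definition of extrinsic homogeneity. Thus geodesics need no further discussion and the content of the corollary really concerns $\dim\Sigma=d\ge 2$.

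For $d\ge 2$ the heavy lifting is done by Proposition~\ref{prop:congquaternionic}: any two isometric totally geodesic submanifolds through $o$ of dimension at least two are isotropy-congruent at $o$. Consequently the collection of complete totally geodesic submanifolds through $o$ that are isometric to $\Sigma$ forms a \emph{single} isotropy-congruence class at $o$, in particular finitely many classes. This is exactly the hypothesis of Lemma~\ref{lemma:finclas}, whose conclusion then gives that every complete embedded totally geodesic submanifold of $\s{S}^{4n+3}_{\H,\tau}$ isometric to $\Sigma$ is extrinsically homogeneous with respect to $\s{G}$. Running this over all isometry types of $\Sigma$ covers every totally geodesic submanifold of dimension $d\ge 2$, completing the argument.

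I do not expect a genuine obstacle here, since the two structural results do all the work; the only point requiring a small amount of care is matching the groups. The definition of isotropy-congruence is phrased in terms of the isotropy $\s{K}$, while extrinsic homogeneity is taken with respect to $\s{G}=\Isom(\s{S}^{4n+3}_{\H,\tau})$; by Remark~\ref{rem:isom} one identifies $\Isom(\s{S}^{4n+3}_{\H,\tau})$ with the isotropy of the ambient rank one symmetric space, so it suffices to apply Lemma~\ref{lemma:finclas} with $\s{G}$ chosen to be the full isometry group and $\s{K}$ its (full) isotropy at $o$, which is precisely the group used in Proposition~\ref{prop:congquaternionic}. With this choice the finiteness-of-classes input and the homogeneity output are with respect to the same group, and the conclusion follows.
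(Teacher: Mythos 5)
Your proposal is correct and follows essentially the same route as the paper, which likewise handles geodesics via the g.o.\ property and dimensions $d\ge 2$ by combining Proposition~\ref{prop:congquaternionic} with Lemma~\ref{lemma:finclas}. Your additional care about embeddedness (via Corollary~\ref{cor:embedded55}) and about taking $\s{G}$ to be the full isometry group so that the isotropy-congruence input and the homogeneity output refer to the same group is exactly the right way to make the paper's one-line deduction precise.
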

We end up this section with the proof of Theorem~\ref{th:C}.
\begin{proof}[Proof of Theorem~\ref{th:C}]
On the one hand, if $\Sigma$ is well-positioned the proof follows by Theorem~\ref{th:wellpos}, Remark~\ref{rem:intersect}, and Remark~\ref{rem:congwellpos}. On the other hand, if $\Sigma$ is not well-positioned the proof follows by combining Theorem~\ref{th:notwellpos}, Lemma~\ref{lemma:notwellposproj2}, Lemma~\ref{lemma:notwellposproj3} and Proposition~\ref{prop:congquaternionic}.
\end{proof}
\section{Totally geodesic submanifolds in octonionic Hopf-Berger spheres}
\label{sect:octonionic}
The purpose of this section is to achieve the classification of totally geodesic submanifolds in $\s{S}^{15}_{\O,\tau}$.

Firstly, let us prove that the totally geodesic submanifold $\widehat{\Sigma}$ of $\s{S}^{15}_{\O,\tau}$ (see~Theorem~\ref{th:notwellsphere}) is an orbit of a subgroup $\s{H}$ of $\s{Spin}_9$ isomorphic to $\s{Spin}_7$. By Lemma~\ref{lemma:finclas} and Theorem~\ref{th:notwellsphere}, $\widehat{\Sigma}$ is extrinsically homogeneous.  According to~\cite{varadarajan}, if $\tau\neq1$, we have the following situation for certain subgroups of $\Isom(\s{S}^{15}_{\O,\tau})=\s{Spin}_9$,
\[\begin{tikzcd}
\s{Spin}_6 \arrow[hookrightarrow]{r} & \s{Spin}_7\arrow[hookrightarrow]{r}{\text{$i$}}  & \s{Spin}_8  \arrow[hookrightarrow]{r} & \s{Spin}_9
\\
\s{SU}_3 \arrow[hookrightarrow]{u}  \arrow[hookrightarrow]{r} & \s{G}_2 \arrow[hookrightarrow]{u} \arrow[hookrightarrow]{r} & \s{Spin}_7 \arrow[hookrightarrow]{u}[right]{\text{$i'$}} 
\end{tikzcd}
\]
In this commuting diagram the lower left corner of each square is the intersection of the subgroup that is above and the subgroup to the right of it. Every inclusion in this diagram is maximal. Moreover, it is well-known that apart from the natural inclusion $i$ of $\s{Spin}_7$ in $\s{Spin}_8$, there are  $2$ more non-standard inclusions $i'$ and $i''$, which together with $i$ lead to the $3$ distinct conjugacy classes of $\s{Spin}_7$-subgroups in $\s{Spin}_8$. These are related by the triality group $\mathrm{Out}(\s{Spin}_8)$ which acts transitively on the  $3$-conjugacy classes of its $\s{Spin}_7$-subgroups, see \cite{varadarajan}.

Let  $\s{K}_v$ be the  stabilizer for the isotropy action of $\s{S}^{15}_{\O,\tau}=\s{G}/\s{K}$ at a unit $\upalpha$-isotropic vector $v\in T_o\widehat{\Sigma}$.
The stabilizer for the isotropy action of $\s{S}^{15}_{\O,\tau}$ at a non-zero vertical vector or at  a horizontal vector   is isomorphic to $\s{Spin}_6$ or $\s{G}_2$, respectively. Then $\s{K}_v$ is isomorphic to the intersection of $\s{Spin}_6$ and $\s{G}_2$, which is  $\s{SU}_3$ by the previous discussion. Recall that $\s{H}$ is the subgroup of $\s{Spin}_9$ such that $\s{H}\cdot o=\widehat{\Sigma}$. By a similar argument  as in Proposition~\ref{prop:congquaternionic}, using the uniqueness of $\widehat{\Sigma}$~(see~Theorem~\ref{th:notwellsphere}), we have that $\widehat{\Sigma}$ is an extrinsically $2$-point homogeneous submanifold of $\s{S}^{15}_{\O,\tau}$. Then, $\s{H}{_o}v=\s{\widetilde{H}}{_o}/\s{\widetilde{K}}_v=\s{S}(T_o\widehat{\Sigma})=\s{S}^6$, where $\widetilde{\s {H}}_o$ and $\s{\widetilde{K}}_v$ denote the effectivized connected subgroups corresponding to the action of $\s{H}_o$ and $\s{K}_v$ on $T_o\widehat{\Sigma}$, respectively. Since $\s{K}_v$ is simple, then $\widetilde{\s{K}}_v$ is either trivial or locally isomorphic to $\s{SU}_3$. In the first case,  we would obtain a contradiction since $\s{S}^6$ is not a Lie group. Thus, $\widetilde{\s{K}}_v$ is locally isomorphic to $\s{SU}_3$. Consequently, by the classification of transitive actions on spheres, see~\cite{MS43}, $\widetilde{\s{H}}_o$ is  isomorphic to $\s{G}_2$ and $\widetilde{\s{K}}_v$  is  isomorphic to $\s{SU}_3$. Now, observe that $\s{H}$ must act effectively on  $\widehat{\Sigma}$, otherwise there would be some $\varphi\in\s{H}$ such that $\widehat{\Sigma}\subset \mathrm{Fix}(\varphi)$. The set of fixed points of an isometry is a well-positioned totally geodesic submanifold which has dimension less or equal than $7$, see Lemma~\ref{lemma:fixedpoint} and Table~\ref{table:hopfbergertgwellpos}. Thus,  $\s{H}$  must be isomorphic to $\s{Spin}_7$ by the classification of transitive actions on spheres.

The following lemma is a crucial step towards the classification of totally geodesic submanifolds in~$\s{S}^{15}_{\O,\tau}$.
\begin{lemma}
\label{lemma:dim7octonionic}
Let $\widehat{\Sigma}$ be a not well-positioned totally geodesic sphere in $\s{S}^{15}_{\F,\tau}$ of dimension $7$. 	Then, the following statements hold:
\begin{enumerate}
	\item[i)] The totally geodesic submanifold $\widehat{\Sigma}$ is an orbit of a subgroup of $\s{Spin}_9$ isomorphic to $\s{Spin}_7$ and the isotropy of the action of $\s{Spin}_7$ on $\s{S}^{15}_{\O,\tau}$ at $o\in\widehat{\Sigma}$ is isomorphic to $\s{G}_2$.
	\item[ii)]
	Let $\Sigma$ be a totally geodesic submanifold of $\s{S}^{15}_{\O,\tau}$ with dimension greater than two. Then, $\Sigma$ is not well-positioned if and only if it is congruent to a totally geodesic submanifold of $\widehat{\Sigma}$.
	\item[iii)] The trilinear map $\upvarphi$ defined by \[\upvarphi(X,Y,Z)=\frac{2 (1-\tau )^{3/2}}{1-2 \tau }\langle \proj_{\g{p}_2} X, [\proj_{\g{p}_1} Y, \proj_{\g{p}_2} Z]\rangle \quad \text{with} \enspace X,Y,Z\in T_o\widehat\Sigma\] is a $\s{G}_2$-invariant $3$-form.
\end{enumerate}
\end{lemma}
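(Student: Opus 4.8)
Item i) has effectively been established in the discussion preceding the statement: the orbit $\widehat{\Sigma}=\s{H}\cdot o$ is produced there with $\s{H}\cong\s{Spin}_7$, and the stabilizer analysis based on the chain $\s{SU}_3\subset\s{G}_2\subset\s{Spin}_7\subset\s{Spin}_9$, combined with the classification of transitive actions on spheres, yields $\s{H}_o\cong\s{G}_2$, acting on $T_o\widehat{\Sigma}$ as its $7$-dimensional representation. I would only record the observation, needed for item iii), that $\s{G}_2=\s{H}_o$ fixes $o$ and therefore sits inside the full isotropy $\s{K}=\s{Spin}_7$ of $\s{S}^{15}_{\O,\tau}=\s{Spin}_9/\s{Spin}_7$ at $o$.

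For the forward implication of item ii), which is the crux, let $\Sigma$ be not well-positioned with $\dim\Sigma>2$. By Lemma~\ref{lemma:notwellposrk1} it is a compact rank one symmetric space, and by Theorem~\ref{th:notwellpos} it is a totally geodesic submanifold of $\bar{M}=\O\s{P}^2$ contained in the geodesic sphere $\s{S}_t(p)=\s{S}^{15}_{\O,\tau}$, with $T_o\Sigma$ being $\upalpha$-isotropic. Wolf's classification (Figure~\ref{fig:tgrk1}) leaves only the candidates $\H\s{P}^2$, $\C\s{P}^2$, $\R\s{P}^2$ and the spheres $\s{S}^k_4$. Since $\upalpha$ has index $\dim\F-1=7$, every $\upalpha$-isotropic subspace has dimension at most $7$, which excludes $\H\s{P}^2$ (of dimension $8$); and $\R\s{P}^2$ is excluded by $\dim\Sigma>2$. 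To discard $\C\s{P}^2$ I would prove that a not well-positioned totally geodesic submanifold of dimension $>2$ has constant curvature $4(1-\tau)$: concretely, that any two orthonormal $\upalpha$-isotropic vectors spanning a curvature-invariant plane span a plane of sectional curvature $4(1-\tau)$, equivalently $T_o\Sigma\ominus\R v\subset\mathbb{V}_1(v)$ (the eigenvalue-$4$ eigenspace of the Jacobi operator $\bar{R}_v$ of $\bar{M}$ at $o$) for every unit $v\in T_o\Sigma$. This is the step where the octonionic curvature identities \eqref{eq:rtensoroctonions} and \eqref{eq:rtensoroctonions0} and the $\s{Spin}_7/\s{G}_2$-structure enter, and it is exactly what fails for $\s{S}^{4n+3}_{\H,\tau}$, where eigenvalue-$1$ isotropic directions occur and produce the projective spaces of Lemmas~\ref{lemma:notwellposproj2} and~\ref{lemma:notwellposproj3}. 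Once constant curvature $4(1-\tau)$ is in hand, $\Sigma$ is a round sphere $\s{S}^k_{4-4\tau}$, so Theorem~\ref{th:notwellsphere} supplies a maximal not well-positioned totally geodesic $7$-sphere $\widehat{\Sigma}$ with $T_o\Sigma\subset T_o\widehat{\Sigma}$, whence $\Sigma\subset\widehat{\Sigma}$; as all such maximal $7$-spheres are isotropy-congruent, $\Sigma$ is congruent to a totally geodesic submanifold of the distinguished $\widehat{\Sigma}$.

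The converse of item ii) is immediate: a totally geodesic submanifold $\Sigma$ of $\widehat{\Sigma}\cong\s{S}^7_{4-4\tau}$ of dimension $>2$ is a round subsphere, totally geodesic in $\s{S}^{15}_{\O,\tau}$ because $\widehat{\Sigma}$ is, and its tangent space is a subspace of the $\upalpha$-isotropic $T_o\widehat{\Sigma}$; as every vector of $T_o\widehat{\Sigma}$ has the common slope of Lemma~\ref{lemma:notwellpos(-1)}, that space contains neither horizontal nor vertical vectors, and neither does any subspace, so Lemma~\ref{lemma:splitting} and Theorem~\ref{th:notwellpos} make $\Sigma$ not well-positioned. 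For item iii) I would evaluate $\upvarphi$ in the explicit basis $e_i=\tfrac{X_i}{2\sqrt{1-\tau}}-\sqrt{\tfrac{2\tau-1}{\tau-1}}\,J_iY_1$, $1\le i\le 7$, of $T_o\widehat{\Sigma}$ given by \eqref{eq:tangentwidehat}, so that $\proj_{\g{p}_1}e_i=\tfrac{X_i}{2\sqrt{1-\tau}}$ and $\proj_{\g{p}_2}e_i=-\sqrt{\tfrac{1-2\tau}{1-\tau}}\,J_iY_1$. Using $[X_j,J_kY_1]=-J_jJ_kY_1$, a direct computation shows the normalizing constant is chosen exactly so that $\upvarphi(e_i,e_j,e_k)=-\langle J_iY_1,J_jJ_kY_1\rangle$. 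Since $J_1,\dots,J_7$ are anticommuting orthogonal complex structures squaring to $-\Id$ (they generate a Clifford-module structure on $\g{p}_2\cong\O$), these quantities are totally antisymmetric in $(i,j,k)$, being precisely the structure constants of the imaginary octonions, i.e.\ the associative calibration; hence $\upvarphi$ is alternating, and a $3$-form. Its $\s{G}_2$-invariance then needs no further computation: the trilinear expression $\langle\proj_{\g{p}_2}X,[\proj_{\g{p}_1}Y,\proj_{\g{p}_2}Z]\rangle$ is built from the inner product, the Lie bracket, and the projections $\proj_{\g{p}_1},\proj_{\g{p}_2}$, all preserved by $\Ad(\s{K})$ with $\s{K}=\s{Spin}_7$, hence invariant under the subgroup $\s{G}_2=\s{H}_o\subset\s{K}$ of item i); since $\s{G}_2$ also preserves the domain $T_o\widehat{\Sigma}$, it preserves $\upvarphi$.

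I expect the genuine obstacle to be the curvature computation in item ii) that rules out $\C\s{P}^2$, that is, the verification that every not well-positioned totally geodesic submanifold of dimension $>2$ has constant curvature $4(1-\tau)$ and hence sits inside $\widehat{\Sigma}$ as a round subsphere. This is precisely the phenomenon that separates the octonionic case from the quaternionic one, so it must exploit the octonionic curvature relations and the underlying $\s{G}_2$-structure rather than formal arguments; the remaining parts are either already recorded in the preceding discussion (item i) or reduce to a short, if careful, calculation (item iii) and routine transfer-of-total-geodesy arguments (the converse of item ii).
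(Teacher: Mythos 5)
Your handling of item i), the converse of item ii), and item iii) agrees with the paper's: i) is indeed delegated to the discussion preceding the lemma, and your verification that $\upvarphi$ is alternating (via the basis of Equation~\eqref{eq:tangentwidehat} and the Clifford relations among the $J_i$) and $\s{G}_2$-invariant (because it is built from $\Ad(\s{K})$-invariant data and $\s{G}_2$ preserves $\g{p}_1$, $\g{p}_2$, the metric and the bracket) is essentially the paper's argument, only more explicit. The route you sketch for the forward implication of ii) --- Wolf's list for $\O\s{P}^2$ via Theorem~\ref{th:notwellpos}, exclusion of $\H\s{P}^2$ and $\s{S}^8_4$ by the index of $\upalpha$, then reduction to excluding $\C\s{P}^2$ --- is a reasonable variant of the paper's reduction, which instead passes through Lemma~\ref{lemma:2octo} to the quaternionic classification of not well-positioned surfaces.

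However, the crux of item ii) is left unproved: you explicitly defer the exclusion of $\C\s{P}^2$ (``I would prove that\dots''), and that exclusion is precisely the substantive content of the paper's proof. There, one assumes $\Sigma$ contains a surface $N\cong\R\s{P}^2_{2/3}$ (which forces $\tau=1/3$ and $\Sigma$ homothetic to $\C\s{P}^2$), takes a third vector $w\in\g{p}_\Sigma$ orthogonal to $T_oN=\spann\{u,v\}$, and kills $w$ by combining the local-symmetry conditions on $\nabla R$ with the $\C\s{P}^2$ curvature relations $\sec(u,w)=\sec(v,w)=4\sec(u,v)$. Worse, the ``concrete'' reformulation you propose to prove --- that every curvature-invariant $\upalpha$-isotropic $2$-plane has sectional curvature $4(1-\tau)$ --- is false: the tangent plane of the not well-positioned $\R\s{P}^2_{2/3}$ in $\s{S}^{15}_{\O,1/3}$ of Table~\ref{table:octonionictgnotwellpos} is curvature-invariant and $\upalpha$-isotropic with curvature $2/3\neq 8/3$. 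So the hypothesis $\dim\Sigma>2$ must enter the computation through a third tangent direction; the statement cannot be established plane by plane. Until that computation is carried out, the forward implication of ii) is not proved.
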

\begin{proof}	
The statement in \textit{i)} is proved in the discussion prior to this lemma.

Clearly, every totally geodesic submanifold $\Sigma$ of $\widehat{\Sigma}$ is not well-positioned. Conversely, let $\Sigma$ be a not well-positioned of dimension greater or equal than three.  By Theorem~\ref{th:notwellpos}, $\Sigma$ contains a totally geodesic surface $N$. By Lemma~\ref{lemma:2octo}, the tangent space of $N$ is equal to the tangent space of a not well-positioned totally geodesic surface of $\s{S}^7_{\H,\tau}$. Therefore, $T_o N$ is either equal to a 2-dimensional subspace of the tangent space of a totally geodesic submanifold congruent to $\widehat{\Sigma}$ by Theorem~\ref{th:notwellsphere}, or $\tau=1/3$ and $T_o N$ is equal to the tangent space of a not well-positioned totally geodesic projective space in a well-positioned totally geodesic $\s{S}^7_{\H,1/3}$ as in Lemma~\ref{lemma:notwellposproj2} \textup{(b)}.

Thus, assume that the tangent space at $o\in\s{S}^{15}_{\O,\tau}$ of $N$ is as in Lemma~\ref{lemma:notwellposproj2}~(b). Then, $\tau=1/3$ and $T_o N$ is spanned by $u=\frac{\sqrt{3}}{2} X_2 + Y_1$ and $v= \frac{\sqrt{3}}{2} X_3+ Y_5$. Let $w\in\g{p}_{\Sigma}$ be a non-zero vector orthogonal to $u$ and $v$. Let us write $w=\sum_{i=1}^7 \alpha_i \frac{\sqrt{3}}{2} X_i + \sum_{i=1}^8 \beta_i Y_i$.
Now using that $\Sigma$ is a symmetric space,
\begin{align*}
	0&=\langle(\nabla_u R)(w,u,u),X_2\rangle=-\frac{16}{9}\beta_7, & & 	0=\langle(\nabla_u R)(w,u,u), Y_{7}\rangle=-\frac{8(\alpha_2-\beta_1)}{3\sqrt{3}},   \\
	0&=\langle(\nabla_v R)(w,v,v), Y_{7}\rangle= \frac{8(\alpha_3-\beta_5)}{3\sqrt{3}}, & &
	0=\langle(\nabla_v R)(w,u,v), Y_{5}\rangle= \frac{2(\alpha_1+3\beta_3)}{3\sqrt{3}}.
\end{align*}
Thus, 
\[\beta_7=0,\quad \beta_1=\alpha_2,\quad \beta_5=\alpha_3,\quad \alpha_1=-3\beta_3.\]
By Lemma~\ref{lemma:notwellposproj2}, $N$ is isometric to $\R \s{P}^2_{2/3}$. Moreover, $\Sigma$ contains $N$ as a totally geodesic submanifold, and thus, by  Theorem~\ref{th:notwellpos} and Corollary~\ref{cor:tghighdimension} ,  $\Sigma$ is homothetic to $\C \s P^2$. If $w\in\g{p}_{\Sigma}$ is a non-zero vector orthogonal to $X$ and $v$, we have $\sec(u,w)=\sec(v,w)=4\sec(u,v)$. Therefore,

\begin{equation}
	\label{eq:seccp1}
	\begin{aligned}
		0=&\omega_1(\sec(u,w)- 4\sec(u,v))=4\alpha_3^2+\alpha_4^2+\alpha_5^2+\alpha_6^2+\alpha_7^2 +\beta_2^2+16\beta_3^2+\beta_4^2 +\beta_6^2 +\beta^2_8  \\
		&+2(\alpha_5\beta_2-\alpha_4\beta_4+\alpha_7\beta_6-\alpha_6\beta_8),
	\end{aligned}
\end{equation}
\begin{equation}
	\label{eq:seccp2}
	\begin{aligned}
		0=&\omega_2(\sec(v,w)- 4\sec(u,v))=4\alpha_3^2+\alpha_4^2+\alpha_5^2+\alpha_6^2+\alpha_7^2 +\beta_2^2+16\beta_3^2+\beta_4^2 +\beta_6^2 +\beta^2_8\\
		&-2(\alpha_5\beta_2-\alpha_4\beta_4+\alpha_7\beta_6-\alpha_6\beta_8),
	\end{aligned}
\end{equation}
where $\omega_i:=2\alpha^2_{4-i}+\alpha^2_4+\alpha^2_5+\alpha^2_6+\alpha^2_7+\beta_2^2+10\beta^2_3+\beta^2_4+\beta^2_6+\beta^2_8$,
for each $i\in\{1,2\}$. Now adding Equation \eqref{eq:seccp1} and Equation \eqref{eq:seccp2}, we get 
\[2( 2\alpha_2^2+2 \alpha_2^2+\alpha_4^2+\alpha_5^2+\alpha_6^2+\alpha_7^2+\beta_2^2+10 \beta_3^2+\beta_4^2+\beta_6^2+\beta_8^2)=0.  \]
This implies that $w$ is zero yielding a contradiction.  Thus, we proved \textit{ii)}.

Let $\upvarphi$ be defined as in \textit{iii)}. Then, the trilinear map $\upvarphi$ is a $3$-form, since we have the equalities $\upvarphi(X,X,Y)=\upvarphi(X,Y,X)=\upvarphi(Y,X,X)=0$ for every $X,Y\in T_o\widehat\Sigma$.
Finally, notice that the isotropy of a not well-positioned totally geodesic submanifold of dimension~$7$ is isomorphic to $\s{G}_2$. The action of $\s{G}_2$ preserves $\g{p}_1$ and $\g{p}_2$, since $\s{G}_2$ lies in the isotropy of $\s{S}^{15}_{\O,\tau}$. Moreover, $\s{G}_2$ preserves the metric and the Lie bracket of the Lie algebra $\g{spin}_9$. Consequently,  $\upvarphi$ is a $\s{G}_2$-invariant $3$-form.
\end{proof}

In the following we will study the congruence problem for totally geodesic submanifolds in $\s{S}^{15}_{\O,\tau}$.

\begin{theorem}
\label{th:congoctonionic}
The moduli space of all not well-positioned totally geodesic spheres in $\s{S}^{15}_{\O,\tau}$ of dimension greater or equal than two  (up to congruence in $\s{S}^{15}_{\O,\tau}$) is isomorphic to the disjoint union
\[ \{2,4,5,6,7\} \sqcup (\{3\}\times[0,1]).     \]
The number $k\in\{2,4,5,6,7\}$ parametrizes the unique (up to congruence in $\s{S}^{15}_{\O,\tau}$) totally geodesic submanifold in $\s{S}^{15}_{\O,\tau}$ of dimension $k$. The set $\{3\}\times[0,1]$ parametrizes the not well-positioned totally geodesic submanifolds of dimension $3$ (up to congruence in $\s{S}^{15}_{\O,\tau}$).
\end{theorem}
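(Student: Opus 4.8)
The plan is to reduce the whole classification to the $\s{G}_2$-geometry of the maximal sphere $\widehat{\Sigma}$ and the associative calibration $\upvarphi$, and then to exploit a sharp contrast between dimension $3$ and dimension $4$. \emph{First} I would fix $o$ and let $\widehat{\Sigma}\cong\s{S}^7_{4-4\tau}$ be the unique maximal not well-positioned totally geodesic sphere through $o$ (Theorem~\ref{th:notwellsphere}). By Lemma~\ref{lemma:dim7octonionic}(ii) every not well-positioned totally geodesic sphere of dimension $\ge 3$ is congruent to one contained in $\widehat{\Sigma}$ (the case $\dim=2$, excluded from that lemma, follows directly from the uniqueness in Theorem~\ref{th:notwellsphere}(i)). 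Since $\widehat{\Sigma}$ is a round $7$-sphere, its totally geodesic submanifolds through $o$ are the great subspheres, hence are in bijection with the linear subspaces $W=T_o\Sigma\subset T_o\widehat{\Sigma}\cong\R^7$; as $T_o\widehat{\Sigma}$ contains no horizontal or vertical vector, so does $W$, and $\exp_o W$ is again not well-positioned. Moreover, if two examples inside $\widehat{\Sigma}$ are isotropy-congruent at $o$ by some $k\in\s{K}$, then $k\widehat{\Sigma}$ is another maximal sphere through $o$ containing the image, so $k\widehat{\Sigma}=\widehat{\Sigma}$ by Theorem~\ref{th:notwellsphere}(i); hence $k$ stabilizes $\widehat{\Sigma}$ and, by Lemma~\ref{lemma:dim7octonionic}(i), acts on $T_o\widehat{\Sigma}$ through $\s{G}_2$. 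Thus \emph{isotropy-congruence classes of $k$-dimensional examples correspond exactly to $\s{G}_2$-orbits of $k$-planes in $\R^7$.}

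\emph{Second} I would read off these orbits from $\upvarphi$. By Lemma~\ref{lemma:dim7octonionic}(iii) and its identification with the Harvey--Lawson associative $3$-form~\cite{harvey-lawson}, $\upvarphi$ is a generic $\s{G}_2$-invariant $3$-form on $T_o\widehat{\Sigma}$. The group $\s{G}_2$ acts transitively on $\s{S}^6$ and on orthonormal $2$-frames (its isotropy at a unit vector is $\s{SU}_3$, transitive on the unit sphere of the complementary $\C^3$), hence transitively on lines and on $2$-planes, and therefore, passing to orthogonal complements, transitively on $k$-planes for $k\in\{1,2,5,6,7\}$. On $3$-planes the comass inequality of Harvey--Lawson shows that $|\upvarphi|$ realises every value in $[0,1]$ and is a complete invariant of the $\s{G}_2$-orbit, the value $1$ characterising associative planes; complementation transfers the same statement to $4$-planes. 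Consequently, for $k\in\{2,5,6,7\}$ there is a single isotropy-congruence class and hence a single congruence class, while for $k=3,4$ the isotropy-congruence classes form a continuum parametrised by $|\upvarphi|\in[0,1]$.

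\emph{Third}, the heart of the matter is to decide whether this continuum survives or collapses under ambient congruence. I would extend $\upvarphi$ to the $\s{Spin}_7$-invariant $3$-form on $\widehat{\Sigma}=\s{Spin}_7/\s{G}_2$, which is the nearly-parallel $\s{G}_2$-structure of the round $\s{S}^7$: there is a nonzero constant $c$ with $(\nabla_X\upvarphi)(Y,Z,W)=c\,\psi(X,Y,Z,W)$, where $\psi$ is the coassociative $4$-form dual to $\upvarphi$. For a totally geodesic $\Sigma\subset\widehat{\Sigma}$ the tangent and normal frames are parallel along $\Sigma$. Thus for a $3$-dimensional $\Sigma_3$ with orthonormal tangent frame $e_1,e_2,e_3$, differentiating in any tangent direction $e_i$ gives
\[
\tfrac{d}{ds}\,\upvarphi(e_1,e_2,e_3)=c\,\psi(e_i,e_1,e_2,e_3)=0,
\]
because the index $e_i$ is repeated; hence $|\upvarphi(T_p\Sigma_3)|$ is \emph{constant} along $\Sigma_3$. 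Since ambient isometries permute the spheres $\widehat{\Sigma}$ and preserve $\upvarphi$ up to sign, $|\upvarphi|$ is then a well-defined $\s{Spin}_9$-invariant of the congruence class, so the $[0,1]$ of isotropy classes yields genuinely $[0,1]$ distinct congruence classes, producing the factor $\{3\}\times[0,1]$. For a $4$-dimensional $\Sigma_4$ the normal $3$-plane $N_p=(T_p\Sigma_4)^{\perp}\subset T_p\widehat{\Sigma}$ is spanned by a parallel normal frame $n_1,n_2,n_3$, and differentiating in a tangent direction $e\perp N_p$ gives $\tfrac{d}{ds}\upvarphi(n_1,n_2,n_3)=c\,\psi(e,n_1,n_2,n_3)$, which is generically nonzero since no index repeats; hence $p\mapsto|\upvarphi(N_p)|$ is \emph{non-constant} along $\Sigma_4$.

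\emph{Finally}, the main obstacle is to promote this variation into a full collapse for $k=4$. I would show that the continuous map $p\mapsto|\upvarphi(N_p)|$ is \emph{surjective} onto $[0,1]$ on each $\Sigma_4$, by analysing the $1$-form $n_1\lrcorner n_2\lrcorner n_3\lrcorner\psi$ restricted to $T_p\Sigma_4$ to locate points where $N_p$ is associative ($|\upvarphi|=1$) and points where $\upvarphi(N_p)=0$, and then using connectedness of $\Sigma_4$. Granting this, every $4$-dimensional example meets points of all isotropy types $\mu\in[0,1]$; translating such a point to a common base point by an ambient isometry and applying the $\s{G}_2$-classification above makes any two $4$-dimensional examples isotropy-congruent there, hence congruent. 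This gives the single congruence class in dimension $4$ and completes the identification of the moduli space with $\{2,4,5,6,7\}\sqcup(\{3\}\times[0,1])$. I expect this surjectivity step to be the delicate point: it is precisely the non-constancy of $|\upvarphi(N_p)|$ which, read through Lemma~\ref{lemma:finclas}, forces the dimension-$4$ spheres to produce infinitely many isotropy-congruence classes and hence to fail extrinsic homogeneity (Theorem~\ref{th:4inhomo}), so this is where the genuinely octonionic behaviour resides.
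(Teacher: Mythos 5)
Your route is genuinely different from the paper's and, modulo one step that you yourself flag as unproven, it is sound. The paper short-circuits the whole pointwise analysis: after reducing (as you do) to submanifolds of $\widehat{\Sigma}\cong\s{S}^7=\s{Spin}_7/\s{G}_2$, it identifies a totally geodesic $\s{S}^k\subset\s{S}^7$ with a $(k+1)$-plane in $\R^8$ and simply quotes the orbit structure of $\s{Spin}_7$ on $\s{G}_{k+1}(\R^8)$: transitive unless $k+1=4$, and cohomogeneity one with two singular orbits (orbit space $[0,1]$) when $k+1=4$. In particular the single congruence class in dimension $4$ is immediate from transitivity of $\s{Spin}_7$ on $\s{G}_5(\R^8)\cong\s{G}_3(\R^8)$, with no need to study how the invariant of the normal $3$-plane varies along $\Sigma_4$. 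You instead fix the base point, classify isotropy-congruence classes by $\s{G}_2$-orbits of $k$-planes in $\R^7$ (this reduction is correct; your use of the uniqueness in Theorem~\ref{th:notwellsphere} to force $k\widehat{\Sigma}=\widehat{\Sigma}$ is exactly the right mechanism), and then decide which isotropy classes get identified when the base point moves. What this buys is a conceptual explanation of the $3$ versus $4$ dichotomy --- the nearly-parallel identity $\nabla\upvarphi=c\,\psi$ forces $|\upvarphi(T_p\Sigma_3)|$ to be constant (your repeated-index computation is correct and reproves Proposition~\ref{prop:calibcong}), while $|\upvarphi(N_p)|$ varies on $\Sigma_4$, which also makes Theorem~\ref{th:4inhomo} transparent. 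What it costs is the surjectivity step.

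That step is where the genuine gap sits: as written, compactness and continuity only give that $p\mapsto|\upvarphi(N_p)|$ has image a closed subinterval of $[0,1]$, and two such subintervals attached to two different $\Sigma_4$'s need not meet a priori, so ``generically nonzero derivative'' is not enough. The claim is nevertheless true and your sketch points at the right object. To close it, pass to $\R^8$: writing $\Sigma_4=\s{S}(V)$ for a $5$-plane $V$ and $N=V^\perp$, one has $\upvarphi_p(N)=\Phi(p,n_1,n_2,n_3)=\langle p,\,n_1\times n_2\times n_3\rangle$, where $\Phi$ is the $\s{Spin}_7$-invariant Cayley $4$-form and $\times$ the associated triple cross product; since $n_1\times n_2\times n_3$ is a unit vector orthogonal to $N$, it lies in $V$, so the function is the restriction to $\s{S}(V)$ of a unit-norm linear functional and its image is exactly $[0,1]$. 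Alternatively, surjectivity follows at once from the transitivity of $\s{Spin}_7$ on $\s{G}_5(\R^8)$ --- but that is precisely the paper's argument, which renders the pointwise analysis unnecessary for the classification itself.
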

\begin{proof}
Let $\Sigma_1$ and $\Sigma_2$ be two not well-positioned totally geodesic spheres of $\s{S}^{15}_{\O,\tau}$  with dimension $k$. When $k=7$, $\Sigma_1$ and $\Sigma_2$ are congruent by Lemma~\ref{lemma:dim7octonionic} \textit{ii)}.

Now assume that $\Sigma_1$ and $\Sigma_2$ are two not well-positioned totally geodesic spheres of dimension $2$. By Lemma~\ref{lemma:2octo} and Lemma~\ref{lemma:dim7octonionic}, they are contained in a well-positioned totally geodesic $\s{S}^7_{\H,\tau}$. Hence, $\Sigma_1$ and $\Sigma_2$ are congruent in $\s{S}^{15}_{\O,\tau}$ by Proposition~\ref{prop:congquaternionic}.

In what follows, let $\Sigma_1$ and $\Sigma_2$ be two not well-positioned totally geodesic spheres of dimension $k\in\{3,4,5,6\}$. Then, by Theorem~\ref{th:notwellsphere}  and Lemma~\ref{lemma:dim7octonionic}, the problem is reduced to analyze the congruence classes of $k$-dimensional totally geodesic submanifolds of the round sphere $\s{S}^7$ under the subgroup $\s{Spin}_7$. Observe that a $k$-dimensional totally geodesic submanifold of $\s{S}^7$ is uniquely determined by a $(k+1)$-plane of $\R^8$. Moreover, $\s{Spin}_7$ acts transitively on $\s{G}_{k+1}(\R^8)$, the Grassmannian of $(k+1)$-planes of $\R^8$, if and only if $k+1\neq 4$; and with cohomogeneity one when $k+1=4$. Furthermore this cohomogeneity one action has two singular orbits. Consequently, the set of congruence classes for $3$-dimensional not well-positioned totally geodesic spheres is identified with $[0,1]$ and for $k\neq 3$ there is just one congruence class. \qedhere
\end{proof}

In the sequel, we  explain how the $3$-form constructed in Lemma~\ref{lemma:dim7octonionic} \textit{iv)} defines an invariant which allows to distinguish between non-congruent not well-positioned totally geodesic $3$-spheres of $\s{S}^{15}_{\O,\tau}$. Let us identify $\Im \O$ with $\R^7$ and consider the 3-form $\upvarphi$ given by
\[\upvarphi(x,y,z)=\langle x, y\cdot z\rangle, \enspace x,y,z\in\Im\O,  \]
where $\langle\cdot,\cdot\rangle$ denotes the Euclidean inner product in $\R^7$, and $\cdot$ denotes the  product in $\O$. Also, $\upvarphi$ satisfies $||\upvarphi||\leq 1$, where  $||\upvarphi||:=\sup\{\upvarphi(\xi): \text{$\xi$ is a unit, simple 3-vector in $\R^7$}   \}$.
This is  a $\s{G}_2$-invariant 3-form and it was called by Harvey and Lawson~\cite{harvey-lawson} the associative calibration on $\Im \O$. The reason for this name is the following. Let $\xi=x\wedge y\wedge z$, where  $x, y$ and $z$ are elements of $\Im\O$ of unit length. Then,  $|\upvarphi(\xi)|=1$  if and only if the $3$-dimensional subspace spanned by $\{x,y,z\}$ is the imaginary part of a subalgebra of $\O$ isomorphic to $\H$.

We can consider the smooth function $\upvarphi\colon \s{G}_3\R^7\rightarrow[-1,1]$, where $\s{G}_3\R^7$ denotes the $3$-plane Grassmannian over $\R^7$. A direct computation shows that the critical points of $\upvarphi$ are precisely the associative $3$-planes. Moreover, $\s{G}_2$ acts on $\s{G}_3\R^7$ with cohomogeneity one (see \cite[Table 1]{Ko02}).
Thus, $|\upvarphi(V)|=|\upvarphi(V')|$ if and only if $V$ and $V'$ in $\s{G}_3\R^7$ belong to the same orbit for the $\s{G}_2$ action on $\s{G}_3\R^7$.

Furthermore, notice that the representation of $\s{G}_2$ on $\Lambda^3(\R^7)$ splits as the direct sum of the trivial, $7$-dimensional, and $27$-dimensional irreducible representations of $\s{G}_2$, see \cite[Theorem 8.5]{salamonwalpuski}. Thus,  there is just one (up to scaling) $\s G_2$-invariant $3$-form defined in $\Im\O$.  Moreover, the $3$-form $\upvarphi$ defined in Lemma~\ref{lemma:dim7octonionic} \textit{iv)} also satisfies that $||\upvarphi||\leq1$, so it agrees with the associative calibration up to sign. Consequently, given two not well-positioned totally geodesic $3$-spheres $\Sigma_1=\exp(V_1)$ and $\Sigma_2=\exp(V_2)$  contained in the same maximal not well-positioned totally geodesic sphere of $\s{S}^{15}_{\O,\tau}$, we have that they are congruent if and only if $|\upvarphi(V_1)|=|{\upvarphi}(V_2)|$. Furthermore, we can always find an element $g\in\s{Spin}_7$ such that $\Sigma_1$ and $g\Sigma_2$ are contained in the same maximal not well-positioned totally geodesic sphere, and since $\upvarphi$ is also invariant under $\s{Spin}_7$,  we have the following result.
\begin{proposition}
\label{prop:calibcong}
Let $\Sigma_i=\exp(V_i)$ be a not well-positioned totally geodesic $3$-sphere of $\s{S}^{15}_{\O,\tau}$ for each $i\in\{1,2\}$. Then, $\Sigma_1$ and $\Sigma_2$ are congruent in $\s{S}^{15}_{\O,\tau}$ if and only if $|\upvarphi(V_1)|=|\upvarphi(V_2)|$.
\end{proposition}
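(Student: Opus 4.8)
The plan is to reduce the congruence problem to an orbit--equivalence problem inside a fixed maximal sphere and then read off the answer from the calibration $\upvarphi$. First I would bring both $3$-spheres into a common ambient copy of the maximal sphere. Each $\Sigma_i$ lies in a unique maximal not well-positioned totally geodesic $7$-sphere through $o$, and by Theorem~\ref{th:notwellsphere}~ii) these are carried onto a fixed standard one $\widehat{\Sigma}$ by elements $k_i$ of the isotropy group $\s{K}=\s{Spin}_7$ of $\s{S}^{15}_{\O,\tau}$ at $o$. Since each $k_i$ fixes $o$ and $\upvarphi$ is $\s{K}$-invariant --- being the $\s{G}$-invariant extension of the $\s{K}$-invariant $3$-form of Lemma~\ref{lemma:dim7octonionic}~iii) --- replacing $\Sigma_i$ by $k_i\Sigma_i$ alters neither $|\upvarphi(V_i)|$ nor the congruence class of $\Sigma_i$. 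Thus I may assume $V_1,V_2$ are $3$-planes in $T_o\widehat{\Sigma}\cong\R^7\cong\Im\O$ and $\Sigma_i=\exp_o(V_i)\subset\widehat{\Sigma}$.

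Next I would translate ambient congruence into a base-point-free orbit statement. Recall from Lemma~\ref{lemma:dim7octonionic}~i) that $\widehat{\Sigma}$ is the orbit of a subgroup isomorphic to $\s{Spin}_7$ whose isotropy at $o$ is $\s{G}_2$; by the uniqueness in Theorem~\ref{th:notwellsphere}~i) any ambient isometry relating $\Sigma_1$ and $\Sigma_2$ must preserve $\widehat{\Sigma}$ and therefore lie in this $\s{Spin}_7$. Viewing $\widehat{\Sigma}\cong\s{S}^{7}_{4-4\tau}\subset\R^8$, the $3$-sphere $\Sigma_i$ is the great subsphere cut out by the $4$-plane $\widetilde{P}_i:=\R o\oplus V_i\subset\R^8$, and, exactly as in the proof of Theorem~\ref{th:congoctonionic}, $\Sigma_1$ and $\Sigma_2$ are congruent in $\s{S}^{15}_{\O,\tau}$ if and only if $\widetilde{P}_1$ and $\widetilde{P}_2$ lie in the same orbit of the $\s{Spin}_7$-action on $\s{G}_4(\R^8)$. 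This reformulation refers to the $4$-plane $\widetilde{P}_i$ rather than to $o$, so it will also make $|\upvarphi(V_i)|$ manifestly independent of the chosen base point once the identification of invariants below is in place.

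The remaining point is to identify the orbit invariant with $|\upvarphi|$. Since the $\s{G}_2$-representation $\Lambda^3(\R^7)$ contains a one-dimensional trivial summand, the $\s{G}_2$-invariant $3$-form of Lemma~\ref{lemma:dim7octonionic}~iii) is a scalar multiple of the associative calibration, and as both are normalized by $\|\upvarphi\|\le 1$ they agree up to sign. The associative calibration on $\Im\O$ and the $\s{Spin}_7$-invariant Cayley $4$-form $\Phi$ on $\O$ are related by $\Phi(\R 1\oplus V)=\upvarphi(V)$ for every $3$-plane $V\subset\Im\O$, so the $\s{Spin}_7$-invariant of $\widetilde{P}_i$ is exactly $|\Phi(\widetilde{P}_i)|=|\upvarphi(V_i)|$. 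Because $\s{G}_2$ acts on $\s{G}_3\R^7$ with cohomogeneity one and the critical points of $\upvarphi$ are precisely the associative $3$-planes (equivalently, $\s{Spin}_7$ acts on $\s{G}_4(\R^8)$ with cohomogeneity one and two singular orbits, realized at $|\Phi|\in\{0,1\}$), the invariant function $|\upvarphi|$ descends to a homeomorphism from the orbit space onto $[0,1]$. Hence $\widetilde{P}_1$ and $\widetilde{P}_2$ are $\s{Spin}_7$-equivalent precisely when $|\upvarphi(V_1)|=|\upvarphi(V_2)|$, which combined with the previous two steps proves the proposition.

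The main obstacle is the last step: showing that $|\upvarphi|$ is a \emph{complete} invariant, i.e.\ that it separates all the orbits and not merely the two singular ones. This rests on the cohomogeneity-one structure of the action, which guarantees that the orbit space is a closed interval, together with the identification of the critical set of $\upvarphi$ with the associative Grassmannian; these force the endpoints $|\upvarphi|\in\{0,1\}$ to be attained exactly on the two singular orbits and $|\upvarphi|$ to be strictly monotone across the principal orbits, matching the parametrization $\{3\}\times[0,1]$ of Theorem~\ref{th:congoctonionic}. A secondary subtlety, already resolved by the $4$-plane reformulation above, is that $|\upvarphi(T_p\Sigma)|$ must be shown independent of the base point $p\in\Sigma$.
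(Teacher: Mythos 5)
Your proposal is correct and follows essentially the same route as the paper: reduce to a single maximal not well-positioned $7$-sphere via the isotropy, identify $\upvarphi$ with the associative calibration through the decomposition of $\Lambda^3(\R^7)$ as a $\s{G}_2$-module, and conclude from the cohomogeneity-one orbit structure together with the critical-point analysis that $|\upvarphi|$ is a complete invariant. The only (harmless) deviation is that you phrase the final orbit count via the Cayley $4$-form and the action of $\s{Spin}_7$ on $\s{G}_4(\R^8)$, whereas the paper's closing discussion works directly with $\s{G}_2$ acting on $\s{G}_3(\R^7)$; the $4$-plane picture already appears in the proof of Theorem~\ref{th:congoctonionic}, so this is the same argument with slightly different bookkeeping.
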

We ilustrate the usefulness of the invariant defined above with the following example.
\begin{example}
\label{ex:3dimoctonionic}
Let us consider the basis $\{v_i\}^7_{i=1}$ of $T_o\widehat{\Sigma}$ as defined by Equation~\eqref{eq:tangentwidehat}. Now for $\theta\in[0,1]$ consider the subspaces $V_\theta$ and $V'_\theta$ spanned by \[\spann\{\sin\left(\frac{\pi\theta}{2}\right) v_1+\cos\left(\frac{\pi\theta}{2}\right) v_4,v_2,v_3\},  \quad \spann\{\sin\left(\frac{\pi\theta}{2}\right) v_1+\cos\left(\frac{\pi\theta}{2}\right) v_2,v_5,v_6\};\] 
respectively. It turns out that $\upvarphi(V_{\theta})=\sin(\tfrac{\pi\theta}{2})$ and  $\upvarphi(V'_{\theta})=\cos(\tfrac{\pi\theta}{2})\in[0,1]$. Therefore, $\Sigma_\theta=\exp_o(V_\theta)$ and $\Sigma_\theta'=\exp_o(V'_\theta)$ are congruent if and only if $\theta=1/2$. 
\end{example}
\begin{remark}
We observe the following interesting fact.  Let $V$ be a $3$-dimensional subspace of $T_o\widehat{\Sigma}$. Then, $\Sigma=\exp_o(V)$ is a  not well-positioned totally geodesic submanifold $3$-sphere of a $7$-dimensional well-positioned totally geodesic submanifold isometric to $\s{S}^{7}_{\H,\tau}$ of $\s{S}^{15}_{\O,\tau}$ if and only if $\upvarphi(V)=1$, i.e.\ $V$ is associative when regarded as a subspace of $\Im\O$.

This follows by considering the subspace $V_\theta$ with $\theta=1$ in Example~\ref{ex:3dimoctonionic} and the subspace given by $(1)_{\O}$ in Subsection~\ref{subsect:examplestg} combined with the uniqueness (up to congruence) of the totally geodesic inclusions $\s{S}^7_{\H,\tau}\hookrightarrow\s{S}^{15}_{\O,\tau}$ and $\s{S}^3_{4-4\tau}\hookrightarrow\s{S}^{7}_{\H,\tau}$, respectively; see~Remark~\ref{rem:congwellpos} and Proposition~\ref{prop:congquaternionic}.
\end{remark}

Now we are in a position to prove Theorem~\ref{th:D}.
\begin{proof}[Proof of Theorem~\ref{th:D}] 
On the one hand, if $\Sigma$ is well-positioned the proof follows by Theorem~\ref{th:wellpos}, Remark~\ref{rem:intersect}, and Remark~\ref{rem:congwellpos}. On the other hand, if $\Sigma$ is not well-positioned the proof follows by combining Theorem~\ref{th:notwellpos}, Lemma~\ref{lemma:dim7octonionic} and Theorem~\ref{th:congoctonionic}.
\end{proof}

To conclude this section we will discuss the homogeneity of the not well-positioned totally geodesic submanifolds of $\s{S}^{15}_{\O,\tau}$. 

\begin{theorem}\label{th:4inhomo}
Let $\Sigma$ be a  totally geodesic submanifold  of $\s{S}^{15}_{\O,\tau}$. Then, $\Sigma$ is not extrinsically homogeneous if and only if $\Sigma$ is congruent to a $4$-dimensional not well-positioned totally geodesic sphere.

\end{theorem}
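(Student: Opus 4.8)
The plan is to reduce, in both directions, the extrinsic homogeneity of a totally geodesic submanifold $\Sigma$ to the transitivity of one concrete linear stabilizer. Since every well-positioned totally geodesic submanifold is extrinsically homogeneous by Remark~\ref{rem:congwellpos}, Theorem~\ref{th:notwellpos} and the list in Theorem~\ref{th:D} reduce matters to a not well-positioned $\Sigma$, which is then a round sphere $\s{S}^k_{4-4\tau}$ with $k\in\{2,3,4,5,6,7\}$ or the plane $\R\s{P}^2_{2/3}$ occurring for $\tau=1/3$. First I would attach to $\Sigma$ the unique maximal not well-positioned totally geodesic $7$-sphere $\widehat{\Sigma}\cong\s{S}^7$ with $T_o\Sigma\subset T_o\widehat{\Sigma}$ given by Theorem~\ref{th:notwellsphere}; as $\widehat{\Sigma}$ is intrinsically determined by $\Sigma$, every isometry preserving $\Sigma$ preserves $\widehat{\Sigma}$, so $\s{G}_\Sigma\subseteq\s{G}_{\widehat\Sigma}$. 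The key structural point is that the effective action of $\s{G}_{\widehat\Sigma}$ on $\widehat{\Sigma}$ is through $\s{Spin}_7$ acting by the spin representation on $\s{S}^7\subset\R^8$: by Lemma~\ref{lemma:dim7octonionic} this $\s{Spin}_7$ already acts transitively, and since $\s{Spin}_7\subset\s{SO}_8$ is a maximal connected subgroup, the effective action is $\s{Spin}_7$ or all of $\s{SO}_8$. The latter would make $\s{SO}_8$ transitive on $\s{G}_4(\R^8)$, forcing all $3$-dimensional not well-positioned spheres to be congruent in $\s{S}^{15}_{\O,\tau}$, which contradicts the $[0,1]$-family of congruence classes of Theorem~\ref{th:congoctonionic}. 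Writing $\Sigma=\s{S}(V)$ for the $(k+1)$-plane $V=\spann(\Sigma)\subset\R^8$, and noting that the kernel of the effective action fixes $\widehat{\Sigma}\supseteq\Sigma$ pointwise, I conclude that $\Sigma$ is extrinsically homogeneous if and only if $\s{L}_V:=\mathrm{Stab}_{\s{Spin}_7}(V)$ acts transitively on $\s{S}(V)=\s{S}^k$.

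The main claim, the case $k=4$, is then a clean dimension count. Since $k+1=5\neq4$, the group $\s{Spin}_7$ acts transitively on $\s{G}_5(\R^8)$ (equivalently on $\s{G}_3(\R^8)$) as recorded in the proof of Theorem~\ref{th:congoctonionic}, so $\dim\s{L}_V=\dim\s{Spin}_7-\dim\s{G}_5(\R^8)=21-15=6$. By the classification of transitive actions on spheres \cite{MS43}, the only group acting transitively on $\s{S}^4$ is $\s{SO}_5$, of dimension $10$. As the action of $\s{G}_\Sigma$ on $\Sigma$ factors through a subgroup of $\s{L}_V$, of dimension at most $6<10$, the sphere $\s{S}^4$ cannot be an orbit. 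Hence a $4$-dimensional not well-positioned totally geodesic sphere is not extrinsically homogeneous, which settles one implication.

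For the converse I would verify that every remaining $\Sigma$ is extrinsically homogeneous. The case $k=7$ is Lemma~\ref{lemma:dim7octonionic}, where $\widehat{\Sigma}$ is itself a $\s{Spin}_7$-orbit. For $k\in\{2,5,6\}$ I would use Lemma~\ref{lemma:finclas}: the isotropy-congruence classes at $o$ of such spheres correspond to the orbits of the isotropy $\s{G}_2=\s{H}_o$ on the Grassmannian $\s{G}_k(T_o\widehat{\Sigma})=\s{G}_k(\R^7)$, where I use Theorem~\ref{th:notwellsphere} to move all maximal $7$-spheres through $o$ into $\widehat{\Sigma}$ by the isotropy and the maximality of $\s{G}_2\subset\s{Spin}_7$ to identify the stabilizer of $T_o\widehat{\Sigma}$ with $\s{G}_2$; since $\s{G}_2$ acts on $\s{G}_k(\R^7)$ with finitely many orbits for $k\in\{1,2,5,6\}$ (indeed transitively, cf.\ \cite{Ko02}), finitely many isotropy-congruence classes force extrinsic homogeneity. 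The plane $\R\s{P}^2_{2/3}$ has tangent space of type $\mathcal{A}^2_{1/3}$ (Lemma~\ref{lemma:notwellposproj2}), which lies in a well-positioned $\s{S}^7_{\H,1/3}$; there it is extrinsically homogeneous by Corollary~\ref{cor:extrinsichomquaternionic}, and this homogeneity is realized inside $\Isom(\s{S}^{15}_{\O,\tau})$ because $\s{S}^7_{\H,1/3}$ is itself an orbit.

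The remaining and most delicate case is $k=3$, where, in contrast with $k=4$, there are infinitely many isotropy-congruence classes (the action of $\s{Spin}_7$ on $\s{G}_4(\R^8)$ has cohomogeneity one), so Lemma~\ref{lemma:finclas} does not apply and transitivity of $\s{L}_V$ on $\s{S}^3$ must be shown directly. Here I would exploit the $\s{Spin}_7$-equivariant double fibration of the incidence variety $\{(p,V):p\in\s{S}(V),\ V\in\s{G}_4(\R^8)\}$ over $\s{S}^7$ (fibre $\s{G}_3(\R^7)$, isotropy $\s{G}_2$) and over $\s{G}_4(\R^8)$ (fibre $\s{S}^3$). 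Tracking principal-orbit dimensions gives $\dim\s{L}_V=6$ and $\dim(\s{L}_V\cap\s{H}_o)=\dim\mathrm{Stab}_{\s{G}_2}(T_o\Sigma)=3$, so the orbit $\s{L}_V\cdot o$ has dimension $6-3=3$, is therefore open, and hence equals the connected sphere $\s{S}^3$. I expect the principal-orbit bookkeeping in this double fibration, together with the separate treatment of the two singular orbits $\upvarphi(V)\in\{0,1\}$ (the associative case $\upvarphi(V)=1$ reducing, as above, to a well-positioned $\s{S}^7_{\H,\tau}$), to be the main technical obstacle; by comparison the $k=4$ non-homogeneity itself is immediate once the reduction to $\s{Spin}_7$ and the dimension count are in place.
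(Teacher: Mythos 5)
Your overall reduction is the same as the paper's: use Remark~\ref{rem:congwellpos} and Theorem~\ref{th:notwellpos} to dispose of the well-positioned case, then use Theorem~\ref{th:notwellsphere} and Lemma~\ref{lemma:dim7octonionic} to translate everything into the question of which totally geodesic spheres of the round $\s{S}^7=\s{Spin}_7/\s{G}_2$ are orbits of subgroups of $\s{Spin}_7$. Where you genuinely diverge is the central $k=4$ step, and your version is cleaner: since $\s{Spin}_7$ is transitive on $\s{G}_5(\R^8)$, the stabilizer $\s{L}_V$ of the $5$-plane $V$ with $\Sigma=\s{S}(V)$ has dimension $21-15=6$, while any compact connected group transitive on $\s{S}^4$ effectivizes to $\s{SO}_5$ and so has dimension at least $10$. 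This replaces the paper's longer structural argument (building a hypothetical almost effective $\s{F}\cong\s{SO}_5$ inside $\s{Spin}_7$, killing its projection to the $\s{SO}_3$-factor, and deriving a contradiction through the Cartan decomposition and parallel Killing fields). Your identification of the effectivized $\s{G}_{\widehat\Sigma}$ with $\s{Spin}_7$ via maximality of $\s{Spin}_7\subset\s{SO}_8$ and the uncountable family of Theorem~\ref{th:congoctonionic} is also sound, and the cases $k\in\{2,5,6,7\}$ and $\R\s{P}^2_{2/3}$ match the paper's treatment.

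The genuine gap is the $k=3$ case of the converse, which you yourself flag as the main obstacle but do not close. Your double-fibration count $\dim(\s{L}_V\cdot o)=\dim\s{L}_V-\dim\mathrm{Stab}_{\s{G}_2}(T_o\Sigma)=6-3=3$ is only justified when $V$ lies on a principal $\s{Spin}_7$-orbit of $\s{G}_4(\R^8)$ \emph{and} $T_o\Sigma$ lies on a principal $\s{G}_2$-orbit of $\s{G}_3(T_o\widehat\Sigma)$; you still owe (a) the fact that the $\s{G}_2$-orbit type of $T_o\Sigma$ does not degenerate as $o$ ranges over $\s{S}(V)$ (this does follow from Proposition~\ref{prop:calibcong}, since $|\upvarphi(T_o\Sigma)|$ is a congruence invariant and hence constant along $\Sigma$, but you should say so), and (b) the matching of codimensions at the two singular orbits, of which only the associative one $|\upvarphi|=1$ is handled by your reduction to a well-positioned $\s{S}^7_{\H,\tau}$; the orbit $|\upvarphi|=0$ is left untreated. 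The paper avoids all of this bookkeeping by a different argument: it extracts from the isotropy $(\s{G}_2)_\pi$ a simple normal subgroup $\s{K}^\pi$ with Lie algebra $\g{so}_3$ and splits into the cases where $\s{K}^\pi$ acts trivially or non-trivially on $\pi$, using the already-established non-homogeneity of the $4$-dimensional spheres to rule out a $4$-dimensional fixed set. Until you carry out the singular-orbit computations, your $k=3$ case is a plausible plan rather than a proof. A minor omission: the statement allows $\dim\Sigma=1$, and you should record (as the paper does in one line) that geodesics are orbits because $\s{S}^{15}_{\O,\tau}$ is a g.o.\ space.
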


\begin{proof}
Firstly, let us assume that $\Sigma$ is equal to a $4$-dimensional not well-positioned totally geodesic sphere of $\s{S}^{15}_{\O,\tau}$.
By Theorem~\ref{th:notwellsphere}, we can always assume that $\Sigma$ is contained in a $7$-dimensional not well-positioned totally geodesic sphere $\s{S}^7_{4-4\tau}$ of $\s{S}^{15}_{\O,\tau}$. Then, by Lemma~\ref{lemma:dim7octonionic}, we can reduce the discussion to decide if a totally geodesic  $\s{S}^4$ in a round $\s{S}^7=\s{Spin}_7/\s{G}_2$ is an orbit of some subgroup of $\s{Spin}_7$. 

Let $\s{K} =\{g\in \s{SO}_8: g\s{S}^4= \s{S}^4\}^0$, where $\cdot^0$ denotes the connected component containing the identity. Then $\s{K}=\s{K}_1\times \s{K}_2$ where $\s{K}_1\simeq \s{SO}_5$   and $\s{K}_2\simeq \s{SO}_3$. Notice that $\s{K}_1$ acts on $\s{S}^4\subset\R^5$, and $\s{K}_2$ acts on $\R^3\simeq\R^8\ominus\R^5$. Let us assume that there exists   a Lie subgroup $\s{F}$ of $\s{Spin} _7\subset \s{SO}_8$ that acts transitively on $\s{S}^4$ and let $\proj_1\colon \s{F}\to \s{K}_1$ be the projection. Let  $\proj_1\colon\g{f}\rightarrow\g{k}_1$ be the corresponding map at the Lie algebra level. Since  $\s{F}$ is compact, we may assume that $\s{F}$ acts almost effectively on $\s{S}^4$.  Indeed, if the action of $\s{F}$ is not almost effectively on $\s{S}^4$, we can replace $\s{F}$ by the Lie group associated with a complementary ideal in  $\mathfrak f$ of 
$\mathfrak f \cap \mathfrak {k}_2=\ker\proj_1$. Then $\proj_1:\s{F}\to \s{K}_1$ is almost injective and $\proj_1(\s{F})=\s{SO}_5$, since $\s{SO}_5$ is the only compact Lie group  acting transitively and effectively on $\s{S}^4$ by the classification of transitive actions on spheres, see~\cite{MS43}. Then $\mathfrak f$ is isomorphic to $\mathfrak{so}_5$. Hence, since $\mathfrak f$ is simple and $\dim (\mathfrak f)>\dim (\mathfrak {so}_3)$ the Lie algebra morphism 
$\proj_2:\mathfrak f \to \g{k}_2$ is trivial. This proves that $\mathfrak {f}=\g{k}_1$. Let us consider the Cartan decomposition of the symmetric spaces $\s{S}^4$ and $\s{S}^7$, given by $\g{k}_1=\g{p}_{1}\oplus\g{h}_{1}$ and $\g{so}_8=\g{p}\oplus\g{so}_7$, respectively. Let us choose a non-zero element $X\in \g{p}_1$. Then, $X\in\g{p}\cap\g{spin}_7$. Thus, since $\s{G}_2$ acts transitively on the unit sphere of $T_o\s{S}^7$, for each direction $X\in T_o\s{S}^7$, there exists a Killing vector field of $\s{S}^7$ parallel at $o$. Consequently, $\g{f}$ contains $\g{p}$, and thus $\g{f}$ contains $\g{so}_8=\g{p}\oplus[\g{p},\g{p}]$ yielding a contradiction.

Secondly, we will prove that every totally geodesic submanifold of $\s{S}^{15}_{\O,\tau}$ (see Table~\ref{table:octonionictgnotwellpos}) that is not congruent to a $4$-dimensional not well-positioned totally geodesic sphere is extrinsically homogeneous. 
The geodesics of $\s{S}^{15}_{\O,\tau}$ are orbits, since $\s{S}^{15}_{\O,\tau}$ is a g.o.\ space; and well-positioned totally geodesic submanifolds of $\s{S}^{15}_{\O,\tau}$ are also orbits, by Remark~\ref{rem:congwellpos}.

Let $\Sigma$ be congruent to the totally geodesic projective plane $\R\s P^2_{2/3}$. Then $\tau=1/3$, see~Lemma~\ref{lemma:notwellposproj2}. In this case $\Sigma$ is contained in a well-positioned totally geodesic submanifold of $\s{S}^{15}_{\O,1/3}$ isometric to $\s{S}^7_{\H,1/3}$ by Lemma~\ref{lemma:2octo}. Then, $\Sigma$ is an extrinsically homogeneous submanifold of $\s{S}^{15}_{\O,\tau}$.

Let $\Sigma$ be congruent to a not well-positioned totally geodesic sphere of $\s{S}^{15}_{\O,\tau}$. By Theorem~\ref{th:notwellsphere}, we can always assume that $\Sigma$ is contained in a $7$-dimensional not well-positioned totally geodesic sphere of $\s{S}^{15}_{\O,\tau}$. Then,  Lemma~\ref{lemma:dim7octonionic} implies that we can reduce the discussion to analyze which  totally geodesic submanifolds of the round sphere $\s{S}^{7}$ are subgroups of $\s{Spin}_7$. 
Let us assume that $\Sigma$ is isometric to a not well-positioned totally geodesic sphere. Let $k\neq 4$ be the dimension of $\Sigma$. Now, if $k=7$, $\Sigma$ is extrinsically homogeneous by Theorem~\ref{th:notwellsphere} and Lemma~\ref{lemma:finclas}.

Observe that for every $k$-dimensional subspace  $V^k \subset T_o\s{S}^7$ there is a totally geodesic $\s{S}^k$ such that 
$T_o\s{S}^k=  V^k$. 
Let  us consider the action of $\s{G}_2$ on the Grassmanian 
$\s{G}_{k}(T_o\s{S}^7)$ of $k$-subspaces of $T_o\s{S}^7$.
If $k\not\in\{3,4\}$, then this action is transitive (see~\cite[Lemma 1.1]{kerr}). Hence,  there is only one congruence class at $o$ of totally geodesic $\s{S}^k$. Then, by Lemma \ref{lemma:finclas},  $\s{S}^k$ is extrinsically homogeneous for $k\not\in\{3,4\}$.

Let us consider the action of $\s{G}_2$ on the Grassmannian $\s{G}_{3}(T_o\s{S}^7)$ of $3$-planes in $T_o\s{S}^7$. It is known that this action has cohomogenity one (see e.g.\ \cite[Table 1]{Ko02}).  The isotropy subgroup $(\s{G}_2)_\pi$ at  any $3$-plane $\pi$ has dimension at least $3$, since $\dim (\s{G}_2)=14$ and $\dim (\s{G}_{3}(T_o\s{S}^7))= 12$. Moreover, since $(\s{G}_2)_\pi\subset \s{G}_2$, the rank of $(\s{G}_2)_\pi$ is at most $2$ and thus $(\s{G}_2)_\pi$ is not abelian. Hence, $(\s{G}_2)_\pi$   has a connected and (non-trivial)  compact simple   normal subgroup~$ \s{K}^\pi$. Note that   the isotropy of a principal orbit $\s{G}_2\cdot\pi$ has dimension $3$, and thus the Lie algebra of 
$\s{K}^\pi$ is isomorphic to $\mathfrak{so}_3$. 

Let $\pi$ be an arbitrary $3$-plane of $T_o\s{S}^7$ and let $\s{S}^3$ be the $3$-dimensional totally geodesic sphere of $\s{S}^7$ with $T_o\s{S}^3=\pi$.
Let us consider the action of $\s{K}^\pi$ on $T_o\s{S}^7$. Observe that this action leaves invariant the subspace $\pi$. Now we will consider the case in which $\s{K}^\pi$ acts trivially on $\pi$,  and the case in which $\s{K}^\pi$ acts non-trivially on $\pi$.

Case (i): $\s{K}^\pi$ acts trivially on $\pi$. 
Let $V\subset T_o\s{S}^7$ be the subspace of fixed vectors of 
$\s{K}^\pi$. Then $V$ contains $\pi$. If $ {V} = \pi$, then $\s{S}^3$ is extrinsically homogeneous since $\s{S}^3$ is a connected component of the  fixed set of a subgroup of the presentation group of $\s{S}^7$, in this case $\s{Spin}_7$, see~\cite[Lemma 9.1.1]{BCO}. Hence, if $V=\pi$, we are done.
If $\dim (V)=4$, by the same argument  the totally geodesic $\s{S}^4$ with $T_o\s{S}^4= V$ is extrinsically homogeneous, contradicting the discussion above.
Assume that $\dim( V)\geq 5$. Thus $\s{K}^\pi$  acts trivially on $T_o\s{S}^7\ominus V$, since $\s{K}^\pi$ is simple and $\dim(T_o\s{S}^7\ominus V)\leq 2$. Then $\s{K}^\pi$ acts trivially on $T_o\s{S}^7$, but this yields a contradiction that proves that $V=\pi$, and then we are done.

Case (ii): $\s{K}^\pi$ acts non-trivially on $\pi$. Then, since $\s{K}^\pi$ is simple, $\s{K}^\s{\pi}$ must act almost effectively on~$\pi$. Then $\{k_{\vert \pi}: k \in \s{K}^\pi\} = \s{SO}(\pi)\cong \s{SO}_3$. Assume that $\s{S}^3$ is not homogeneous. Then,  $\{k_{\vert T_p\s{S}^3}: k \in \s{K}^{T_p\s{S}^3}\} = \s{SO}(T_p\s{S}^3)$ for every, $p\in \s{S}^3$. Otherwise, $\s{K}^{T_p\s{S}^3}$ would act trivially for some  $p\in \s{S}^3$, yielding a contradiction by the discussion in Case (i).
This implies that
$\s{S}^3$ is extrinsically homogeneous. 
\qedhere	
\end{proof}

\end{document}